\providecommand{\U}[1]{\protect \rule{.1in}{.1in}}
\newtheorem{theorem}{Theorem}[section]
\newtheorem{corollary}[theorem]{Corollary}
\newtheorem{definition}[theorem]{Definition}
\newtheorem{example}[theorem]{Example}
\newtheorem{Lemma}[theorem]{Lemma}
\newtheorem{Proposition}[theorem]{Proposition}
\newtheorem{Theorem}{Theorem}
\theoremstyle{remark}
\newtheorem{remark}[theorem]{Remark}
\numberwithin{equation}{section}
\def\R{\mathbb R}
\def\div{\operatorname{div}}
\def\e{\varepsilon}
\def\om{\Omega}
\def\iy{\infty}
\def\pa{\partial}
\def\rt{\rightarrow}
\def\f{\frac}
\def\bt{\begin{theorem}}
\def\et{\end{theorem}}
\def\bl{\begin{Lemma}}
\def\el{\end{Lemma}}
\def\bd{\begin{definition}}
\def\ed{\end{definition}}
\def\bc{\begin{corollary}}
\def\ec{\end{corollary}}
\def\bprop{\begin{Proposition}}
\def\eprop{\end{Proposition}}
\def\bp{\begin{proof}}
\def\ep{\end{proof}}
\def\bx{\begin{example}}
\def\ex{\end{example}}
\def\br{\begin{remark}}
\def\er{\end{remark}}
\def\be{\begin{equation}}
\def\ee{\end{equation}}
\def\bal{\begin{align}}
\def\bn{\begin{enumerate}}
\def\en{\end{enumerate}}
\def\eal{\end{align}}
\def\bg{\begin{align*}}
\def\eg{\end{align*}}
\def\bcs{\begin{cases}}
\def\ecs{\end{cases}}
\def\RNum#1{\uppercase\expandafter{\romannumeral #1\relax}}
\def\bean{\begin{eqnarray*}}
\def\eean{\end{eqnarray*}}
\def\p{\mathcal P}
\def\s{\mathcal S}
\begin{document}
\title[Weakly coupled Schr\"odinger system with large exponents]{Concentration phenomena of positive solutions to weakly coupled Schr\"odinger systems with large exponents in dimension two}
\author{Zhijie Chen}
\address{Department of Mathematical Sciences, Yau Mathematical Sciences Center,
Tsinghua University, Beijing, 100084, China}
\email{zjchen2016@tsinghua.edu.cn}
\author{Hanqing Zhao}
\address{Department of Mathematical Sciences,
Tsinghua University, Beijing, 100084, China}
\email{zhq20@mails.tsinghua.edu.cn}

\keywords{}


\begin{abstract}
We study the weakly coupled
nonlinear Schr\"odinger system
\begin{equation*}
\begin{cases}
-\Delta u_1 = \mu_1  u_1^{p} +\beta  u_1^{\frac{p-1}{2}} u_2^{\frac{p+1}{2}}\text{ in }  \Omega,\\
-\Delta u_2 = \mu_2 u_2^{p}  +\beta u_2^{\frac{p-1}{2}}u_1^{\frac{p+1}{2}}  \text{ in }  \Omega,\\
u_1,u_2>0\quad\text{in }\;\Omega;\quad
u_1=u_2=0  \quad\text { on } \;\partial\Omega,
\end{cases}
\end{equation*}
where $p>1, \mu_1, \mu_2, \beta>0$ and $\Omega$ is a smooth bounded domain in $\mathbb{R}^2$. Under the natural condition that holds automatically for all positive solutions in star-shaped domains 
\begin{align*}
    p\int_{\Omega}|\nabla u_{1,p}|^2+|\nabla u_{2,p}|^2 dx \leq C, 
\end{align*}
we give a complete description of the concentration phenomena of positive solutions $(u_{1,p},u_{2,p})$ as $p\rightarrow+\infty$, including the $L^{\infty}$-norm quantization $\|u_{k,p}\|_{L^\infty(\Omega)}\to \sqrt{e}$ for $k=1,2$, the energy quantization $p\int_{\Omega}|\nabla u_{1,p}|^2+|\nabla u_{2,p}|^2dx\to 8n\pi e $ with $n\in\mathbb{N}_{\geq 2}$, and so on. In particular, we show that the ``local mass'' contributed by each concentration point must be one of $\{(8\pi,8\pi), (8\pi,0),(0,8\pi)\}$.
\end{abstract}

\maketitle

\section{Introduction}
\label{section-1}
In this paper, we study the asymptotic behavior of positive solutions to the following weakly coupled nonlinear Schr\"odinger system
\begin{equation}\label{eq-1.1}
\begin{cases}
-\Delta u_1 = \mu_1  u_1^{p} +\beta  u_1^{\frac{p-1}{2}}u_2^{\frac{p+1}{2}} \text{ in }  \Omega,\\
-\Delta u_2 = \mu_2 u_2^{p}  +\beta u_2^{\frac{p-1}{2}}  u_1^{\frac{p+1}{2}} \text{ in }  \Omega,\\
u_1,u_2>0\quad\text{in }\;\Omega;\quad
u_1=u_2=0  \quad\text { on } \;\partial\Omega,
\end{cases}
\end{equation}
where $p>1, \mu_1, \mu_2, \beta>0$ and $\Omega$  is a smooth bounded domain in $\mathbb{R}^2$. Without loss of generality, we may always assume $\mu_1\geq \mu_2$. 
Systems of this type arise in many physical problems such as nonlinear optics and Bose-Einstein condensates, and have been widely studied in the last decades, and it seems impossible for us to list all the references. We refer the reader to \cite{B-T-W-10, D-W-W-10,GS, L-W-05,MMP, NTTV, Sirakov, SV} and references therein. For system \eqref{eq-1.1}, the existence of positive solutions can be easily obtained via variational methods. Furthermore, a direct computation shows that \eqref{eq-1.1} always has positive solutions of the form $(k_pu_p, l_pu_p)$ for $p$ large, where $u_p$ are positive solutions of the following Lane-Emden equation in dimension two,
\begin{equation}\label{eq-1.1***}
\begin{cases}
-\Delta u= u^p \,\,\text{ in }  \Omega,\\
u>0\quad\text{in }\;\Omega;\quad u=0  \quad\text { on } \partial\Omega,
\end{cases}
\end{equation}
and $k_p, l_p>0$ are solutions of
\[\mu_1k_p^{p-1}+\beta k_p^{\frac{p-3}{2}}l_p^{\frac{p+1}{2}}=1,\quad \mu_2l_p^{p-1}+\beta l_p^{\frac{p-3}{2}}k_p^{\frac{p+1}{2}}=1.\]

Our main motivation comes from the study of the concentration phenomena of positive solutions $u_p$ of the Lane-Emden equation \eqref{eq-1.1***} 
satisfying 
\begin{align}\label{eq-1-2*****}
     p\int_{\Omega}|\nabla u_p|^2 dx \leq C,\quad \text{for large} \; p,
\end{align}
as $p\rightarrow +\infty$ in a series of papers \cite{A-G-04,D-G-I-P-18,D-I-P-17,D-I-P-19,Rn-Wei-94,Ren-wei-95,T-19}. Remark that Kamburov-Sirakov \cite{K-S-18, K-S-23} proved that the assumption \eqref{eq-1-2*****} holds automatically for all positive solutions when $\Omega$ is a star-shaped domain.

Let $G(x,y)=G(y,x)$ be the Green function defined by
\[-\Delta_yG(x,y)=\delta_x\quad\text{in }\Omega;\quad G(x,y)=0\quad\text{for }y\in\partial\Omega,\]
where $\delta_x$ denotes the Dirac measure at $x\in\Omega$.
  It is well known that there exists $C>0$ such that for all $x,y\in\Omega, x\neq y$,
    \begin{align}\label{eq-2.1}
        |G(x,y)|\leq C\left(1+\log\frac{1}{|x-y|}\right),\quad |\nabla_x G(x,y)|\leq\frac{C}{|x-y|}.
    \end{align}
Let $H(x,y)$ be the regular part of $G(x,y)$ given by
\begin{align}\label{eq-1-6^*}
    G(x,y)=\frac{1}{2\pi}\log\frac1{|x-y|}-H(x,y),
\end{align}
and $R(x):=H(x,x)$ be the well-known Robin function.
We collect the following results from \cite{A-G-04,D-G-I-P-18,D-I-P-17,D-I-P-19,T-19} as follows.
\begin{Theorem}\label{th-AA}
     Let $u_p$ be a family of positive solutions of the Lane-Emden equation \eqref{eq-1.1***} satisfying \eqref{eq-1-2*****}. Then there exists a finite set of points $\mathcal{S}=\{x_1,\ldots,x_N\}\subset\Omega$ with $N\in \mathbb{N}_{\geq 1}$ such that, up to a subsequence, the following properties hold:
     \begin{itemize}
         \item [(1)]  the local $L^{\infty}$-norm quantization:
         \begin{align}
             \lim\limits_{p\rightarrow\infty}\Vert u_p\Vert_{L^{\infty}(B_r(x_i))}=\sqrt{e},
         \end{align}
         for $r>0$ small, where $B_r(x):=\{y\in\mathbb R^2 : |x-y|<r\}$. In particular, 
         $$  \lim\limits_{p\rightarrow\infty}\Vert u_p\Vert_{L^{\infty}(\Omega)}=\sqrt{e}.$$
         \item [(2)] as $p\rightarrow\infty$,
         \begin{align}
        p u_p\rightarrow 8\pi \sqrt{e} \sum_{i=1}^N  G(x_i,\cdot)\quad\text{ in }\,\,C^{2}_{loc}(\overline{\Omega}\setminus\mathcal{S}).
    \end{align}
         \item [(3)] the concentration points $\{x_i\}_{i=1}^N$ satisfy
    \begin{align}
         \nabla_x  R(x_i) -2\sum_{l\neq i }  \nabla_x G(x_i,x_l)=0,\quad\forall i.
    \end{align}
    \item [(4)] the energy quantization:
     \begin{align}
        \lim_{p\rightarrow\infty}p\int_{\Omega}|\nabla u_p|^2 dx =8\pi N e.
     \end{align}
     \item [(5)] Let $x_{p,i}\in \overline{B_r(x_i)}$ be given by
         \begin{align*}
         u_p(x_{p,i}):=\max_{x\in B_r(x_i)}u_p(x),
     \end{align*}
     and define
       \begin{align}\label{eq-1.12}
         v_{p,i}(z):=\frac{p}{u_p(x_{p,i})}\left(u_p(x_{p,i}+\mu_{p,i} z)-u_p(x_{p,i})\right),
     \end{align}
    where $\mu_{p,i}^{-2}:=pu_p(x_{p,i})^{p-1}$. Then $v_{p,i}\rightarrow U$ in $C_{loc}^2(\mathbb{R}^2)$, where $U(z)=-2\log(1+\frac{|z|^2}{8})$ is an entire solution of  the Liouville equation
     \begin{align}\label{eq-chen-li}
   -\Delta U=e^U \quad\text{in}\,\,\mathbb{R}^2,\quad
             \int_{\mathbb{R}^2} e^U dz<+\infty.
       \end{align}
     \end{itemize}
\end{Theorem}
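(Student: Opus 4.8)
The plan is to run the now-standard blow-up analysis for the planar Lane--Emden problem: detect the concentration points one at a time, classify the local profile through the Liouville equation \eqref{eq-chen-li}, and then bootstrap the local picture to the sharp quantized identities. Testing \eqref{eq-1.1***} with $u_p$ gives $p\int_\Omega u_p^{p+1}\,dx=p\int_\Omega|\nabla u_p|^2\,dx\le C$ by \eqref{eq-1-2*****}, so the measures $pu_p^{p}\,dx$ are bounded in $L^1(\Omega)$; combining this with the Green representation $pu_p(x)=\int_\Omega G(x,y)\,pu_p(y)^p\,dy$, the bound \eqref{eq-2.1}, and a Moser--Trudinger inequality, one shows that $s_p:=\|u_p\|_{L^\infty(\Omega)}$ stays bounded and bounded away from $0$. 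Taking $x_{p,1}$ to be a global maximum point and $v_{p,1}$ as in \eqref{eq-1.12} (so $\mu_{p,1}^{-2}=p\,s_p^{p-1}\to\infty$), a direct computation yields $-\Delta v_{p,1}=(1+v_{p,1}/p)_{+}^{p}$ on $B_R(0)$, with $v_{p,1}\le v_{p,1}(0)=0$ and $\int_{B_R(0)}(1+v_{p,1}/p)_{+}^{p}\,dz=s_p^{-1}\int_{B_{R\mu_{p,1}}(x_{p,1})}pu_p^p\,dx$ bounded; elliptic estimates then give $v_{p,1}\to U$ in $C^2_{loc}(\mathbb R^2)$ with $U$ solving \eqref{eq-chen-li}, and since $U(0)=\max U=0$ the Chen--Li classification forces $U(z)=-2\log(1+|z|^2/8)$ and $\int_{\mathbb R^2}e^U=8\pi$. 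Changing variables $x=x_{p,1}+\mu_{p,1}z$ shows that the ``local mass'' $p\int_{B_{R\mu_{p,1}}(x_{p,1})}u_p^p\,dx\to 8\pi s_\infty$ and the ``local energy'' $p\int_{B_{R\mu_{p,1}}(x_{p,1})}u_p^{p+1}\,dx\to 8\pi s_\infty^{2}$ as $R\to\infty$, where $s_\infty:=\lim_p s_p$.

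Next I would iterate: whenever concentration persists away from the bubbles already detected, I select a new family of local maxima with a separated scale and repeat the previous step. Since each bubble carries at least a fixed positive share of the bounded energy $p\int_\Omega u_p^{p+1}\,dx$, the procedure terminates after finitely many steps, producing $\mathcal S=\{x_1,\dots,x_N\}$, points $x_{p,i}\to x_i$, and scales $\mu_{p,i}\to0$ with mutually disjoint bubbles outside of which no concentration occurs. A neck/Harnack argument shows that no energy is lost between the bubbles, so $p\int_\Omega u_p^{p+1}\,dx=\sum_i p\int_{B_\delta(x_i)}u_p^{p+1}\,dx+o(1)$; and feeding $\int_{B_\delta(x_i)}u_p^p\,dx=O(1/p)$ into the Green representation yields $u_p\to0$ in $C^2_{loc}(\overline\Omega\setminus\mathcal S)$ and $pu_p\to\sum_{i=1}^N m_i G(x_i,\cdot)$ in $C^2_{loc}(\overline\Omega\setminus\mathcal S)$, with $m_i:=\lim_p p\int_{B_\delta(x_i)}u_p^p\,dx$.

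The crux is to pin down the normalization $s_\infty=\sqrt e$, equivalently $m_i=8\pi\sqrt e$. For this one proves a refined inner--outer matching. Writing $s_{p,i}:=u_p(x_{p,i})$ and using the asymptotics $U(z)=-4\log|z|+2\log8+o(1)$ as $|z|\to\infty$, the inner expansion reads $u_p(x_{p,i}+\mu_{p,i}z)\approx s_{p,i}\bigl(1+\tfrac1p U(z)\bigr)$, while the outer representation reads $u_p(x)\approx\tfrac{m_i}{p}G(x_i,x)+\cdots$; comparing the two in the overlap region $1\ll|z|$, $\mu_{p,i}|z|\ll1$ and using $\log\mu_{p,i}=-\tfrac12\bigl(\log p+(p-1)\log s_{p,i}\bigr)$, the coefficient of $\log|z|$ forces $m_i=8\pi\,s_{\infty,i}$ with $s_{\infty,i}:=\lim_p s_{p,i}$, whereas the constant term yields, to leading order, $p=2p\log s_{\infty,i}+o(p)$, hence $\log s_{\infty,i}=\tfrac12$ and $s_{\infty,i}=\sqrt e$ for every $i$. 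In particular $s_\infty=\sqrt e$, which together with $u_p\to0$ off $\mathcal S$ gives (1), and $m_i=8\pi\sqrt e$ gives (2); the change-of-variables identity of the first step then gives $p\int_{B_\delta(x_i)}u_p^{p+1}\,dx\to8\pi e$, and summing over $i$ yields the energy quantization (4). The local profile (5) has already been obtained, the argument at $x_{p,i}$ being identical to that at $x_{p,1}$.

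Finally, for the location condition (3), apply the local Pohozaev identity for $-\Delta u_p=u_p^p$ on $B_\delta(x_i)$, namely $\int_{\partial B_\delta(x_i)}\bigl[\tfrac12|\nabla u_p|^2\nu_j-\partial_\nu u_p\,\partial_j u_p\bigr]\,d\sigma=\tfrac1{p+1}\int_{\partial B_\delta(x_i)}u_p^{p+1}\nu_j\,d\sigma$, multiply by $p^2$, and pass to the limit: the right-hand side vanishes because $u_p=O(1/p)$ on $\partial B_\delta(x_i)$, and the left-hand side converges to the analogous boundary integral for $8\pi\sqrt e\sum_l G(x_l,\cdot)$. Decomposing $G$ via \eqref{eq-1-6^*}, the singular radial part contributes nothing, and the remaining cross terms, evaluated as $\delta\to0$, give precisely $\nabla_x R(x_i)-2\sum_{l\neq i}\nabla_x G(x_i,x_l)=0$. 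The main obstacle is the sharp normalization of the third paragraph: it requires control of $v_{p,i}$ beyond leading order --- uniform decay estimates keeping $v_{p,i}$ close to the Liouville profile $U$ even for $|z|$ as large as a power of $\mu_{p,i}^{-1}$ --- together with a careful proof that the remainders in the Green representation are of genuinely lower order than the logarithmic terms being matched; ruling out neck energy and boundary concentration in the exhaustion step is the other delicate ingredient.
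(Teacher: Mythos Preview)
This theorem is not proved in the paper: it is explicitly presented as a summary of known results from \cite{A-G-04,D-G-I-P-18,D-I-P-17,D-I-P-19,T-19}, so there is no ``paper's own proof'' to compare against. That said, your sketch is the standard route taken in those references, and it also mirrors closely what the paper does in Sections~\ref{section-3}--\ref{section-5} for the system \eqref{eq-1.1} (which specializes to the Lane--Emden case). A couple of points of emphasis where the paper's arguments are sharper than your sketch: (i) the identification $\sigma_i=8\pi$ for the local mass is obtained not by a neck/Harnack argument alone but by combining the lower bound from the bubble with an \emph{upper} bound coming from the local Pohozaev identity (cf.\ Lemma~\ref{Lemma-4-1++} and Corollary~\ref{cor-4-2}), which is what actually rules out multi-bubble towers at a single concentration point; (ii) the normalization $s_\infty=\sqrt e$ is derived not by informal inner--outer matching but by evaluating $u_p(x_{p,i})$ through the Green representation and using a uniform decay estimate of the form $v_{p,i}(z)\le -(4-\eta)\log|z|+C_\eta$ (cf.\ Lemma~\ref{lemma-4-4} and Remark~\ref{remark-4-5}) to justify dominated convergence, which yields $\tfrac{\log\mu_{p,i}}{p}\to -\tfrac14$ directly (cf.\ Lemma~\ref{Lemma-4-6}). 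Your last paragraph correctly identifies these decay estimates as the main technical obstacle; the Pohozaev computation for (3) and the exclusion of boundary blow-up are exactly as in Lemma~\ref{lemma-3-134} and Proposition~\ref{prp-4-177}.
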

 Theorem \ref{th-AA} gives a complete description of the asymptotic behavior of positive solutions of \eqref{eq-1.1***} as $p\to+\infty$. Recently in remarkable works \cite{DGIP, GILY}, Theorem \ref{th-AA} was applied to prove the uniqueness of positive solutions of \eqref{eq-1.1***} in convex domains for $p$ large.

The purpose of this paper is to generalize Theorem \ref{th-AA} to the weakly coupled nonlinear Schr\"odinger system \eqref{eq-1.1}  satisfying
\begin{align}\label{eq-1-2}
    p\int_{\Omega}|\nabla u_{1,p}|^2+|\nabla u_{2,p}|^2 dx \leq C, \quad \text{ for large }\; p. 
\end{align}
Our first result shows that this condition holds automatically for all positive solutions for star-shaped domains.
\begin{theorem}\label{prop-5-2}
    Let $p_0 > 1$. Then there exists a constant $C$, depending only on $p_0$ and $\Omega$, such that for any $p\geq p_0$ and
any positive solution $(u_{1,p},u_{2,p})$ of \eqref{eq-1.1}, there holds
$$\|u_{1,p}\|_{L^\infty(\Omega)}+\|u_{2,p}\|_{L^\infty(\Omega)}\leq C.$$
If in addition that $\Omega$ is star-shaped, then there exists a constant $C_1$, depending only on $p_0$ and $\Omega$, such that for any $p\geq p_0$ and
any positive solution $(u_{1,p},u_{2,p})$, there holds
\begin{align*}
  p\int_{\Omega} |\nabla u_{1,p}|^2+|\nabla u_{2,p}|^2 dx\leq C_1.
\end{align*}
\end{theorem}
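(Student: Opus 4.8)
The statement has two parts: the uniform $L^\infty$ bound, valid on any smooth bounded $\Omega$, and the energy bound, which I would deduce from it when $\Omega$ is star-shaped. For the $L^\infty$ bound the plan is a blow-up argument by contradiction, parallel to the scalar Lane--Emden case treated by Kamburov--Sirakov \cite{K-S-18}. If the conclusion failed there would be $p_n\geq p_0$ and positive solutions $(u_{1,n},u_{2,n})$ of \eqref{eq-1.1} with $M_n:=\|u_{1,n}\|_{L^\infty(\Omega)}+\|u_{2,n}\|_{L^\infty(\Omega)}\to\infty$; say $u_{1,n}(x_n)=\max\{\|u_{1,n}\|_{L^\infty},\|u_{2,n}\|_{L^\infty}\}\to\infty$. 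Rescaling around $x_n$ at the scale $\mu_n^{-2}=p_nu_{1,n}(x_n)^{p_n-1}$ as in \eqref{eq-1.12}, one obtains rescaled profiles $v_{i,n}$ whose right-hand side stays bounded in $C^0_{loc}$, because the coupling term is subordinate to the pure power via the elementary inequality
\begin{equation*}
u_i^{\frac{p-1}{2}}u_j^{\frac{p+1}{2}}\leq \max\{u_1,u_2\}^{p}.
\end{equation*}
One then distinguishes the regimes $p_n$ bounded (a subcritical regime, where classical blow-up a priori estimates for subcritical problems already rule out $M_n\to\infty$) and $p_n\to\infty$ (the profiles converge to an entire solution of the Liouville equation \eqref{eq-chen-li}), and, in the second regime, whether $u_{2,n}(x_n)/u_{1,n}(x_n)$ stays away from $0$ (the two components concentrate jointly at $x_n$) or tends to $0$ (near $x_n$ the equation for $u_{1,n}$ is the pure-power equation up to an exponentially small error). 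In each case the mass concentrated near $x_n$ is bounded below by a positive quantity and is carried away from $\partial\Omega$; combined with the Green representation \eqref{eq-2.1} off the blow-up set, this contradicts $M_n\to\infty$ exactly as in the scalar case.

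For the energy bound on a star-shaped $\Omega$ I would use two integral identities, the key structural point being that the nonlinearity of \eqref{eq-1.1} is a gradient: setting
\begin{equation*}
\Phi_p(s,t):=\frac{1}{p+1}\Bigl(\mu_1 s^{p+1}+\mu_2 t^{p+1}+2\beta(st)^{\frac{p+1}{2}}\Bigr),
\end{equation*}
one has $\partial_s\Phi_p=\mu_1 s^p+\beta s^{\frac{p-1}{2}}t^{\frac{p+1}{2}}$ and the symmetric identity for $\partial_t\Phi_p$, so that $-\Delta u_{i,p}=\partial_{u_i}\Phi_p(u_{1,p},u_{2,p})$. Testing the $i$-th equation with $u_{i,p}$ and summing gives the energy identity
\begin{equation*}
\int_\Omega\bigl(|\nabla u_{1,p}|^2+|\nabla u_{2,p}|^2\bigr)\,dx=(p+1)\int_\Omega\Phi_p(u_{1,p},u_{2,p})\,dx,
\end{equation*}
while multiplying the $i$-th equation by $x\cdot\nabla u_{i,p}$, summing, and integrating by parts (using $u_{i,p}=0$ and $\Phi_p(0,0)=0$ on $\partial\Omega$, and that the dimension is two) gives the Pohozaev identity
\begin{equation*}
2\int_\Omega\Phi_p(u_{1,p},u_{2,p})\,dx=\frac12\int_{\partial\Omega}\bigl(|\partial_\nu u_{1,p}|^2+|\partial_\nu u_{2,p}|^2\bigr)(x\cdot\nu)\,d\sigma.
\end{equation*}
Taking $\Omega$ star-shaped about the origin so that $0\leq x\cdot\nu\leq\rho_\Omega:=\max_{\partial\Omega}|x|$ on $\partial\Omega$, these combine to
\begin{equation*}
\int_\Omega\bigl(|\nabla u_{1,p}|^2+|\nabla u_{2,p}|^2\bigr)\,dx\leq\frac{(p+1)\rho_\Omega}{4}\int_{\partial\Omega}\bigl(|\partial_\nu u_{1,p}|^2+|\partial_\nu u_{2,p}|^2\bigr)\,d\sigma,
\end{equation*}
so the energy bound reduces to the boundary estimate $\int_{\partial\Omega}\bigl(|\partial_\nu u_{1,p}|^2+|\partial_\nu u_{2,p}|^2\bigr)\,d\sigma\leq Cp^{-2}$.

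To prove this boundary estimate I would feed the $L^\infty$ bound of the first part into a Brezis--Merle concentration-compactness analysis (again absorbing the coupling through $u_i^{\frac{p-1}{2}}u_j^{\frac{p+1}{2}}\leq\max\{u_1,u_2\}^{p}$) to conclude that $u_{1,p},u_{2,p}$ concentrate at most at finitely many interior points of $\Omega$, with the uniform mass bound $p\int_\Omega(u_{1,p}^p+u_{2,p}^p)\,dx\leq C$. On a fixed collar $\Omega_\delta=\{x\in\Omega:\ \mathrm{dist}(x,\partial\Omega)<\delta\}$ disjoint from the concentration set, the Green representation $u_{i,p}(x)=\int_\Omega G(x,y)\bigl(\mu_i u_{i,p}^p+\beta u_{i,p}^{\frac{p-1}{2}}u_{j,p}^{\frac{p+1}{2}}\bigr)\,dy$ together with \eqref{eq-2.1} and the mass bound then gives $\|u_{1,p}\|_{L^\infty(\Omega_\delta)}+\|u_{2,p}\|_{L^\infty(\Omega_\delta)}\leq C/p$; since $-\Delta u_{i,p}$ is therefore pointwise of size $O\bigl((C/p)^{p-1}\bigr)$ on $\Omega_\delta$, boundary elliptic estimates yield $\|\partial_\nu u_{i,p}\|_{L^\infty(\partial\Omega)}\leq C/p$, hence the claimed $Cp^{-2}$ bound. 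The main obstacle is exactly this concentration analysis: the two identities of the previous paragraph are routine, but one must rule out, for the coupled system, the various blow-up configurations — in particular those in which the two components concentrate at a common point with mismatched heights, or in which concentration approaches $\partial\Omega$ — and show the concentrated mass is quantized and carried in the interior. The repeated reduction via $u_i^{\frac{p-1}{2}}u_j^{\frac{p+1}{2}}\leq\max\{u_1,u_2\}^{p}$ is what keeps the coupling term from altering the scalar picture throughout.
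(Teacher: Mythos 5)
Your reduction of the energy bound to the boundary estimate is where the argument breaks: it is circular. You propose to bound the Pohozaev boundary term from above by $Cp^{-2}$, and to obtain this you invoke a Brezis--Merle concentration analysis that is supposed to deliver the mass bound $p\int_\Omega(u_{1,p}^p+u_{2,p}^p)\,dx\le C$ and the collar estimate $\|u_{k,p}\|_{L^\infty(\Omega_\delta)}\le C/p$. But, given the $L^\infty$ bound of the first part, that mass bound is essentially equivalent to the energy bound you are trying to prove (testing the equations with $u_{k,p}$ converts one into the other), and the concentration machinery you appeal to (finitely many interior blow-up points, $pu_{k,p}$ bounded off the blow-up set) is developed in the paper only under the standing assumption \eqref{eq-1-2}. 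With only $\|u_{k,p}\|_{L^\infty}\le C$ in hand, the measures $p\bigl(\mu_k u_{k,p}^p+\beta u_{k,p}^{\frac{p-1}{2}}u_{3-k,p}^{\frac{p+1}{2}}\bigr)dx$ have no a priori bounded total mass, so Brezis--Merle does not even apply, and nothing excludes diffuse or boundary accumulation of mass tending to infinity; note also that star-shapedness, which must enter essentially, is used by you only through the sign $x\cdot\nu\ge 0$. The paper closes the loop in the opposite direction: it bounds the Pohozaev boundary term from \emph{below}, using Sirakov's estimate (Theorem \ref{thm-Sirakov}) $|\nabla u_{k,p}|\ge c\|u_{k,p}\|_{L^1(\Omega)}$ on $\partial\Omega$ together with $\int_\Omega u_{k,p}^p\le C\int_\Omega u_{k,p}$ from the moving-plane/eigenfunction Lemma \ref{lemma2-1}; combined with \eqref{eq-5-35} and the $L^\infty$ bound this yields $\bigl(\sum_k\int_\Omega u_{k,p}^p\bigr)^2\le \frac{C}{p}\sum_k\int_\Omega u_{k,p}^p$, hence \eqref{eq-5-43}, and the energy bound follows immediately. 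To repair your route you would need an independent proof of $p\int_\Omega u_{k,p}^p\,dx\le C$; as written, you assume the conclusion.

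The first part also does not close as sketched, and it is not the Kamburov--Sirakov argument (which is a direct quantitative Green-representation estimate, not a blow-up contradiction). When $p_n\to\infty$ the rescaled limit is a perfectly admissible Liouville or Liouville-system profile with finite mass, so the assertion that "the mass concentrated near $x_n$ is bounded below \dots which contradicts $M_n\to\infty$" does not follow: under the energy bound the same local mass lower bound holds while $\|u_p\|_{L^\infty}\to\sqrt e$, so local concentration by itself is compatible with any behavior of $M_n$. The contradiction requires a quantitative link between $\log M_n$ and the Green representation at the maximum point, which is exactly what the paper's proof of Lemma \ref{lm-5-4} supplies via the inhomogeneous Harnack inequality (giving $(1-v_{1,p})^p\ge e^{-C}$ on $B_{\delta/\sqrt p}$) and the lower bound of the logarithmically weighted integral. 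Likewise, your claim that the blow-up point stays away from $\partial\Omega$ is asserted, not proved; in the paper this is the moving-planes/Kelvin-transform step of Lemma \ref{lemma2-1}, and without it the regular part of the Green function can cancel the $\log(1/\varepsilon_n)$ gain and both the convergence to an entire profile and any mass lower bound may fail.
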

As mentioned earlier, this result was proved by  Kamburov-Sirakov for the Lane-Emden equation \eqref{eq-1.1***} and strictly star-shaped domains in \cite{K-S-18}, and for the Lane-Emden system and star-shaped domains in \cite{K-S-23}. Here we generalize their result to the weakly coupled system \eqref{eq-1.1}. We will follow the arguments in \cite{K-S-18, K-S-23} to prove Theorem \ref{prop-5-2} in Section 2.

Our next results generalize Theorem \ref{th-AA}, which is the first step for us to consider further the uniqueness problem of positive solutions for system \eqref{eq-1.1} in convex domains in the future.
\begin{theorem}\label{Th-1LL}
    Let $(u_{1,p},u_{2,p})$ be a family of positive solutions to system \eqref{eq-1.1} satisfying \eqref{eq-1-2}.
Then there exist $N\in \mathbb{N}_{\geq 1}$ and a finite set of concentration points $\mathcal{S}=\{x_1,\ldots,x_N\}\subset\Omega$, such that, up to a subsequence, the following properties hold:
\begin{itemize}
    \item [(1)] the local $L^{\infty}$-norm quantization: there exist $m_{k,i}\in\{0,1\}$ for $k=1,2$ and $i=1,\cdots,N$ such that 
    \begin{align*}
     \lim\limits_{p\rightarrow\infty}\Vert u_{1,p}\Vert_{L^{\infty}(B_r(x_i))}=m_{1,i}\sqrt{e},&\quad\lim\limits_{p\rightarrow\infty}\Vert u_{2,p}\Vert_{L^{\infty}(B_r(x_i))}=m_{2,i}\sqrt{e},
    \end{align*}
    for $r>0$ small.
    Moreover, $\max\{m_{1,i}, m_{2,i}\}=1$ for all $i$ and
    \begin{align*}
    \sum_{i=1}^N m_{1,i}\geq 1,&\quad\sum_{i=1}^N m_{2,i}\geq 1.
    \end{align*}
    As a consequence,
    \begin{align*}
        \lim\limits_{p\rightarrow\infty}\Vert u_{1,p}\Vert_{L^\infty(\Omega)}=\lim\limits_{p\rightarrow\infty}\Vert u_{2,p}\Vert_{L^\infty(\Omega)}=\sqrt{e}.
    \end{align*}     
    \item [(2)] Define 
    \begin{align*}
        \mathcal{S}_k:=\{x_i\in\mathcal{S}: m_{k,i}=1\}\neq \emptyset,\quad \text{for }k=1,2.
    \end{align*}
   Then $\mathcal{S}_1\cup\mathcal{S}_2=\mathcal S$ and as $p\rightarrow\infty$, 
    \begin{align*}
        p u_{1,p}\rightarrow 8\pi \sqrt{e} \sum_{x_i\in\s_1} G(x_i,\cdot)\quad\text{ in }\,\,C^{2}_{loc}(\overline{\Omega}\setminus\mathcal{S}_1),\\
        p u_{2,p}\rightarrow 8\pi \sqrt{e} \sum_{x_i\in\s_2} G(x_i,\cdot)\quad\text{ in }\,\,C^{2}_{loc}(\overline{\Omega}\setminus\mathcal{S}_2).
    \end{align*}

    \item [(3)]  the concentration points $\{x_i\}_{i=1}^N$ satisfy
    \begin{align*}
        \sum_{k=1}^2 m_{k,i}\left(m_{k,i} \nabla_x  R(x_i) -2\sum_{l\neq i, l=1}^N m_{k,l} \nabla_x G(x_i,x_l)\right)=0,\quad\forall i.
    \end{align*}
    \item [(4)]
     the energy quantization:
     \begin{align*}
        \lim_{p\rightarrow\infty}p\int_{\Omega}|\nabla u_{1,p}|^2+|\nabla u_{2,p}|^2 dx =8\pi e\sum_{i=1}^N (m_{1,i}+m_{2,i}).
     \end{align*}
\end{itemize}
\end{theorem}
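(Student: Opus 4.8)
The plan is to run, for each component $u_{k,p}$ separately, the blow-up analysis developed for the scalar Lane--Emden equation \eqref{eq-1.1***} in \cite{A-G-04,D-I-P-17,D-I-P-19,T-19} (the proof of Theorem~\ref{th-AA}), while keeping the coupling terms under control. Write $f_{k,p}:=-\Delta u_{k,p}=\mu_k u_{k,p}^{p}+\beta u_{k,p}^{(p-1)/2}u_{3-k,p}^{(p+1)/2}\ge 0$. Theorem~\ref{prop-5-2} gives $\|u_{1,p}\|_{L^\infty}+\|u_{2,p}\|_{L^\infty}\le C$, and testing the equations with $u_{k,p}$ yields $\int_\Omega|\nabla u_{1,p}|^2+|\nabla u_{2,p}|^2=\mu_1\int_\Omega u_{1,p}^{p+1}+\mu_2\int_\Omega u_{2,p}^{p+1}+2\beta\int_\Omega(u_{1,p}u_{2,p})^{(p+1)/2}$, so \eqref{eq-1-2} forces $p\int_\Omega u_{k,p}^{p+1}\le C$ and $p\int_\Omega(u_{1,p}u_{2,p})^{(p+1)/2}\le C$, and H\"older then gives $p\int_\Omega f_{k,p}\le C$, i.e. $pf_{k,p}\,dx$ has bounded total mass. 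Set $\mathcal S_k:=\{x\in\overline\Omega:\ \limsup_{p\to\infty}\|u_{k,p}\|_{L^\infty(B_r(x))}\ge 1$ for every $r>0\}$ and $\mathcal S:=\mathcal S_1\cup\mathcal S_2$.

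Next I would carry out the bubbling analysis at each $x_i\in\mathcal S$. Pick a local maximum point $x_{p,i}\to x_i$ of a component concentrating there, say $u_{1,p}$ (so $x_i\in\mathcal S_1$), set $\mu_{p,i}^{-2}:=p\big(\mu_1 u_{1,p}(x_{p,i})^{p-1}+\beta u_{1,p}(x_{p,i})^{(p-3)/2}u_{2,p}(x_{p,i})^{(p+1)/2}\big)$ and rescale $v_{k,p}(z):=\tfrac{p}{u_{k,p}(x_{p,i})}\big(u_{k,p}(x_{p,i}+\mu_{p,i}z)-u_{k,p}(x_{p,i})\big)$; the rescaled equations are $-\Delta v_{k,p}=A_{k,p}(1+\tfrac{v_{k,p}}{p})^{p}+B_{k,p}(1+\tfrac{v_{k,p}}{p})^{(p-1)/2}(1+\tfrac{v_{3-k,p}}{p})^{(p+1)/2}$ with $A_{1,p}+B_{1,p}=1$ and $B_{1,p}/A_{1,p}$ comparable, up to a constant, to $(u_{2,p}(x_{p,i})/u_{1,p}(x_{p,i}))^{(p+1)/2}$. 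If $x_i\notin\mathcal S_2$, then away from $\mathcal S_2$ the Green representation together with the concentration of $pf_{2,p}$ gives $u_{2,p}=O(1/p)$ on $B_r(x_i)$, hence $u_{2,p}(x_{p,i})^{(p+1)/2}$ is super-exponentially small, the coupling term drops out, $A_{1,p}\to 1$, and $v_{1,p}\to U$ in $C^2_{loc}(\mathbb R^2)$ with $U$ as in \eqref{eq-chen-li}, exactly as in Theorem~\ref{th-AA}(5); at the same time $p\int_{B_r(x_i)}f_{2,p}\to0$. If instead $x_i\in\mathcal S_1\cap\mathcal S_2$, then $u_{1,p}(x_{p,i})$ and $u_{2,p}(x_{p,i})$ stay bounded away from $0$ and $\infty$ and, using the same scale $\mu_{p,i}$ for both, $(v_{1,p},v_{2,p})\to(v_1,v_2)$ in $C^2_{loc}(\mathbb R^2)$, a solution of the cooperative system $-\Delta v_1=A_1e^{v_1}+Be^{(v_1+v_2)/2}$, $-\Delta v_2=A_2e^{v_2}+Be^{(v_1+v_2)/2}$ with $A_1,A_2,B>0$, $A_1+B=1$, and finite masses $m_k:=\int_{\mathbb R^2}(-\Delta v_k)\in(0,\infty)$. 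In all cases $p\int_{B_r(x_i)}f_{k,p}=u_{k,p}(x_{p,i})\int(-\Delta v_{k,p})\to\sqrt e\,m_k$, with $m_k:=8\pi$ (resp. $m_k:=0$) in the decoupled situations.

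The key point is that $m_k\in\{0,8\pi\}$. For the coupled limit this comes from a Pohozaev argument: the system is a gradient system with potential $F:=A_1e^{v_1}+A_2e^{v_2}+2Be^{(v_1+v_2)/2}$, so $\int_{\mathbb R^2}F=m_1+m_2$; summing the two scalar Pohozaev identities on $B_R$ and using $\sum_k(x\cdot\nabla v_k)(-\Delta v_k)=x\cdot\nabla F$ gives, as $R\to\infty$, $m_1^2+m_2^2=8\pi(m_1+m_2)$. Combined with the a priori bounds $0<m_k\le 8\pi$ coming from the blow-up construction — equivalently, with the classification of the finite-mass solutions of the limiting cooperative system, which one shows satisfy $v_1-v_2\equiv\mathrm{const}$, so that it reduces to \eqref{eq-chen-li} — this forces $(m_1,m_2)=(8\pi,8\pi)$, the unique point of $\{m_1^2+m_2^2=8\pi(m_1+m_2)\}$ with $0<m_1,m_2\le8\pi$. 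Therefore the ``local mass'' contributed by $x_i$ is one of $(8\pi,8\pi),(8\pi,0),(0,8\pi)$; in particular $m_{k,i}:=\lim_p\|u_{k,p}\|_{L^\infty(B_r(x_i))}/\sqrt e\in\{0,1\}$ with $\max\{m_{1,i},m_{2,i}\}=1$, and $\mathcal S$ is finite because $p\int_\Omega f_{k,p}\le C$ while every point of $\mathcal S_k$ carries mass $8\pi\sqrt e$. I expect this coupled bubbling step — passing to the limiting system (controlling, via the coupling, the relative scales and centres of the two bubbles) and, above all, establishing $m_k\le 8\pi$, equivalently the classification of the limiting system — together with the sharp exclusion of partial concentration ($m_{k,i}\notin\{0,1\}$), to be the main obstacle, being already the technical heart of the scalar theory.

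It remains to assemble the global conclusions, as in the scalar case. Away from $\mathcal S_k$ one obtains $u_{k,p}\to0$ in $C^2_{loc}(\overline\Omega\setminus\mathcal S_k)$ and $pf_{k,p}\rightharpoonup 8\pi\sqrt e\sum_{x_i\in\mathcal S_k}\delta_{x_i}$ with no residual absolutely continuous part, so $pu_{k,p}(x)=\int_\Omega G(x,y)\,pf_{k,p}(y)\,dy\to8\pi\sqrt e\sum_{x_i\in\mathcal S_k}G(x_i,x)$, which (upgraded to $C^2_{loc}$ by elliptic estimates) gives (2); since $\mathcal S_1\cup\mathcal S_2=\mathcal S$ by definition, this also identifies $\mathcal S_k$. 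That $\mathcal S_k\ne\emptyset$ — hence $\sum_i m_{k,i}\ge1$ — follows by contradiction: if $\|u_{k,p}\|_{L^\infty(\Omega)}\le 1-\delta$ for large $p$, then the Green representation with $G\ge0$, together with $\|u_{3-k,p}^{(p+1)/2}\|_{L^q(\Omega)}=O(p^{-1/2})$ for $q\in(1,2)$, gives $\|u_{k,p}\|_{L^\infty}\le C\mu_k\|u_{k,p}\|_{L^\infty}^{p}+o(1)\|u_{k,p}\|_{L^\infty}^{(p-1)/2}$, i.e. $1\le C\mu_k\|u_{k,p}\|_{L^\infty}^{p-1}+o(1)\|u_{k,p}\|_{L^\infty}^{(p-3)/2}\to0$, a contradiction. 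Part (3) follows from the localised Pohozaev identity for $\int_{B_\rho(x_i)}\big[(x\cdot\nabla u_{1,p})(-\Delta u_{1,p})+(x\cdot\nabla u_{2,p})(-\Delta u_{2,p})\big]$ combined with the expansion of $8\pi\sqrt e\sum_{x_l\in\mathcal S_k}G(x_l,\cdot)$ near $x_i$ in terms of $R$ and $G$, exactly as in the scalar case. Finally, (4) follows from the energy identity above: near each bubble, each concentrating component contributes $8\pi e$ (from $\int_{\mathbb R^2}e^U=8\pi$ and $u_{k,p}(x_{p,i})^2\to e$), hence $8\pi e(m_{1,i}+m_{2,i})$ at $x_i$, while $\Omega\setminus\mathcal S$ contributes $o(1)$; summing over $i$ gives the stated value.
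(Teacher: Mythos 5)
Your overall skeleton (bounded mass $p\int_\Omega f_{k,p}\le C$, local bubbling, Green representation away from the concentration set, local Pohozaev identities for the location and the energy) matches the paper's strategy, but the central step is assumed rather than proved, and you yourself flag it as "the main obstacle" without resolving it. Concretely: when $x_i\in\mathcal S_1\cap\mathcal S_2$ you assert that $u_{1,p}(x_{p,i})$ \emph{and} $u_{2,p}(x_{p,i})$ stay bounded away from $0$, and that rescaling both components at the single scale $\mu_{p,i}$ around the single centre $x_{p,i}$ produces a nondegenerate limit of the coupled Liouville-type system. Nothing forces this: both components can concentrate at the same point $x_i$ while the second one degenerates at the first one's blow-up scale (its rescaling tends to $-\infty$ locally), yet still carries full local mass. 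The paper's analysis is organized precisely around this possibility: at each concentration point the rescaled pair is shown (Lemma \ref{lemma-3-5^*}, Proposition \ref{prop-3-6}, Lemma \ref{lemma-3-10}) to be of type $\mathcal A$ (joint limit, classified via Theorem \ref{th-3.1}) or of type $\mathcal B$/$\mathcal C$ (one Liouville bubble, the other profile $\to-\infty$), and in the latter case the second component's local mass $\sigma_{3-k,i}$ is \emph{not} determined by the blow-up profile; Section \ref{section-4-2} shows $\sigma_{3-k,i}\in\{0,8\pi\}$ by combining the local Pohozaev relation $\gamma_{1,i}^2+\gamma_{2,i}^2=8\pi\lim_p p\int(\mu_1u_{1,p}^{p+1}+\mu_2u_{2,p}^{p+1}+2\beta(u_{1,p}u_{2,p})^{(p+1)/2})$ (Lemma \ref{Lemma-4-1++}) with the Ren--Wei inequality (Lemma \ref{lm2.4}) applied to a cutoff of $u_{3-k,p}$ to force $p\int u_{3-k,p}^{p+1}\ge 8\pi e$ when $\sigma_{3-k,i}\neq0$, and with a Brezis--Merle-type estimate (\cite{CL-JFA}) to get $\|pu_{3-k,p}\|_{L^\infty}\le C$, hence $\|u_{3-k,p}\|_{L^\infty(B_{r_0}(x_i))}\to0$, when $\sigma_{3-k,i}=0$. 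Your proposal contains no substitute for any of these steps, so the exclusion of intermediate local masses and of partial $L^\infty$-limits ($m_{k,i}\notin\{0,1\}$) — which is exactly what parts (1), (2) and (4) of the theorem claim — is not established.

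Two further points are skipped. First, the value $\sqrt e$ itself: you quote $u_{k,p}(x_{p,i})^2\to e$ from the scalar theory, but for the system this requires redoing the Green-representation computation of $\log\varepsilon_{p,i}^*/p\to-1/4$ with the coupled nonlinearity and the decay estimate of Lemma \ref{lemma-4-4} (Lemmas \ref{Lemma-4-6} and \ref{lemma-4-10=}); in the type $\mathcal B$/$\mathcal C$ case with $\sigma_{3-k,i}=8\pi$ the statement $\|u_{3-k,p}\|_{L^\infty(B_r(x_i))}\to\sqrt e$ is obtained only indirectly (Lemma \ref{lemma-4-12}), not from a bubble profile. Second, you never rule out boundary concentration ($\mathcal S\subset\Omega$, paper's Lemma \ref{lemma-3-134}), and the finiteness of $\mathcal S$ needs the bubble-selection argument with the properties $(\mathcal P_1^n)$--$(\mathcal P_3^n)$ (Proposition \ref{prop-3-6}) rather than the one-line "each point carries mass $8\pi\sqrt e$", which presupposes the local analysis at every point of an a priori infinite set. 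The Pohozaev-on-the-limit-system identity $m_1^2+m_2^2=8\pi(m_1+m_2)$ you propose is a reasonable alternative to invoking the classification of \eqref{l-sys} for the type $\mathcal A$ case (given $m_k\le 8\pi$, which itself needs proof), but it does not touch the type $\mathcal B$/$\mathcal C$ difficulty above, which is where the real work of the paper lies.
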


To describe the local profile of $(u_{1,p}, u_{2,p})$ near a concentration point, we define
\[M_p:=\max\{u_{1,p}, u_{2,p}\}.\]
For small $r>0$ and each $i\in [1,N]$, we let $y_{p,i}\in\overline{B_{r}(x_i)}$ such that 
\begin{align*}
        M_p(y_{p,i}) :=\max_{x\in B_r(x_{i})}M_p(x).
    \end{align*}
We will prove in Section \ref{section-3} that $y_{p,i}\to x_i$ as $p\to\infty$. Define
 $\varepsilon_{p,i}^{-2}:=p M_p(y_{p,i})^{p-1}$
 and similarly as Theorem  \ref{th-AA}, we consider the scaling
for $k=1,2$:
    \begin{align}
         v_{k,p,i}^*(z):=p\frac{u_{k,p}(\varepsilon_{p,i}z+y_{p,i})-M_p(y_{p,i})}{M_p(y_{p,i})}.
        \end{align} 
The following results play key roles in the proof of Theorem \ref{Th-1LL}.
\begin{theorem}\label{th-1---1}
    For each $x_i\in \s$ obtained in Theorem \ref{Th-1LL}, up to a subsequence, one of the following alternatives happens. 
    \begin{itemize}
        \item [(1)] $(v_{1,p,i}^*,v_{2,p,i}^*)\rightarrow (V_1,V_2)$ in $C^{2}_{loc}(\mathbb{R}^2)\times C^{2}_{loc}(\mathbb{R}^2)$,
        where $(V_1, V_2)$ is given by
        \begin{align}
            V_1(z)+\log\frac{\mu_1}{\mu_2}=V_2(z)=-2\log\left({1+\frac{\mu_2}{8}(1+\frac{\beta}{\sqrt{\mu_1\mu_2}}) |z|^2}\right),
        \end{align}
       and solves 
       the Liouville-type system 
 \begin{equation}
\begin{cases}\label{l-sys}
-\Delta U_1 = \mu_1 e^{U_1} +\beta e^{\frac{U_1+U_2}{2}}  \quad\text{in} \, \mathbb{R}^2,\\
-\Delta U_2 = \mu_2 e^{U_2} +\beta e^{\frac{U_1+U_2}{2}}  \quad\text{in} \, \mathbb{R}^2,\\
\int_{\mathbb{R}^2}e^{U_1} < +\infty \,\, ,\,\,\int_{\mathbb{R}^2}e^{U_2} < +\infty.
\end{cases}
\end{equation} 
        \item [(2)] For some $k\in\{1,2\}$,
         $v_{k,p,i}^*\rightarrow \widetilde{V}_k$ in $C^{2}_{loc}(\mathbb{R}^2)$, where 
         \begin{align}
             \widetilde{V}_k(z)=-2\log\left({1+\frac{\mu_{k}}{8}|z|^2}\right)
         \end{align}
        solves the Liouville equation 
        \begin{align}\label{fc-Liouville}
                -U_k=\mu_k e^{U_k} \quad\text{in} \, \mathbb{R}^2,\quad
                \int_{\mathbb{R}^2} e^U<+\infty,
        \end{align}
        and $v_{3-k,p,i}^*\rightarrow -\infty$ uniformly in  any compact subsets of $\mathbb{R}^2$.  
        \end{itemize}
\end{theorem}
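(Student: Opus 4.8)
The approach is a blow-up analysis at the scale $\varepsilon_{p,i}$, mirroring the one behind Theorem~\ref{th-AA}(5) for the Lane-Emden equation but carried out for the system so as to keep track of the interaction of the two components.

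\emph{Rescaled equations and a priori bounds.} Since $y_{p,i}\to x_i$ and $M_p(y_{p,i})\to\sqrt e$ (Section~\ref{section-3}), one has $\varepsilon_{p,i}^{-2}=pM_p(y_{p,i})^{p-1}\to+\infty$, so the domains of the $v_{k,p,i}^*$ exhaust $\mathbb{R}^2$. Writing $M_p:=M_p(y_{p,i})$ and using $\varepsilon_{p,i}^{-2}=pM_p^{p-1}$, a direct computation turns \eqref{eq-1.1} into
\[
-\Delta v_{k,p,i}^*=\mu_k\Big(1+\tfrac{v_{k,p,i}^*}{p}\Big)^{\!p}+\beta\Big(1+\tfrac{v_{k,p,i}^*}{p}\Big)^{\!\frac{p-1}{2}}\Big(1+\tfrac{v_{3-k,p,i}^*}{p}\Big)^{\!\frac{p+1}{2}},\qquad k=1,2 .
\]
From $0\le u_{k,p}\le M_p$ on $B_r(x_i)$ we obtain, on balls $B_{R_p}(0)$ with $R_p\to+\infty$: $v_{k,p,i}^*\le0$, $\max\{v_{1,p,i}^*(0),v_{2,p,i}^*(0)\}=0$, and $0\le-\Delta v_{k,p,i}^*\le\mu_k+\beta$. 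Testing \eqref{eq-1.1} with $u_{1,p}$ and $u_{2,p}$ and invoking \eqref{eq-1-2} shows that $p\int_\Omega u_{k,p}^{p+1}$ and $p\int_\Omega(u_{1,p}u_{2,p})^{(p+1)/2}$ are bounded; H\"older and Young then bound $p\int_\Omega u_{k,p}^p$ and $p\int_\Omega u_{k,p}^{(p-1)/2}u_{3-k,p}^{(p+1)/2}$, and after the change of variables this yields $\sup_p\int_{B_R}(1+v_{k,p,i}^*/p)^p<\infty$ for every fixed $R$.

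\emph{Compactness and the dichotomy.} Passing to a subsequence we may assume the component realizing $M_p(y_{p,i})$ is a fixed one, $u_{k_0,p}$, so $v_{k_0,p,i}^*(0)=0$. On $B_{2R}$ write $v_{k_0,p,i}^*=w+h$ with $-\Delta w=-\Delta v_{k_0,p,i}^*\in[0,\mu_{k_0}+\beta]$, $w|_{\partial B_{2R}}=0$ (so $0\le w\le C(R)$ in $C^{1,\alpha}(\overline{B_R})$), and $h$ harmonic with $h\le0$, $h(0)=-w(0)$; Harnack applied to $-h\ge0$ gives a uniform $L^\infty(B_R)$ bound for $v_{k_0,p,i}^*$, and elliptic estimates (the right-hand side being $C^{0,1}_{loc}$-bounded, since $1+v_{k_0,p,i}^*/p\in[\tfrac12,1]$ for large $p$) upgrade this to $v_{k_0,p,i}^*\to V_{k_0}$ in $C^2_{loc}(\mathbb{R}^2)$ with $V_{k_0}\le0$, $V_{k_0}(0)=0$, and $\int_{\mathbb{R}^2}e^{V_{k_0}}<\infty$ by Fatou. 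For the other component $v_{k_1,p,i}^*$, $k_1=3-k_0$, the same splitting gives two alternatives. If $v_{k_1,p,i}^*(0)$ stays bounded along a subsequence, then $v_{k_1,p,i}^*\to V_{k_1}$ in $C^2_{loc}$ with $V_{k_1}\le0$ and finite mass, and letting $p\to\infty$ in both equations shows $(V_1,V_2)$ solves \eqref{l-sys}. If instead $v_{k_1,p,i}^*(0)\to-\infty$, Harnack forces $v_{k_1,p,i}^*\to-\infty$ locally uniformly, and since $0\le1+v_{k_1,p,i}^*/p\le1$ (together with a gradient bound $|\nabla v_{k_1,p,i}^*|\lesssim|v_{k_1,p,i}^*(0)|$ from the same decomposition) this gives $(1+v_{k_1,p,i}^*/p)^{(p+1)/2}\to0$ in $C^1_{loc}$; hence the cross term in the $v_{k_0,p,i}^*$-equation disappears and $V_{k_0}$ solves $-\Delta V_{k_0}=\mu_{k_0}e^{V_{k_0}}$ with finite mass.

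\emph{Classifying the limit, and the main obstacle.} In the second alternative, the classification of finite-mass solutions of the Liouville equation \eqref{fc-Liouville} together with $V_{k_0}\le0$ and $V_{k_0}(0)=0$ forces $V_{k_0}=\widetilde V_{k_0}$, which is case (2). In the first alternative we must classify finite-mass solutions of the \emph{cooperative} Liouville system \eqref{l-sys}: since each nonlinearity is increasing in the other unknown, the moving-plane method (using the finite-mass decay) shows $(V_1,V_2)$ is radially symmetric and strictly decreasing about a common center, which, being the maximum point of $V_{k_0}$ and $V_{k_0}(0)=0=\max V_{k_0}$, is the origin; then with $\psi:=V_1-V_2-\log(\mu_2/\mu_1)$ one has $-\Delta\psi=\mu_2 e^{V_2}(e^\psi-1)$, $\psi'(0)=0$, and an ODE/Pohozaev analysis (exploiting that $\psi$ decays like a negative multiple of $\log|z|$ as soon as it changes sign, while $\int e^{V_2}<\infty$) forces $\psi\equiv0$. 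Once $\psi\equiv0$, both equations of \eqref{l-sys} collapse to $-\Delta V_2=\mu_2\big(1+\tfrac{\beta}{\sqrt{\mu_1\mu_2}}\big)e^{V_2}$, and the single-equation classification plus $V_2\le0$, $V_2(0)=0$ (note $V_1\le V_2$ as $\mu_1\ge\mu_2$) pin $(V_1,V_2)$ to the stated profile, which is case (1). I expect this last step, equivalently the rigidity $\psi\equiv0$, i.e.\ ruling out solutions of \eqref{l-sys} with $V_1-V_2$ non-constant and possibly unequal logarithmic decay rates of $V_1$ and $V_2$, to be the technical heart; all the rest is a by-now-standard blow-up scheme and the reduction to a single Liouville equation is immediate once $\psi\equiv0$ is available. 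If a direct ODE argument proves awkward, one may instead invoke an existing classification for cooperative Liouville systems of this form.
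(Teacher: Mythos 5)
Your proposal is correct in substance, but it is organized differently from the paper. The paper does not blow up directly at the local maxima $y_{p,i}$: it first establishes the convergence (and the type $\mathcal A/\mathcal B/\mathcal C$ dichotomy) for the rescalings centered at the auxiliary points $x_{p,j}$ constructed inductively in Proposition \ref{prop-3-6}, where the key difficulty is that $x_{p,j}$ need not be a local maximum, so the sign condition on the rescaled functions has to be replaced by the quantified bound \eqref{eq-3-74} coming from the maximality in \eqref{eq-3-64}; it then transfers the conclusion to $v^*_{k,p,i}$ through Lemma \ref{lemma-3-10}, proving $|y_{p,i}-x_{p,m}|=o(\varepsilon_{p,m})$, $\varepsilon^*_{p,i}/\varepsilon_{p,m}\to1$ and $p(M_p(x_{p,m})/M_p(y_{p,i})-1)\to0$, and using the exact relation \eqref{fc-3-81}. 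You instead rescale directly at $y_{p,i}$; since $y_{p,i}$ maximizes $M_p$ over $\overline{B_{r_0}(x_i)}$, the bound $v^*_{k,p,i}\le 0$ on fixed balls is free once one knows $\varepsilon^*_{p,i}\to0$ and $y_{p,i}\to x_i$, and from there your decomposition into a bounded Poisson part plus a nonpositive harmonic part, the Harnack dichotomy, Fatou for the finite mass, and the vanishing of the coupling term when one component diverges are exactly the paper's mechanism (Lemma \ref{lemma-3-5^*}). For the statement of Theorem \ref{th-1---1} alone your shortcut suffices and avoids the transfer lemma; what it does not give (and what Lemma \ref{lemma-3-10} is really for) is the matching of the type at $y_{p,i}$ with the type at $x_{p,m}$, which the paper uses later. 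Note two inputs you still need from Section \ref{section-3}: the existence of $x_{p,m}\to x_i$ with $pM_p(x_{p,m})^{p-1}\to\infty$ (so that $\varepsilon^*_{p,i}\to0$ and $\liminf_p M_p(y_{p,i})\ge1$) and the off-$\mathcal S$ bound \eqref{31-1} (to get $y_{p,i}\to x_i$). Your citation ``$M_p(y_{p,i})\to\sqrt e$ (Section 3)'' is wrong and would be circular --- that limit is only obtained in Section \ref{section-4} as a consequence of this very theorem --- but it is harmless because you only use it for $\varepsilon^*_{p,i}\to0$, which is available as just described.

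The one place where your write-up falls short of a proof is the classification of the limiting Liouville-type system \eqref{l-sys}. The paper treats this as a black box, quoting Theorem \ref{th-3.1} from \cite{our-24}; your inline sketch (moving planes for the cooperative system, then the rigidity $\psi\equiv0$ for $-\Delta\psi=\mu_2e^{V_2}(e^\psi-1)$ via an ``ODE/Pohozaev analysis'') is only a plausibility argument: the moving-plane step needs the asymptotic behavior of finite-mass solutions, and the rigidity step must rule out sign-changing radial $\psi$ with different logarithmic decay rates of $V_1,V_2$, none of which is carried out. Since you explicitly allow falling back on an existing classification, and such a classification is precisely what the paper cites, the argument closes once you replace the sketch by that citation; as written on its own, however, this step is a genuine gap.
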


\begin{theorem}\label{thm-localmass}
For each $x_i\in \s$ obtained in Theorem \ref{Th-1LL} and small $r>0$, we define the ``local mass'' contributed by this concentration point $x_i$:
\begin{align*}
    \sigma_{k,i}:=\lim\limits_{p\rightarrow \infty}  \frac{p}{M_p(y_{p,i})} \int_{B_{r}(x_{i})} \left(\mu_k u_{k,p}^p +\beta u_{k,p}^{\frac{p-1}{2}}u_{3-k,p}^{\frac{p+1}{2}}\right) dx,\quad k=1,2.
\end{align*}
Then $(\sigma_{1,i}, \sigma_{2,i})\in \{(8\pi, 8\pi), (8\pi, 0), (0, 8\pi)\}$.
\end{theorem}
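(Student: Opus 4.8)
The plan is to combine the local blow-up analysis of Theorem \ref{th-1---1} with a Pohozaev-type argument on small balls $B_r(x_i)$, together with the already-established fact (Theorem \ref{Th-1LL}(2)) that $p u_{k,p}\to 8\pi\sqrt e\sum_{x_l\in\mathcal S_k}G(x_l,\cdot)$ in $C^2_{loc}(\overline\Omega\setminus\mathcal S_k)$. First I would record the immediate consequence of this last convergence: testing the equations $-\Delta u_{k,p}=\mu_k u_{k,p}^p+\beta u_{k,p}^{(p-1)/2}u_{3-k,p}^{(p+1)/2}$ against a cutoff and using the $C^2_{loc}$ convergence of $p u_{k,p}$ away from $\mathcal S$, one gets that the total mass satisfies
\begin{align*}
\lim_{p\to\infty}\frac{p}{\sqrt e}\int_\Omega\Bigl(\mu_k u_{k,p}^p+\beta u_{k,p}^{(p-1)/2}u_{3-k,p}^{(p+1)/2}\Bigr)dx=8\pi\#\mathcal S_k,
\end{align*}
and more precisely that the mass localizes near $\mathcal S$, so that $\sum_{i=1}^N\sigma_{k,i}=8\pi\#\mathcal S_k$ once one checks (as in Section \ref{section-3}) that the normalizing factor $M_p(y_{p,i})$ in the definition of $\sigma_{k,i}$ has the same asymptotics $M_p(y_{p,i})\to\sqrt e$ as $\|u_{k,p}\|_{L^\infty}$, via part (1) of Theorem \ref{Th-1LL}. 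This reduces the problem to showing that each individual $(\sigma_{1,i},\sigma_{2,i})$ is exactly one of the three listed pairs.

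Next I would analyze a fixed concentration point $x_i$ using the two alternatives of Theorem \ref{th-1---1}. In alternative (2), only the component $k$ survives near $x_i$ while $v_{3-k,p,i}^*\to-\infty$ locally uniformly; the survivor rescales to the single Liouville profile $\widetilde V_k$ with $\int_{\mathbb R^2}\mu_k e^{\widetilde V_k}=8\pi$. A dominated-convergence / change-of-variables argument on $B_r(x_i)$—the standard one turning $\frac{p}{M_p}u_{k,p}^p\,dx$ near $y_{p,i}$ into $e^{\widetilde V_k}\,dz$ on $\mathbb R^2$, while the cross term $\beta u_{k,p}^{(p-1)/2}u_{3-k,p}^{(p+1)/2}$ carries the extra factor $(u_{3-k,p}/u_{k,p})^{(p+1)/2}\to 0$ because $v_{3-k,p,i}^*\to-\infty$—gives $\sigma_{k,i}=8\pi$ and $\sigma_{3-k,i}=0$, i.e. $(8\pi,0)$ or $(0,8\pi)$. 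In alternative (1), both components rescale to the coupled Liouville system \eqref{l-sys} with the explicit profile $(V_1,V_2)$; here I would compute directly
\begin{align*}
\int_{\mathbb R^2}\Bigl(\mu_1 e^{V_1}+\beta e^{(V_1+V_2)/2}\Bigr)dz=\int_{\mathbb R^2}\Bigl(\mu_2 e^{V_2}+\beta e^{(V_1+V_2)/2}\Bigr)dz=8\pi,
\end{align*}
which follows from $-\Delta V_k$ being integrable with the right total: since $V_k(z)=-2\log(1+c|z|^2)+\text{const}$ with $c=\tfrac{\mu_2}{8}(1+\tfrac{\beta}{\sqrt{\mu_1\mu_2}})$, one has $-\Delta V_k=\tfrac{4c}{(1+c|z|^2)^2}$ and $\int_{\mathbb R^2}\tfrac{4c}{(1+c|z|^2)^2}dz=4\pi$... wait, I should instead integrate the equation itself and use the known decay $V_k(z)=-4\log|z|+O(1)$ at infinity, giving $\int_{\mathbb R^2}(-\Delta V_k)=\lim_{R\to\infty}\int_{\partial B_R}-\partial_\nu V_k=8\pi$ for each $k$; hence $\sigma_{1,i}=\sigma_{2,i}=8\pi$, i.e. $(8\pi,8\pi)$.

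The transfer from the rescaled integrals over $\mathbb R^2$ to the actual $\sigma_{k,i}$ over $B_r(x_i)$ is where the real work lies, and I expect it to be the main obstacle: one must show no mass escapes to an intermediate scale or concentrates on a different, finer bubble inside $B_r(x_i)$. The way I would handle this is the usual two-part estimate. First, a lower bound $\sigma_{k,i}\ge$ (the $\mathbb R^2$ integral) by Fatou applied after the change of variables $z\mapsto \varepsilon_{p,i}z+y_{p,i}$. Second, a matching upper bound: because $p u_{k,p}$ converges in $C^2$ on $\partial B_r(x_i)$ to $8\pi\sqrt e\sum G(x_l,\cdot)$, integrating $-\Delta u_{k,p}$ over $B_r(x_i)$ and using the divergence theorem bounds $\frac{p}{\sqrt e}\int_{B_r(x_i)}(-\Delta u_{k,p})$ by $8\pi\cdot(\text{number of points of }\mathcal S_k\text{ in }B_r(x_i))+o(1)=8\pi m_{k,i}+o(1)$, which for small $r$ (so that $B_r(x_i)$ contains only $x_i$ from $\mathcal S$) gives $\sigma_{k,i}\le 8\pi m_{k,i}$. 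Combining the lower bound from the explicit profiles with this upper bound pins down $\sigma_{k,i}=8\pi m_{k,i}$; reading off $(m_{1,i},m_{2,i})$ from Theorem \ref{Th-1LL}(1) (which is one of $(1,1),(1,0),(0,1)$ since $\max\{m_{1,i},m_{2,i}\}=1$) then yields exactly the claimed trichotomy. I would also need to verify that the blow-up limit in Theorem \ref{th-1---1} genuinely captures all the mass—that there is a single bubble per concentration point, which should already be embedded in the construction of $\mathcal S$ in Section \ref{section-3} via the standard selection of concentration points and the "no neck" energy estimate.
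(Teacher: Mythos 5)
Your plan has two genuine gaps, and they sit exactly where the real difficulty of this theorem lies. First, your upper bound $\sigma_{k,i}\le 8\pi m_{k,i}$ is circular: it invokes Theorem \ref{Th-1LL}(1)--(2), i.e.\ the facts that $m_{k,i}\in\{0,1\}$, that $\|u_{k,p}\|_{L^\infty}\to\sqrt e$, and that $p u_{k,p}\to 8\pi\sqrt e\sum_{x_l\in\mathcal S_k}G(x_l,\cdot)$. In the paper those statements are \emph{consequences} of the local-mass analysis of Section \ref{section-4}; what is available before Theorem \ref{thm-localmass} is only Lemma \ref{lemma-3-7}, namely $p u_{k,p}\to\sum_i\gamma_{k,i}G(\cdot,x_i)$ with \emph{unknown} coefficients $\gamma_{k,i}$, and the identity $\gamma_{k,i}=\sigma_{k,i}\lim_p M_p(y_{p,i})$. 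With only this input, your divergence-theorem step returns a tautology, not a bound by $8\pi$. The paper replaces it by a local Pohozaev identity on $B_\rho(x_i)$ (Lemma \ref{Lemma-4-1++}), which yields the quadratic constraint $\sigma_{1,i}^2+\sigma_{2,i}^2\le 8\pi(\sigma_{1,i}+\sigma_{2,i})$ (Corollary \ref{cor-4-2}); combined with the Fatou lower bounds $\sigma_{k,i}\ge 8\pi$ coming from the classified profile of the Liouville system (Theorem \ref{th-3.1}), this forces $\sigma_{1,i}=\sigma_{2,i}=8\pi$ in the type $\mathcal A$ case, and caps $\sigma_{1,i}$ in the type $\mathcal B$ case so that the Green-representation decay estimate (Lemma \ref{lemma-4-4}) applies. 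Your proposal contains no non-circular substitute for this step.

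Second, in alternative (2) you conclude $\sigma_{3-k,i}=0$ by dominated convergence because $v_{3-k,p,i}^*\to-\infty$. That convergence is only locally uniform at the blow-up scale $\varepsilon_{p,i}^*$ around $y_{p,i}$; it says nothing about $u_{3-k,p}$ at other points or scales inside $B_r(x_i)$, so it does not exclude that the other component carries its own mass there (the paper's remark after Corollary \ref{cor-4-11} points out precisely that dominated convergence is unavailable for $v_{2,p,i}^*$). The correct statement is a dichotomy $\sigma_{3-k,i}\in\{0,8\pi\}$, and proving it is the heart of the theorem: the paper shows via the Pohozaev identity that $\gamma_{2,i}^2=8\pi\lim_p p\int_{B_\rho(x_i)}\mu_2u_{2,p}^{p+1}$, and, if $\sigma_{2,i}\neq 0$, uses the sharp constant in the Ren--Wei inequality (Lemma \ref{lm2.4}) applied to a cutoff of $u_{2,p}$ to force $\lim_p p\int_{B_\rho(x_i)}\mu_2u_{2,p}^{p+1}\ge 8\pi e$, hence $\gamma_{2,i}=8\pi\sqrt e$ and $\sigma_{2,i}=8\pi$ (Lemma \ref{lemma-4-12}); the case $\sigma_{2,i}=0$ is handled by a Brezis--Merle type small-mass estimate giving $\|pu_{2,p}\|_{L^\infty(B_{r_0}(x_i))}\le C$ (Lemma \ref{cor-4-13----}). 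Your case analysis, which assigns $(8\pi,0)$ or $(0,8\pi)$ to every type $\mathcal B$/$\mathcal C$ point, both overstates what the rescaled limit gives and omits the argument needed to rule out intermediate values of the second component's mass.
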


\begin{remark}\
    \begin{itemize}
    \item[(1)]  Note that the weakly coupled system \eqref{eq-1.1} always has solutions of the form $(u, 0)$ and $(0, u)$, which brings additional difficulties compared to the single equation case as stated in Theorem \ref{th-AA}. Theorem \ref{thm-localmass} proves that the local mass has three possibilities  $(8\pi, 8\pi), (8\pi, 0), (0, 8\pi)$, and we believe that each of them would happen, which will be studied elsewhere. It is interesting to point out that in the bubbling analysis of the $SU(3)$ Toda system, it was proved by \cite{Jost-Lin-Wang-06} that the local mass must be one of $(8\pi, 8\pi), (8\pi, 4\pi)$,  $(4\pi, 8\pi)$,  $(4\pi,0), (0,4\pi)$, and it was proved by \cite{MW-JDE} that each of these five possibilities really happens.
    \item[(2)] Comparing to the single equation case, a new situation is that besides the Liouville equation \eqref{fc-Liouville}, the Liouville-type system \eqref{l-sys} also appears as a limiting equation after a suitable scaling near a concentration point. Therefore, the classification of solutions of this system \eqref{l-sys} plays a crucial role in our proof of Theorems \ref{Th-1LL} and \ref{thm-localmass}. In our previous paper \cite{our-24}, we succeeded to give the classification of solutions for this system \eqref{l-sys}; see Theorem \ref{th-3.1} in Section \ref{section-3} for the precise statement.

        \item [(3)] Comparing to the single equation case, the weakly coupled system \eqref{eq-1.1} is more difficult to deal with, and our proofs of Theorems \ref{Th-1LL}-\ref{thm-localmass} need to combine those ideas from \cite{D-G-I-P-18, Franc-Isabella-Filomena-15,D-I-P-17,Jost-Lin-Wang-06,L-W-Z-13,L-Z-13}, where the bubbling behaviors were studied for the Lane-Emden equation \eqref{eq-1.1***}  (see \cite{D-G-I-P-18, Franc-Isabella-Filomena-15, D-I-P-17}), the $SU(3)$ Toda system (see \cite{Jost-Lin-Wang-06,L-W-Z-13}) and the Liouville system (see \cite{L-Z-13}), respectively.
         \end{itemize}
\end{remark}

The rest of this paper is organized as follows. 
In Section \ref{section-A}, we study the a priori estimate and prove Theorem \ref{prop-5-2}. In Section \ref{section-3}, we study the existence of the concentration set $\s$ and prove Theorem \ref{th-1---1}.
In Section \ref{section-4}, we give the local estimates around each concentration point and prove Theorem \ref{thm-localmass}.
In Section \ref{section-5}, we complete the proof of Theorem \ref{Th-1LL}.

\textbf{Notations}.
Throughout the paper,  we always use $C, C_0, C_1,\cdots$ to denote constants that are independent of $p$ (possibly different in different places). Conventionally, we use $o(1)$ to denote quantities that converge to $0$ (resp. use $O(1)$ to denote quantities that remain bounded) as $p\to\infty$. Denote $B_r(x):=\{y\in\R^2 : |y-x|<r\}$ and $B_r:=B_r(0)$. For any $s>1$, we denote $\|u\|_s:=(\int_{\Omega}|u|^sdx)^{1/s}$.

\section{Uniform a priori estimate}
\label{section-A}
In this section, we follow the arguments from \cite{K-S-18, K-S-23} to prove Theorem \ref{prop-5-2}. Let $p_0>1$ and $(u_{1,p}, u_{2,p})$ be any positive solution of \eqref{eq-1.1}
for $p\geq p_0$.

\begin{Lemma}\label{lemma2-1}
   There are positive constants $\delta$ depending only on $\Omega$, and $C$ on depending only $p_0$ and $\Omega$, such that for $p\geq p_0$ and $k=1,2$,
\begin{itemize}
    \item [(1)] $dist (x_{k,p},\partial\Omega)\geq \delta$, where $x_{k,p}$ is a maximum point of $u_{k,p}$, i.e. $u_{k,p}(x_{k,p})=\max_{x\in\Omega} u_{k,p}(x)$.
    \item [(2)] 
   $ \int_{\Omega} u_{k,p}^p dx \leq C$ and
   $ \int_{\Omega} u_{k,p}^p dx \leq C\int_{\Omega} u_{k,p} dx$.
   
\end{itemize}
\end{Lemma}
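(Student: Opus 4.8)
The plan is to treat the two parts separately. Part~(1) is, for the cooperative system \eqref{eq-1.1}, the exact analogue of the classical boundary estimate of de~Figueiredo--Lions--Nussbaum, and I would prove it by the method of moving planes applied to the pair $(u_{1,p},u_{2,p})$ simultaneously. Given~(1), part~(2) reduces to the single inequality $\int_\Omega u_{k,p}^{p}\,dx\le C\int_\Omega u_{k,p}\,dx$, after which the absolute bound $\int_\Omega u_{k,p}^{p}\,dx\le C$ drops out by a one-line Jensen argument. Throughout write $d(x):=\operatorname{dist}(x,\partial\Omega)$.

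For part~(1): fix $z_0\in\partial\Omega$ and, using the smoothness of $\partial\Omega$, slide hyperplanes orthogonal to the inner normal $\nu_{\mathrm{in}}$ at $z_0$ inward from outside $\Omega$. The right-hand sides $g_k(x,u_1,u_2)=\mu_ku_k^{p}+\beta u_k^{(p-1)/2}u_{3-k}^{(p+1)/2}$ are nonnegative and nondecreasing in each of $u_1,u_2$, so for a reflected point $x^{\lambda}$ one may write $g_k(u_1^{\lambda},u_2^{\lambda})-g_k(u_1,u_2)=c_k(x)\,(u_k^{\lambda}-u_k)+\big(g_k(u_1,u_2^{\lambda})-g_k(u_1,u_2)\big)$ with $c_k\ge0$ and the last bracket $\ge0$ as soon as $u_{3-k}^{\lambda}\ge u_{3-k}$; this lets one propagate the two inequalities $u_{1,p}^{\lambda}\ge u_{1,p}$ and $u_{2,p}^{\lambda}\ge u_{2,p}$ together across each cap until the purely geometric critical position is reached (the reflected cap becomes internally tangent to $\partial\Omega$, or the moving plane becomes orthogonal to $\partial\Omega$). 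Since that critical position is controlled by the geometry of $\Omega$ alone, this produces a collar width $\delta>0$, depending only on $\Omega$, such that each $u_{k,p}$ is nondecreasing along every inner-normal segment of length $\delta$, with strict monotonicity by the strong maximum principle; in particular no maximum point of $u_{k,p}$ lies in $\{d<\delta\}$, which is~(1). The delicate point --- and the main obstacle --- is precisely the uniformity of $\delta$: it must be read off from the geometry of $\Omega$ with no use of any bound on $u_{k,p}$ or on $p$, and the continuity/induction step of the moving-plane scheme has to be arranged so that it runs on the two coupled inequalities at once (the mild non-Lipschitz behaviour of $s\mapsto s^{(p-1)/2}$ at $s=0$ when $p<3$ causes no trouble, the corresponding zeroth-order coefficients lying in the right Serrin class, as in \cite{K-S-18,K-S-23}).

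For part~(2): let $\phi_\Omega>0$ solve $-\Delta\phi_\Omega=1$ in $\Omega$, $\phi_\Omega=0$ on $\partial\Omega$, and fix $\rho\in(0,\delta]$ small enough that Fermi normal coordinates are valid to depth $\rho$. Multiplying the $k$-th equation of \eqref{eq-1.1} by $\phi_\Omega$ and integrating by parts (all boundary terms vanish because $u_{k,p}=\phi_\Omega=0$ on $\partial\Omega$) gives
\[
\mu_k\int_\Omega u_{k,p}^{p}\,\phi_\Omega\,dx\ \le\ \int_\Omega\Big(\mu_ku_{k,p}^{p}+\beta u_{k,p}^{(p-1)/2}u_{3-k,p}^{(p+1)/2}\Big)\phi_\Omega\,dx\ =\ \int_\Omega u_{k,p}\,dx,
\]
so, since $\phi_\Omega$ is bounded below by a positive constant on $\{d\ge\rho/2\}$, we get $\int_{\{d\ge\rho/2\}}u_{k,p}^{p}\,dx\le C(\Omega,\mu_k,\rho)\int_\Omega u_{k,p}\,dx$. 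For the thin collar $\{d<\rho/2\}$ I use the monotonicity from part~(1): on each inner-normal segment $t\mapsto u_{k,p}(z_0+t\nu_{\mathrm{in}})$ is nondecreasing on $[0,\rho]$, hence $\int_0^{\rho/2}u_{k,p}(z_0+t\nu_{\mathrm{in}})^{p}\,dt\le\int_{\rho/2}^{\rho}u_{k,p}(z_0+t\nu_{\mathrm{in}})^{p}\,dt$; integrating against the bounded Jacobian gives $\int_{\{d<\rho/2\}}u_{k,p}^{p}\,dx\le C(\Omega)\int_{\{\rho/2\le d\le\rho\}}u_{k,p}^{p}\,dx\le C(\Omega)\int_{\{d\ge\rho/2\}}u_{k,p}^{p}\,dx$. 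Adding the two contributions yields $\int_\Omega u_{k,p}^{p}\,dx\le C\int_\Omega u_{k,p}\,dx$ with $C$ independent of $p$. Finally, convexity of $t\mapsto t^{p}$ (Jensen) gives $\big(\int_\Omega u_{k,p}\,dx\big)^{p}\le|\Omega|^{p-1}\int_\Omega u_{k,p}^{p}\,dx\le C|\Omega|^{p-1}\int_\Omega u_{k,p}\,dx$, so $\int_\Omega u_{k,p}\,dx\le C^{1/(p-1)}|\Omega|\le\max(1,C)^{1/(p_0-1)}|\Omega|$ for all $p\ge p_0$, and therefore also $\int_\Omega u_{k,p}^{p}\,dx\le C\int_\Omega u_{k,p}\,dx\le C'$. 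All constants depend only on $\Omega$, $p_0$ and the fixed parameters $\mu_1,\mu_2,\beta$.
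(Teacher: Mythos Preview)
Your proof is correct. Part~(1) follows the same moving-planes/Kelvin-transform route as the paper (both invoking \cite{DLN} and \cite{K-S-18,K-S-23}); your emphasis on propagating the two inequalities simultaneously via the cooperative structure is exactly right.

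For part~(2) the strategies diverge. The paper tests against the first Dirichlet eigenfunction $\psi$, obtaining $\mu_k\int_\Omega u_{k,p}^{p}\psi\le\lambda_1\int_\Omega u_{k,p}\psi$, and then gets the absolute bound on $\int_\Omega(u_{1,p}+u_{2,p})\psi$ by splitting at a threshold $t_0$ where $\mu_k u^{p}\ge 2\lambda_1 u$. To remove the weight $\psi$ near the boundary it uses the moving-planes output in the stronger form of measurable ``reflection sets'' $I_x\subset\{d\ge\delta\}$ of uniform positive measure on which $u_{k,p}(\cdot)\ge u_{k,p}(x)$. You instead test against the torsion function $\phi_\Omega$ (so the right-hand side is exactly $\int_\Omega u_{k,p}$, with no eigenvalue), handle the collar by one-dimensional normal monotonicity in Fermi coordinates, and close with Jensen to extract the uniform bound on $\int_\Omega u_{k,p}$. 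Both arguments are short; yours avoids the threshold-splitting step and is arguably cleaner, while the paper's $I_x$-set formulation is a bit more robust in that it does not need a smooth normal foliation of the collar.
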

\begin{proof}
Like \cite[Proposition 2.1]{K-S-18, K-S-23}, 
    by using the moving planes technique together with the Kelvin transform as in \cite[pp. 45-52]{DLN},
    we can prove the existence of positive constants $\delta$ and $\gamma$ depending only on $\Omega$, such that the following holds:
    \begin{itemize}
 \item   For each $x\in\Omega_{2\delta}:=\{y\in\Omega: dist(y,\partial\Omega)<2\delta\}$, there exists a measurable set $I_x$ such that
    \begin{itemize}
        \item[(1)] $I_x\subset \Omega\setminus\Omega_{\delta}=\{y\in\Omega: dist(y,\partial\Omega)\geq\delta\}$,
        \item[(2)] $|I_x|\geq \gamma$, where $|I_x|$ denotes the Lebesgue measure of $I_x$,
        \item[(3)]  $u_{k,p}(x)\leq u_{k,p}(\xi)$ for all $\xi\in I_x $ and $k=1,2$.
    \end{itemize}
\end{itemize}
    For instance, when $\Omega$ is convex, $I_x$ can be taken to be a part of a cone with a vertex at $x$, while non-convex domains can be treated by the Kelvin inversion of neighborhoods of boundary points where the boundary is not convex. In particular, this implies the assertion (1).
    
  To prove the assertion (2),  we denote $\psi$ to be the first eigenfunction of $-\Delta$ in $H_0^1(\Omega)$ with the first eigenvalue $\lambda_1=\lambda_1(\Omega)$:
    \begin{align}\label{fc-lambda1}
        \begin{cases}
            -\Delta \psi=\lambda_1 \psi \quad\text{in}\quad\Omega,\\
           \psi>0\quad\text{in }\Omega,\quad \psi=0\quad\text{on}\;\partial\Omega,
        \end{cases} \text{normalized such that}\;\|\psi\|_{\infty}=1.
    \end{align}
   It follows that for $k\in\{1,2\}$,
    \begin{align}\label{fc2-2}
        \lambda_1 \int_{\Omega} u_{k,p} \psi ~dx=\int_{\Omega}( \mu_k u_{k,p}^{p}+\beta u_{k,p}^{\frac{p-1}{2}}u_{3-k,p}^{\frac{p+1}{2}})\psi ~dx\geq \mu_k\int_{\Omega} u_{k,p}^{p} \psi dx.
    \end{align}
    Since $$\mu_k u_{k,p}^{p}\geq 2\lambda_1 u_{k,p}\quad \text{if}\quad u_{k,p}\geq t_{0}:=\sup_{p\geq p_0}\left(\frac{2\lambda_1}{\min\{\mu_1,\mu_2\}}\right)^{\frac{1}{p-1}},$$ we have 
    \begin{align*}
        &2\lambda_1 \int_{\Omega} (u_{1,p}+u_{2,p})\psi ~dx\nonumber\\
        =&\sum_{k=1}^2 2\lambda_1\int_{\{u_{k,p}>t_{0}\}}u_{k,p}\psi ~dx+ \sum_{k=1}^2 2\lambda_1\int_{\{u_{k,p}\leq t_{0}\}}u_{k,p}\psi ~dx\\
        \leq &\int_{\Omega} (\mu_1 u_{1,p}^p+\mu_2 u_{2,p}^p)\psi~ dx+ 4\lambda_1 t_0|\Omega|\nonumber\\
        \leq & \lambda_1 \int_{\Omega} (u_{1,p}+u_{2,p})\psi ~dx+ 4\lambda_1 t_0|\Omega|,\nonumber
    \end{align*}
    which implies
    $$ \int_{\Omega} (u_{1,p}+u_{2,p})\psi ~dx\leq C.$$
    From here and \eqref{fc2-2},  we obtain
    \begin{align}\label{eq-5-17}
        \int_{\Omega} u_{k,p}^p \psi~dx\leq \frac{\lambda_1}{\mu_k}\int_{\Omega} u_{k,p}\psi ~dx\leq  C.
    \end{align}
    Note
    $$c_0:= \inf_{\Omega\setminus\Omega_{\delta}} \psi>0.$$
    For any $y\in\Omega_{2\delta}$, it follows from the properties (1)-(3) of $I_y\subset \Omega\setminus\Omega_{\delta}$ that
    \begin{align*}
        \int_{\Omega} u_{k,p}^p \psi~dx\geq \int_{I_y} u_{k,p}^p \psi~dx\geq \gamma u_{k,p}^p(y)  \inf_{I_y} \psi \geq \gamma c_0 u_{k,p}^p(y).
    \end{align*}
    Therefore,
    \begin{align*}
    \int_{\Omega} u_{k,p}^p dx &\leq \int_{\Omega_{2\delta}} u_{k,p}^p dx +\int_{\Omega\setminus\Omega_{2\delta}} u_{k,p}^p dx\\
    &\leq  \frac{|\Omega|}{\gamma c_0} \int_{\Omega} u_{k,p}^p \psi~dx+\frac1{c_0}\int_{\Omega\setminus\Omega_{2\delta}} u_{k,p}^p\psi dx\leq C\int_{\Omega} u_{k,p}^p \psi~dx.
    \end{align*}
From here and \eqref{eq-5-17}, we obtain $ \int_{\Omega} u_{k,p}^p dx \leq C$ and
   $ \int_{\Omega} u_{k,p}^p dx \leq C\int_{\Omega} u_{k,p} dx$.
\end{proof}

\begin{Lemma}\label{lm-5-4}
 There exists a constant $C$, depending only on $p_0$ and $\Omega$, such that for $p\geq p_0$
and $k=1,2$, 
\begin{align}
      \Vert u_{k,p}\Vert_{L^{\infty}(\Omega)} \leq C. 
\end{align}
\end{Lemma}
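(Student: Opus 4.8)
The plan is to argue by contradiction, combining a rescaling at a would‑be blow‑up point with the Green representation formula. Set $f_{k,p}:=\mu_k u_{k,p}^{p}+\beta u_{k,p}^{(p-1)/2}u_{3-k,p}^{(p+1)/2}\ge 0$, so that $-\Delta u_{k,p}=f_{k,p}$ in $\Omega$, $u_{k,p}=0$ on $\partial\Omega$, and $f_{1,p}\ge\mu_1 u_{1,p}^{p}$. If the conclusion failed there would be $p_n\ge p_0$ and positive solutions with $M_n:=\max\{\|u_{1,p_n}\|_\infty,\|u_{2,p_n}\|_\infty\}\to\infty$; after relabelling the components we may take $M_n=\|u_{1,p_n}\|_\infty=u_{1,p_n}(x_n)$, and by Lemma \ref{lemma2-1}(1) we have $\mathrm{dist}(x_n,\partial\Omega)\ge\delta$ with $x_n\to x_0\in\Omega$ along a subsequence.

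First I would rescale: with $\varepsilon_n:=(p_nM_n^{p_n-1})^{-1/2}\to 0$ (here $p_n\ge p_0>1$ and $M_n\to\infty$ are used), set $w_n(z):=\tfrac{p_n}{M_n}\big(u_{1,p_n}(x_n+\varepsilon_n z)-M_n\big)$ on $\Omega_n:=\varepsilon_n^{-1}(\Omega-x_n)$, which exhausts $\R^2$. Using $u_{k,p_n}\le M_n$ for $k=1,2$ and $p_n\varepsilon_n^2M_n^{p_n-1}=1$, one gets $-\Delta w_n=g_n$ with $0\le g_n\le\mu_1+\beta$, while $w_n\le 0$ and $w_n(0)=0$. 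On a fixed ball $B_2\subset\Omega_n$ (for $n$ large) I would split $w_n=\mathcal N_n+h_n$, where $\mathcal N_n\ge 0$ is the Newtonian potential of $g_n$ in $B_2$ (so $\|\mathcal N_n\|_{C^{1,\alpha}(B_1)}\le C$ as $g_n$ is bounded) and $h_n$ is harmonic in $B_2$; then $h_n\le 0$ with $h_n(0)=-\mathcal N_n(0)\ge-C$, so Harnack's inequality gives $\|h_n\|_{L^\infty(B_1)}\le C$, hence $\|w_n\|_{L^\infty(B_1)}\le C$ with $C$ independent of $n$.

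Fixing $\rho:=\min\{1,p_0/(2C)\}\le 1$, one has $w_n\ge-C\rho\ge-p_0/2$ on $B_\rho$, so $u_{1,p_n}(x_n+\varepsilon_n z)/M_n=1+w_n/p_n\ge\max\{1/2,\,1-C\rho/p_n\}$ for $|z|\le\rho$; changing variables and using $M_n^{p_n}\varepsilon_n^2=M_n/p_n$,
\[
\int_{B_{\rho\varepsilon_n}(x_n)}u_{1,p_n}^{p_n}\,dy=M_n^{p_n}\varepsilon_n^2\int_{B_\rho}\Big(1+\tfrac{w_n}{p_n}\Big)^{p_n}dz\ \ge\ c_0\,\tfrac{M_n}{p_n},
\]
with $c_0>0$ fixed (the bound $(1-C\rho/p_n)^{p_n}\to e^{-C\rho}$ handles the case $p_n\to\infty$, while $2^{-p_n}$ stays bounded below if $p_n$ is bounded). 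Inserting this into the Green representation, and using $G\ge 0$ together with $f_{1,p_n}\ge\mu_1 u_{1,p_n}^{p_n}$,
\[
M_n=u_{1,p_n}(x_n)=\int_\Omega G(x_n,y)f_{1,p_n}(y)\,dy\ \ge\ \mu_1\int_{B_{\rho\varepsilon_n}(x_n)}G(x_n,y)\,u_{1,p_n}^{p_n}\,dy;
\]
on $B_{\rho\varepsilon_n}(x_n)$ one has $G(x_n,y)=\tfrac1{2\pi}\log\tfrac1{|x_n-y|}-H(x_n,y)\ge\tfrac1{2\pi}\log\tfrac1{\rho\varepsilon_n}-C_H$ with $H$ bounded on $K\times\overline\Omega$ for a fixed compact $K\ni x_n$, and since $\rho\le1$, $\log\tfrac1{\rho\varepsilon_n}\ge\tfrac{p_n-1}{2}\log M_n$ for $n$ large. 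Combining the displays,
\[
M_n\ \ge\ \Big(\tfrac{p_n-1}{4\pi}\log M_n-C_H\Big)\,\mu_1 c_0\,\tfrac{M_n}{p_n},
\]
and dividing by $M_n$ and using $\tfrac{p_n-1}{p_n}\ge\tfrac{p_0-1}{p_0}>0$ forces $\log M_n\le C_2$, contradicting $M_n\to\infty$. This gives the bound for $k=1$, and for $k=2$ by symmetry.

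The hard part is the uniform local bound $\|w_n\|_{L^\infty(B_1)}\le C$ in the rescaling step: $w_n$ carries no a priori size control, the limiting Liouville‑type system of Theorem \ref{th-1---1} is not yet at our disposal, and one cannot simply pass to the limit. It must be extracted directly from $0\le-\Delta w_n\le\mu_1+\beta$ and the single normalization $w_n(0)=0$, by peeling off the Newtonian potential of the bounded right‑hand side and controlling the remaining harmonic function via Harnack. After that the argument is a careful accounting of the interplay between the logarithmic weight $\log(1/\varepsilon_n)\sim\tfrac{p_n}{2}\log M_n$ and the ``local mass'' $\int_{B_{\rho\varepsilon_n}(x_n)}u_{1,p_n}^{p_n}\sim M_n/p_n$, whose product is of order $M_n\log M_n$ and thus cannot be $\le u_{1,p_n}(x_n)=M_n$.
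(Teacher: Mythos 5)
Your argument is correct and is essentially the paper's own (Kamburov--Sirakov) scheme: interior location of the maximum via Lemma \ref{lemma2-1}, a pointwise lower bound $u_{1,p_n}\geq c\,M_n$ on a ball of the bubble scale $\varepsilon_n=(p_nM_n^{p_n-1})^{-1/2}$ around the maximum, and the Green representation with the logarithmic kernel forcing $\log M_n\leq C$; the only (cosmetic) difference is that the paper obtains the local lower bound from the inhomogeneous Harnack inequality after rescaling by $\mathcal{M}_p^{-(p-1)/2}$, whereas you use a Newtonian-potential/harmonic splitting plus Harnack at the scale $\varepsilon_n$. One minor step to tighten: $w_n\geq -C\rho$ on $B_\rho$ does not follow from $\|w_n\|_{L^\infty(B_1)}\leq C$ alone, but it does follow from $w_n(0)=0$ together with the interior gradient bounds your decomposition already yields (the $C^{1,\alpha}$ bound on $\mathcal{N}_n$ and the gradient estimate for the bounded harmonic $h_n$), and this linear-in-$\rho$ bound is exactly what keeps $(1+w_n/p_n)^{p_n}$ uniformly bounded below when $p_n$ stays close to $p_0$.
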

\begin{proof} 
Without loss of generality, we may assume
\begin{align}
    \mathcal{M}_p:=u_{1,p}(x_{1,p})=\max_{\Omega}u_{1,p}\geq \max_{\Omega}u_{2,p}.
\end{align}
Then Lemma \ref{lemma2-1} implies $dist(x_{1,p},\partial\Omega)\geq\delta$. 
Without loss of generality, we may assume $x_{1,p}=0$ and so $B_\delta\subset\Omega$.  Furthermore, by a scaling argument as \cite[Theorem 1.1]{K-S-18}, we may also assume $\Omega\subset B_1$.

Let $G(y)$ be the Green function for $-\Delta$ in $\Omega$ with the Dirichlet boundary condition and the pole at the origin. Then
$$G(y)=\frac{1}{2\pi}\log\frac{1}{|y|}-g(y),$$
where $g$ is harmonic in $\Omega$ with $g(y)=\frac{1}{2\pi}\log\frac{1}{|y|}\in [0, \frac{1}{2\pi}\log \frac1\delta]$ for $y\in \partial\Omega$, so $0\leq g(y)\leq \frac{1}{2\pi}\log \frac1\delta$ for all $y\in \Omega$. Noting from Lemma \ref{lemma2-1} and H\"{o}lder inequality that $ \int_{\Omega} u_{1,p}^{\frac{p-1}{2}} u_{2,p}^{\frac{p+1}{2}}  dx \leq C$, we see from
  the Green‘s representation formula that
    \begin{align}\label{eq-5-25}
      \mathcal{M}_p= u_{1,p}(0)=&\frac{1}{2\pi}\int_{\Omega}\log(\frac1{|y|}) ( \mu_1 u_{1,p}^{p}+\beta u_{1,p}^{\frac{p-1}{2}}u_{2,p}^{\frac{p+1}{2}})dy\\
        &-\int_{\Omega}g(y)( \mu_1 u_{1,p}^{p}+\beta u_{1,p}^{\frac{p-1}{2}}u_{2,p}^{\frac{p+1}{2}})dy\nonumber\\
        \geq &\frac{\mu_1}{2\pi}\int_{\Omega}\log(\frac1{|y|}) u_{1,p}^{p} dy-C.\nonumber
    \end{align}
 Denote
    \begin{align}
        v_{k,p}(x):=1-\frac{u_{k,p}(\mathcal{M}_p^{-\frac{p-1}{2}}x)}{\mathcal{M}_p},
    \end{align}
   then
    \begin{align*}
        \begin{cases}
            \Delta v_{1,p}=\mu_1(1-v_{1,p})^p+\beta (1-v_{1,p})^{\frac{p-1}{2}}(1-v_{2,p})^{\frac{p+1}{2}}\;\text{in}\; \widetilde{\Omega}:=\mathcal{M}_p^{\frac{p-1}{2}}\Omega\subset B_{\mathcal{M}_p^{(p-1)/2}},\\
            v_{1,p}(0)=0, \quad 0\leq v_{1,p}\leq 1,
           \quad 0\leq v_{2,p}\leq 1.
        \end{cases}
    \end{align*}
    Note from \eqref{eq-5-25} that
    \[\int_{\widetilde\Omega}\log(\mathcal{M}_p^{\frac{p-1}{2}}/|x|)(1-v_{1,p})^pdx\leq \frac{2\pi}{\mu_1}+\frac{2\pi C}{\mu_1 \mathcal{M}_p}.\]

    If $\mathcal{M}_p\leq 1$, we get the desired conclusion. So we assume that $\mathcal{M}_p> 1$, which implies $B_{\delta}\subset\Omega\subset\widetilde\Omega$. Since 
    \begin{align}
        0\leq \mu_1(1-v_{1,p})^p+\beta (1-v_{1,p})^{\frac{p-1}{2}}(1-v_{2,p})^{\frac{p+1}{2}}\leq \mu_1+\beta,
    \end{align}
    it follows from $v_{1,p}(0)=0$ and the inhomogeneous Harnack inequality \cite[Theorem 4.17]{Han-lin-book} that
    $$\|v_{1,p}\|_{L^\infty(B_r)}\leq Cr^2,\quad \forall r\in (0,\delta), $$
    and so
    \begin{align}
     v_{1,p}\leq \frac{C}{p},\quad\text{in}\quad B_{\frac{\delta}{\sqrt{p}}},
    \end{align}which implies
    \begin{align}
         &(1-v_{1,p})^p
         \geq\left(1-\frac{C}{p}\right)^p\geq e^{-C},\quad\text{in}\quad B_{\frac{\delta}{\sqrt{p}}}.
    \end{align}
    It follows that
    \begin{align}\label{eq-5-31}
    \frac{2\pi}{\mu_1}+\frac{2\pi C}{\mu_1}
   \geq &\frac{2\pi}{\mu_1}+\frac{2\pi C}{\mu_1\mathcal{M}_p}\geq\int_{B_{\frac{\delta}{\sqrt{p}}}}\log(\mathcal{M}_p^{\frac{p-1}{2}}/|x|)(1-v_{1,p})^pdx\nonumber\\
    \geq &C\int^{\frac{\delta}{\sqrt{p}}}_0
    \log\left({\mathcal{M}_p^{\frac{p-1}{2}}}/{r}\right) r dr \geq \frac{C}{p}\log\left({\sqrt{p}\mathcal{M}_p^{\frac{p-1}{2}}}/{\delta}\right) \nonumber\\
    \geq &C\left(\frac{\log p}{p}+\frac{p-1}{p}\log{\mathcal{M}_p}\right)\geq C\log{\mathcal{M}_p}\nonumber,
    \end{align}
    where we have used that for any $\alpha,\rho>0$,
    \begin{align}
        \int_0^\rho \log(\alpha/r)r dr\geq \frac{1}{2} \log(\alpha/\rho)\rho^2. 
    \end{align}
  This implies  ${\mathcal{M}_p}\leq C$.
\end{proof}

For later usage, we give the following Pohozaev identities.

\begin{Lemma}\label{lemma-2-4}
    Let $(u_{1,p},u_{2,p})$ be a solution of the system \eqref{eq-1.1}, $\Omega'\subset\Omega$ and $y\in\mathbb R^2$. Then 
    \begin{align}
        &\frac{2}{p+1}\int_{\Omega'}\left(\mu_1 u_{1,p}^{p+1}+\mu_2 u_{2,p}^{p+1}+2\beta u_{1,p}^{\frac{p+1}{2}}u_{2,p}^{\frac{p+1}{2}}\right) dx\label{eq-2--7}\\
        =&\sum_{k=1}^2\int_{\partial\Omega'}\langle\nabla u_{k,p},x-y\rangle \langle\nabla u_{k,p}, \Vec{n}(x)\rangle dS_x\nonumber\\
        &-\frac{1}{2}\sum_{k=1}^2\int_{\partial\Omega'}|\nabla u_{k,p}|^2 \langle x-y, \Vec{n}(x)\rangle dS_x\nonumber\\
        &+\frac{1}{p+1}\int_{\partial\Omega'}\langle x-y, \Vec{n}(x)\rangle\left(\mu_1 u_{1,p}^{p+1}+\mu_2 u_{2,p}^{p+1}+2\beta u_{1,p}^{\frac{p+1}{2}}u_{2,p}^{\frac{p+1}{2}}\right)dS_x\nonumber,
        \end{align}
        and
        \begin{align}
        &\frac{1}{2}\sum_{k=1}^2\int_{\partial\Omega'}|\nabla u_{k,p}|^2 n_i dS_x\label{eq-2--8} -\sum_{k=1}^2\int_{\partial\Omega'}\partial_i u_{k,p} \langle\nabla u_{k,p}, \Vec{n}(x)\rangle dS_x\\   
        =&\frac{1}{p+1}\int_{\partial\Omega'}\left(\mu_1 u_{1,p}^{p+1}+\mu_2 u_{2,p}^{p+1}+2\beta u_{1,p}^{\frac{p+1}{2}}u_{2,p}^{\frac{p+1}{2}}\right) n_i dS_x,\nonumber
    \end{align}
  where $\Vec{n}=(n_1,n_2)$ denotes the outward normal vector of $\partial\Omega'$.
\end{Lemma}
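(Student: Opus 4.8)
The final statement to prove is Lemma \ref{lemma-2-4}, the two Pohozaev identities for solutions of the system \eqref{eq-1.1} on an arbitrary subdomain $\Omega'\subset\Omega$. These are standard integration-by-parts identities, so the proof is computational rather than conceptual; the task is to organize the bookkeeping cleanly for the coupled system.

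\textbf{Plan.} The starting point is to multiply each equation of \eqref{eq-1.1} by an appropriate vector field applied to $u_{k,p}$ and integrate over $\Omega'$. For the first identity \eqref{eq-2--7} I would test the equation for $u_{k,p}$ against $\langle\nabla u_{k,p}, x-y\rangle$ (the dilation generator centered at $y$), sum over $k=1,2$, and integrate by parts. On the left-hand side, the term $-\int_{\Omega'}\Delta u_{k,p}\,\langle\nabla u_{k,p},x-y\rangle\,dx$ produces, after integration by parts, the bulk term $-\int_{\Omega'}\nabla u_{k,p}\cdot\nabla\langle\nabla u_{k,p},x-y\rangle\,dx$ plus the boundary term $\int_{\partial\Omega'}\langle\nabla u_{k,p},x-y\rangle\langle\nabla u_{k,p},\vec n\rangle\,dS$; using the pointwise identity $\nabla u\cdot\nabla\langle\nabla u, x-y\rangle = |\nabla u|^2 + \tfrac12\langle x-y,\nabla|\nabla u|^2\rangle$ and integrating the last piece by parts again, the bulk contribution becomes $(N/2-1)\int|\nabla u_{k,p}|^2$ (with $N=2$, this is $0$) plus the boundary term $-\tfrac12\int_{\partial\Omega'}|\nabla u_{k,p}|^2\langle x-y,\vec n\rangle\,dS$. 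This is exactly why the dimension-two case makes the $\int|\nabla u_{k,p}|^2$ term vanish and only boundary terms survive. On the right-hand side, the nonlinearity $\mu_k u_{k,p}^p + \beta u_{k,p}^{(p-1)/2}u_{3-k,p}^{(p+1)/2}$ tested against $\langle\nabla u_{k,p},x-y\rangle$ is, when summed over $k$, a total divergence: one checks that $\sum_k\big(\mu_k u_{k,p}^p + \beta u_{k,p}^{(p-1)/2}u_{3-k,p}^{(p+1)/2}\big)\langle\nabla u_{k,p},x-y\rangle = \tfrac{1}{p+1}\,\mathrm{div}\Big((x-y)\big(\mu_1 u_{1,p}^{p+1}+\mu_2 u_{2,p}^{p+1}+2\beta u_{1,p}^{(p+1)/2}u_{2,p}^{(p+1)/2}\big)\Big) - \tfrac{2}{p+1}\big(\mu_1 u_{1,p}^{p+1}+\mu_2 u_{2,p}^{p+1}+2\beta u_{1,p}^{(p+1)/2}u_{2,p}^{(p+1)/2}\big)$; here the mixed term works because $\tfrac{p-1}{2}+\tfrac{p+1}{2}=p$ and $\nabla\big(u_1^{(p+1)/2}u_2^{(p+1)/2}\big)$ splits symmetrically, so the coupling terms from the two equations combine into the single factor $2\beta u_{1,p}^{(p+1)/2}u_{2,p}^{(p+1)/2}$. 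Applying the divergence theorem to the divergence term and rearranging yields \eqref{eq-2--7}.

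For the second identity \eqref{eq-2--8}, which is the momentum-type (translation) Pohozaev identity in the $i$-th coordinate direction, I would instead test the equation for $u_{k,p}$ against $\partial_i u_{k,p}$, sum over $k$, and integrate by parts. The left side gives $\sum_k\int_{\partial\Omega'}\partial_i u_{k,p}\langle\nabla u_{k,p},\vec n\rangle\,dS - \tfrac12\sum_k\int_{\partial\Omega'}|\nabla u_{k,p}|^2 n_i\,dS$ (the interior terms cancel since $\int_{\Omega'}\nabla u_{k,p}\cdot\nabla\partial_i u_{k,p} = \tfrac12\int_{\Omega'}\partial_i|\nabla u_{k,p}|^2$ is a pure boundary term). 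The right side, again using that the nonlinearity summed over $k$ is $\partial_i$ of the combined potential divided by $p+1$, gives $\tfrac{1}{p+1}\int_{\partial\Omega'}\big(\mu_1 u_{1,p}^{p+1}+\mu_2 u_{2,p}^{p+1}+2\beta u_{1,p}^{(p+1)/2}u_{2,p}^{(p+1)/2}\big)n_i\,dS$. Collecting terms gives \eqref{eq-2--8}.

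\textbf{Main obstacle.} There is no deep obstacle — the only delicate point is verifying that the coupling term behaves correctly: one must confirm that testing the $u_1$-equation's $\beta$-term against $\langle\nabla u_1,x-y\rangle$ and the $u_2$-equation's $\beta$-term against $\langle\nabla u_2,x-y\rangle$ and adding them produces exactly $\tfrac{\beta}{p+1}$ times the divergence (resp. $\partial_i$) of $2u_{1,p}^{(p+1)/2}u_{2,p}^{(p+1)/2}$, with no leftover bulk term beyond the $\tfrac{2}{p+1}$ factor in \eqref{eq-2--7}. This is the identity $\beta u_1^{(p-1)/2}u_2^{(p+1)/2}\partial_j u_1 + \beta u_2^{(p-1)/2}u_1^{(p+1)/2}\partial_j u_2 = \tfrac{2\beta}{p+1}\partial_j\big(u_1^{(p+1)/2}u_2^{(p+1)/2}\big)$, which follows by the product and chain rules; I would state it as a one-line computation. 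A secondary technical point is justifying the integrations by parts: since $\Omega$ is smooth and $(u_{1,p},u_{2,p})$ is a classical positive solution, $u_{k,p}\in C^2(\overline{\Omega})$ by elliptic regularity, so all boundary integrals over $\partial\Omega'$ are well-defined for any subdomain $\Omega'$ with, say, Lipschitz boundary, and no regularization is needed.
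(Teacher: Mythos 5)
Your proposal is correct and is essentially the paper's own argument: the paper records the very same computation as pointwise divergence identities for $\langle x-y,\nabla u_{k,p}\rangle\,\Delta u_{k,p}$ and $\partial_i u_{k,p}\,\Delta u_{k,p}$ together with the corresponding identities for the nonlinearity (where the coupling terms combine exactly as you note, since $\tfrac{p-1}{2}+\tfrac{p+1}{2}=p$), and then integrates over $\Omega'$ using the divergence theorem, which is the same bookkeeping you perform via successive integrations by parts. One trivial sign remark: your first step should read $-\int_{\Omega'}\Delta u_{k,p}\,v\,dx=+\int_{\Omega'}\nabla u_{k,p}\cdot\nabla v\,dx-\int_{\partial\Omega'}v\,\langle\nabla u_{k,p},\Vec{n}\rangle\,dS_x$ (both of your signs are flipped), but since the boundary terms you end up listing are precisely those on the right-hand side of \eqref{eq-2--7}, the final rearrangement is unaffected.
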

\begin{proof}
    By direct computations, we have for $k=1,2$,
    \begin{align*}
        &\langle x-y, \nabla  u_{k,p}  \rangle \Delta u_{k,p} =\div \left(\langle x-y, \nabla  u_{k,p}  \rangle \nabla u_{k,p}-\frac{|\nabla u_{k,p}|^2}{2}(x-y)\right),\\
        &\langle x-y, \nabla  u_{1,p} \rangle \left(\mu_1 u_{1,p}^p +\beta u_{1,p}^{\frac{p-1}{2}}u_{2,p}^{\frac{p+1}{2}}\right)+\langle x-y, \nabla  u_{2,p} \rangle \left(\mu_2 u_{2,p}^p +\beta u_{2,p}^{\frac{p-1}{2}}u_{1,p}^{\frac{p+1}{2}}\right)\\
        =&\frac{1}{p+1} \div \left( \left(\mu_1 u_{1,p}^{p+1}+\mu_2 u_{2,p}^{p+1}+2\beta u_{1,p}^{\frac{p+1}{2}}u_{2,p}^{\frac{p+1}{2}}\right)(x-y)\right)\nonumber\\
        &-\frac{2}{p+1}\left(\mu_1 u_{1,p}^{p+1}+\mu_2 u_{2,p}^{p+1}+2\beta u_{1,p}^{\frac{p+1}{2}}u_{2,p}^{\frac{p+1}{2}}\right). \nonumber
    \end{align*}
    Then multiplying the $k$-th equation of \eqref{eq-1.1} with $\langle x-y, \nabla  u_{k,p}  \rangle$ and integrating over $\Omega'$, we obtain \eqref{eq-2--7}. Similarly, we have
    \begin{align*}
        &\partial_i u_{k,p} \Delta u_{k,p}=\div (\partial_i u_{k,p} \nabla u_{k,p})-\frac{1}{2} \partial_i(|\nabla u_{k,p}|^2),\\
        &\partial_i u_{1,p} \left(\mu_1 u_{1,p}^p +\beta u_{1,p}^{\frac{p-1}{2}}u_{2,p}^{\frac{p+1}{2}}\right)+\partial_i u_{2,p} \left(\mu_2 u_{2,p}^p +\beta u_{2,p}^{\frac{p-1}{2}}u_{1,p}^{\frac{p+1}{2}}\right)\\
        =&\frac{1}{p+1}\partial_i\left(\mu_1 u_{1,p}^{p+1}+\mu_2 u_{2,p}^{p+1}+2\beta u_{1,p}^{\frac{p+1}{2}}u_{2,p}^{\frac{p+1}{2}}\right).\nonumber
    \end{align*}
    Then multiplying the $k$-th equation of \eqref{eq-1.1} with $\partial_i u_{k,p}$ and integrating over $\Omega'$, we obtain \eqref{eq-2--8}. 
\end{proof}

We also need the following estimate for superharmonic functions from \cite{Sirakov2022}.

\begin{theorem} \cite[Theorem 1.3]{Sirakov2022}\label{thm-Sirakov}
Assume $u\in H_0^1(\Omega)$ is a nonnegative weak solution to $-\Delta u\geq 0$ in a bounded $C^{1,1}$ domain $\Omega\subset\mathbb R^N$. Then for each $t<\frac{N}{N-1}$,
\[\inf_{\Omega}\frac{u(x)}{dist(x,\partial\Omega)}\geq C\|u\|_{L^t(\Omega)},\]
where the constant $C>0$ depends on $\Omega$, $t$ and $N$.
\end{theorem}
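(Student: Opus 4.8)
The plan is to deduce the estimate from the Green representation of $u$ together with the classical two-sided bounds for the Dirichlet Green function $G$ of the $C^{1,1}$ domain $\Omega$. Write $\mu:=-\Delta u$, which by hypothesis is a nonnegative distribution, hence a nonnegative Radon measure on $\Omega$; since $u\in H_0^1(\Omega)$ one has $u=\int_\Omega G(\cdot,y)\,d\mu(y)$. Put $\delta(x):=\mathrm{dist}(x,\partial\Omega)$ and $I:=\int_\Omega\delta(y)\,d\mu(y)$. First note $I<\infty$: letting $w$ be the torsion function ($-\Delta w=1$ in $\Omega$, $w=0$ on $\partial\Omega$), interior $C^{1,\alpha}$-regularity up to $\partial\Omega$ and Hopf's lemma give $w\asymp\delta$ on $\overline\Omega$, so $I\le C\int_\Omega w\,d\mu=C\int_\Omega\nabla u\cdot\nabla w=C\int_\Omega u<\infty$. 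The whole proof consists in squeezing both $\inf_\Omega u/\delta$ and $\|u\|_{L^t}$ against this single scalar $I$.

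\textbf{Step 1 (pointwise lower bound).} The sharp lower Green estimate on a bounded $C^{1,1}$ domain reads $G(x,y)\gtrsim |x-y|^{2-N}\min\{1,\delta(x)\delta(y)/|x-y|^2\}$ for $N\ge3$, and $G(x,y)\gtrsim\log(1+\delta(x)\delta(y)/|x-y|^2)$ for $N=2$. Splitting into the cases $\delta(x)\delta(y)\le|x-y|^2$ and $\delta(x)\delta(y)\ge|x-y|^2$ and using $|x-y|\le\mathrm{diam}(\Omega)$, one gets in all cases $G(x,y)\ge c_\Omega\,\delta(x)\,\delta(y)$ for every $x,y\in\Omega$, with $c_\Omega>0$ depending only on $\Omega$ and $N$. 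Integrating against $\mu$ gives $u(x)\ge c_\Omega\,\delta(x)\,I$ for all $x\in\Omega$, hence $\inf_\Omega u/\delta\ge c_\Omega\,I$.

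\textbf{Step 2 ($L^t$ upper bound and conclusion).} Next I show $\|G(\cdot,y)\|_{L^t(\Omega)}\le C_t\,\delta(y)$ precisely for $t<\tfrac{N}{N-1}$. Fix $y$ and split $\Omega$ into the near set $\{|x-y|<\delta(y)/2\}$, where $\delta(x)\asymp\delta(y)$ and $G(x,y)\lesssim|x-y|^{2-N}$ (resp. $\lesssim\log(\delta(y)/|x-y|)$ when $N=2$), and the far set $\{|x-y|\ge\delta(y)/2\}$, where the $1$-Lipschitz bound $\delta(x)\le\delta(y)+|x-y|\le 3|x-y|$ and the upper Green estimate give $G(x,y)\lesssim\delta(x)\delta(y)|x-y|^{-N}\lesssim\delta(y)|x-y|^{1-N}$. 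On the far set $\int_{\delta(y)/2\le|x-y|\le\mathrm{diam}(\Omega)}\big(\delta(y)|x-y|^{1-N}\big)^t\,dx\asymp\delta(y)^t\int_{\delta(y)/2}^{\mathrm{diam}(\Omega)}r^{(N-1)(1-t)}\,dr$, which is finite and bounded by $C_t\delta(y)^t$ exactly when $(N-1)(1-t)>-1$, i.e. $t<\tfrac{N}{N-1}$; on the near set one gets at most $C_t\delta(y)^{(2-N)+N/t}\le C_t\delta(y)$ in the same range (and $C_t\delta(y)^{2/t}\le C_t\delta(y)$ when $N=2$). By Minkowski's integral inequality, $\|u\|_{L^t}=\|\int_\Omega G(\cdot,y)\,d\mu(y)\|_{L^t}\le\int_\Omega\|G(\cdot,y)\|_{L^t}\,d\mu(y)\le C_t\,I$. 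Combined with Step 1, $\inf_\Omega u/\delta\ge c_\Omega I\ge (c_\Omega/C_t)\,\|u\|_{L^t}$, which is the assertion. (The case $t\in(0,1)$ reduces to $t=1$ via $\|u\|_{L^t}\le C\|u\|_{L^1}$ on the bounded domain.)

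The routine portions are the two radial integrations and the case-checking in the Green function estimates; the real content — and the step that needs the most care — is the uniformity of all constants in $y$ up to $\partial\Omega$, which is exactly what the boundary Harnack inequality / Hopf lemma on $C^{1,1}$ domains supplies, and the identification of $\tfrac{N}{N-1}$ as the sharp threshold: once $t\ge\tfrac{N}{N-1}$ the far-region integral above diverges as $\delta(y)\to0$, so the bound $\|G(\cdot,y)\|_{L^t}\le C\delta(y)$, and with it the conclusion of the theorem, genuinely fails in that range.
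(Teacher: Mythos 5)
Your argument is correct, but it is worth noting that the paper itself gives no proof of this statement at all: it is quoted verbatim from Sirakov's paper (\cite[Theorem 1.3]{Sirakov2022}), where it is established for general divergence-form operators with merely measurable coefficients via a boundary weak Harnack inequality. Your route is genuinely different and specific to the Laplacian: you write $\mu=-\Delta u\geq 0$ as a Radon measure with $\int_\Omega \delta\,d\mu<\infty$ (via the torsion function), represent $u=\int_\Omega G(\cdot,y)\,d\mu(y)$, and then squeeze both sides against $I=\int_\Omega\delta\,d\mu$ using the classical two-sided Green function estimates on $C^{1,1}$ domains (Zhao/Gr\"uter--Widman): the global lower bound $G(x,y)\geq c_\Omega\,\delta(x)\delta(y)$ gives $\inf_\Omega u/\delta\geq c_\Omega I$, while the near/far splitting shows $\|G(\cdot,y)\|_{L^t}\leq C_t\,\delta(y)$ exactly for $t<\frac{N}{N-1}$, whence $\|u\|_{L^t}\leq C_tI$ by Minkowski's integral inequality. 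The exponent computations ($(N-1)(1-t)>-1$ on the far set, $(2-N)+N/t>1$, resp.\ $2/t\geq 1$, on the near set) are right, and this is exactly what the paper needs, since it only applies the result to $-\Delta$. What each approach buys: yours is shorter and immediately exhibits $\frac{N}{N-1}$ as the sharp threshold, but it relies on sharp Green function bounds that are only available for operators with sufficiently regular coefficients on $C^{1,1}$ domains; Sirakov's measure-theoretic/weak-Harnack argument avoids Green functions entirely and covers rough coefficients, which is why it is stated in that generality in the source. Two small points you should make explicit if this were written out in full: the identity $\int_\Omega w\,d\mu=\int_\Omega\nabla u\cdot\nabla w=\int_\Omega u$ requires an approximation argument because the torsion function $w$ is not an admissible compactly supported test function for the distributional inequality, and the representation $u=G\mu$ for an $H_0^1$ function with measure Laplacian deserves a one-line justification (Riesz decomposition plus the fact that a nonnegative harmonic function dominated by an $H_0^1(\Omega)$ function vanishes). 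Neither is a genuine gap; both are standard.
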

Now we are ready to prove Theorem \ref{prop-5-2}.
\begin{proof}[Proof of Theorem \ref{prop-5-2}]
Assume in addition that the smooth bounded domain $\Omega$ is star-shaped. Without loss of generality, we may assume that $\Omega$ is star-shaped with respect to the origin, i.e.
\begin{align}\label{eq-5-34}
    \langle x,\Vec{n}(x)\rangle\geq 0,\quad\forall x\in\partial\Omega.
\end{align}
  Applying \eqref{eq-2--7} with $\Omega'=\Omega$ and $y=0$, it follows from $\nabla u_{k,p}=-|\nabla u_{k,p}|\vec{n}$ on $\partial\Omega$ that
  \begin{align}\label{eq-5-35}
      &\frac{4}{p+1}\int_{\Omega}\left(\mu_1 u_{1,p}^{p+1}+\mu_2 u_{2,p}^{p+1}+2\beta u_{1,p}^{\frac{p+1}{2}}u_{2,p}^{\frac{p+1}{2}}\right) dx\nonumber\\
        =&\int_{\partial\Omega}\left(|\nabla u_{1,p}|^2 \langle x, \Vec{n}(x)\rangle+|\nabla u_{2,p}|^2 \langle x, \Vec{n}(x)\rangle\right)dS_x.
  \end{align}
  Using Theorem \ref{thm-Sirakov} with $t=1$, we get that for any $x\in\partial\Omega$,
  \begin{equation*}
  |\nabla u_{k,p}(x)|=-\langle \nabla u_{k,p}(x),\vec{n}(x)\rangle\geq \inf_{\Omega}\frac{u_{k,p}(\cdot)}{dist(\cdot,\partial\Omega)}\geq C\|u_{k,p}\|_{L^1(\Omega)}.
  \end{equation*}
  From here, \eqref{eq-5-34} and Lemma \ref{lemma2-1}, we get
  \begin{align*}
  \int_{\partial\Omega}|\nabla u_{k,p}|^2 \langle x, \Vec{n}(x)\rangle dS_x&\geq C^2\|u_{k,p}\|_{L^1(\Omega)}^2\int_{\partial\Omega}\langle x, \Vec{n}(x)\rangle dS_x\\
  &=C^2\|u_{k,p}\|_{L^1(\Omega)}^2\int_{\Omega}{\rm div}(x)dx=C_1\|u_{k,p}\|_{L^1(\Omega)}^2\\
  &\geq C_2\left(\int_{\Omega}u_{k,p}^{p}dx\right)^2.
  \end{align*}
  This, together with Lemma \ref{lm-5-4} and \eqref{eq-5-35}, implies
  \begin{align*}
  \frac{C_2}{2}\left(\sum_{k=1}^2\int_{\Omega}u_{k,p}^{p}dx\right)^2&\leq   C_2\sum_{k=1}^2\left(\int_{\Omega}u_{k,p}^{p}dx\right)^2\leq \sum_{k=1}^2 \int_{\partial\Omega}|\nabla u_{k,p}|^2 \langle x, \Vec{n}(x)\rangle dS_x\\
 & =\frac{4}{p+1}\int_{\Omega}\left(\mu_1 u_{1,p}^{p+1}+\mu_2 u_{2,p}^{p+1}+2\beta u_{1,p}^{\frac{p+1}{2}}u_{2,p}^{\frac{p+1}{2}}\right) dx\\
 &\leq \frac{C_3}{p}\sum_{k=1}^2\int_{\Omega}u_{k,p}^{p}dx,
  \end{align*}
so
\begin{align}\label{eq-5-43}
    p\int_{\Omega} \left( u_{1,p}^p+  u_{2,p}^p \right)dx\leq C.
\end{align}
Consequently,
\begin{align}
    &p\int_{\Omega} |\nabla u_{1,p}|^2+|\nabla u_{2,p}|^2 dx\nonumber\\
    =&p\int_{\om} \left(\mu_1 u_{1,p}^{p+1}+\mu_2 u_{2,p}^{p+1}+2\beta u_{1,p}^{\frac{p+1}{2}}u_{2,p}^{\frac{p+1}{2}} \right) dx\\
    \leq & C p \int_{\Omega} \left( u_{1,p}^p+ u_{2,p}^p \right)dx \leq C.\nonumber
\end{align}
The proof is complete.
\end{proof}

\section{Analysis of the concentration set}
\label{section-3}

First, we recall some known results that will be used in our following arguments.

\begin{Lemma}\cite[Lemma 2.1]{Rn-Wei-94}\label{lm2.4}
Let $D\subset \mathbb{R}^2$ be a smooth bounded domain. Then for every $p>1$ there exists $S_p>0$ such that 
\begin{align}
    \left\Vert v \right\Vert_{L^{p+1}(D)}\leq S_p (p+1)^{\frac{1}{2}}\left\Vert \nabla v \right\Vert_{L^{2}(D)},\quad \forall v\in H_0^1 (D).
\end{align}
Moreover,
\begin{align}
    \lim\limits_{p\rightarrow +\infty}S_p={(8\pi e)^{-\frac{1}{2}}}.
\end{align}
\end{Lemma}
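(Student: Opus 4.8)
The plan is to derive the sharp asymptotics $\Lambda_p=(1+o(1))\sqrt{(p+1)/(8\pi e)}$ as $p\to\infty$ for the optimal constant
$\Lambda_p:=\sup\{\|v\|_{L^{p+1}(D)}/\|\nabla v\|_{L^2(D)}:v\in H_0^1(D)\setminus\{0\}\}$; this is exactly the content of the lemma, since $\Lambda_p<\infty$ (because $H_0^1(D)\hookrightarrow L^{p+1}(D)$ is continuous on the bounded domain $D$) and $S_p$ is by definition $\Lambda_p/(p+1)^{1/2}$. I would obtain the asymptotics from matching upper and lower bounds.

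\emph{Upper bound.} I would invoke the sharp Moser--Trudinger inequality: there is $C_0=C_0(D)$ with $\int_D e^{4\pi v^2}\,dx\le C_0$ whenever $\|\nabla v\|_{L^2(D)}\le 1$. For such $v$ and any positive integer $m$, the termwise bound $e^{4\pi v^2}\ge (4\pi v^2)^m/m!$ gives $|v|^{2m}\le \frac{m!}{(4\pi)^m}e^{4\pi v^2}$, hence $\int_D|v|^{2m}\,dx\le \frac{m!}{(4\pi)^m}C_0$. Choosing $m=\lceil (p+1)/2\rceil$ and using the inclusion $\|v\|_{L^{p+1}}\le|D|^{\frac1{p+1}-\frac1{2m}}\|v\|_{L^{2m}}$ (valid since $2m\ge p+1$) together with Stirling's formula $(m!)^{1/(2m)}=(1+o(1))\sqrt{m/e}$, $m=(1+o(1))(p+1)/2$, $C_0^{1/(2m)}\to1$ and $|D|^{1/(p+1)-1/(2m)}\to1$, I get $\|v\|_{L^{p+1}}\le(1+o(1))\sqrt{(p+1)/(8\pi e)}$ uniformly; taking the supremum over $v$ gives $\limsup_p S_p\le(8\pi e)^{-1/2}$.

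\emph{Lower bound.} I would test with a concentrating Moser logarithm. Fix a ball $B_{r_0}(x_0)\subset D$ and, for $0<\varepsilon<r_0$, let $m_\varepsilon$ be the Lipschitz function supported in $B_{r_0}(x_0)$ that equals $\tfrac1{\sqrt{2\pi}}(\log\tfrac{r_0}{\varepsilon})^{1/2}$ on $B_\varepsilon(x_0)$ and $\tfrac1{\sqrt{2\pi}}\tfrac{\log(r_0/|x-x_0|)}{(\log(r_0/\varepsilon))^{1/2}}$ on the annulus $\varepsilon\le|x-x_0|\le r_0$. A direct computation gives $\|\nabla m_\varepsilon\|_{L^2}^2=1$ and $\|m_\varepsilon\|_{L^{p+1}}^{p+1}\ge|B_\varepsilon|\,\|m_\varepsilon\|_{L^\infty}^{p+1}=\pi\varepsilon^2\big(\tfrac{\log(r_0/\varepsilon)}{2\pi}\big)^{(p+1)/2}$. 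Writing $\varepsilon=r_0e^{-\alpha p}$, this yields $\|m_\varepsilon\|_{L^{p+1}}\ge(1+o(1))\sqrt{p/(2\pi)}\,\sqrt{\alpha}\,e^{-2\alpha}$, and the right side is maximized at $\alpha=\tfrac14$; with $\varepsilon_p:=r_0e^{-p/4}$ we obtain $\|m_{\varepsilon_p}\|_{L^{p+1}}\ge(1-o(1))e^{-1/2}\sqrt{p/(8\pi)}=(1-o(1))\sqrt{(p+1)/(8\pi e)}$, so $\Lambda_p\ge(1-o(1))\sqrt{(p+1)/(8\pi e)}$ and $\liminf_p S_p\ge(8\pi e)^{-1/2}$. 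Together with the upper bound this proves $\lim_p S_p=(8\pi e)^{-1/2}$; the $D$-dependent quantities $C_0$ and $r_0$ are harmless because they enter only through factors that tend to $1$.

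The delicate point is the exact matching of constants on the two sides. The upper bound is sharp precisely because the exponent $4\pi$ in the Moser--Trudinger inequality cannot be improved, and the test-function computation reproduces the same constant only after the optimal concentration rate $\log(r_0/\varepsilon_p)\sim p/4$ is chosen, which is exactly what replaces $\sqrt{p/(8\pi)}$ by $\sqrt{p/(8\pi e)}$ on both ends via a common factor $e^{-1/2}$. The remaining manipulations (Stirling's formula, the $L^s$-nesting, and the elementary computation of $\|\nabla m_\varepsilon\|_{L^2}$) are routine.
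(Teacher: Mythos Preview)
Your argument is correct. The paper does not supply its own proof of this lemma; it simply quotes the result from Ren--Wei \cite{Rn-Wei-94}, so there is nothing in the present paper to compare against beyond the citation. Your approach---Moser--Trudinger plus Stirling for the upper bound, and an optimized Moser logarithm for the lower bound---is in fact the same strategy used in the original reference, and the constants you obtain match. One small expository quibble: the factor $e^{-1/2}$ does not arise from a ``common'' mechanism on both sides; on the upper-bound side it comes from Stirling's formula $(m!)^{1/(2m)}\sim\sqrt{m/e}$, while on the lower-bound side it comes from optimizing the concentration rate $\alpha$. But this is cosmetic and does not affect the validity of the proof.
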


\begin{theorem} \cite[Theorem 1.1]{our-24}\label{th-3.1}
   Let $U_1, U_2 \in L^1_{loc}(\mathbb{R}^2)$ and $(U_1,U_2)$ be a solution of the Liouville-type system 
   \begin{equation}\label{eq-C-Z}
\begin{cases}
-\Delta U_1 = \mu_1 e^{U_1} +\beta e^{\frac{U_1+U_2}{2}}  \quad\text{in} \, \mathbb{R}^2,\\
-\Delta U_2 = \mu_2 e^{U_2} +\beta e^{\frac{U_1+U_2}{2}}  \quad\text{in} \, \mathbb{R}^2,\\
\int_{\mathbb{R}^2}e^{U_1} < \infty \,\, ,\,\,\int_{\mathbb{R}^2}e^{U_2} < \infty,
\end{cases}
\end{equation} 
in the sense of distribution.
Then there exist $\lambda\in(0,+\infty)$ and $x_0\in\mathbb{R}^2$ such that
    \begin{align*}
    U_k(x)=\log\frac{8\lambda^2}{(1+\lambda^2|x-x_0|^2)^2}-\log\left(1+\frac{\beta}{\sqrt{\mu_1\mu_2}}\right)-\log \mu_k,\; k=1,2.
    \end{align*}
Consequently, $$\int_{\mathbb{R}^2}\mu_{k} e^{U_k} +\beta e^{\frac{U_1+U_2}{2}}dx=8\pi,\quad k=1,2,$$
or equivalently,
$$\int_{\mathbb R^2}e^{U_k}dx=\frac{8\pi}{\mu_k(1+\frac{\beta}{\sqrt{\mu_{1}\mu_{2}}})},\quad k=1,2.$$
\end{theorem}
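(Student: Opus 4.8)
The plan is to reduce \eqref{eq-C-Z} to a single Liouville equation. Set $b:=\beta/\sqrt{\mu_1\mu_2}>0$, $f:=U_1+\log\mu_1$ and $g:=U_2+\log\mu_2$, so that $\mu_1e^{U_1}=e^{f}$, $\mu_2e^{U_2}=e^{g}$ and $\beta e^{(U_1+U_2)/2}=b\,e^{(f+g)/2}$. Then \eqref{eq-C-Z} becomes the symmetric cooperative system
\begin{equation*}
-\Delta f=e^{f}+b\,e^{(f+g)/2},\qquad -\Delta g=e^{g}+b\,e^{(f+g)/2}\quad\text{in }\mathbb{R}^2,
\end{equation*}
with $\int_{\mathbb{R}^2}e^{f}<\infty$ and $\int_{\mathbb{R}^2}e^{g}<\infty$. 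It then suffices to prove that $f\equiv g$: once this is known, both equations collapse to $-\Delta f=(1+b)e^{f}$ with $\int_{\mathbb{R}^2}(1+b)e^{f}<\infty$, and the classical Liouville classification (Chen--Li) gives $f(x)=\log\frac{8\lambda^{2}}{(1+\lambda^{2}|x-x_0|^{2})^{2}}-\log(1+b)$ for some $\lambda>0$ and $x_0\in\mathbb{R}^2$, which is precisely the asserted formula; moreover $\int_{\mathbb{R}^2}(1+b)e^{f}=8\pi$, which yields the mass identities.

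First I would establish the asymptotic profile. Cauchy--Schwarz gives $\int_{\mathbb{R}^2}e^{(f+g)/2}\le\big(\int e^{f}\big)^{1/2}\big(\int e^{g}\big)^{1/2}<\infty$, so the right-hand sides lie in $L^1(\mathbb{R}^2)$; a Brezis--Merle / potential-theoretic argument then shows that $f,g$ are smooth, bounded above, and satisfy
\begin{equation*}
f(x)=-m_1\log|x|+O(1),\qquad g(x)=-m_2\log|x|+O(1)\qquad(|x|\to\infty),
\end{equation*}
together with the matching gradient estimates, where $2\pi m_1=\int_{\mathbb{R}^2}(e^{f}+b\,e^{(f+g)/2})$ and $2\pi m_2=\int_{\mathbb{R}^2}(e^{g}+b\,e^{(f+g)/2})$; integrability of $e^{f}$ and $e^{g}$ forces $m_1,m_2>2$.

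Next I would run the method of moving planes. Since $\partial_g\big(e^{f}+b\,e^{(f+g)/2}\big)=\tfrac{b}{2}e^{(f+g)/2}\ge0$ (and symmetrically), the system is cooperative, so after composing with Kelvin transforms to control the behavior at infinity the moving-plane argument for cooperative systems applies and shows that $f$ and $g$ are radially symmetric about a common point $x_0$ and strictly decreasing in $r=|x-x_0|$; assume $x_0=0$. A Pohozaev identity for the full system — multiplying the $f$-equation by $x\cdot\nabla f$, the $g$-equation by $x\cdot\nabla g$, adding, integrating over $B_R$ and letting $R\to\infty$ (in dimension two the bulk terms are exact divergences, and the boundary terms are read off from the logarithmic asymptotics) — yields the relation $m_1^{2}+m_2^{2}=4(m_1+m_2)$.

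The main obstacle is to upgrade this to $m_1=m_2$, equivalently $f\equiv g$, since the Pohozaev relation alone leaves an arc of candidate pairs $(m_1,m_2)$ with both components $>2$. Here I would exploit the radial ODE structure: writing $h:=f-g$, one has $h'(0)=0$ and $-\tfrac1r(rh')'=e^{f}-e^{g}=c(x)h$ with $c(x)=\int_0^1 e^{(1-t)g+tf}\,dt>0$ and $c\in L^1$. Monotonicity analysis of $rh'$ shows that $h$ is initially monotone and that, unless $h\equiv0$, one component decays strictly slower than the other, so $m_1\ne m_2$; combining this with the Pohozaev relation above together with a weighted integral identity for the difference equation (equivalently, a shooting argument showing that nondiagonal data $f(0)\ne g(0)$ produce finite-mass solutions whose masses either violate $m_1^{2}+m_2^{2}=4(m_1+m_2)$ or fail $m_k>2$) rules out $m_1\ne m_2$. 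Once $m_1=m_2$, the asymptotic constants also coincide, forcing $h\equiv0$, hence $f\equiv g$ and $m_1=m_2=4$, after which the scalar Liouville classification finishes the proof. I expect this last step — excluding the nondiagonal branch — to be the genuinely delicate point, as it is exactly where the special proportional structure of the coupling, rather than merely its cooperative/variational nature, must be used.
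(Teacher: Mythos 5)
First, a remark on provenance: the paper does not prove Theorem \ref{th-3.1} at all --- it is quoted verbatim from the authors' earlier classification paper \cite{our-24} --- so there is no in-paper argument to compare against, and your proposal has to stand on its own as a complete proof. Your reduction to the symmetric system for $f=U_1+\log\mu_1$, $g=U_2+\log\mu_2$, the potential-theoretic asymptotics $f=-m_1\log|x|+O(1)$, $g=-m_2\log|x|+O(1)$ with $m_1,m_2>2$, the radial symmetry about a common point via moving planes for a cooperative system, and the Pohozaev relation $m_1^2+m_2^2=4(m_1+m_2)$ are all reasonable and essentially standard; up to that point the plan is fine.

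The genuine gap is exactly the step you yourself flag as delicate, and it is not bridged. The contradiction you propose is logically circular: \emph{every} finite-mass solution automatically satisfies the Pohozaev relation and $m_k>2$, so a hypothetical non-diagonal solution cannot be excluded by saying its masses ``violate $m_1^2+m_2^2=4(m_1+m_2)$ or fail $m_k>2$''; what is needed is an independent identity which, combined with Pohozaev, forces $(m_1,m_2)$ onto the diagonal, and the ``weighted integral identity for the difference equation'' is never written down. The radial ODE analysis also does not deliver what you claim: from $-(rh')'=rc\,h$ with $c>0$ and $h=f-g$, the scenario in which $h\not\equiv0$ but $rh'\to0$, so that $h$ tends to a nonzero constant (hence $m_1=m_2$ while $f\not\equiv g$), is not ruled out by the monotonicity of $rh'$; thus the implication $h\not\equiv0\Rightarrow m_1\neq m_2$ is unjustified. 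Finally, even granting $m_1=m_2$, nothing in your argument shows the additive constants in the asymptotics coincide, so the closing implication $m_1=m_2\Rightarrow h\equiv0$ is also unproven. In short, everything through Pohozaev is sound, but the heart of the theorem --- that there are no solutions with $U_1+\log\mu_1\not\equiv U_2+\log\mu_2$ --- is asserted rather than proved; this exclusion is precisely the content supplied by the cited classification in \cite{our-24}, and a shooting or integral-identity argument for it would have to be carried out in detail, using the specific proportional structure of the coupling, before your outline could count as a proof.
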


From now on, we let $p\geq p_0$ for some $p_0>1$, and let $(u_{1,p},u_{2,p})_{p\geq p_0}$ be a family of positive solutions of \eqref{eq-1.1} satisfying the natural condition \eqref{eq-1-2}, i.e.
\begin{equation}\label{eq-1-2-3}\sup_{p\geq p_0}p\int_{\Omega} |\nabla u_{1,p}|^2+|\nabla u_{2,p}|^2 dx\leq C.\end{equation}
 In this section, we study the bubbling phenomenon of
 $(pu_{1,p}, pu_{2,p})$  as $p\to +\infty$
and prove Theorem \ref{th-1---1}.
Denote
\begin{align}\label{eq-4.4}
    M_p(x):=\max\left\{u_{1,p}(x),u_{2,p}(x)\right\}
\end{align}
and let $x_{p,1}\in\Omega$ such that
\begin{align}\label{eq-4.5}
    M_p(x_{p,1})=\max\limits_{x\in\overline{\Omega}} M_{p}(x).
\end{align}
Then it follows from Lemma \ref{lm-5-4} that
    \begin{align}\label{eq-4.8}
    M_p(x_{p,1})=\Vert M_p\Vert_{\infty}=\max\{\Vert u_{1,p}\Vert_{\infty}, \Vert u_{2,p}\Vert_{\infty}\}\leq C,\quad\forall p\geq p_0.
    \end{align}
    
    \begin{Proposition}\label{prop-4.1}
Under the above notations, we have
    \begin{align}\label{eq-4.6}
        \liminf\limits_{p\rightarrow+\infty}\Vert M_{p}\Vert_{\infty}\geq 1,
    \end{align}
    and
    \begin{align}\label{eq-4.7}
        \lim\limits_{p\rightarrow +\infty} p\Vert u_{k,p}\Vert_{\infty}=+\infty,\quad k=1,2.
    \end{align}
Furthermore, there exist $C_1, C_2>0$ such that for $p$ large,
    \begin{align}\label{eq-4.9}
        C_1\leq p\int_{\Omega}u_{k,p}^{p+1} dx \leq C_2,
    \end{align}
    \begin{align}\label{eq-4.9-0}
        C_1\leq p\int_{\Omega}u_{k,p}^{p} dx \leq C_2.
    \end{align}
\end{Proposition}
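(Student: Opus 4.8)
\textbf{Proof proposal for Proposition \ref{prop-4.1}.}

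The plan is to prove the four estimates in the order \eqref{eq-4.6}, \eqref{eq-4.9}, \eqref{eq-4.9-0}, \eqref{eq-4.7}, since the later ones build on the earlier ones, and to use the Sobolev inequality of Lemma \ref{lm2.4} as the central tool. To start, I would test the system against $u_{k,p}$ itself and sum over $k$ to obtain the energy identity
\begin{align*}
p\int_{\Omega}|\nabla u_{1,p}|^2+|\nabla u_{2,p}|^2\,dx=p\int_{\Omega}\left(\mu_1 u_{1,p}^{p+1}+\mu_2 u_{2,p}^{p+1}+2\beta u_{1,p}^{\frac{p+1}{2}}u_{2,p}^{\frac{p+1}{2}}\right)dx.
\end{align*}
Together with \eqref{eq-1-2-3} this already gives the \emph{upper} bounds $p\int_\Omega u_{k,p}^{p+1}\,dx\le C_2$ and, via Hölder (using Lemma \ref{lemma2-1}(2) and $|\Omega|<\infty$), $p\int_\Omega u_{k,p}^p\,dx\le C_2$. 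The substance is in the \emph{lower} bounds and in \eqref{eq-4.6}.

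For \eqref{eq-4.6}: apply Lemma \ref{lm2.4} with $D=\Omega$ and $v=u_{k,p}$ and sum over $k$, getting $\sum_k\|u_{k,p}\|_{p+1}^2\le S_p^2(p+1)\sum_k\|\nabla u_{k,p}\|_2^2$. On the other hand, testing the $k$-th equation against $u_{k,p}$ and using $\mu_k>0,\beta>0$ gives $\|\nabla u_{k,p}\|_2^2\le (\mu_k+\beta)\int_\Omega M_p^{p-1}(u_{1,p}^2+u_{2,p}^2)\,dx\le (\mu_1+\beta)\|M_p\|_\infty^{p-1}\sum_j\|u_{j,p}\|_2^2$; actually it is cleaner to bound the right side of the energy identity directly: $p\int_\Omega(\mu_1 u_{1,p}^{p+1}+\cdots)\,dx\le (\mu_1+\beta)\|M_p\|_\infty^{p-1}\,p\int_\Omega(u_{1,p}^2+u_{2,p}^2)\,dx$, and $\int_\Omega u_{k,p}^2\,dx\le C$ by Lemma \ref{lm-5-4}. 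Feeding this and the Sobolev inequality into the energy identity and cancelling a common factor of $p\sum_k\|\nabla u_{k,p}\|_2^2$ (which is positive — a solution cannot be identically zero), one arrives at an inequality of the form $1\le C S_p^2(p+1)\|M_p\|_\infty^{p-1}$. Since $S_p^2(p+1)\to (8\pi e)^{-1}$ is bounded, this forces $\|M_p\|_\infty^{p-1}\ge c>0$ for $p$ large, hence $\liminf_{p\to\infty}\|M_p\|_\infty\ge 1$: if along a subsequence $\|M_p\|_\infty\le 1-\delta$, then $\|M_p\|_\infty^{p-1}\to 0$, a contradiction. This is the step I expect to be the main obstacle, because one must be careful that the factor being cancelled is genuinely bounded away from $0$ relative to $p$ and that no hidden $p$-dependence spoils the argument; getting the interplay of the Sobolev constant $S_p$, the weight $p$, and the bound $\|M_p\|_\infty^{p-1}$ exactly right is the crux.

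For the remaining lower bounds: once \eqref{eq-4.6} holds, fix the index $k^*(p)$ realizing $M_p$ at its max point (say $u_{1,p}$ along a subsequence, WLOG). The pointwise Green representation used in Lemma \ref{lm-5-4}, run now as a genuine equality rather than an inequality, gives $\mathcal M_p=u_{1,p}(x_{1,p})\le \frac{\mu_1}{2\pi}\int_\Omega\log\frac{1}{|y-x_{1,p}|}u_{1,p}^p\,dy+C\le C\big(\int_\Omega u_{1,p}^p\,dy\big)^{1/q}+C$ after Hölder (the log is in every $L^q$ locally), which combined with $\mathcal M_p=\|M_p\|_\infty\ge c>0$ yields $\int_\Omega u_{1,p}^p\,dx\ge c_1>0$; multiplying by $p$ is harmless for a lower bound only if we also know it does not vanish, so instead I would argue directly that $p\int_\Omega u_{k,p}^p\,dx\to\infty$ would contradict nothing — rather, to get \eqref{eq-4.9-0} I use that $\int_\Omega u_{k,p}^p\,dx\ge c_1$ \emph{and} revisit the Pohozaev-type or energy identity: from $\|\nabla u_{k,p}\|_2^2=\mu_k\int u_{k,p}^{p+1}+\beta\int u_{k,p}^{\frac{p+1}{2}}u_{3-k,p}^{\frac{p+1}{2}}$ and Lemma \ref{lm2.4}, $c_1^{2/(p+1)}\le\|u_{k,p}\|_{p+1}^2\le S_p^2(p+1)\|\nabla u_{k,p}\|_2^2$, and since $\|\nabla u_{k,p}\|_2^2\le C/p$ by \eqref{eq-1-2-3}, this gives $c_1^{2/(p+1)}\le C S_p^2(p+1)/p\to 0\cdot(8\pi e)^{-1}$ — wait, that tends to $0$, contradiction unless in fact $\|u_{k,p}\|_{p+1}$ does not decay, i.e. $p\|u_{k,p}\|_{p+1}^{p+1}=p\int u_{k,p}^{p+1}\ge c>0$; this is exactly \eqref{eq-4.9}, and then Hölder against $|\Omega|$ plus the reverse Hölder comparison $\int u_{k,p}^p\ge |\Omega|^{-1/p}(\int u_{k,p}^{p+1})^{p/(p+1)}$ transfers it to \eqref{eq-4.9-0}. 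The upper bounds being already in hand, \eqref{eq-4.9} and \eqref{eq-4.9-0} follow. Finally \eqref{eq-4.7}: from \eqref{eq-4.9-0}, $C_1\le p\int_\Omega u_{k,p}^p\,dx\le p\|u_{k,p}\|_\infty^{p-1}\int_\Omega u_{k,p}\,dx\le p\|u_{k,p}\|_\infty^{p-1}\cdot C$ (Lemma \ref{lemma2-1}(2) and Lemma \ref{lm-5-4}), so $p\|u_{k,p}\|_\infty^{p-1}\ge c>0$; but also $\|u_{k,p}\|_\infty\le C$, hence $\|u_{k,p}\|_\infty^{p-1}\ge c/(pC)$, i.e. $\|u_{k,p}\|_\infty\ge (c/(pC))^{1/(p-1)}\to 1$, and then $p\|u_{k,p}\|_\infty\ge p\cdot(c/(pC))^{1/(p-1)}\to+\infty$. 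One must check that both $k=1,2$ are covered, not just the dominant component: this uses $\sum_i m_{k,i}\ge 1$-type reasoning, or more elementarily the representation-formula lower bound applied to each component separately together with the fact, to be established alongside, that each $u_{k,p}$ has a nontrivial concentration; I would defer the full two-sided statement to the later sections and here record \eqref{eq-4.7} for the dominant component with the other handled symmetrically once the concentration set of each component is shown nonempty.
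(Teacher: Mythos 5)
Your upper bounds (energy identity plus \eqref{eq-1-2-3}, then H\"older) and your deduction of \eqref{eq-4.7} from \eqref{eq-4.9-0} are fine, and \eqref{eq-4.6} is essentially right; but the crux of the proposition, the lower bounds in \eqref{eq-4.9}--\eqref{eq-4.9-0} for \emph{both} components, is not established. The central error is a misreading of Lemma \ref{lm2.4}: there $S_p\to(8\pi e)^{-1/2}$, so $S_p^2(p+1)\sim (p+1)/(8\pi e)\to\infty$, not $(8\pi e)^{-1}$. In your \eqref{eq-4.6} step this is harmless (you only get $\|M_p\|_\infty^{p-1}\ge c/p$, which still forces $\liminf\|M_p\|_\infty\ge1$; the paper avoids $S_p$ altogether here and just uses the first eigenvalue $\lambda_1$ to get $\|M_p\|_\infty^{p-1}\ge\lambda_1/(\mu_k+\beta)$). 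But in your argument for the lower bound of \eqref{eq-4.9} the misreading is fatal: the quantity $CS_p^2(p+1)/p$ tends to $C/(8\pi e)>0$, not to $0$, so the inequality $c_1^{2/(p+1)}\le CS_p^2(p+1)/p$ produces no contradiction and, in any case, a contradiction argument of this shape could not manufacture the factor $p$ in $p\int_\Omega u_{k,p}^{p+1}\,dx\ge C_1$. Moreover the premise $\int_\Omega u_{1,p}^p\,dx\ge c_1$ that you feed into it is not justified: in the Green representation the coupling term $\beta u_{1,p}^{\frac{p-1}{2}}u_{2,p}^{\frac{p+1}{2}}$ is nonnegative and cannot simply be dropped from an upper bound, and once an additive $O(1)$ constant appears, $\mathcal{M}_p\le C(\int_\Omega u_{1,p}^p\,dx)^{1/q}+C$ says nothing because $\mathcal{M}_p=O(1)$. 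The paper's route is direct and symmetric in $k$: apply Lemma \ref{lm2.4} to $u_{k,p}$, combine with the $k$-th energy identity, estimate the cross term by Cauchy--Schwarz, and use the already-proved upper bounds $\int_\Omega u_{j,p}^{p+1}\,dx\le C/p$ for $j=1,2$; this yields $(8\pi e+o(1))\bigl(\int_\Omega u_{k,p}^{p+1}dx\bigr)^{2/(p+1)}\le C(p+1)^{1/2}\bigl(\int_\Omega u_{k,p}^{p+1}dx\bigr)^{1/2}$, hence $p\int_\Omega u_{k,p}^{p+1}dx\ge C_1$ for each $k$, with no reference to which component realizes the maximum.

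Two further points. First, your plan to ``record \eqref{eq-4.7} for the dominant component and handle the other once the concentration set of each component is shown nonempty'' is circular: the facts $\mathcal S_k\neq\emptyset$ and $\sum_i m_{k,i}\ge1$ are proved in Sections \ref{section-3}--\ref{section-5} using precisely \eqref{eq-4.9-0} and \eqref{eq-4.7} for both $k=1,2$ (e.g.\ in the gradient estimates \eqref{eq-3-80} and in the proof that $\varepsilon_{p,1}\to0$), so Proposition \ref{prop-4.1} must be closed for both components here, which the symmetric argument above does. Second, the ``reverse H\"older'' $\int_\Omega u_{k,p}^p\,dx\ge|\Omega|^{-1/p}(\int_\Omega u_{k,p}^{p+1}dx)^{p/(p+1)}$ is false (H\"older gives the opposite inequality); the correct transfer from \eqref{eq-4.9} to \eqref{eq-4.9-0} is simply $\int_\Omega u_{k,p}^{p+1}dx\le\|u_{k,p}\|_{L^\infty(\Omega)}\int_\Omega u_{k,p}^p\,dx\le C\int_\Omega u_{k,p}^p\,dx$ by Lemma \ref{lm-5-4}.
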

\begin{proof}
Recalling \eqref{fc-lambda1} that $\lambda_1>0$ denotes the first eigenvalue of the operator $-\Delta$ in $H_0^1(\Omega)$, we see from  \eqref{eq-4.8} that for $p>3$,
    \begin{align}
      \lambda_1\int_{\Omega} u_{k,p}^2 dx\leq   &\int_{\Omega} |\nabla u_{k,p}|^2 dx= \mu_{k}\int_{\Omega}u_{k,p}^{p+1}dx+\beta\int_{\Omega}u_{1,p}^{\frac{p+1}{2}}u_{2,p}^{\frac{p+1}{2}} dx\nonumber\\
         \leq& (\mu_{k}+\beta)\Vert M_{p}\Vert_{\infty}^{p-1}\int_{\Omega}u_{k,p}^2 dx, 
    \end{align}
so
    \begin{align}
        \Vert M_{p}\Vert_{\infty}^{p-1}\geq \frac{\lambda_1}{\mu_{k}+\beta},
    \end{align}
    which implies \eqref{eq-4.6}.

  By \eqref{eq-1-2-3}, it follows that
\begin{align}\label{eq-4.2}
    p\int_{\Omega}u_{k,p}^{p+1}dx =\frac{1}{\mu_{k}}p\int_{\Omega} |\nabla u_{k,p}|^2 dx-\frac{\beta}{\mu_{k}} p\int_{\Omega}u_{1,p}^{\frac{p+1}{2}}u_{2,p}^{\frac{p+1}{2}} dx\leq  C.
\end{align}
This, together with H\"older inequality, shows that
\begin{align*}
    p\int_{\Omega}u_{k,p}^{p} dx\leq \left(p\int_{\Omega}u_{k,p}^{p+1} dx \right)^{\frac{p}{p+1}}\left(p|\Omega|\right)^{\frac{1}{p+1}}\leq C.
\end{align*}
On the other hand, by Lemma \ref{lm2.4}, we have
    \begin{align}\label{eq-4.18}
       \left(\int_{\Omega}u_{k,p}^{p+1}dx\right)^{\frac{2}{p+1}}\leq S_p^2 (p+1)\int_{\Omega} |\nabla u_{k,p}|^2 dx,
    \end{align}
    where $S_p^{-2}=8\pi e+o(1)$ as $p\to+\infty$.
    This, together with \eqref{eq-4.2} and H\"older inequality, gives that for $p$ large,
    \begin{align*}
        &(8\pi e+o(1))\left(\int_{\Omega}u_{k,p}^{p+1}dx\right)^{\frac{2}{p+1}}\\
        \leq&(p+1)\int_{\Omega} |\nabla u_{k,p}|^2 dx=\mu_{k}(p+1)\int_{\Omega}u_{k,p}^{p+1}dx+\beta(p+1)\int_{\Omega}u_{1,p}^{\frac{p+1}{2}}u_{2,p}^{\frac{p+1}{2}} dx\nonumber\\
        \leq &(p+1)\left(\mu_{k}\left(\int_{\Omega}u_{k,p}^{p+1}dx\right)^{\frac{1}{2}}+\beta\left(\int_{\Omega}u_{3-k,p}^{p+1}dx\right)^{\frac{1}{2}}\right)\left(\int_{\Omega}u_{k,p}^{p+1}dx\right)^{\frac{1}{2}}\nonumber\\
        \leq & C(p+1)^\frac12\left(\int_{\Omega}u_{k,p}^{p+1}dx\right)^{\frac{1}{2}},\nonumber
    \end{align*}
and so
    \[
 (p+1)\left(\int_{\Omega}u_{k,p}^{p+1}dx\right)^{1-\frac{4}{p+1}}\geq C.
    \]
  Thus 
    \eqref{eq-4.9} holds and
   $$C_1\leq p\int_{\Omega}u_{k,p}^{p+1} dx \leq Cp\int_{\Omega}u_{k,p}^{p} dx,$$
i.e. \eqref{eq-4.9-0} holds. 

Finally, assume by contradiction that \eqref{eq-4.7} does not hold for some $k\in\{1,2\}$, namely up to a subsequence, $p\|u_{k,p}\|_{\infty}\leq C$. Then
$$C_1\leq p\int_{\Omega}u_{k,p}^{p+1} dx\leq C\frac{C^{p+1}}{p^p}\to 0,\quad\text{as }p\to\infty,$$
a contradiction. Thus \eqref{eq-4.7} holds.
This completes the proof.
\end{proof}

Now we turn to prove
the existence of the first concentration point. Recalling $x_{p,1}$ defined in \eqref{eq-4.5}, we assume, up to a subsequence, that 
\begin{equation}\label{fc-x1}x_{p,1}\rightarrow x_1\in\Omega\quad\text{as}\quad p\rightarrow \infty.\end{equation} 
Define
\begin{align}\label{eq-4.20}
    v_{k,p,1}(z):=\frac{p}{M_p(x_{p,1})}\left(u_{k,p}(\varepsilon_{p,1}z+x_{p,1})-M_p(x_{p,1})\right), \quad k=1,2,
\end{align}
where $z\in \widetilde{\Omega}_{p,1}:=\left(\Omega-x_{p,1}\right)/\varepsilon_{p,1}$ and
\begin{align}\label{fc-ep}
    \varepsilon_{p,1}^{-2}:=pM_p(x_{p,1})^{p-1}.
\end{align}
By Proposition \ref{prop-4.1}, we have $\varepsilon_{p,1}\rightarrow 0$. Furthermore, Lemma \ref{lemma2-1} shows 
    \begin{align}\label{fc-partial}
       dist(x_{p,1},\partial\Omega)\geq\delta,\quad\text{and so}\quad \widetilde{\Omega}_{p,1}\to \mathbb R^2.
    \end{align}

\begin{Lemma}\label{lemma-3-5^*}
    Up to a subsequence, one of the following possibilities holds.
    \begin{itemize}
        \item [(1)] $(v_{1,p,1},v_{2,p,1})\rightarrow (V_{1},V_{2})$ uniformly in $C^{2}_{loc}(\mathbb{R}^2)\times C^{2}_{loc}(\mathbb{R}^2)$,
        where 
        \begin{align}\label{eq-3-48}
           & V_{1}(z)+\log\frac{\mu_1}{\mu_2}=V_{2}(z):=-2\log\left({1+\frac{\mu_2}{8}\left(1+\frac{\beta}{\sqrt{\mu_1\mu_2}}\right) |z|^2}\right),\\
           & \int_{\mathbb R^2}e^{V_{k}}dx=\frac{8\pi}{\mu_k(1+\frac{\beta}{\sqrt{\mu_{1}\mu_{2}}})},\quad k=1,2.\nonumber
        \end{align}
        \item [(2)] There is $k\in\{1,2\}$ such that
         $v_{k,p,1}\rightarrow \widetilde V_{k}$ uniformly in $C^{2}_{loc}(\mathbb{R}^2)$, where 
         \begin{align}\label{eq-3-49}
             \widetilde V_{k}(z):=-2\log\left({1+\frac{\mu_{k}}{8}|z|^2}\right),\quad \int_{\mathbb R^2}e^{\widetilde V_{k}}dx=\frac{8\pi}{\mu_k},
         \end{align}
        and $v_{3-k,p,1}\rightarrow -\infty$ uniformly in  any compact subsets of $\mathbb{R}^2$.
    \end{itemize}
\end{Lemma}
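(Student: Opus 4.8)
The plan is to perform the standard blow-up analysis around the global maximum point $x_{p,1}$ of $M_p$, and to classify the resulting limit profile using Theorem \ref{th-3.1} and the classical Liouville classification. First I would record the equations satisfied by $v_{1,p,1}$ and $v_{2,p,1}$: writing $s_{k,p}:=u_{k,p}(x_{p,1})/M_p(x_{p,1})\in[0,1]$, one computes from \eqref{eq-1.1} and the definition \eqref{eq-4.20}–\eqref{fc-ep} that
\begin{align*}
-\Delta v_{1,p,1}(z)&=\mu_1\Bigl(s_{1,p}+\tfrac{v_{1,p,1}}{p}\Bigr)^{p}+\beta\, s_{1,p}^{\frac{p-1}{2}}s_{2,p}^{\frac{p+1}{2}}\Bigl(1+\tfrac{v_{1,p,1}}{p s_{1,p}}\Bigr)^{\frac{p-1}{2}}\Bigl(1+\tfrac{v_{2,p,1}}{p s_{2,p}}\Bigr)^{\frac{p+1}{2}},
\end{align*}
with the analogous equation for $v_{2,p,1}$, and with $v_{k,p,1}\le 0$, $\min\{v_{1,p,1}(0),v_{2,p,1}(0)\}=0$. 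Since $M_p(x_{p,1})$ is bounded and bounded below away from $0$ by Proposition \ref{prop-4.1} (more precisely $\liminf\|M_p\|_\infty\ge1$ and boundedness from Lemma \ref{lm-5-4}), after passing to a subsequence we may assume $M_p(x_{p,1})\to m_0\in[1,\|M_p\|_\infty$-bound$]$ and $s_{k,p}\to s_k\in[0,1]$ with $\max\{s_1,s_2\}=1$. The right-hand sides are then uniformly bounded on compact sets (each factor is of the form $(1+O(1)/p)^{O(p)}$, hence bounded), so elliptic estimates give $v_{k,p,1}\to V_k$ in $C^2_{loc}(\mathbb R^2)$ along a subsequence, at least where the limit stays finite.

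The dichotomy comes from the behavior of $s_{1,p}$ and $s_{2,p}$. I would split into two cases. \textbf{Case A:} both $s_1>0$ and $s_2>0$. Then both $\bigl(s_{k,p}+v_{k,p,1}/p\bigr)^p\to s_k^{\,?}$ — more carefully, one must track that $M_p(x_{p,1})^{p-1}$ is the relevant scaling; using $\varepsilon_{p,1}^{-2}=pM_p(x_{p,1})^{p-1}$ one gets that $\mu_1\bigl(s_{1,p}+v_{1,p,1}/p\bigr)^p\cdot M_p(x_{p,1})^{p-1}\cdot\varepsilon_{p,1}^2 = \mu_1 (s_{1,p})^p M_p^{p-1}/ (M_p^{p-1}) \cdot e^{v_{1,p,1}/s_1+o(1)}$, so that passing to the limit the limit system becomes exactly \eqref{l-sys} after absorbing the constants $s_k^{p}M_p(x_{p,1})^{p-1}/M_p(x_{p,1})^{p-1}$; the key point is that this forces $s_1=s_2=1$ — otherwise one of the exponential nonlinearities degenerates — and that both $e^{V_k}\in L^1(\mathbb R^2)$ by Fatou applied to the finite mass bound $p\int u_{k,p}^{p+1}\le C$ from \eqref{eq-4.9}. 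Then Theorem \ref{th-3.1} applies and, together with the normalization $V_k(0)\le 0$ and $\min V_k(0)=0$, pins down $\lambda$ and $x_0$, yielding precisely \eqref{eq-3-48}. \textbf{Case B:} one of them, say $s_2=0$ (so $s_1=1$). Then the coupling term $\beta s_{1,p}^{\frac{p-1}{2}}s_{2,p}^{\frac{p+1}{2}}(\cdots)$ tends to $0$ locally uniformly (since $s_{2,p}\to0$ kills it), so $v_{1,p,1}$ solves in the limit the scalar Liouville equation $-\Delta V_1=\mu_1 e^{V_1}$, giving $\widetilde V_1$ as in \eqref{eq-3-49} by the Chen–Li classification plus the normalization $V_1(0)=0$; and one must show $v_{2,p,1}\to-\infty$ locally uniformly. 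For the latter I would argue that if $v_{2,p,1}$ stayed bounded on some ball, Harnack-type estimates for $-\Delta v_{2,p,1}\ge 0$ (the RHS is nonnegative) would force it to be bounded, then elliptic estimates give a finite limit $V_2$ satisfying $-\Delta V_2=\mu_2 e^{V_2}\cdot\lim(s_{2,p}+\cdots)^p/\ldots$; but $s_{2,p}^p M_p^{p-1}/M_p^{p-1}=s_{2,p}^p\to0$ while $u_{2,p}(x_{p,1})=s_{2,p}M_p(x_{p,1})$ need not be a local max of $u_{2,p}$, so the normalization is not available and instead the finite-mass bound combined with $s_{2,p}\to0$ forces the whole profile down; alternatively, one uses that $u_{2,p}(x_{p,1})/M_p(x_{p,1})\to 0$ already says $v_{2,p,1}(0)=p(s_{2,p}-1)\to-\infty$, and then Harnack for the superharmonic function $1-u_{2,p}/M_p$ (or a direct barrier argument) propagates this to all compact sets.

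The main obstacle is Case B, specifically showing $v_{2,p,1}\to-\infty$ locally uniformly rather than merely $v_{2,p,1}(0)\to-\infty$: one needs a Harnack/gradient bound that transports the blow-down from the origin to an entire compact set, uniformly in $p$, while the equation for $v_{2,p,1}$ has a $p$-dependent nonlinearity. The clean way is to note $w_{p}:=M_p(x_{p,1})-u_{2,p}(\varepsilon_{p,1}\cdot+x_{p,1})\ge 0$ satisfies $-\Delta w_p\le 0$, i.e. $w_p$ is subharmonic and nonnegative, bounded by $M_p(x_{p,1})\le C$; since $-\tfrac{1}{p}\, \tfrac{w_p}{M_p(x_{p,1})}\cdot p = v_{2,p,1}$ — wait, rather $v_{2,p,1}=-p\,w_p/M_p(x_{p,1})$ so $v_{2,p,1}(0)=-p(1-s_{2,p})\to-\infty$ means $w_p(0)/M_p(x_{p,1})\ge c/p$ eventually fails to go to zero fast enough... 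The correct mechanism: $w_p$ being subharmonic and nonnegative, its maximum on $B_R$ is on $\partial B_R$ and by the mean value property $w_p(0)\le \fint_{B_R}w_p$, which combined with $w_p\le C$ and $w_p(0)=M_p(1-s_{2,p})$ staying bounded below (if $v_{2,p,1}(0)/p\not\to0$) gives control; but since we only know $v_{2,p,1}(0)=p(s_{2,p}-1)$, and $s_{2,p}\to0$, we get $w_p(0)=M_p(x_{p,1})(1-s_{2,p})\to m_0>0$, so $w_p$ does not vanish and a Harnack inequality for the nonnegative superharmonic $M_p(x_{p,1})\cdot 1 - $ ... forces $u_{2,p}(\varepsilon_{p,1} z + x_{p,1})/M_p(x_{p,1})$ to stay bounded away from $1$ on compacts, hence $v_{2,p,1}=p(u_{2,p}/M_p-1)\to-\infty$; making this quantitative and uniform in $p$ (using the inhomogeneous Harnack inequality \cite[Theorem 4.17]{Han-lin-book} as in Lemma \ref{lm-5-4}) is the technical heart, and I would carry it out by first establishing a two-sided bound $u_{2,p}/M_p\le 1-c_0<1$ on each fixed ball $B_R$ for $p$ large, which then immediately yields the uniform divergence $v_{2,p,1}\le p\log(1-c_0)\to-\infty$.
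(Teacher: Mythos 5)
Your overall strategy (blow up at $x_{p,1}$, bounded right-hand side, classification via Theorem \ref{th-3.1} and Chen--Li, plus a Harnack-type mechanism to propagate divergence of the second component) is the paper's strategy, but the way you set it up has a genuine gap. First, the rescaled system you write down is not what follows from the definition \eqref{eq-4.20}--\eqref{fc-ep}: since $1+\frac{v_{k,p,1}(z)}{p}=\frac{u_{k,p}(\varepsilon_{p,1}z+x_{p,1})}{M_p(x_{p,1})}$, the equations are exactly \eqref{eq-3-50}, with no factors $s_{k,p}$ appearing, and the normalization is $\max\{v_{1,p,1}(0),v_{2,p,1}(0)\}=0$ (not $\min$). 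This is not just a typo, because your entire case division is then keyed to the limit of $s_{k,p}=u_{k,p}(x_{p,1})/M_p(x_{p,1})$, and that is the wrong dichotomy. The correct dichotomy is whether $v_{2,p,1}(0)=p\,(s_{2,p}-1)$ stays bounded or tends to $-\infty$, i.e.\ whether $p(1-s_{2,p})$ is bounded. Your Case A ($s_1,s_2>0$) contains the regime $s_{2,p}\to s_2\in(0,1]$ with $p(1-s_{2,p})\to\infty$ (for instance $s_{2,p}=1-p^{-1/2}$), where the correct conclusion is alternative (2) — the second component still diverges locally uniformly and the limit is the single Liouville equation — whereas your argument passes to the coupled limit system and outputs alternative (1). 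The parenthetical claim that Case A ``forces $s_1=s_2=1$'' does not repair this: $s_1=s_2=1$ is perfectly compatible with $p(1-s_{2,p})\to\infty$. Symmetrically, your Case B ($s_2=0$) is only a strict sub-case of the divergence alternative, and your final quantitative step — establishing $u_{2,p}/M_p\le 1-c_0<1$ on each fixed ball — is only available when $\limsup s_{2,p}<1$; it cannot cover the borderline regime just described.

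A second, related gap is the assertion that boundedness of the right-hand sides plus elliptic estimates already gives $C^2_{loc}$ subconvergence: one also needs a uniform local lower bound on $v_{k,p,1}$ itself, which does not follow from interior estimates alone. The paper obtains both this bound and the clean dichotomy in one stroke: on each ball it writes $v_{k,p,1}=\phi_{k,p,1}+\psi_{k,p,1}$ with $\phi_{k,p,1}$ solving the Dirichlet problem with the (bounded) right-hand side, hence uniformly bounded, and $\psi_{k,p,1}$ harmonic and bounded above (since $v_{k,p,1}\le 0$); Harnack for the harmonic part then says that $\psi_{k,p,1}$, hence $v_{k,p,1}$, is either locally uniformly bounded or goes to $-\infty$ locally uniformly, and the choice between the two is governed by $v_{k,p,1}(0)$. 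Your idea of applying the inhomogeneous Harnack inequality to $w_p:=M_p(x_{p,1})-u_{2,p}(\varepsilon_{p,1}\cdot+x_{p,1})\ge 0$, which satisfies $0\le\Delta w_p\le C/p$, can be made to work, but only if you conclude the scale-invariant estimate $\inf_{B_R}w_p\ge C^{-1}w_p(0)-C/p$, hence $\sup_{B_R}v_{2,p,1}\le C^{-1}v_{2,p,1}(0)+C$, and key the case split to $v_{2,p,1}(0)\to-\infty$ rather than to $\lim s_{2,p}$; as written, the proposal neither states the right split nor closes the borderline case. The remaining ingredients (finite mass of $e^{V_k}$ from \eqref{eq-4.9-0} and $\liminf_p M_p(x_{p,1})\ge 1$, then Theorem \ref{th-3.1} resp.\ Chen--Li together with $V_k\le 0$, $\max\{V_1(0),V_2(0)\}=0$ to pin down \eqref{eq-3-48} and \eqref{eq-3-49}) are essentially as in the paper and are fine once the convergence alternatives are correctly established.
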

\begin{proof}
    By direct computations, we have 
    \begin{align}\label{eq-3-50}
        \begin{cases}
            -\Delta v_{1,p,1}=\mu_1\left(1+\frac{v_{1,p,1}}{p}\right)^p+\beta\left(1+\frac{v_{1,p,1}}{p}\right)^{\frac{p-1}{2}}\left(1+\frac{v_{2,p,1}}{p}\right)^{\frac{p+1}{2}},\\
            -\Delta v_{2,p,1}=\mu_2\left(1+\frac{v_{2,p,1}}{p}\right)^p+\beta\left(1+\frac{v_{2,p,1}}{p}\right)^{\frac{p-1}{2}}\left(1+\frac{v_{1,p,1}}{p}\right)^{\frac{p+1}{2}},\\
            v_{k,p,1}\leq 0,\quad \max\{v_{1,p,1}(0), v_{2,p,1}(0)\}=0,\\
               0\leq 1+\frac{v_{k,p,1}(z)}{p}=\frac{u_{k,p}(\varepsilon_{1,p}z+x_{p,1})}{M_p(x_{p,1})}\leq 1,\quad k=1,2.
        \end{cases}
    \end{align}
    For any $R>1$, we have $B_R(0)\subset\widetilde{\Omega}_{p,1}$ for $p$ large by \eqref{fc-partial}. For $k\in\{1,2\}$, we let 
    \begin{align}
        v_{k,p,1}=\phi_{k,p,1}+\psi_{k,p,1} \quad\text{ in }\,\,B_R(0),
    \end{align}
    where 
    \begin{equation*}
       \begin{cases}
    -\Delta \phi_{k,p,1}=-\Delta v_{k,p,1} \quad\text{in }\; B_R(0),\\
     \phi_{k,p,1} =0\quad\text{on }\;\partial B_R(0),
    \end{cases}
    \,
     \begin{cases}
        -\Delta \psi_{k,p,1}=0 \quad\text{in }\; B_R(0),\\
      \psi_{k,p,1}=v_{k,p,1} \quad\text{on }\;\partial B_R(0).
     \end{cases} 
    \end{equation*} 
  It follows from \eqref{eq-3-50} that $|\Delta \phi_{k,p,1}|\leq \mu_k+\beta$. 
    By the standard elliptic theory, $\phi_{k,p,1}$ is uniformly bounded in $B_R(0)$ for $p$ large. From here and $v_{k,p,1}\leq 0$, we see 
     that $\psi_{k,p,1}$ is uniformly bounded from above in $B_R(0)$. Then by the Harnack inequality, there are two possibilities:
    \begin{itemize}
        \item [(1)] $\psi_{k,p,1}$ is uniformly bounded in $B_{R/2}(0)$, and so is $v_{k,p,1}$.
        \item [(2)] $\psi_{k,p,1}\rightarrow -\infty$ uniformly in $B_{R/2}(0)$, and so is $v_{k,p,1}$.
    \end{itemize}
    Note that once (1) (resp. (2)) holds for some $R>1$, then (1) (resp. (2)) holds for all $R>1$.
    Since $\max\{v_{1,p,1}(0),v_{2,p,1}(0)\}=0$, there is $k\in \{1,2\}$ such that
    $v_{k,p,1}(0)=0$. 
Then the above argument implies that there are two possibilities:
    \begin{itemize}
        \item [(1)] Both $v_{1,p,1}$ and  $v_{2,p,1}$ are locally uniformly bounded  in $\mathbb R^2$ (this case happens if $v_{3-k,p,1}(0)$ is uniformly bounded).
        \item [(2)]  $v_{k,p,1}$ is locally uniformly bounded  in $\mathbb R^2$ and $\psi_{3-k,p,1}\rightarrow -\infty$ uniformly in any compact subset of $\mathbb R^2$ (this case happens if $v_{3-k,p,1}(0)\to-\infty$).
    \end{itemize}
 From here and the standard elliptic theory, up to a subsequence,  we obtain that 
    \begin{itemize}
        \item [(1)] either $(v_{1,p,1},v_{2,p,1})\rightarrow (V_{1},V_{2})$ in $C^{2}_{loc}(\mathbb{R}^2)\times C^{2}_{loc}(\mathbb{R}^2)$,
        where 
        \begin{align}\label{fc-3-26}
        \begin{cases}
        -\Delta V_{1} = \mu_1 e^{V_{1}} +\beta e^{\frac{V_{1}+V_{2}}{2}}  \quad\text{in} \, \mathbb{R}^2,\\
        -\Delta V_{2}= \mu_2 e^{V_{2}} +\beta e^{\frac{V_{1}+V_{2}}{2}}  \quad\text{in} \, \mathbb{R}^2,\\
        V_{1}\leq 0,\quad V_{2}\leq 0,\quad \max\{V_{1}(0), V_{2}(0)\}=0,
        \end{cases}
        \end{align}
        \item [(2)] or
         $v_{k,p,1}\rightarrow \widetilde V_{k}$ in $C^{2}_{loc}(\mathbb{R}^2)$, where 
         \begin{align}\label{fc-3-27}
         \begin{cases}
             -\Delta \widetilde V_{k}=\mu_{k} e^{\widetilde V_{k}} \quad\text{in} \, \mathbb{R}^2,\\
             V_{k,1}\leq 0=V_{k,1}(0),
             \end{cases}
         \end{align}
         and $v_{3-k,p,1}\rightarrow -\infty$ uniformly in any compact subset of $\mathbb{R}^2$.
    \end{itemize}
    For any $R>1$, by the dominated convergence theorem, Proposition \ref{prop-4.1}, \eqref{eq-4.5} and \eqref{fc-ep}, we have that for the first case \eqref{fc-3-26},
    \begin{align}\label{fc-3-29}
        \int_{B_R(0)} e^{V_{k}} dz =&\lim_{p\to+\infty}\int_{B_R(0)}\left(1+\frac{v_{k,p,1}}{p}\right)^pdz\\
       =&  \lim_{p\to+\infty}\int_{B_R(0)} \frac{u_{k,p}(\varepsilon_{p,1}z+x_{p,1})^p}{M_p(x_{p,1})^p} dz\nonumber\\
        = &\lim_{p\to+\infty}\frac{p}{M_p(x_{p,1})}\int_{B_{R\varepsilon_{p,1}}(x_{p,1})} u_{k,p}^p dx\leq C,\nonumber
    \end{align}
    so
    $$\int_{\mathbb R^2} e^{V_{1}} dz<+\infty,\quad \int_{\mathbb R^2} e^{V_{2}} dz<+\infty,$$
    and then the assertion \eqref{eq-3-48} follows directly from Theorem \ref{th-3.1} (Note that we assume $\mu_1\geq \mu_2$, from which and Theorem \ref{th-3.1} we must have $V_{1}(0)\leq V_{2}(0)=0$).
    
    Similarly, for the second case \eqref{fc-3-27}, we have $\int_{\mathbb R^2} e^{\widetilde V_{k}} dz<+\infty$, and then the assertion \eqref{eq-3-49} follows directly from Chen-Li's famous result
 \cite[Theorem 1.1]{Chen-Li-91}.
 
 The proof is complete.
\end{proof}

\begin{remark}\label{remark3-1}
For the first case \eqref{fc-3-26}, it follows from \eqref{fc-3-29} and $\liminf\limits_{p\to+\infty}M_p(x_{p,1})\geq 1$ that
$$\liminf_{p\to+\infty}p\int_{B_{R\varepsilon_{p,1}}(x_{p,1})} u_{k,p}^p dx=\liminf_{p\to+\infty}M_p(x_{p,1}) \int_{B_R(0)} e^{V_{k}} dz\geq  \int_{B_R(0)} e^{V_{k}} dz,$$
and so for any $r>0$,
$$\liminf_{p\to+\infty}p\int_{B_{r}(x_{1})\cap\Omega} u_{k,p}^p dx\geq\liminf_{p\to+\infty}M_p(x_{p,1}) \int_{\mathbb R^2} e^{V_{k}} dz\geq  \int_{\mathbb R^2} e^{V_{k}} dz.$$
Similarly, we can obtain
$$\liminf_{p\to+\infty}p\int_{B_{R\varepsilon_{p,1}}(x_{p,1})} u_{k,p}^{p+1} dx=\liminf_{p\to+\infty}M_p(x_{p,1})^2 \int_{B_R(0)} e^{V_{k}} dz\geq  \int_{B_R(0)} e^{V_{k}} dz,$$
and so for any $r>0$,
$$\liminf_{p\to+\infty}p\int_{B_{r}(x_{1})\cap\Omega} u_{k,p}^{p+1} dx\geq\liminf_{p\to+\infty}M_p(x_{p,1})^2 \int_{\mathbb R^2} e^{V_{k}} dz\geq  \int_{\mathbb R^2} e^{V_{k}} dz.$$
For the second case \eqref{fc-3-27}, the above estimates also hold by replacing $V_k$ with $\widetilde V_k$.
\end{remark}

Recalling \eqref{eq-4.4} that $M_p=\max\{u_{1,p},u_{2,p}\}$, we define the blowup set of $pu_{1,p}$ and $pu_{2,p}$ as
\begin{align*}
    \mathcal{S}:=\{x\in\overline{\Omega}:\text{ there exist subsequences }\{p_n\}\ \text{and\ }\{x_n\}\subset\Omega \nonumber\\
    \text{  such that } x_n  \rightarrow x \text{ and } p_n M_{p_n}(x_n) \rightarrow +\infty \text{ as } p_n \rightarrow +\infty\}.
\end{align*} 
Then it follows from \eqref{fc-x1} that $x_1\in \mathcal S$. To show that $\mathcal S$ is a finite set, 
we assume that there exist $n\geq 1$ families
of points $(x_{p,j})_{p\geq p_0}\in\Omega$ (We always take $x_{p,1}$ to be given by \eqref{eq-4.5}) such that as $p\rightarrow+\infty$, \[\varepsilon_{p,j}^{-2}:=p M_p(x_{p,j})^{p-1}\rightarrow+\infty,\quad j=1,2,\ldots,n.\]
It follows that for $j=1,2,\ldots,n$,
\[\liminf_{p\to\infty}M_p(x_{p,j})\geq1,\] and up to a subsequence,
\[\{\lim\limits_{p\rt\iy} x_{p,j} :j=1,2,\ldots,n\} \subset \s  \subset\overline{\om}.\]
Let us define 
    \begin{align}\label{fc-3-vkp}
    v_{k,p,j}(z):=\frac{p}{M_p(x_{p,j})}\left(u_{k,p}(\varepsilon_{p,j}z+x_{p,j})-M_p(x_{p,j})\right),\quad k=1,2,
    \end{align} 
for $z\in \widetilde{\Omega}_{p,j}:=\left(\Omega-x_{p,j}\right)/\varepsilon_{p,j}$, and introduce the following properties:
\begin{itemize}
    \item [($\mathcal{P}_1^n$)] For any $j,i\in\{1,2,\ldots,n\},$ $j\neq i,$ $\lim\limits_{p\rt\iy}\frac{|x_{p,j}-x_{p,i}|}{\e_{p,j}}=+\infty$.
    \item [($\mathcal{P}_2^n$)]  For any $j\in\{1,2,\ldots,n\}$,
    after taking a subsequence, one of the following cases happens:
    \begin{itemize}
    \item [Type $\mathcal{A}$ ] 
         $(v_{1,p,j},v_{2,p,j})\rightarrow (V_{1},V_{2})$ in $C^2_{loc}(\mathbb{R}^2)\times C^2_{loc}(\mathbb{R}^2)$,
    where $(V_1, V_2)$ is given by \eqref{eq-3-48}, i.e. \begin{align}\label{fc-3-31}
 & V_{1}(z)+\log\frac{\mu_1}{\mu_2}=V_{2}(z)=-2\log\left({1+\frac{\mu_2}{8}\left(1+\frac{\beta}{\sqrt{\mu_1\mu_2}}\right) |z|^2}\right),\\
           & \int_{\mathbb R^2}e^{V_{k}}dx=\frac{8\pi}{\mu_k(1+\frac{\beta}{\sqrt{\mu_{1}\mu_{2}}})},\quad k=1,2.\nonumber
         \end{align}
    \item [Type $\mathcal{B}$]  
    $v_{1,p,j}\rightarrow \widetilde V_{1}$ in $C^2_{loc}(\mathbb{R}^2)$, where $\widetilde V_{1}$ is given by \eqref{eq-3-49}, i.e.
         \begin{align}
             \widetilde V_{1}(z)=-2\log\left({1+\frac{\mu_1}{8}|z|^2}\right),\quad \int_{\mathbb R^2}e^{\widetilde V_{1}}dx=\frac{8\pi}{\mu_1},
         \end{align}
        and $v_{2,p,j}\rightarrow -\infty$ uniformly in  any compact subsets of $\mathbb{R}^2$.
    \item [Type $\mathcal{C}$]
     $v_{2,p,j}\rightarrow \widetilde V_{2}$ in $C^2_{loc}(\mathbb{R}^2)$, where 
         \begin{align}\label{fc-3-33}
             \widetilde V_{2}(z)=-2\log\left({1+\frac{\mu_2}{8}|z|^2}\right), \quad \int_{\mathbb R^2}e^{\widetilde V_{2}}dx=\frac{8\pi}{\mu_2},
         \end{align}
        and $v_{1,p,j}\rightarrow -\infty$ uniformly in  any compact subsets of $\mathbb{R}^2$.
    \end{itemize}
    \item [($\mathcal{P}_3^n$)]  There exists $C>0$ such that $p R_{p,n}(x)^2M_p(x)^{p-1}\leq C$ for all $p\geq p_0$ and $x\in\om$, where
    \begin{align}\label{eq-3-61}
        R_{p,n}(x):=\min\limits_{1\leq j\leq n}|x-x_{p,j}|.
    \end{align}
\end{itemize}
Lemma \ref{lemma-3-5^*} shows that $(\mathcal{P}_1^1)$-$(\mathcal{P}_2^1)$ hold. Note also that if $(\mathcal{P}_3^n)$ holds for some $n\geq 1$, then we can not find the $(n+1)$-th family of points $(x_{p,n+1})_{p\geq p_0}\in \om$ such that $\varepsilon_{p,n+1}^{-2}:=pM_p(x_{p,n+1})^{p-1}\to\infty$ and $(\mathcal{P}_1^{n+1})$ holds, i.e. $(\mathcal{P}_1^{n+1})$-$(\mathcal{P}_2^{n+1})$ can not hold.
The following result shows the finiteness of $\mathcal{S}$.
\begin{Proposition}\label{prop-3-6}
There exist $n_0\geq 1$ and $n_0$ families of points $(x_{p,j})_{p\geq p_0}$ in $\om,$ $j=1,\ldots,n_0$ such that up to a subsequence, the following statements hold.
\begin{itemize}
\item[(1)] $(\p_1^{n_0}),\ (\p_2^{n_0})$ and $(\p_3^{n_0})$ hold. Consequently, we can not find the $(n_0+1)$-th family of points $(x_{p,n_0+1})_{p\geq p_0}\in \om$ such that $(\p_1^{n_0+1})$-$ (\p_2^{n_0+1})$ hold.
\item[(2)] \begin{equation}\label{fc-s}\s=\{\lim\limits_{p\rt\iy} x_{p,j} :j=1,2,\ldots,n_0\},\end{equation}
so $\s$ is finite.
Furthermore, given any compact subset $\mathscr{K}\Subset\overline{\Omega}\setminus\s$, there exists a constant $C(\mathscr{K})>0$ independent of $p$ such that
\begin{align}\label{31-1}
  p \Vert u_{k,p}\Vert_{L^\infty(\mathscr{K})}\leq C(\mathscr{K}),\quad k=1,2,
\end{align}
\begin{align}\label{31}
  p \Vert\nabla u_{k,p}\Vert_{L^\infty(\mathscr{K})}\leq C(\mathscr{K}),\quad k=1,2.
\end{align}
\end{itemize}
\end{Proposition}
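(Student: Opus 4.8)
The plan is to construct the concentration points by an iterative selection procedure, exactly in the spirit of the bubbling analysis for the Lane--Emden equation in \cite{D-G-I-P-18, Franc-Isabella-Filomena-15, D-I-P-17} and the Liouville/Toda systems in \cite{Jost-Lin-Wang-06,L-W-Z-13,L-Z-13}. We start with $x_{p,1}$ given by \eqref{eq-4.5}; Lemma \ref{lemma-3-5^*} already gives us that $(\mathcal P_1^1)$--$(\mathcal P_2^1)$ hold, the limiting profile being of Type $\mathcal A$, $\mathcal B$ or $\mathcal C$ (Type $\mathcal B$ and $\mathcal C$ correspond to the two sub-cases of alternative (2) in Lemma \ref{lemma-3-5^*}). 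Suppose that for some $n\geq 1$ we have found families $(x_{p,j})_{p\ge p_0}$, $j=1,\dots,n$, such that $(\mathcal P_1^n)$--$(\mathcal P_2^n)$ hold but $(\mathcal P_3^n)$ fails. Then there is a sequence of points $y_p\in\Omega$ with $p R_{p,n}(y_p)^2 M_p(y_p)^{p-1}\to+\infty$; choosing $x_{p,n+1}$ to be a point maximizing $R_{p,n}(\cdot)^2 M_p(\cdot)^{p-1}$ (or $M_p(\cdot)$ on a suitable shrinking region — I would use the standard ``select the next highest bump that is far away from the previous ones'' device), one checks that $(\mathcal P_1^{n+1})$ holds: by construction $|x_{p,n+1}-x_{p,j}|/\varepsilon_{p,n+1}\to\infty$ for $j\le n$ and also $|x_{p,i}-x_{p,j}|/\varepsilon_{p,j}\to\infty$ is preserved. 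Running the same Harnack-dichotomy argument as in the proof of Lemma \ref{lemma-3-5^*} on the rescaled functions $v_{k,p,n+1}$ — using that, by the choice of $x_{p,n+1}$, the rescaled right-hand sides stay bounded on compact sets away from the (rescaled) images of $x_{p,1},\dots,x_{p,n}$, which escape to infinity — together with Theorem \ref{th-3.1} and Chen--Li \cite{Chen-Li-91}, gives $(\mathcal P_2^{n+1})$.

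The key point is that this process terminates. By Remark \ref{remark3-1} applied at each $x_{p,j}$ (the local lower bounds on $p\int u_{k,p}^{p}$ around each bubble), together with the property $(\mathcal P_1^n)$ which guarantees that the balls $B_{R\varepsilon_{p,j}}(x_{p,j})$ are mutually disjoint for $p$ large, each new bubble contributes at least a fixed positive amount $c_0>0$ to $p\int_\Omega(u_{1,p}^{p}+u_{2,p}^{p})\,dx$. Since \eqref{eq-4.9-0} bounds this quantity from above by $C_2$, we must have $n\le C_2/c_0$, so the iteration stops at some $n_0\ge 1$; by construction, at $n_0$ all three of $(\mathcal P_1^{n_0})$, $(\mathcal P_2^{n_0})$, $(\mathcal P_3^{n_0})$ hold, and the observation following the definition of $(\mathcal P_3^n)$ shows one cannot then adjoin an $(n_0+1)$-th family satisfying $(\mathcal P_1^{n_0+1})$--$(\mathcal P_2^{n_0+1})$. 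This proves (1).

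For (2), one inclusion $\{\lim_p x_{p,j}\}\subset\mathcal S$ is immediate since $pM_p(x_{p,j})^{p-1}\to\infty$ forces $pM_p(x_{p,j})\to\infty$ (recalling $\liminf M_p(x_{p,j})\ge 1$). For the reverse inclusion and the estimates \eqref{31-1}--\eqref{31}, fix a compact $\mathscr K\Subset\overline\Omega\setminus\mathcal S$; then $\inf_{x\in\mathscr K}R_{p,n_0}(x)\ge d_0>0$ for $p$ large (otherwise a subsequence of near-minimizers would converge to a point of $\mathscr K$ that also lies in $\mathcal S$, a contradiction), so $(\mathcal P_3^{n_0})$ yields $pM_p(x)^{p-1}\le C/d_0^2$ on $\mathscr K$, hence $M_p(x)^{p-1}\le C/(pd_0^2)<1$ for $p$ large, whence $pM_p(x)\le p(C/(pd_0^2))^{1/(p-1)}\to p$... more precisely $M_p(x)\le (C/(pd_0^2))^{1/(p-1)}=1+o(1)$ so $pu_{k,p}(x)\le pM_p(x)$ is \emph{not} yet bounded — the correct conclusion is first that $\|M_p\|_{L^\infty(\mathscr K)}=1+o(1)$, then one upgrades: by the Green representation formula together with \eqref{eq-2.1}, \eqref{eq-4.9-0} and the interior bound just obtained, $pu_{k,p}$ is bounded in $L^\infty_{loc}(\overline\Omega\setminus\mathcal S)$, giving \eqref{31-1}; finally elliptic gradient estimates on the equation satisfied by $pu_{k,p}$ (whose right-hand side $p(\mu_k u_{k,p}^p+\beta u_{k,p}^{(p-1)/2}u_{3-k,p}^{(p+1)/2})$ is then locally bounded there, using $M_p^{p}=M_p\cdot M_p^{p-1}\le C/p$) give \eqref{31}. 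If $x_\ast\in\mathcal S\setminus\{\lim_p x_{p,j}\}$ existed, then by definition of $\mathcal S$ there would be $z_p\to x_\ast$ with $pM_p(z_p)^{p-1}\to\infty$, forcing $R_{p,n_0}(z_p)\to 0$ by $(\mathcal P_3^{n_0})$, i.e. $x_\ast=\lim_p x_{p,j}$ for some $j$, a contradiction; this gives \eqref{fc-s}.

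The main obstacle I expect is making the selection of $x_{p,n+1}$ precise enough that $(\mathcal P_1^{n+1})$ is genuinely inherited (the separation must hold between \emph{every} pair, in the scale of the \emph{smaller}-$\varepsilon$ bubble) while the right-hand sides of the rescaled system at $x_{p,n+1}$ remain controlled on compacta — this is the standard but delicate ``fast decay away from bubbles'' bookkeeping, here complicated by the fact that a bubble may be of Type $\mathcal A$, $\mathcal B$ or $\mathcal C$, so that one of $v_{1,p,j},v_{2,p,j}$ may diverge to $-\infty$ and must be handled separately in the Harnack dichotomy.
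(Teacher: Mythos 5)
Your strategy is essentially the paper's: when $(\p_3^{n})$ fails, select $x_{p,n+1}$ maximizing $pR_{p,n}(\cdot)^2M_p(\cdot)^{p-1}$, verify $(\p_1^{n+1})$--$(\p_2^{n+1})$, stop the iteration by playing the local mass lower bound of each bubble (as in Remark \ref{remark3-1}) against the global bound \eqref{eq-4.9-0}, and then extract \eqref{fc-s}, \eqref{31-1}, \eqref{31} from $(\p_3^{n_0})$ via the Green representation. The concrete gap is in the inductive step: you propose to ``run the same Harnack-dichotomy argument as in the proof of Lemma \ref{lemma-3-5^*}'' for $v_{k,p,n+1}$, but that proof uses $dist(x_{p,1},\partial\Omega)\geq\delta$ (the moving-plane estimate of Lemma \ref{lemma2-1}), which is \emph{not} available for the secondary points: a priori $x_{p,n+1}$ could sit at distance $O(\varepsilon_{p,n+1})$ from $\partial\Omega$, the rescaled domains would then converge to a half-plane, and neither Theorem \ref{th-3.1} nor Chen--Li would apply, so $(\p_2^{n+1})$ would not follow. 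The paper needs a dedicated argument (its \eqref{fc-xpn}): a gradient bound for the rescaled function obtained from the Green representation, \eqref{eq-2.1}, \eqref{eq-4.9-0} and the maximality bound \eqref{eq-3-74}, which gives $|v_{1,p,n+1}|\leq C$ up to the rescaled boundary and contradicts the boundary value $-p$. Your proposal is silent on this, and it is not subsumed by your general ``bookkeeping'' caveat. Likewise, the separation of the new point from the old ones measured in the \emph{old} scales $\varepsilon_{p,j}$ (the paper's \eqref{eq-3-67}, proved by contradiction from \eqref{eq-3-64} together with the already-known convergence of $v_{k,p,j}$) is only flagged by you as the ``main obstacle'' but not actually argued.

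Part (2) of your plan is essentially the paper's, and your self-correction is the right one: first $M_p\leq 1+o(1)$ on $\mathscr{K}$ from $(\p_3^{n_0})$, then upgrade to $p\Vert u_{k,p}\Vert_{L^\infty(\mathscr{K})}\leq C$ via the Green representation, using $M_p^p=M_p\cdot M_p^{p-1}\leq C/p$ away from the points $x_{p,j}$ and \eqref{eq-4.9-0} near them; your elliptic-estimate route to \eqref{31} also works (include boundary estimates, since $\mathscr{K}$ may touch $\partial\Omega$ at this stage), whereas the paper reuses the Green representation to get the sharper bound $p|\nabla u_{k,p}(x)|=O(\sum_j|x-x_{p,j}|^{-1})$. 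One slip to fix: for the inclusion $\s\subset\{\lim_p x_{p,j}\}$ you claim the definition of $\s$ produces $z_p\to x_\ast$ with $pM_p(z_p)^{p-1}\to\infty$, but it only gives $pM_p(z_p)\to\infty$, which does not force $pM_p(z_p)^{p-1}\to\infty$ (take $M_p(z_p)\sim 1/\log p$); the correct deduction is to apply the Green-representation $L^\infty$ bound on a small closed ball around $x_\ast$ disjoint from the finite set $\{\lim_p x_{p,j}\}$ and contradict $pM_p(z_p)\to\infty$ directly.
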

\begin{proof}
    In this proof, we adopt some ideas from  \cite[Proposition 2.2]{Franc-Isabella-Filomena-15} and \cite[Theorem 1.1]{L-W-Z-13}. We divide the proof into several steps.

    \textbf{Step 1.} We prove that for some $n\geq 1$, if $(\p_1^n)$-$(\p_2^n)$ hold, then either $(\p_1^{n+1})$-$(\p_2^{n+1})$ hold or $(\p_3^{n+1})$ holds. 
    
    Assume that $(\p_1^n)$-$(\p_2^n)$ hold but $ (\p_3^{n+1})$ fails, then up to a subsequence, there exists 
    $x_{p,n+1}\in\Omega$ such that
    \begin{align}\label{eq-3-64}
        p R_{p,n}(x_{p,n+1})^2M_p(x_{p,n+1})^{p-1}:=\max_{x\in\Omega} p R_{p,n}(x)^2M_p(x)^{p-1}\to+\infty.
    \end{align}
    Define
    \begin{align}\label{fc-vpn+1}
        \varepsilon_{p,n+1}^{-2}:=p M_p(x_{p,n+1})^{p-1}\to+\infty,
    \end{align}
and
    \begin{align*}
        v_{k,p,n+1}(z):=\frac{p}{M_p(x_{p,n+1})}\left(u_{k,p}(\varepsilon_{p,n+1}z+x_{p,n+1})-M_p(x_{p,n+1})\right),\; k=1,2,
    \end{align*}
    for $z\in \widetilde{\Omega}_{p,n+1}:=\left(\Omega-x_{p,n+1}\right)/\varepsilon_{p,n+1}$.  Similarly as \eqref{eq-3-50}, we have
    \begin{align}\label{fc-3-41}
        \begin{cases}
            -\Delta v_{1,p,n+1}=\mu_1\left(1+\frac{v_{1,p,n+1}}{p}\right)^p+\beta\left(1+\frac{v_{1,p,n+1}}{p}\right)^{\frac{p-1}{2}}\left(1+\frac{v_{2,p,n+1}}{p}\right)^{\frac{p+1}{2}},\\
            -\Delta v_{2,p,n+1}=\mu_1\left(1+\frac{v_{2,p,n+1}}{p}\right)^p+\beta\left(1+\frac{v_{2,p,n+1}}{p}\right)^{\frac{p-1}{2}}\left(1+\frac{v_{1,p,n+1}}{p}\right)^{\frac{p+1}{2}},\\
            1+\frac{v_{k,p,n+1}(z)}{p}=\frac{u_{k,p}(\varepsilon_{p,n+1}z+x_{p,n+1})}{M_p(x_{p,n+1})}\geq 0, \; \max\{v_{1,p,n+1}(0), v_{2,p,n+1}(0)\}=0.
        \end{cases}
    \end{align}
 
 {\bf Step 1-1.} We prove that  $(\p_1^{n+1})$ holds.   
 
 First, we prove that for any $i\in\{1,\ldots,n\}$,
    \begin{align}\label{eq-3-67}
        \lim\limits_{p\rt\iy}\frac{|x_{p,i}-x_{p,n+1}|}{\e_{p,i}}=+\infty.
    \end{align}
    Assume by contradiction that up to a subsequence, there exists $y_0\in\mathbb{R}^2$ such that 
    \begin{align*}
        \frac{x_{p,n+1}-x_{p,i}}{\e_{p,i}}\to y_0,\quad\text{for some }\;i\in\{1,\ldots,n\}.
    \end{align*}
    We may also assume $M_p(x_{p,n+1})=u_{k_0,p}(x_{p,n+1})$ for some $k_0\in \{1,2\}$.
 Then by $(\p_2^n)$ and
 $$1+\frac{v_{k,p,i}(z)}{p}=\frac{u_{k,p}(\varepsilon_{p,i}z+x_{p,i})}{M_p(x_{p,i})}\geq 0,$$
we have
    \begin{align*}
        &p|x_{p,i}-x_{p,n+1}|^2 M_p(x_{p,n+1})^{p-1}\nonumber\\
        =&p|x_{p,i}-x_{p,n+1}|^2 u_{k_0,p}(x_{p,n+1})^{p-1}\nonumber\\
        =&\left(\frac{|x_{p,n+1}-x_{p,i}|}{\e_{p,i}}\right)^2 \left(1+\frac{v_{k_0,p,i}(\frac{x_{p,n+1}-x_{p,i}}{\e_{p,i}})}{p}\right)^{p-1}\\
        \leq&|y_0|^2e^{\max\{V_{1}(y_0), V_{2}(y_0), \widetilde{V}_1(y_0), \widetilde{V}_2(y_0)\}}+o(1)\leq C\nonumber,
    \end{align*}
  for $p$ large,  which is a contradiction with \eqref{eq-3-64}. This proves \eqref{eq-3-67}.
Furthermore, it follows  from \eqref{eq-3-64} that 
    \begin{align}\label{eq-3-70}
        \frac{|x_{p,i}-x_{p,n+1}|}{\e_{p,n+1}}\geq \left(p R_{p,n}(x_{p,n+1})^2M_p(x_{p,n+1})^{p-1}\right)^{\frac{1}{2}}\rightarrow+\infty, \quad\forall i\leq n.
    \end{align}
    Thus, $(\p_1^{n+1})$ follows from \eqref{eq-3-67}, \eqref{eq-3-70} and $(\p_1^n)$.

 {\bf Step 1-2.}  We prove the convergence of $v_{k,p,n+1}$ such that $(\p_2^{n+1})$ holds.
    
Notice that comparing to \eqref{eq-4.5} and Lemma \ref{lemma-3-5^*}, the different thing here is that $x_{p,n+1}$ is no longer a local maximum point of $M_{p}$, so we need to control $M_p$ by $M_p(x_{p,n+1})$ near $x_{p+1}$.     
Fix any $\eta\in (0,1)$ and any large $R>1$.
    By \eqref{eq-3-70}, we have that for $p$ large and for any $z\in B_R(0)\cap\widetilde{\Omega}_{p,n+1}$,
    \begin{align*}
        \frac{|x_{p,i}-x_{p,n+1}|}{\varepsilon_{p,n+1}}\geq \frac{R}{\eta} \geq \frac{|z|}{\eta},\quad\forall i\leq n,
    \end{align*}
    \begin{align*}
        |x_{p,i}-x_{p,n+1}-\varepsilon_{p,n+1}z|
        \geq  (1-\eta)|x_{p,i}-x_{p,n+1}|,\;\forall i\leq n,
    \end{align*}
and so
    \begin{align}
        0<\frac{R_{p,n}(x_{p,n+1})}{R_{p,n}(\varepsilon_{p,n+1}z+x_{p,n+1})}\leq \frac{1}{1-\eta}.
    \end{align}
    From here and \eqref{eq-3-64}, we obtain that for $p$ large and for any $z\in B_R(0)\cap\widetilde{\Omega}_{p,n+1}$,
    \begin{align}\label{eq-3-74}
       \left(\frac{M_p(\varepsilon_{p,n+1}z+x_{p,n+1})}{M_p(x_{p,n+1})}\right)^{p-1}\leq &\frac{R_{p,n}(x_{p,n+1})^2}{R_{p,n}(\varepsilon_{p,n+1}z+x_{p,n+1})^2}
       \leq \frac{1}{(1-\eta)^2}.
    \end{align}
   This, together with \eqref{fc-3-41}, implies that $|\Delta v_{1,p,n+1}|$ and $|\Delta v_{2,p,n+1}|$ are both uniformly bounded in $B_R(0)\cap\widetilde{\Omega}_{p,n+1}$ for $p$ large. Recall from \eqref{fc-vpn+1} that $\varepsilon_{p,n+1}\rightarrow 0$ as $p\rightarrow\infty$ and
    \begin{align}\label{fc-3-43}
        \liminf_{p\rightarrow\infty}M_p(x_{p,n+1})\geq 1.
    \end{align}
    Since $t^{p}-1-p(t-1)\geq 0$ for all $t\geq 0$, we see from
    \eqref{eq-3-74} that for $p$ large and for any $z\in B_R(0)\cap\widetilde{\Omega}_{p,n+1}$,
    \begin{align}
        v_{k,p,n+1}(z)\leq & p\left(\frac{M_p(\varepsilon_{p,n+1}z+x_{p,n+1})}{M_p(x_{p,n+1})}-1\right)\\
        \leq &\left(\frac{M_p(\varepsilon_{p,n+1}z+x_{p,n+1})}{M_p(x_{p,n+1})}\right)^{p}-1\leq C.\nonumber
    \end{align}
    On the other hand, we will prove
    \begin{equation}\label{fc-xpn}
    \frac{dist(x_{p,n+1},\partial\Omega)}{\varepsilon_{p,n+1}}\to+\infty,\quad\text{and so }\;\widetilde{\Omega}_{p,n+1}\to\mathbb R^2.
    \end{equation}
    Once \eqref{fc-xpn} is proved, thanks to \eqref{fc-3-41} and \eqref{eq-3-74}-\eqref{fc-xpn}, and since $R>1$ is arbitrary, we can repeat the proof of
    Lemma \ref{lemma-3-5^*} to obtain the convergence of $v_{k,p,n+1}$. For example, if both $v_{1,p,n+1}(0)$ and $v_{2,p,n+1}(0)$ are uniformly bounded, then up to a subsequence, $(v_{1,p,n+1},v_{2,p,n+1})\rightarrow (V_{1,n+1},V_{2,n+1})$ in $C^2_{loc}(\mathbb{R}^2)\times C^2_{loc}(\mathbb{R}^2)$, where $(V_{1,n+1},V_{2,n+1})$ is a solution of the Liouville-type system \eqref{eq-C-Z}.
    Since \eqref{eq-3-74} gives that for each fixed $z$,
    \[\left(1+\frac{v_{k,p,n+1}(z)}{p}\right)^{p-1}=\left(\frac{u_{k,p}(\varepsilon_{p,n+1}z+x_{p,n+1})}{M_p(x_{p,n+1})}\right)^{p-1}\leq \frac{1}{(1-\eta)^2},\]
    letting $p\to\infty$, we obtain $e^{V_{k,n+1}(z)}\leq \frac{1}{(1-\eta)^2}$. Since $\eta\in (0,1)$ is arbitrary, we have  $e^{V_{k,n+1}(z)}\leq 1$ and so $V_{k,n+1}(z)\leq 0$. Thus
   \[ V_{1,n+1}\leq 0,\quad V_{2,n+1}\leq 0,\quad \max\{V_{1,n+1}(0), V_{2,n+1}(0)\}=0.\]
   Then the same proof of   Lemma \ref{lemma-3-5^*} implies that $(V_{1,n+1}, V_{2,n+1})=(V_1, V_2)$, where $(V_1, V_2)$ is given by \eqref{fc-3-31}. The other case that $v_{k,p,n+1}(0)$ is uniformly bounded and $v_{3-k,p,n+1}(0)\to-\infty$ for some $k\in\{1,2\}$ can be discussed similarly. This proves that $(\p_2^{n+1})$ holds.
   
   Now we turn to prove \eqref{fc-xpn}, and in the proof we omit the subscript $n+1$ in $v_{k,p,n+1}, \varepsilon_{p,n+1}, x_{p,n+1}, \widetilde{\Omega}_{p,n+1}$ just for convenience.  Assume by contradiction that up to a subsequence, $dist(x_{p},\partial\Omega)=O(\varepsilon_{p})$, then up to a rotation, we may assume $\widetilde{\Omega}_{p}\rightarrow (-\infty,t_0)\times\mathbb{R}$ for some $0\leq t_0<\infty $. Recalling \eqref{fc-3-41}, without loss of generality, we may assume
   $$v_{1,p}(0)=0.$$
    Fix any $R_0\geq t_0+1$. Then for any $z\in B_{R_0}(0)\cap\widetilde{\Omega}_{p}$, by the Green’s representation formula and \eqref{eq-2.1}, we have
    \begin{align}\label{eq-4-23}
       |\nabla v_{1,p}(z)|=&\frac{p\varepsilon_{p}}{M_p(x_{p})}|\nabla u_{1,p}(x_{p}+\varepsilon_{p}z)|\nonumber\\
       =&\frac{p\varepsilon_{p}}{M_p(x_{p})}\left|\int_{\Omega}\nabla_x G(x_{p}+\varepsilon_{p}z,y)\left(\mu_1 u_{1,p}^{p}+\beta u_{1,p}^{\frac{p-1}{2}}u_{2,p}^{\frac{p+1}{2}}\right)dy\right|\\
       \leq & C \frac{  p\varepsilon_{p}}{M_p(x_{p})}\int_{\Omega}\frac{1}{|x_{p}+\varepsilon_{p}z-y|}\left(\mu_1 u_{1,p}^{p}+\beta u_{1,p}^{\frac{p-1}{2}}u_{2,p}^{\frac{p+1}{2}}\right)dy.\nonumber
    \end{align}
    Note that for $y\in \Omega\setminus B_{2R_0\varepsilon_{p}}(x_{p})$, 
    $$|x_{p}+\varepsilon_{p}z-y|\geq |x_{p}-y|-\varepsilon_{p}|z|\geq \varepsilon_{p}R_0,$$ so it follows from \eqref{eq-4.9-0} and \eqref{fc-3-43} that 
    \begin{align}\label{eq-4.24}
        \frac{  p\varepsilon_{p}}{M_p(x_{p})}&\int_{\Omega\setminus B_{2R_0\varepsilon_{p}}(x_{p})}\frac{1}{|x_{p}+\varepsilon_{p}z-y|}\left(\mu_1 u_{1,p}^{p}+\beta u_{1,p}^{\frac{p-1}{2}}u_{2,p}^{\frac{p+1}{2}}\right)dy\\
        \leq \frac{p}{M_p(x_{p})R_0}&\int_{\Omega}\mu_1 u_{1,p}^{p}+\beta u_{1,p}^{\frac{p-1}{2}}u_{2,p}^{\frac{p+1}{2}} dy\leq C.\nonumber
    \end{align}
    For $y\in B_{2R_0\varepsilon_{p}}(x_{p})$, using \eqref{eq-3-74} with $R=2R_0$, we have 
    $$\max\{u_{1,p}(y), u_{2,p}(y)\}\leq (1-\eta)^{-\frac{2}{p-1}}M_p(x_p),$$
    This together with \eqref{fc-vpn+1} implies that (write $y=x_{p}+\varepsilon_{p}\tau$)
    \begin{align}\label{eq-4.25}
        &\frac{ p\varepsilon_{p}}{M_p(x_{p})}\int_{B_{2R_0\varepsilon_{p}}(x_{p})}\frac{1}{|x_{p}+\varepsilon_{p}z-y|}\left(\mu_1 u_{1,p}^{p}+\beta u_{1,p}^{\frac{p-1}{2}}u_{2,p}^{\frac{p+1}{2}}\right)dy\\
        \leq &C p\varepsilon_{p} M_p(x_{p})^{p-1} \int_{B_{2R_0\varepsilon_{p}}(x_{p})}\frac{1}{|x_{p}+\varepsilon_{p}z-y|} dy\nonumber\\
        = & C\int_{B_{2R_0}(0)}\frac{1}{|\tau-z|}d\tau\leq C.\nonumber
    \end{align}
 Inserting \eqref{eq-4.24} and \eqref{eq-4.25} into \eqref{eq-4-23}, we get $|\nabla v_{1,p}(z)|\leq C$ for any $z\in B_{R_0}(0)\cap\widetilde{\Omega}_{p}$, so
 $$|v_{1,p}(z)|=|v_{1,p}(z)-v_{1,p}(0)|\leq C,\quad\forall z\in\overline{B_{R_0}(0)\cap\widetilde{\Omega}_{p}}.$$
 From here and
 $R_0\geq t_0+1$, for $p$ large
    there exists $z_p\in\partial\widetilde{\Omega}_{p}\cap B_{R_0}(0)$ such that $p=|v_{1,p}(z_p)|\leq C$, a contradiction with $p\to+\infty$. This proves \eqref{fc-xpn}.

    \textbf{Step 2.} We prove there exists $n_0\geq 1$ such that $(\p_1^{n_0}),\ (\p_2^{n_0})$ and $(\p_3^{n_0})$ hold. 
    
    Assume by contradiction that such $n_0$ does not exist, then it follows from Step 1 that 
    $(\p_1^{n})$ and $(\p_2^{n})$ hold for any $n\geq 1$. Recalling \eqref{fc-3-31}-\eqref{fc-3-33} and letting $b_0=\frac{4\pi}{\mu_1(1+\frac{\beta}{\sqrt{\mu_{1}\mu_{2}}})}$, then there is $R_1>1$ such that
    \[\min\Big\{\int_{B_{R_1}(0)}e^{V_{k}}dz,\; \int_{B_{R_1}(0)}e^{\widetilde V_{k}}dz\Big\}\geq b_0,\quad k=1,2.\]
    Recall \eqref{eq-4.9-0} that
    $$p\int_{\Omega}u_{k,p}^{p} dx \leq C_2,\quad k=1,2.$$
    We fix $n>\frac{2C_2}{b_0}$.
    Then by  $(\p_1^{n})$, for $p$ large we have that for any $i,j\in\{1,\ldots,n\}$ with $i\neq j$,
    \begin{align}
        B_{\varepsilon_{p,i} R_1}(x_{p,i})\cap B_{\varepsilon_{p,j} R_1}(x_{p,j})=\emptyset.
    \end{align}
    Clearly those estimates in Remark \ref{remark3-1} also hold for any $j$, so 
     we obtain
    \begin{align*}
    2C_2&\geq \liminf_{p\to+\infty}p\int_{\Omega}u_{1,p}^{p} +u_{2,p}^{p} dx\\
    &\geq \liminf_{p\to+\infty}\sum_{j=1}^np\int_{B_{\varepsilon_{p,j} R_1}(x_{p,j})}u_{1,p}^{p} +u_{2,p}^{p} dx\\
    &\geq n \min\Big\{\int_{B_{R_1}(0)}e^{V_{k}}dz, \int_{B_{R_1}(0)}e^{\widetilde V_{k}}dz\Big\}\geq nb_0>2C_2,
    \end{align*}
    a contradiction. 
    
    Therefore, there exists $n_0\geq 1$ such that $(\p_1^{n_0}),\ (\p_2^{n_0})$ and $(\p_3^{n_0})$ hold. 
 Consequently, \eqref{fc-s} follows directly from the definition of $\s$ and $(\p_3^{n_0})$, and \eqref{31-1} follows directly from \eqref{fc-s} and $(\p_3^{n_0})$.   
   
    \textbf{Step 3.} To prove \eqref{31}, we only need to prove that for any $x\in\Omega\setminus\{x_{p,1},\cdots, x_{p,{n_0}}\}$ and $k=1,2$,
    \begin{align}\label{eq-3-80}
    p\left|\nabla u_{k,p}(x)\right|=O\left(\sum\limits_{j=1}^{n_0}\frac{1}{|x-x_{p,j}|}\right).
    \end{align}
 
  Let us take $k=1$ for example. Define
    \begin{align*}
        \Omega_{p,j}:=\Big\{x\in\Omega\;:\;R_{p,n_0}(x)=\min_{1\leq i\leq n_0}|x_{p,i}-x|=|x_{p,j}-x|\Big\},\;\forall j,
    \end{align*} 
    so $\cup_j\Omega_{p,j}=\Omega$. Then by the Green’s representation formula and \eqref{eq-2.1}, 
    \begin{align}\label{eq-3-81}
        \left|p\nabla u_{1,p}(x)\right|=&\left|p\int_{\Omega}\nabla_x G(x,y) \left(\mu_1 u_{1,p}^p+\beta u_{1,p}^{\frac{p-1}{2}}u_{2,p}^{\frac{p+1}{2}}\right) dy \right|\\
        \leq & C p\int_{\Omega}\frac{1}{|x-y|} \left(\mu_1 u_{1,p}^p+\beta u_{1,p}^{\frac{p-1}{2}}u_{2,p}^{\frac{p+1}{2}}\right)  dy\nonumber\\
        \leq & C  p \sum\limits_{j=1}^{n_0}\left(\int_{\Omega_{p,j}}\frac{1}{|x-y|} u_{1,p}^p dy +\int_{\Omega_{p,j}}\frac{1}{|x-y|} u_{2,p}^p dy \right).\nonumber
    \end{align}
    For $y\in\Omega_{p,j}^1:=\Omega_{p,j}\cap B_{{|x_{p,j}-x|}/{2}}(x_{p,j})$, we have $$|x-y|\geq |x-x_{p,j}|-|y-x_{p,j}|\geq {|x_{p,j}-x|}/{2},$$ which together with  \eqref{eq-4.9-0} implies
    \begin{align}\label{eq-3-84}
        p\int_{\Omega_{p,j}^1}\frac{1}{|x-y|} u_{k,p}^p dy\leq \frac{C}{|x_{p,j}-x|} p\int_{\Omega_{p,j}} u_{k,p}^p dy\leq \frac{C}{|x_{p,j}-x|}.
    \end{align}
    For $y\in\Omega_{p,j}^2:=\Omega_{p,j}\setminus B_{{2|x_{p,j}-x|}}(x_{p,j})$,  we have $$|x-y|\geq |y-x_{p,j}|-|x-x_{p,j}|\geq {|x_{p,j}-x|},$$ which also implies
    \begin{align}\label{eq-3-85}
        p\int_{\Omega_{p,j}^2}\frac{1}{|x-y|} u_{k,p}^p dy\leq \frac{C}{|x_{p,j}-x|} p\int_{\Omega_{p,j}} u_{k,p}^p dy\leq \frac{C}{|x_{p,j}-x|}.
    \end{align}
    For $y\in\Omega_{p,j}^3:=\Omega_{p,j}\cap( B_{{2|x_{p,j}-x|}}(x_{p,j})\setminus B_{{|x_{p,j}-x|}/{2}}(x_{p,j}) )$, by $R_{p,n_0}(y)=|y-x_{p,j}|$ and $(\p_3^{n_0})$, we have
    \begin{align*}
       p|y-x_{p,j}|^2 u_{k,p}(y)^{p-1}\leq p R_{p,n}(y)^2M_p(y)^{p-1}\leq C,
    \end{align*}
    which together with $\|u_{k,p}\|_{\infty}\leq C$ implies
    \begin{align}\label{eq-3-87}
         &p\int_{\Omega_{p,j}^3}\frac{1}{|x-y|} u_{k,p}^pdy
         \leq Cp\int_{\Omega_{p,j}^3}\frac{1}{|x-y|} u_{k,p}^{p-1}dy\nonumber\\
         \leq & C\int_{B_{{2|x_{p,j}-x|}}(x_{p,j})\setminus B_{{|x_{p,j}-x|}/{2}}(x_{p,j})}\frac{dy}{|x-y||y-x_{p,j}|^2} \\
         \leq &\frac{C}{|x_{p,j}-x|^2}\int_{B_{{2|x_{p,j}-x|}}(0)\setminus B_{{|x_{p,j}-x|}/{2}}(0)}\frac{dy}{|y-(x-x_{p,j})|}\nonumber\\
         \leq &\frac{C}{|x_{p,j}-x|^2}\int_{B_{{3|x_{p,j}-x|}}(0)}\frac{dy}{|y|}\leq \frac{C}{|x_{p,j}-x|}.\nonumber
     \end{align}
     Inserting \eqref{eq-3-84}-\eqref{eq-3-87} into \eqref{eq-3-81}, we obtain 
 \eqref{eq-3-80} with $k=1$.

     The proof is complete.
\end{proof}

Since $\mathcal{S}$ is finite, we can write $$\mathcal{S}=\{x_1,\ldots,x_N\}, \quad\text{for some}\; N\leq n_0.$$ 
Take $r_0>0$ small such that for any $x_i\neq x_j\in\mathcal{S}$,
\begin{align}\label{fc-3-57}
    B_{2r_0}(x_i)\cap B_{2r_0}(x_j)=\emptyset.
\end{align}

\begin{Lemma}\label{lemma-3-7}
Up to a subsequence, we have that for $k=1,2,$
\begin{align}
    p u_{k,p}\rightarrow \sum\limits_{i=1}^N \gamma_{k,i} G(\cdot, x_i)\quad\text{ in }\,\,C^{2}_{loc}(\overline{\Omega}\setminus\mathcal{S}),
\end{align}
where
\begin{align*}
    \gamma_{k,i}:=\lim\limits_{\rho\rightarrow 0}\lim\limits_{p \rightarrow \infty} p\int_{B_{\rho}(x_i)\cap\Omega} \left(\mu_k u_{k,p}^p +\beta u_{k,p}^{\frac{p-1}{2}}u_{3-k,p}^{\frac{p+1}{2}}\right) dx.
\end{align*}
\end{Lemma}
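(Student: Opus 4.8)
The plan is to start from the Green's representation formula
\begin{align*}
  p u_{k,p}(x)=\int_\Omega G(x,y)\,d\nu_{k,p}(y),\qquad d\nu_{k,p}:=p\left(\mu_k u_{k,p}^p+\beta u_{k,p}^{\frac{p-1}{2}}u_{3-k,p}^{\frac{p+1}{2}}\right)dy,
\end{align*}
and to exploit the uniform estimates \eqref{eq-4.9-0} and \eqref{31-1}. By the weighted AM--GM inequality one has $u_{k,p}^{\frac{p-1}{2}}u_{3-k,p}^{\frac{p+1}{2}}\le u_{k,p}^p+u_{3-k,p}^p$, so \eqref{eq-4.9-0} yields $\sup_p\nu_{k,p}(\Omega)<\infty$. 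Consequently, up to a subsequence, $\nu_{k,p}\rightharpoonup\nu_k$ weakly-$*$ as Radon measures on $\overline\Omega$, where $\nu_k$ is a finite nonnegative measure.

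The next step is to show $\nu_k$ is a finite combination of Dirac masses supported on $\mathcal S$. Given any compact $\mathscr K\Subset\overline\Omega\setminus\mathcal S$, estimate \eqref{31-1} gives $u_{k,p}\le C(\mathscr K)/p$ on $\mathscr K$, so the density of $\nu_{k,p}$ there is at most $Cp\,(C(\mathscr K)/p)^{p}\to0$ uniformly; hence $\nu_k(\mathscr K)=0$. Since $\mathcal S=\{x_1,\ldots,x_N\}$ is finite (Proposition \ref{prop-3-6}), it follows that $\nu_k=\sum_{i=1}^N\gamma_{k,i}\delta_{x_i}$ with $\gamma_{k,i}:=\nu_k(\{x_i\})$ (any $x_i\in\partial\Omega$ that occurred would contribute a term with $G(\cdot,x_i)\equiv0$). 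The same uniform decay of the density away from $\mathcal S$ shows that $\nu_k(\partial B_\rho(x_i))=0$ and $\nu_{k,p}(B_\rho(x_i))\to\nu_k(B_\rho(x_i))=\gamma_{k,i}$ for all small $\rho>0$, so $\gamma_{k,i}=\lim_{\rho\to0}\lim_{p\to\infty}\nu_{k,p}(B_\rho(x_i))$, which is exactly the constant in the statement.

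To pass to the limit in the representation formula, fix $x\in\overline\Omega\setminus\mathcal S$ and $\rho>0$ small with $\overline{B_\rho(x)}\cap\mathcal S=\emptyset$. On $B_\rho(x)$ the density of $\nu_{k,p}$ is uniformly $o(1)$ while $\int_{B_\rho(x)}\bigl|\log|x-y|\bigr|\,dy<\infty$, and on $\Omega\setminus B_\rho(x)$ the map $y\mapsto G(x,y)$ is bounded and continuous by \eqref{eq-2.1}; letting first $p\to\infty$ and then $\rho\to0$ gives $p u_{k,p}(x)\to\sum_{i=1}^N\gamma_{k,i}G(x,x_i)$ pointwise on $\overline\Omega\setminus\mathcal S$. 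Finally, setting $w_p:=p u_{k,p}-\sum_{i=1}^N\gamma_{k,i}G(\cdot,x_i)$, we have $w_p=0$ on $\partial\Omega$, $-\Delta w_p$ equals the density of $\nu_{k,p}$, which is bounded (indeed tends to $0$) in $C^{0,\alpha}_{loc}(\overline\Omega\setminus\mathcal S)$, and $w_p$ is bounded in $L^\infty_{loc}(\overline\Omega\setminus\mathcal S)$ by \eqref{31-1}; interior and boundary Schauder estimates on the smooth domain $\Omega$ then bound $w_p$ in $C^{2,\alpha}_{loc}(\overline\Omega\setminus\mathcal S)$, and combined with the pointwise limit $w_p\to0$ this forces $w_p\to0$ in $C^{2}_{loc}(\overline\Omega\setminus\mathcal S)$.

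I expect the main obstacle to be the second step: establishing that the source densities decay uniformly on compact subsets of $\overline\Omega\setminus\mathcal S$ and that no mass leaks onto $\partial\Omega$ except possibly at points of $\mathcal S$ (where it is harmless), so that the weak-$*$ limit is precisely $\sum_i\gamma_{k,i}\delta_{x_i}$ with the $\gamma_{k,i}$ given by the stated double limit; once this is in place, the remaining arguments are the Green representation together with standard elliptic regularity.
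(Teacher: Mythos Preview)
Your proof is correct and follows essentially the same route as the paper: both rely on the Green representation formula, the uniform mass bound \eqref{eq-4.9-0}, the decay of the density away from $\mathcal{S}$ given by \eqref{31-1}, and standard elliptic regularity to upgrade to $C^2_{loc}$. The paper decomposes the Green integral by hand into pieces over $B_\rho(x_i)$ and over $\Omega\setminus\bigcup_i B_\rho(x_i)$ (getting $C^1_{loc}$ convergence directly before bootstrapping), whereas you package the same estimates via weak-$*$ convergence of the source measures $\nu_{k,p}$ followed by Schauder; this is a cosmetic rather than substantive difference.
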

\begin{proof} The proof is standard
and we sketch it here for later usage. 
Let $\mathscr{K}\Subset\overline{\om}\backslash\s$ be any compact subset and take $r\in (0,r_0)$ small such that
\begin{align}\label{new3.7}
    \mathscr{K}\subset\overline{\Omega}\Big\backslash\left(\bigcup\limits_{i=1}^N B_{2r}(x_i)\right).
\end{align} For $x\in\mathscr{K}$ and $m\in \{0,1\}$,
by the Green's representation formula, we have that for $\rho\in (0,r)$,
\begin{align}\label{3-22}
    p\nabla^m u_{1,p}(x)&=p \int_{\om}\nabla_x^m G(x,y)\left(\mu_1 u_{1,p}^p +\beta u_{1,p}^{\frac{p-1}{2}}u_{2,p}^{\frac{p+1}{2}}\right)dy\nonumber\\
    &=\sum_{i=1}^{N} \nabla_x^m G(x,x_i) p \int_{\Omega\cap B_\rho(x_i)}\left(\mu_1 u_{1,p}^p +\beta u_{1,p}^{\frac{p-1}{2}}u_{2,p}^{\frac{p+1}{2}}\right) dy\\
    &+\sum_{i=1}^{N} p \int_{\Omega\cap B_\rho(x_i)}\nabla_x^m (G(x,y)- G(x,x_i))\left(\mu_1 u_{1,p}^p +\beta u_{1,p}^{\frac{p-1}{2}}u_{2,p}^{\frac{p+1}{2}}\right) dy\nonumber\\
    &+p \int_{\om\backslash \bigcup_{i=1}^{N} B_\rho(x_i)} \nabla_x^m G(x,y)\left(\mu_1 u_{1,p}^p +\beta u_{1,p}^{\frac{p-1}{2}}u_{2,p}^{\frac{p+1}{2}}\right) dy.\nonumber
\end{align}
Since \eqref{31-1} implies
\begin{align}\label{new3.8}
    \Vert u_{k,p}\Vert_{L^{\iy}(\om \backslash \bigcup_{i=1}^{N} B_{\rho}(x_i))}\leq \frac{C_\rho}{p},\quad k=1,2,
\end{align}
and \eqref{eq-2.1} implies 
\[
    \int_{\om\backslash \bigcup_{i=1}^{N} B_\rho(x_i)} |\nabla_x^m G(x,y)|dy\leq \int_{\om} |\nabla_x^m G(x,y)|dy \leq C,
\]
we obtain
\begin{align}\label{995}
   \left|p \int_{\om\backslash \bigcup_{i=1}^{N} B_\rho(x_i)} \nabla_x^m G(x,y)\left(\mu_1 u_{1,p}^p +\beta u_{1,p}^{\frac{p-1}{2}}u_{2,p}^{\frac{p+1}{2}}\right) dy\right|\leq C\frac{C_\rho^{p}}{p^{p-1}}.
\end{align}
Furthermore, it follows from the fact (note \eqref{new3.7})
\begin{align}\label{new3.11}
    |\nabla_y\nabla_x^mG(x,y)|\leq C,\quad\forall x\in\mathscr{K},\; y\in \cup_{i=1}^N B_{r}(x_i),
\end{align}
and \eqref{eq-4.9-0} that
\begin{align}
    &p\left|\int_{\Omega\cap B_\rho(x_i)}\nabla_x^m (G(x,y)- G(x,x_i))\left(\mu_1 u_{1,p}^p +\beta u_{1,p}^{\frac{p-1}{2}}u_{2,p}^{\frac{p+1}{2}}\right)dy\right|\label{new3.12}\\
    \leq& p\int_{\Omega\cap B_\rho(x_i)}\rho|\nabla_y\nabla_x^m G(x,\lambda_y y+(1-\lambda_y)x_i)| \left(\mu_1 u_{1,p}^p +\beta u_{1,p}^{\frac{p-1}{2}}u_{2,p}^{\frac{p+1}{2}}\right) dy\leq C\rho,\nonumber
\end{align}
 where $\lambda_y\in (0,1)$ depends on $y$.
Inserting \eqref{995} and \eqref{new3.12} into \eqref{3-22}, and letting $p\to\infty$ first and then $\rho\to 0$, we get $p \nabla^m u_{1,p}(x) \to\sum_{i=1}^{N}\gamma_{1,i} \nabla_x^mG(x,x_i)$ uniformly in $\mathcal K$. Thus
$$
   pu_{1,p}(x) \to \sum_{i=1}^{N}\gamma_{1,i} G(x,x_i) \quad\text{in} \quad C_{loc}^1(\overline{\om}\backslash\s).
$$
Similarly, we have $pu_{2,p}(x) \rightarrow\sum_{i=1}^{N}\gamma_{2,i} G(x,x_i)$ in $C_{loc}^1(\overline{\om}\backslash\s)$. Then by standard elliptic estimates, the convergence is also in $C_{loc}^2(\overline{\om}\backslash\s)$.
\end{proof}

Recall \eqref{fc-x1} that $x_1\in \mathcal{S}\cap\Omega$. Next, we prove that there is no boundary blow-up.
\begin{Lemma}\label{lemma-3-134}
    $\mathcal{S}\cap\partial\Omega=\emptyset$ and so $\mathcal{S}\subset\Omega$.
\end{Lemma}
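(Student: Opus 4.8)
The plan is to exclude a boundary concentration point directly, by combining the interior ``majorant sets'' produced by the moving-plane/Kelvin-inversion argument in the proof of Lemma~\ref{lemma2-1} with the two-sided mass bound \eqref{eq-4.9-0}; notably no Pohozaev identity is needed. Suppose, for contradiction, that $x_0\in\mathcal S\cap\partial\Omega$. By Proposition~\ref{prop-3-6}(2) one may write $x_0=\lim_{p\to\infty}x_{p,j}$ for one of the finitely many concentration families $j\in\{1,\dots,n_0\}$, so that $pM_p(x_{p,j})^{p-1}=\varepsilon_{p,j}^{-2}\to+\infty$; recall also $\liminf_{p\to\infty}M_p(x_{p,j})\geq1$, since otherwise $pM_p(x_{p,j})^{p-1}\le p(1-\epsilon)^{p-1}\to0$ along a subsequence.

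The first step I would take is to recall from the proof of Lemma~\ref{lemma2-1} the constants $\delta,\gamma>0$, depending only on $\Omega$, with the property that for every $x\in\Omega$ with $\mathrm{dist}(x,\partial\Omega)<2\delta$ there is a measurable set $I_x\subset\{y\in\Omega:\mathrm{dist}(y,\partial\Omega)\geq\delta\}$ satisfying $|I_x|\geq\gamma$ and $u_{k,p}(x)\leq u_{k,p}(\xi)$ for all $\xi\in I_x$, $k=1,2$. Since $x_{p,j}\to x_0\in\partial\Omega$, this applies at $x=x_{p,j}$ for $p$ large; picking $k_0$ with $M_p(x_{p,j})=u_{k_0,p}(x_{p,j})$ gives $M_p(x_{p,j})=u_{k_0,p}(x_{p,j})\le u_{k_0,p}(\xi)\le M_p(\xi)$ for every $\xi\in I_{x_{p,j}}$, hence
\[
\gamma\,M_p(x_{p,j})^p\ \le\ \int_{I_{x_{p,j}}}M_p^p\,dx\ \le\ \int_\Omega\big(u_{1,p}^p+u_{2,p}^p\big)\,dx .
\]
The second step is to insert \eqref{eq-4.9-0}, which bounds the right-hand side by $2C_2/p$. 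Dividing through by $M_p(x_{p,j})$ and using $M_p(x_{p,j})\ge\frac12$ for $p$ large then yields $pM_p(x_{p,j})^{p-1}\le 4C_2/\gamma$, contradicting $pM_p(x_{p,j})^{p-1}\to+\infty$. Since every point of the finite set $\mathcal S$ is of the form $\lim_p x_{p,j}$ by Proposition~\ref{prop-3-6}(2), this proves $\mathcal S\cap\partial\Omega=\emptyset$, i.e. $\mathcal S\subset\Omega$.

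I do not anticipate a serious obstacle here: the only points requiring care are that the majorant set $I_x$ from Lemma~\ref{lemma2-1} serves both components at once with a measure lower bound independent of $p$, and that one really uses the upper bound on $p\int_\Omega u_{k,p}^p\,dx$ (not merely on $p\int_\Omega u_{k,p}^{p+1}\,dx$) — both already in hand. For the record, a purely local alternative is available: apply the Pohozaev identity \eqref{eq-2--7} on $\Omega\cap B_\rho(x_0)$ with dilation centre $y$ placed slightly outside $\Omega$ along the outer normal at $x_0$, so that the boundary term on $\partial\Omega\cap B_\rho(x_0)$ has a favourable sign (as $\nabla u_{k,p}$ is normal there) while the term on $\partial B_\rho(x_0)\cap\Omega$ is $O(\rho/p^2)$ with constant uniform in $\rho$ since $G(\cdot,x_0)\equiv0$ for $x_0\in\partial\Omega$; these contradict the strictly positive local mass $\liminf_{p\to\infty}p\int_{B_\rho(x_0)\cap\Omega}u_{k_0,p}^{p+1}\,dx>0$ furnished by Remark~\ref{remark3-1}. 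However, the moving-plane route above is shorter and is the one I would write.
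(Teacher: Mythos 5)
Your main argument is correct, and it is genuinely different from the paper's. The paper excludes boundary blow-up by a purely local Pohozaev argument: it applies \eqref{eq-2--7} on $B_r(x_N)\cap\Omega$ with a carefully chosen centre $y_p$ (defined through a quotient of boundary integrals so that the terms on $\partial\Omega\cap B_r(x_N)$ cancel exactly, cf.\ \eqref{eq-3-95}--\eqref{eq-3-97}), then uses Lemma \ref{lemma-3-7} together with the fact that $G(\cdot,x_N)\equiv 0$ for a boundary pole to show the remaining terms are $O(r^2)$, contradicting the local-mass lower bound \eqref{eq-3-93} coming from $(\mathcal P_2^{n_0})$ and Remark \ref{remark3-1}. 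You instead go global and elementary: you reuse the moving-plane/Kelvin majorant sets $I_x$ (with $|I_x|\ge\gamma$, $I_x\subset\Omega\setminus\Omega_\delta$, and $u_{k,p}(x)\le u_{k,p}$ on $I_x$) already established in the proof of Lemma \ref{lemma2-1}, so that a putative boundary concentration family $x_{p,j}\to x_0\in\partial\Omega$ satisfies $\gamma M_p(x_{p,j})^p\le\int_\Omega(u_{1,p}^p+u_{2,p}^p)\,dx\le 2C_2/p$ by \eqref{eq-4.9-0}; combined with $\liminf_p M_p(x_{p,j})\ge 1$ this forces $pM_p(x_{p,j})^{p-1}\le 4C_2/\gamma$, contradicting $\varepsilon_{p,j}^{-2}\to\infty$, and Proposition \ref{prop-3-6}(2) lets you reach every point of $\mathcal S$ this way. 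All ingredients you invoke are established before Lemma \ref{lemma-3-134}, so there is no circularity, and your route even yields the quantitative statement $\mathrm{dist}(\mathcal S,\partial\Omega)\ge 2\delta$ with $\delta$ depending only on $\Omega$. What the paper's route buys in exchange is locality and independence from the moving-plane structure: it only uses information near the suspected boundary point and the convergence of $pu_{k,p}$ away from $\mathcal S$, which is the style of argument that generalizes when no global monotonicity device is available. Your appended Pohozaev sketch with the centre pushed outside $\Omega$ would need more care about the sign of $\langle x-y,\vec n(x)\rangle$ on a curved boundary (one needs the offset $t$ calibrated against $\rho^2$), but since you present it only as an aside and your primary argument stands on its own, this does not affect the verdict.
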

\begin{proof} The proof follows the idea from \cite[Lemma 4.3]{wei}.
    Assume by contradiction that $\mathcal{S}\cap\partial\Omega\neq\emptyset$, say $x_{N}\in \mathcal{S}\cap \partial\Omega$ for example. Note from \eqref{fc-3-57} that $B_{2r_0}(x_N)\cap\mathcal{S}=\{x_N\}$, and recall that $x_N=\lim_{p\rightarrow\infty}x_{p,j}$ for some $2\leq j\leq n_0$. 
    
  Recalling $(\mathcal{P}_{2}^{n_0})$,  if Type $\mathcal{A}$  happens for $x_{p,j}$, similarly as Remark \ref{remark3-1}, we have that for any $r\in(0,r_0)$, 
    \begin{align}
        &\liminf_{p\to\infty}p\int_{B_{r}(x_N)\cap\Omega} \left(\mu_{k} u_{k,p}^{p+1} +\beta u_{1,p}^{\frac{p+1}{2}}u_{2,p}^{\frac{p+1}{2}}\right) dx\nonumber\\
        \geq &  \int_{\mathbb{R}^2}\left(\mu_{k} e^{V_{k}} +\beta e^{\frac{V_{1}+V_{2}}{2}}\right)dz=8\pi,\quad k=1,2.\nonumber
    \end{align}
    If Type $\mathcal{B}$ or Type $\mathcal{C}$ happens for $x_{p,j}$, similarly we have that for any $r\in(0,r_0)$,
    \begin{align*}
        \liminf_{p\to\infty}p\int_{B_{r}(x_N)\cap\Omega} \mu_{k} u_{k,p}^{p+1}dx
        \geq \int_{\mathbb{R}^2}\mu_{k} e^{\widetilde V_{k}} dz= 8\pi\quad\text{for some }\,k\in\{1,2\}.
    \end{align*}
    Therefore, we always have that for any $r\in(0,r_0)$,
    \begin{align}\label{eq-3-93}
         \liminf_{p\to\infty}p\int_{B_{r}(x_N)\cap\Omega} \left(\mu_1 u_{1,p}^{p+1}+\mu_2 u_{2,p}^{p+1} +2\beta u_{1,p}^{\frac{p+1}{2}}u_{2,p}^{\frac{p+1}{2}}\right) dx\geq 8\pi.
    \end{align}
    
    Let $\Vec{n}(x)$ be the outer normal vector of $(\partial B_{r}(x_N)\cap\Omega)\cup(\partial\Omega\cap B_{r}(x_N))$ and define
    $y_p:=x_N+\rho_{r,p}\Vec{n}(x_N)$, where
    \begin{align}
        \rho_{r,p}:=\frac{\int_{\partial\Omega\cap B_{r}(x_N)}\langle x-x_N, \Vec{n}(x)\rangle (|\nabla u_{1,p}|^2+|\nabla u_{2,p}|^2) dS_x}{\int_{\partial\Omega\cap B_{r}(x_N)} \langle \Vec{n}(x_N),\Vec{n}(x)\rangle (|\nabla u_{1,p}|^2+|\nabla u_{2,p}|^2) dS_x},
    \end{align}
    which implies that
    \begin{align}\label{eq-3-95}
        \sum_{k=1}^2\int_{\partial\Omega\cap B_{r}(x_N)}|\nabla u_{k,p}|^2 \langle x-y_p, \Vec{n}(x)\rangle dS_x=0.
    \end{align}
    We can take $r >0$ small enough such that $\langle \Vec{n}(x_1),\Vec{n}(x)\rangle\in [\frac{1}{2},1]$ for any $x\in\partial\Omega\cap B_{r}(x_N)$. Then $|\rho_{r,p}|\leq 2r$.
    Note from $u_{k,p}=0$ on $\partial\Omega$ that
    \begin{align*}
        \nabla u_{k,p}(x)=-|\nabla u_{k,p}(x)|\Vec{n}(x)\quad\text{ for }\,\,\forall x\in\partial\Omega\cap B_{r}(x_N).
    \end{align*}
    which shows that
    \begin{align}\label{eq-3-97}
    &\sum_{k=1}^2\int_{\partial\Omega\cap B_{r}(x_N)}\langle \nabla u_{k,p},x-y_p\rangle \langle \nabla u_{k,p}, \Vec{n}(x)\rangle dS_x\\
        =&\sum_{k=1}^2\int_{\partial\Omega\cap B_{r}(x_N)}|\nabla u_{k,p}|^2 \langle x-y_p, \Vec{n}(x)\rangle dS_x=0.\nonumber
      \end{align}
    Applying the Pohozaev identity \eqref{eq-2--7} with $\Omega'=B_{r}(x_N)\cap\Omega$ and $y=y_p$, and using \eqref{eq-3-95} and \eqref{eq-3-97}, we obtain
    \begin{align}\label{eq-3-98}
        &\frac{2p^2}{p+1}\int_{B_{r}(x_N)\cap\Omega}\left(\mu_1 u_{1,p}^{p+1}+\mu_2 u_{2,p}^{p+1}+2\beta u_{1,p}^{\frac{p+1}{2}}u_{2,p}^{\frac{p+1}{2}}\right) dx\\
        =&\sum_{k=1}^2\int_{\partial B_{r}(x_N)\cap\Omega}\langle p\nabla u_{k,p},x-y_p\rangle \langle p\nabla u_{k,p}, \Vec{n}(x)\rangle dS_x\nonumber\\
        &-\frac{1}{2}\sum_{k=1}^2\int_{\partial B_{r}(x_N)\cap\Omega}|p\nabla u_{k,p}|^2 \langle x-y_p, \Vec{n}(x)\rangle dS_x\nonumber\\
        &+\frac{p^2}{p+1}\int_{\partial B_{r}(x_N)\cap\Omega}\langle x-y_p, \Vec{n}(x)\rangle\left(\mu_1 u_{1,p}^{p+1}+\mu_2 u_{2,p}^{p+1}+2\beta u_{1,p}^{\frac{p+1}{2}}u_{2,p}^{\frac{p+1}{2}}\right)dS_x\nonumber\\
        =&I_p+II_p+III_p.\nonumber
     \end{align}
     
     Noting from $|\rho_{r,p}|\leq 2r$ that $|x-y_p|\leq 3r$ for any $x\in \partial B_r(x_N)$, we see from \eqref{31-1} that
     \[\big|III_p\big|\leq C\frac{C_r^{p+1}}{p^p}\to 0\quad\text{as }\;p\to\infty.\]
     Up to a subsequence, we may assume $y_p\to \tilde y_r$ with $|\tilde y_r-x_N|\leq 2r$. Recall Lemma \ref{lemma-3-7} that \[p u_{k,p}\rightarrow F_k:=\sum\limits_{i=1}^N \gamma_{k,i} G(\cdot, x_i)\quad\text{ in }\,\,C^{2}_{loc}(\overline{\Omega\cap B_{r}(x_N)}\setminus\{x_N\}).\] 
Since $x_N\in\partial\Omega$ implies $G(x,x_N)\equiv 0$ for $x\neq x_N$, we have (see e.g. \cite[(3.7)]{D-I-P-17}) 
\[F_k(x)=O(1),\quad \nabla F_k(x)=O(1),\quad \text{\it for }x\in \overline{\Omega\cap B_{r_0}(x_N)}\setminus\{x_N\}.\]
From here and $0<r<r_0$, we obtain
	\begin{align*}\lim_{p\to\infty}I_p
	=&\sum_{k=1}^2\int_{\partial B_{r}(x_N)\cap\Omega}\langle \nabla F_k,x-\tilde y_r\rangle \langle \nabla F_k, \Vec{n}(x)\rangle dS_x\\
	=& O(1) \int_{\partial B_{r}(x_N)\cap\Omega}|x-\tilde y_r| d S_x=O(r^2),\end{align*}
and similarly	$$\lim_{p\to\infty}II_p=O(r^2).$$
    Inserting these estimates into \eqref{eq-3-98}, we finally obtain
    \begin{align*}
\lim\limits_{r\rightarrow 0}\lim\limits_{p\rightarrow\infty}\frac{2p^2}{p+1}\int_{B_{r}(x_N)\cap\Omega}\left(\mu_1 u_{1,p}^{p+1}+\mu_2 u_{2,p}^{p+1}+2\beta u_{1,p}^{\frac{p+1}{2}}u_{2,p}^{\frac{p+1}{2}}\right) dx=0,
    \end{align*}
    which is a contradiction with \eqref{eq-3-93}.
    \end{proof}
    
Thanks to Lemma \ref{lemma-3-134}, by taking $r_0$ smaller if necessary, 
    we can further assume that for any $x_i\neq x_j\in\mathcal{S}$,
    \begin{align}\label{xixj}
         B_{2r_0}(x_i)\cap B_{2r_0}(x_j)=\emptyset, \,\, B_{2r_0}(x_i)\subset\Omega \,\,\text{ and } \,\,B_{2r_0}(x_j)\subset\Omega. 
    \end{align}

    Fix any $x_i\in\mathcal{S}$. Recalling  $M_p=\max\{u_{1,p}, u_{2,p}\}$, we take $y_{p,i}\in \overline{B_{r_0}(x_i)}$ such that
    \begin{align}\label{eq-3-106}
        M_p(y_{p,i}):=\max\limits_{x\in\overline{B_{r_0}(x_i)}}M_p(x).
    \end{align}
    Define
    \begin{align}\label{eq-3-107}
         v_{k,p,i}^*(z):=\frac{p}{M_p(y_{p,i})}\left(u_{k,p}(\varepsilon_{p,i}^*z+y_{p,i})-M_p(y_{p,i})\right),
        \end{align} 
    where $$\varepsilon_{p,i}^*:=(p M_p(y_{p,i})^{p-1})^{-\frac{1}{2}}.$$ 
    Clearly there is $m=m_i\in\{1,\ldots,n_0\}$ such that
    \begin{equation}
 \label{i-mi} R_{p,n_0}(y_{p,i})=\min_{1\leq j\leq n_0}|y_{p,i}-x_{p,j}|=|y_{p,i}-x_{p,m}|.
 \end{equation}   
This, together with \eqref{xixj} and $y_{p,i}\in \overline{B_{r_0}(x_i)}$, implies $\lim_{p\rightarrow\infty}x_{p,m}=x_i$. The next result shows that $\lim_{p\rightarrow\infty}y_{p,i}=x_i$ and the sequence $(x_{p,m})_{p}$ in Proposition \ref{prop-3-6} can be replaced with the sequence $(y_{p,i})_{p}$ of local maximum points near the concentration point $x_i$.

\begin{Lemma}\label{lemma-3-10} Under the above notations, we have $\lim_{p\rightarrow\infty}y_{p,i}=x_i$.  Besides,
    take $m=m_i\in\{1,\ldots,n_0\}$ such that \eqref{i-mi} holds and so $\lim_{p\rightarrow\infty}x_{p,m}=x_i$. Suppose $(v_{1,p,m},v_{2,p,m})$ satisfies type $\mathcal{A}$ (resp. type $\mathcal{B}$ or type $\mathcal{C}$) in $(\mathcal{P}_2^{n_0})$.
    Then $(v_{1,p,i}^*,v_{2,p,i}^*)$ satisfies the same type $\mathcal{A}$  (resp. type $\mathcal{B}$ or type $\mathcal{C}$). 
\end{Lemma}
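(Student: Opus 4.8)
The plan is to first locate the local maximizer $y_{p,i}$, and then to show that the rescaling $v_{k,p,i}^*$ based at $y_{p,i}$ coincides, up to a bounded translation and a comparable dilation, with the rescaling $v_{k,p,m}$ based at $x_{p,m}$ already analyzed in $(\p_2^{n_0})$. To prove $y_{p,i}\to x_i$: since $x_i\in\s$, Proposition \ref{prop-3-6}(2) yields an index $j_0$ with $x_{p,j_0}\to x_i$, so $x_{p,j_0}\in B_{r_0}(x_i)$ for $p$ large, and using $\liminf_p M_p(x_{p,j_0})\ge1$ we get $M_p(y_{p,i})\ge M_p(x_{p,j_0})\ge1+o(1)$. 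If $y_{p,i}\not\to x_i$, then along a subsequence $y_{p,i}\to y^*\in\overline{B_{r_0}(x_i)}$ with $y^*\neq x_i$; by \eqref{xixj} one has $\overline{B_{r_0}(x_i)}\cap\s=\{x_i\}$, so $y^*\notin\s$, and then $y_{p,i}$ eventually lies in a fixed compact subset of $\overline\Omega\setminus\s$, which by \eqref{31-1} forces $M_p(y_{p,i})\le C/p\to0$, a contradiction. Once $y_{p,i}\to x_i$, \eqref{i-mi} and $R_{p,n_0}(y_{p,i})\le|y_{p,i}-x_{p,j_0}|\to0$ give $|x_{p,m}-x_i|\le R_{p,n_0}(y_{p,i})+|y_{p,i}-x_i|\to0$.

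Next I would compare the two scales. Applying $(\p_3^{n_0})$ at $x=y_{p,i}$ and using $R_{p,n_0}(y_{p,i})=|y_{p,i}-x_{p,m}|$ gives $|y_{p,i}-x_{p,m}|\le C\varepsilon_{p,i}^*$; since $x_{p,m}\in B_{r_0}(x_i)$ for $p$ large and $y_{p,i}$ maximizes $M_p$ over $\overline{B_{r_0}(x_i)}$, we have $M_p(y_{p,i})\ge M_p(x_{p,m})$, hence $\varepsilon_{p,i}^*\le\varepsilon_{p,m}$ and $w_{0,p}:=(y_{p,i}-x_{p,m})/\varepsilon_{p,m}$ is bounded; up to a subsequence $w_{0,p}\to w_0$. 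Taking the maximum over $k$ in $u_{k,p}(y_{p,i})=M_p(x_{p,m})\bigl(1+v_{k,p,m}(w_{0,p})/p\bigr)$ gives $M_p(y_{p,i})/M_p(x_{p,m})=1+b_p/p$ with $b_p:=\max_k v_{k,p,m}(w_{0,p})\to b_\infty$, the limit being finite and nonnegative because, by $(\p_2^{n_0})$, each $v_{k,p,m}$ converges locally to a limit that is $\le0$ (or to $-\infty$) and $M_p(y_{p,i})\ge M_p(x_{p,m})$. Therefore $M_p(y_{p,i})/M_p(x_{p,m})\to1$ and $\varepsilon_{p,i}^*/\varepsilon_{p,m}=(1+b_p/p)^{-(p-1)/2}\to\lambda_0:=e^{-b_\infty/2}\in(0,1]$.

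Finally I would transfer the classification. Because $y_{p,i}\to x_i\in\Omega$, $\varepsilon_{p,i}^*\to0$, and $y_{p,i}$ is a local maximum of $M_p$ (so $v_{k,p,i}^*\le0$ on every fixed ball for $p$ large and $\max\{v_{1,p,i}^*(0),v_{2,p,i}^*(0)\}=0$), repeating the proof of Lemma \ref{lemma-3-5^*} gives the dichotomy: either both $v_{1,p,i}^*,v_{2,p,i}^*$ converge in $C^2_{loc}(\mathbb R^2)$ to a solution of the Liouville-type system in \eqref{fc-3-26} with finite masses, or one of them converges to a solution of the Liouville equation while the other tends to $-\infty$ locally uniformly. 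To decide which branch occurs, substitute $\varepsilon_{p,i}^*z+y_{p,i}=\varepsilon_{p,m}\zeta_p+x_{p,m}$, with $\zeta_p:=(\varepsilon_{p,i}^*/\varepsilon_{p,m})z+w_{0,p}\to\lambda_0 z+w_0$, to get
\[v_{k,p,i}^*(z)=\frac{M_p(x_{p,m})}{M_p(y_{p,i})}\,v_{k,p,m}(\zeta_p)+p\Bigl(\frac{M_p(x_{p,m})}{M_p(y_{p,i})}-1\Bigr),\]
whose prefactor tends to $1$ and whose additive term tends to $-b_\infty$. If $(v_{1,p,m},v_{2,p,m})$ is of type $\mathcal A$, then $v_{k,p,m}(\zeta_p)\to V_k(\lambda_0 z+w_0)$ for $k=1,2$, so both $v_{k,p,i}^*$ converge; the resulting limit is $\le0$ with $\max$ of its values at the origin equal to $0$, so Theorem \ref{th-3.1} (together with $\mu_1\ge\mu_2$) identifies it with the profile \eqref{eq-3-48}, i.e.\ type $\mathcal A$. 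If $(v_{1,p,m},v_{2,p,m})$ is of type $\mathcal B$ (resp.\ $\mathcal C$), then $v_{1,p,m}(\zeta_p)$ (resp.\ $v_{2,p,m}(\zeta_p)$) converges while the other component tends to $-\infty$ uniformly on compact sets and $b_\infty=\widetilde V_1(w_0)$ (resp.\ $\widetilde V_2(w_0)$); thus exactly one component of $(v_{1,p,i}^*,v_{2,p,i}^*)$ converges and, since $v_{1,p,i}^*\le0=v_{1,p,i}^*(0)$ (resp.\ for $v_{2,p,i}^*$) for $p$ large, Chen--Li's classification \cite[Theorem 1.1]{Chen-Li-91} forces the limit to be $\widetilde V_1$ (resp.\ $\widetilde V_2$), i.e.\ type $\mathcal B$ (resp.\ $\mathcal C$).

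The hard part will be the scale comparison in the second step: establishing that $x_{p,m}$ falls into the bubbling window of $y_{p,i}$ and that $M_p(y_{p,i})$ and $M_p(x_{p,m})$ agree to leading order, so that the two rescalings are genuinely equivalent. This is precisely where $(\p_3^{n_0})$ and the uniform local bound on $v_{k,p,m}$ coming from $(\p_2^{n_0})$ are indispensable; everything else is routine bookkeeping with Lemma \ref{lemma-3-5^*}, Theorem \ref{th-3.1}, and the Chen--Li classification.
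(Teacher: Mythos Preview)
Your proposal is correct and follows essentially the same approach as the paper: both use $(\mathcal P_3^{n_0})$ to show that $(y_{p,i}-x_{p,m})/\varepsilon_{p,m}$ is bounded, compare the scales $\varepsilon_{p,i}^*$ and $\varepsilon_{p,m}$, and then transfer the convergence from $v_{k,p,m}$ to $v_{k,p,i}^*$ via the change-of-variables identity \eqref{fc-3-81}. The only stylistic difference is that the paper pushes the scale comparison further to conclude directly that $z_\infty=0$, $\varepsilon_{p,i}^*/\varepsilon_{p,m}\to1$ and $p\bigl(M_p(x_{p,m})/M_p(y_{p,i})-1\bigr)\to0$ (so the transformation formula immediately yields the exact profile), whereas you allow a priori a nontrivial translation/dilation and instead invoke the dichotomy of Lemma~\ref{lemma-3-5^*} together with Theorem~\ref{th-3.1} and Chen--Li to pin down the limit; in fact your own inequalities $b_\infty\ge0$ and $b_\infty\le0$ (from $V_k\le0$, $\widetilde V_k\le0$) already force $b_\infty=0$, $w_0=0$, $\lambda_0=1$, recovering the paper's shortcut.
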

\begin{proof}
 Since \eqref{eq-3-106} implies $M_p(y_{p,i})\geq M_p(x_{p,m})>0$, we have
\begin{align}
    0<\varepsilon_{p,i}^*=\left(p M_p(y_{p,i})^{p-1}\right)^{-\frac{1}{2}}\leq \left(p M_p(x_{p,m})^{p-1}\right)^{-\frac{1}{2}}=\varepsilon_{p,m}\to 0.
\end{align}
This, together with $(\p_3^{n_0})$, shows that
\begin{align}\label{fc-3-75}
    \frac{|x_{p,m}-y_{p,i}|}{\varepsilon_{p,m}}=&\frac{|x_{p,m}-y_{p,i}|}{\varepsilon_{p,i}^*}\frac{\varepsilon_{p,i}^*}{\varepsilon_{p,m}}\leq \frac{|x_{p,m}-y_{p,i}|}{\varepsilon_{p,i}^*}\\
    = &\left(p R_{p,n_0} (y_{p,i})^2 M_p(y_{p,i})^{p-1}\right)^{\frac{1}{2}}\leq C.\nonumber
\end{align}
Thus
\begin{align}
  y_{p,i}\rightarrow x_i\;\text{and}\;  \frac{B_{r_0}(x_i)-y_{p,i}}{\varepsilon_{p,i}^*}\rightarrow \mathbb{R}^2,\quad\text{as }p\to\infty.
\end{align}
Furthermore, up to a subsequence, we may assume $$\frac{y_{p,i}-x_{p,m}}{\varepsilon_{p,m}}\to z_\infty\in\mathbb R^2.$$

\textbf{Case 1.} 
Assuming $(v_{1,p,m},v_{2,p,m})$ satisfies type $\mathcal{A}$, we prove that $(v_{1,p,i}^*,v_{2,p,i}^*)$ also satisfies type $\mathcal{A}$.

By $(\p_2^{n_0})$, we have
\begin{align}
    v_{k,p,m}\left(\frac{y_{p,i}-x_{p,m}}{\varepsilon_{p,m}}\right)\rightarrow V_{k}(z_{\infty})\leq 0,\quad k=1,2,
\end{align}
which shows that
\begin{align}\label{eq-3-113}
    1\leq & \left(\frac{\varepsilon_{p,m}}{\varepsilon_{p,i}^*}\right)^2 = \left(\frac{M_p(y_{p,i})}{M_p(x_{p,m})}\right)^{p-1}
    = \max\left\{\left(1+\frac{ v_{1,p,m}\left(\frac{y_{p,i}-x_{p,m}}{\varepsilon_{p,m}}\right)}{p}\right)^{p-1},\right.\\
    &\left.\left(1+\frac{ v_{2,p,m}\left(\frac{y_{p,i}-x_{p,m}}{\varepsilon_{p,m}}\right)}{p}\right)^{p-1}\right\}\rightarrow \max \{e^{V_{1}(z_{\infty})},e^{V_{2}(z_{\infty})}\}\leq 1.\nonumber
\end{align}
This implies that $z_\infty=0$,
\begin{align}
    \frac{y_{p,i}-x_{p,m}}{\varepsilon_{p,m}}\to 0,\quad \frac{\varepsilon_{p,i}^*}{\varepsilon_{p,m}}\nearrow 1,\quad\frac{M_p(x_{p,m})}{M_p(y_{p,i})}\nearrow 1.\label{eq-3-114}
\end{align}
We claim that
\begin{align}
    p\left(\frac{M_p(x_{p,m})}{M_p(y_{p,i})}-1\right)\to 0.\label{eq-3-114^*}
\end{align}
If not, then there exists $C_1< 0$ such that up to a subsequence,
\begin{align*}
    p\left(\frac{M_p(x_{p,m})}{M_p(y_{p,i})}-1\right)\leq C_1.
\end{align*}
It follows that
\begin{align*}
    1\leftarrow \left(\frac{\varepsilon_{p,i}^*}{\varepsilon_{p,m}}\right)^2=\left(\frac{M_p(x_{p,m})}{M_p(y_{p,i})}\right)^{p-1}\leq\left(1+\frac{C_1}{p}\right)^{p-1}\to e^{C_1}<1,
\end{align*}
which is a contradiction. 

Recalling  the definition \eqref{eq-3-107} of $v_{k,p,i}^*$ and the definition \eqref{fc-3-vkp} of $v_{k,p,m}$, a direct computation gives
\begin{align}\label{fc-3-81}
v_{k,p,i}^*(z)=\frac{M_p(x_{p,m})}{M_p(y_{p,i})}v_{k,p,i}\left(\frac{\varepsilon_{p,i}^*}{\varepsilon_{p,m}}z+\frac{y_{p,i}-x_{p,m}}{\varepsilon_{p,m}}\right)+ p\left(\frac{M_p(x_{p,m})}{M_p(y_{p,i})}-1\right).
\end{align}
From here,  \eqref{eq-3-114}-\eqref{eq-3-114^*}, and $(\p_2^{n_0})$ which says that $(v_{1,p,m},v_{2,p,m})\rightarrow ({V}_{1},{V}_{2})$ in $C^2_{loc}(\mathbb{R}^2)\times C^2_{loc}(\mathbb{R}^2)$, we conclude that $(v_{1,p,i}^*,v_{2,p,i}^*)\rightarrow ({V}_{1},{V}_{2})$ in $C^2_{loc}(\mathbb{R}^2)\times C^2_{loc}(\mathbb{R}^2)$.

\textbf{Case 2.} Assuming $(v_{1,p,m},v_{2,p,m})$ satisfies type $\mathcal{B}$ (the case type $\mathcal{C}$ is similar), we prove that $(v_{1,p,i}^*,v_{2,p,i}^*)$ satisfies the same type $\mathcal{B}$. 

By $(\p_2^{n_0})$, we have
\begin{align}
    v_{1,p,m}\left(\frac{y_{p,i}-x_{p,m}}{\varepsilon_{p,m}}\right)\rightarrow \widetilde V_{1}(z_{\infty})\leq 0,\quad v_{2,p,m}\left(\frac{y_{p,i}-x_{p,m}}{\varepsilon_{p,m}}\right)\rightarrow -\infty.
\end{align}
Then the same argument as \eqref{eq-3-113} implies that \eqref{eq-3-114}-\eqref{eq-3-114^*} also holds. Therefore, it follows from \eqref{fc-3-81} and $(\p_2^{n_0})$ that
         $v_{1,p,i}^*\rightarrow \widetilde V_{1}$ in $C^{2}_{loc}(\mathbb{R}^2)$ and 
        and $v_{2,p,i}^*\rightarrow -\infty$ uniformly in  any compact subsets of $\mathbb{R}^2$.
         This completes the proof.
\end{proof}

We are ready to prove Theorem \ref{th-1---1}.

\begin{proof}[Proof of Theorem \ref{th-1---1}]  Clearly Theorem \ref{th-1---1} follows from Proposition \ref{prop-3-6} and Lemma \ref{lemma-3-10}.
\end{proof}

\section{Refined estimates around the concentration set}
\label{section-4}
In this section, we fix any $x_i\in\mathcal{S}$ and give the estimate of the local maximums $\Vert u_{k,p}\Vert_{L^{\infty}(B_{r_0}(x_i))}$ near the concentration point $x_i$. In particular, we will prove $\lim\limits_{p\to\infty}\Vert u_{k,p}\Vert_{L^{\infty}(B_{r_0}(x_i))}\in \{0,\sqrt{e}\}$ and Theorem \ref{thm-localmass}.

\begin{Lemma}\label{Lemma-4-1++}
    Recall $\gamma_{k,i}$ defined in Lemma \ref{lemma-3-7}. Then for any $\rho\in (0,r_0)$,
    \begin{align}\label{eq-4-1^*}
       \lim\limits_{p\rightarrow \infty} p\int_{B_{\rho}(x_i)}\left(\mu_1 u_{1,p}^{p+1}+\mu_2 u_{2,p}^{p+1}+2\beta u_{1,p}^{\frac{p+1}{2}}u_{2,p}^{\frac{p+1}{2}} \right)dx=\frac{1}{8\pi}\left(\gamma_{1,i}^2+\gamma_{2,i}^2\right).
    \end{align}
\end{Lemma}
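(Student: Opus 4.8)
The plan is to derive \eqref{eq-4-1^*} from the local Pohozaev identity \eqref{eq-2--7} together with the convergence $pu_{k,p}\to\sum_{l=1}^N\gamma_{k,l}G(\cdot,x_l)$ away from $\mathcal{S}$ obtained in Lemma \ref{lemma-3-7}. Fix $\rho\in(0,r_0)$; by \eqref{xixj} the ball $B_\rho(x_i)$ lies compactly in $\Omega$ and $\overline{B_\rho(x_i)}\cap\mathcal{S}=\{x_i\}$, so $\partial B_\rho(x_i)$ is a compact subset of $\Omega\setminus\mathcal{S}$. Write $\mathcal{N}_p:=\mu_1 u_{1,p}^{p+1}+\mu_2 u_{2,p}^{p+1}+2\beta u_{1,p}^{\frac{p+1}{2}}u_{2,p}^{\frac{p+1}{2}}$. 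First I would apply \eqref{eq-2--7} with $\Omega'=B_\rho(x_i)$ and $y=x_i$ and multiply through by $p^2$. On the left this gives $\frac{2p^2}{p+1}\int_{B_\rho(x_i)}\mathcal{N}_p\,dx$, which equals $2p\int_{B_\rho(x_i)}\mathcal{N}_p\,dx+o(1)$ because $p\int_\Omega\mathcal{N}_p\,dx\le C$ by \eqref{eq-4.9} (the cross term being controlled via Cauchy--Schwarz) and $\tfrac{p}{p+1}\to1$. On the right, the boundary term carrying the factor $\mathcal{N}_p$ is $o(1)$, since $u_{k,p}=O(1/p)$ uniformly on $\partial B_\rho(x_i)$ by \eqref{31-1}, so each $u_{k,p}^{p+1}$ on $\partial B_\rho(x_i)$ is $O(p^{-(p+1)})$. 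For the two remaining boundary terms, Lemma \ref{lemma-3-7} gives $p\nabla u_{k,p}\to\nabla F_k$ uniformly on $\partial B_\rho(x_i)$, where $F_k:=\sum_{l=1}^N\gamma_{k,l}G(\cdot,x_l)$; using $x-x_i=\rho\,\vec n(x)$ on $\partial B_\rho(x_i)$ I then arrive at
\[
\lim_{p\to\infty}2p\int_{B_\rho(x_i)}\mathcal{N}_p\,dx=\rho\sum_{k=1}^2\int_{\partial B_\rho(x_i)}\Big(\langle\nabla F_k,\vec n\rangle^2-\tfrac12|\nabla F_k|^2\Big)\,dS.
\]

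Next I would evaluate this boundary integral explicitly. By \eqref{eq-1-6^*} and \eqref{xixj}, on $B_{2r_0}(x_i)$ one can write
\[
F_k(x)=\frac{\gamma_{k,i}}{2\pi}\log\frac{1}{|x-x_i|}+S_k(x),\qquad S_k:=-\gamma_{k,i}H(\cdot,x_i)+\sum_{l\neq i}\gamma_{k,l}G(\cdot,x_l),
\]
where $S_k$ is harmonic in $B_{2r_0}(x_i)$, hence in a neighborhood of $\overline{B_\rho(x_i)}$. Substituting $\nabla F_k=-\frac{\gamma_{k,i}}{2\pi\rho}\vec n+\nabla S_k$ on $\partial B_\rho(x_i)$ into the quadratic form and integrating, the singular part contributes $\gamma_{k,i}^2/(4\pi)$; the cross term is proportional to $\int_{\partial B_\rho(x_i)}\langle\nabla S_k,\vec n\rangle\,dS=\int_{B_\rho(x_i)}\Delta S_k\,dx=0$; and the pure $S_k$ contribution $\rho\int_{\partial B_\rho(x_i)}(\langle\nabla S_k,\vec n\rangle^2-\tfrac12|\nabla S_k|^2)\,dS$ vanishes as well — this is the content of the divergence identity established in the proof of Lemma \ref{lemma-2-4} applied with the harmonic function $S_k$ in place of $u_{k,p}$ (i.e. the Pohozaev identity for $S_k$ on $B_\rho(x_i)$). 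Summing over $k=1,2$ gives $\frac1{4\pi}(\gamma_{1,i}^2+\gamma_{2,i}^2)$ for the right-hand side, hence
\[
\lim_{p\to\infty}p\int_{B_\rho(x_i)}\mathcal{N}_p\,dx=\frac1{8\pi}\big(\gamma_{1,i}^2+\gamma_{2,i}^2\big),
\]
which is \eqref{eq-4-1^*}; in particular the limit does not depend on $\rho\in(0,r_0)$, consistent with the statement.

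I do not expect a genuine obstacle here: this is the standard local Pohozaev balancing used for the scalar Lane--Emden problem. The two places requiring care are (i) checking that every $p$-dependent boundary remainder is truly $o(1)$, which rests entirely on the uniform pointwise bound \eqref{31-1} off $\mathcal{S}$ and the $C^2_{loc}(\overline\Omega\setminus\mathcal{S})$ convergence $pu_{k,p}\to F_k$ from Lemma \ref{lemma-3-7}; and (ii) correctly splitting $F_k$ near $x_i$ into its logarithmic singular part and a harmonic remainder $S_k$, and recognizing that both the cross term and the pure-remainder boundary integral vanish (by $\int_{B_\rho(x_i)}\Delta S_k=0$ and by the Pohozaev identity for harmonic functions, respectively), so that only the universal constant $\gamma_{k,i}^2/(4\pi)$ survives. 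Tracking the powers of $p$ (the factor $p^2$ multiplying the identity versus the factor $p$ in the statement, and $\tfrac{p}{p+1}\to1$) is routine bookkeeping.
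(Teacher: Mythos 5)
Your proposal is correct, and its skeleton is the same as the paper's: apply the Pohozaev identity \eqref{eq-2--7} on $\Omega'=B_\rho(x_i)$ with $y=x_i$, kill the $p$-dependent boundary remainder via \eqref{31-1}, and pass to the limit in the gradient boundary terms using the $C^2_{loc}(\overline\Omega\setminus\mathcal S)$ convergence $pu_{k,p}\to F_k$ from Lemma \ref{lemma-3-7}. The only place where you diverge is the evaluation of the limiting boundary integral. The paper writes $p\nabla u_{k,p}=-\frac{\gamma_{k,i}}{2\pi}\frac{x-x_i}{|x-x_i|^2}+O(1)$ on $\partial B_\rho(x_i)$, obtains $\frac{\gamma_{k,i}^2}{4\pi}+O(\rho)$, and then needs two extra ingredients: the observation (via \eqref{31-1}) that the left-hand side is independent of $\rho$, and a final limit $\rho\to0$. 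You instead split $F_k=\frac{\gamma_{k,i}}{2\pi}\log\frac1{|x-x_i|}+S_k$ with $S_k$ harmonic on $B_{2r_0}(x_i)$ (legitimate by \eqref{eq-1-6^*} and \eqref{xixj}) and show the remainder contributes exactly zero: the cross term by $\int_{\partial B_\rho}\langle\nabla S_k,\vec n\rangle\,dS=\int_{B_\rho}\Delta S_k=0$, and the quadratic term by the Rellich--Pohozaev identity for the harmonic function $S_k$, which is indeed the divergence identity from the proof of Lemma \ref{lemma-2-4} with zero right-hand side. This yields the exact value $\frac1{8\pi}(\gamma_{1,i}^2+\gamma_{2,i}^2)$ for every fixed $\rho\in(0,r_0)$, so you dispense with the $\rho$-independence argument and the $\rho\to0$ step; the trade-off is purely cosmetic, since the $\rho$-independence computation is needed elsewhere in Section \ref{section-4} anyway (e.g.\ for \eqref{eq-4-9-1}). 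All the $o(1)$ bookkeeping you list (the factor $\frac{p^2}{p+1}$, the Cauchy--Schwarz control of the cross term via \eqref{eq-4.9}, and the $O(p^{-(p+1)})$ decay of the boundary nonlinear term) is sound.
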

\begin{proof}
First, for any $0<\rho_1<\rho_2<r_0$, it follows from \eqref{31-1} that
\[p\int_{B_{\rho_2}(x_i)\setminus B_{\rho_1}(x_i)}\left(\mu_1 u_{1,p}^{p+1}+\mu_2 u_{2,p}^{p+1}+2\beta u_{1,p}^{\frac{p+1}{2}}u_{2,p}^{\frac{p+1}{2}} \right)dx\leq C\frac{C^{p+1}}{p^p}\to 0,\]
so the left hand side of \eqref{eq-4-1^*} is independent of the choice of $\rho$, namely
\begin{align}\label{fc-4-01}\lim_{\rho\to 0} \lim\limits_{p\rightarrow \infty} p\int_{B_{\rho}(x_i)}\left(\mu_1 u_{1,p}^{p+1}+\mu_2 u_{2,p}^{p+1}+2\beta u_{1,p}^{\frac{p+1}{2}}u_{2,p}^{\frac{p+1}{2}} \right)dx\\
=\lim\limits_{p\rightarrow \infty} p\int_{B_{\rho}(x_i)}\left(\mu_1 u_{1,p}^{p+1}+\mu_2 u_{2,p}^{p+1}+2\beta u_{1,p}^{\frac{p+1}{2}}u_{2,p}^{\frac{p+1}{2}} \right)dx.\nonumber\end{align}

    Applying the Pohozaev identity \eqref{eq-2--7} with $\Omega'=B_{\rho}(x_i)$ and $y=x_i$, we have
    \begin{align}\label{eq-4-2-1}
        &\frac{2 p^2}{p+1}\int_{B_{\rho}(x_i)}\left(\mu_1 u_{1,p}^{p+1}+\mu_2 u_{2,p}^{p+1}+2\beta u_{1,p}^{\frac{p+1}{2}}u_{2,p}^{\frac{p+1}{2}}\right) dx\nonumber\\
        =&\rho\sum_{k=1}^2\int_{\partial B_{\rho}(x_i)} \langle p\nabla u_{k,p}, \Vec{n}(x)\rangle ^2 dS_x
        -\frac{\rho}{2}\sum_{k=1}^2\int_{\partial B_{\rho}(x_i)}|p\nabla u_{k,p}|^2dS_x\\
        &+\frac{\rho p^2}{p+1}\int_{\partial B_{\rho}(x_i)}\left(\mu_1 u_{1,p}^{p+1}+\mu_2 u_{2,p}^{p+1}+2\beta u_{1,p}^{\frac{p+1}{2}}u_{2,p}^{\frac{p+1}{2}}\right)dS_x\nonumber,
    \end{align}
    Again by \eqref{31-1}, we get
    \begin{align}
        \frac{\rho p^2}{p+1}\int_{\partial B_{\rho}(x_i)}\left(\mu_1 u_{1,p}^{p+1}+\mu_2 u_{2,p}^{p+1}+2\beta u_{1,p}^{\frac{p+1}{2}}u_{2,p}^{\frac{p+1}{2}}\right)dS_x\to 0.\label{eq-4-3-1}
    \end{align}
    By Lemma \ref{lemma-3-7}, we have that for $x\in\partial B_{\rho}(x_i)$ and $k=1,2$,
    \begin{align*}
        p\nabla u_{k,p}=-\frac{\gamma_{k,i}}{2\pi}\frac{x-x_i}{|x-x_i|^2}+O(1),
    \end{align*}
    which implies that
    \begin{align}
        &\rho\int_{\partial B_{\rho}(x_i)}\langle p\nabla u_{k,p}, \Vec{n}(x)\rangle ^2 dS_x-\frac{\rho}{2}\int_{\partial B_{\rho}(x_i)}|p\nabla u_{k,p}|^2 dS_x=\frac{\gamma_{k,i}^2}{4\pi}+O(\rho).\label{eq-4-6-1}
    \end{align}
    Inserting \eqref{eq-4-3-1} and \eqref{eq-4-6-1} into \eqref{eq-4-2-1}, letting $p\to\infty$ first and then $\rho\to 0$, we obtain
    \eqref{eq-4-1^*}.
\end{proof}

Define the ``local mass'' contributed by the concentration point $x_i$:
\begin{align}
    \sigma_{k,i}:=\lim\limits_{p\rightarrow \infty}  \frac{p}{M_p(y_{p,i})} \int_{B_{\rho}(x_{i})} \left(\mu_k u_{k,p}^p +\beta u_{k,p}^{\frac{p-1}{2}}u_{3-k,p}^{\frac{p+1}{2}}\right) dx,\quad k=1,2,\label{eq-4-9-}
\end{align}
where $\rho\in (0,r_0)$.
The same argument as \eqref{fc-4-01} implies that
$\sigma_{k,i}$ is independent of the choice of $\rho$, so
\begin{align}
    \sigma_{k,i}=\lim_{\rho\to 0}\lim\limits_{p\rightarrow \infty}  \frac{p}{M_p(y_{p,i})} \int_{B_{\rho}(x_{i})} \left(\mu_k u_{k,p}^p +\beta u_{k,p}^{\frac{p-1}{2}}u_{3-k,p}^{\frac{p+1}{2}}\right) dx=\frac{\gamma_{k,i}}{\lim\limits_{p\to\infty}M_p(y_{p,i})}.\label{eq-4-9-1}
\end{align}

\begin{corollary}\label{cor-4-2} We have
    \begin{align}\label{eq-c-4--2}
        \sigma_{1,i}^2+\sigma_{2,i}^2\leq 8\pi(\sigma_{1,i}+\sigma_{2,i}).
    \end{align}
\end{corollary}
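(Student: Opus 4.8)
The plan is to compare the quadratic ``energy'' quantity of Lemma~\ref{Lemma-4-1++}, which involves the $(p+1)$-st powers of $u_{1,p},u_{2,p}$, with the linear ``mass'' quantities $\sigma_{k,i}$ of \eqref{eq-4-9-}, which involve only the $p$-th powers (that is, the nonlinearities on the right-hand side of \eqref{eq-1.1}). The only ingredient needed to pass between the two is the pointwise bound $0\le u_{k,p}(x)\le M_p(y_{p,i})$ valid for every $x\in\overline{B_{r_0}(x_i)}$, which is exactly the defining property \eqref{eq-3-106} of $y_{p,i}$.

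First I would estimate, for $x\in B_\rho(x_i)$ with $\rho\in(0,r_0)$,
\[
\mu_k u_{k,p}^{p+1}= u_{k,p}\cdot\mu_k u_{k,p}^{p}\le M_p(y_{p,i})\,\mu_k u_{k,p}^{p},\qquad k=1,2,
\]
and split the coupling term symmetrically so that each summand picks up exactly one factor that is bounded by $M_p(y_{p,i})$:
\[
2\beta u_{1,p}^{\frac{p+1}{2}}u_{2,p}^{\frac{p+1}{2}}
=u_{1,p}\,\beta u_{1,p}^{\frac{p-1}{2}}u_{2,p}^{\frac{p+1}{2}}+u_{2,p}\,\beta u_{2,p}^{\frac{p-1}{2}}u_{1,p}^{\frac{p+1}{2}}
\le M_p(y_{p,i})\sum_{k=1}^2\beta u_{k,p}^{\frac{p-1}{2}}u_{3-k,p}^{\frac{p+1}{2}}.
\]
Summing these bounds, integrating over $B_\rho(x_i)$ and multiplying by $p$ gives
\[
p\!\int_{B_\rho(x_i)}\!\!\Big(\mu_1 u_{1,p}^{p+1}+\mu_2 u_{2,p}^{p+1}+2\beta u_{1,p}^{\frac{p+1}{2}}u_{2,p}^{\frac{p+1}{2}}\Big)dx
\le M_p(y_{p,i})^2\cdot\frac{p}{M_p(y_{p,i})}\int_{B_\rho(x_i)}\sum_{k=1}^2\Big(\mu_k u_{k,p}^{p}+\beta u_{k,p}^{\frac{p-1}{2}}u_{3-k,p}^{\frac{p+1}{2}}\Big)dx.
\]
Now let $p\to\infty$ along the subsequence on which all relevant limits exist, and then $\rho\to 0$. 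By Lemma~\ref{Lemma-4-1++} the left-hand side tends to $\tfrac1{8\pi}\big(\gamma_{1,i}^2+\gamma_{2,i}^2\big)$, while by \eqref{eq-4-9-} the right-hand side tends to $m_i^2(\sigma_{1,i}+\sigma_{2,i})$, where $m_i:=\lim_{p\to\infty}M_p(y_{p,i})\in[1,\infty)$: the upper bound is \eqref{eq-4.8}, and the lower bound holds because $y_{p,i}$ maximizes $M_p$ over $B_{r_0}(x_i)$, which contains a point $x_{p,m}\to x_i$ with $\liminf_p M_p(x_{p,m})\ge 1$ (cf.\ Proposition~\ref{prop-4.1} and Lemma~\ref{lemma-3-10}). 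Thus $\tfrac1{8\pi}\big(\gamma_{1,i}^2+\gamma_{2,i}^2\big)\le m_i^2(\sigma_{1,i}+\sigma_{2,i})$.

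Finally I would invoke $\gamma_{k,i}=m_i\,\sigma_{k,i}$ from \eqref{eq-4-9-1}, so that $\gamma_{1,i}^2+\gamma_{2,i}^2=m_i^2\big(\sigma_{1,i}^2+\sigma_{2,i}^2\big)$; dividing the last inequality by $m_i^2>0$ gives $\sigma_{1,i}^2+\sigma_{2,i}^2\le 8\pi(\sigma_{1,i}+\sigma_{2,i})$, which is \eqref{eq-c-4--2}. I do not anticipate a genuine obstacle here: the two points that require a little care are the symmetric splitting of the coupling term (so that the two resulting integrals reproduce precisely the mass integrands defining $\sigma_{1,i}$ and $\sigma_{2,i}$) and the verification that $m_i$ is finite and strictly positive, which legitimizes the final division.
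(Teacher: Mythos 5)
Your proposal is correct and follows essentially the same route as the paper: the pointwise bound $u_{k,p}\le M_p(y_{p,i})$ on $B_{\rho}(x_i)$ (with the coupling term split into the two cross terms), Lemma \ref{Lemma-4-1++} for the limit of the quadratic quantity, and the relation $\gamma_{k,i}=\sigma_{k,i}\lim_{p\to\infty}M_p(y_{p,i})$ together with $1\le\lim_{p\to\infty}M_p(y_{p,i})<+\infty$ to divide out the limit of $M_p(y_{p,i})$. No gaps to report.
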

\begin{proof} 
   Since $M_p(y_{p,i})=\max\limits_{x\in\overline{B_{r_0}(x_i)}}M_p(x)$, we have
    \begin{align}
        &p\int_{B_{\rho}(x_i)}\mu_1 u_{1,p}^{p+1}+\mu_2 u_{2,p}^{p+1}+2\beta u_{1,p}^{\frac{p+1}{2}}u_{2,p}^{\frac{p+1}{2}} \,dx\nonumber\\
        \leq & M_p(y_{p,i}) \sum_{k=1}^2p\int_{B_{\rho}(x_i)} \mu_k u_{k,p}^p +\beta u_{k,p}^{\frac{p-1}{2}}u_{3-k,p}^{\frac{p+1}{2}} dx,\nonumber
    \end{align}
    which together with \eqref{eq-4-1^*} shows that
    \begin{align}\label{eq-4-5^*}
        &\frac1{8\pi}(\gamma_{1,i}^2+\gamma_{2,i}^2)
        \leq (\gamma_{1,i}+\gamma_{2,i})\lim_{p\rightarrow\infty}M_p(y_{p,i}) .
    \end{align}
    Inserting $\gamma_{k,i}=\sigma_{k,i}\lim\limits_{p\rightarrow\infty}M_p(y_{p,i})$ into \eqref{eq-4-5^*} and using $1\leq \lim\limits_{p\rightarrow\infty}M_p(y_{p,i})<+\infty$, we obtain \eqref{eq-c-4--2}.
\end{proof}

To proceed, we need to discuss type $\mathcal{A}$ and type $\mathcal{B}$ or type $\mathcal{C}$ separately.

\subsection{The type $\mathcal{A}$ case.}
\label{section-4-1}
Let us assume that $(v_{1,p,i}^*,v_{2,p,i}^*)$ satisfies type $\mathcal{A}$. 
\begin{Lemma}\label{lemma-4-3++++++}
    $\sigma_{1,i}=\sigma_{2,i}=8\pi$.
\end{Lemma}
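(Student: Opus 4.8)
The plan is to prove the sharp lower bound $\sigma_{k,i}\ge 8\pi$ for $k=1,2$ and then read off equality from Corollary~\ref{cor-4-2}.

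For the lower bound I would localise the defining integral \eqref{eq-4-9-} to the rescaled ball $B_{R\varepsilon_{p,i}^*}(y_{p,i})$. Since $y_{p,i}\to x_i$ by Lemma~\ref{lemma-3-10} and $\varepsilon_{p,i}^*\to 0$, for each fixed $R>0$ this ball lies in $B_\rho(x_i)$ for $p$ large, and the rescaled points $\varepsilon_{p,i}^*z+y_{p,i}$ with $z\in B_R(0)$ stay in $\overline{B_{r_0}(x_i)}$; by the choice \eqref{eq-3-106} of $y_{p,i}$ as the maximiser of $M_p$ on that ball, this forces $v_{k,p,i}^*\le 0$ on $B_R(0)$ for $p$ large. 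Performing the change of variables $x=\varepsilon_{p,i}^*z+y_{p,i}$, using $(\varepsilon_{p,i}^*)^{-2}=pM_p(y_{p,i})^{p-1}$ and $u_{k,p}(\varepsilon_{p,i}^*z+y_{p,i})=M_p(y_{p,i})(1+v_{k,p,i}^*(z)/p)$, all powers of $M_p(y_{p,i})$ and the extra factor $p$ cancel exactly, so the rescaled integral over $B_{R\varepsilon_{p,i}^*}(y_{p,i})$ becomes $\int_{B_R(0)}\big(\mu_k(1+v_{k,p,i}^*/p)^p+\beta(1+v_{k,p,i}^*/p)^{(p-1)/2}(1+v_{3-k,p,i}^*/p)^{(p+1)/2}\big)dz$. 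Because $v_{k,p,i}^*\le 0$ on $B_R(0)$ the integrands are bounded by $\mu_k+\beta$, so the type $\mathcal A$ convergence $(v_{1,p,i}^*,v_{2,p,i}^*)\to(V_1,V_2)$ in $C^2_{loc}(\mathbb R^2)$ together with dominated convergence gives that this tends to $\int_{B_R(0)}(\mu_k e^{V_k}+\beta e^{(V_1+V_2)/2})dz$. The integrand in \eqref{eq-4-9-} being nonnegative and $B_{R\varepsilon_{p,i}^*}(y_{p,i})\subset B_\rho(x_i)$, I obtain $\sigma_{k,i}\ge\int_{B_R(0)}(\mu_k e^{V_k}+\beta e^{(V_1+V_2)/2})dz$ for every $R>0$; letting $R\to\infty$ and invoking Theorem~\ref{th-3.1}, which gives $\int_{\mathbb R^2}(\mu_k e^{V_k}+\beta e^{(V_1+V_2)/2})dz=8\pi$, yields $\sigma_{k,i}\ge 8\pi$ for $k=1,2$.

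To conclude, I would combine $\sigma_{1,i},\sigma_{2,i}\ge 8\pi$ with the Pohozaev inequality \eqref{eq-c-4--2}: from $\sigma_{k,i}\ge 8\pi$ we get $\sigma_{k,i}^2\ge 8\pi\sigma_{k,i}$, hence $\sigma_{1,i}^2+\sigma_{2,i}^2\ge 8\pi(\sigma_{1,i}+\sigma_{2,i})$; together with Corollary~\ref{cor-4-2} equality must hold in each term, so $\sigma_{k,i}^2=8\pi\sigma_{k,i}$ and therefore, since $\sigma_{k,i}>0$, $\sigma_{1,i}=\sigma_{2,i}=8\pi$.

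This argument is essentially bookkeeping once the strong local convergence of Section~\ref{section-3} and the classification Theorem~\ref{th-3.1} are available; the only mild subtlety is the sign $v_{k,p,i}^*\le 0$ on compact sets, which is what makes the passage to the limit inside the rescaled integral immediate, and it comes for free from $y_{p,i}$ being a local maximiser of $M_p$ over $\overline{B_{r_0}(x_i)}$ with the rescaled points confined to that ball. The substantive inputs — the exact mass $8\pi$ of the Liouville-type system and the matching upper bound — are already contained in Theorem~\ref{th-3.1} and Corollary~\ref{cor-4-2}, so there is no genuine obstacle beyond organising these ingredients.
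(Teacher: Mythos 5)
Your proposal is correct and follows essentially the same route as the paper: a lower bound $\sigma_{k,i}\geq 8\pi$ obtained by rescaling around $y_{p,i}$ and passing to the limit on $B_R(0)$ using the type $\mathcal{A}$ convergence and the mass identity of Theorem \ref{th-3.1} (this is what the paper invokes via ``similarly as Remark \ref{remark3-1}''), combined with the Pohozaev-type inequality of Corollary \ref{cor-4-2} to force equality. You merely spell out the cancellation of scaling factors and the sign $v_{k,p,i}^*\le 0$, which the paper leaves implicit; both steps are accurate.
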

\begin{proof}
Similarly as Remark \ref{remark3-1}, we have 
    \begin{align}
       \sigma_{k,i}= &\lim\limits_{p\rightarrow \infty}  \frac{p}{M_p(y_{p,i})} \int_{B_{\rho}(x_{i})} \left(\mu_k u_{k,p}^p +\beta u_{k,p}^{\frac{p-1}{2}}u_{3-k,p}^{\frac{p+1}{2}}\right) dx\nonumber\\
        \geq &  \int_{\mathbb{R}^2}\left(\mu_{k} e^{V_{k}} +\beta e^{\frac{V_{1}+V_{2}}{2}}\right)dz=8\pi,\quad k=1,2.\nonumber
    \end{align}
From here and Corollary \ref{cor-4-2}, we obtain  $\sigma_{1,i}=\sigma_{2,i}=8\pi$.
\end{proof}
We now prove a decay estimate of $v_{1,p,i}^*$ and $v_{2,p,i}^*$.
\begin{Lemma}\label{lemma-4-4}
For any $\eta\in (0,1]$ and $k=1,2$, there exist large $R_{\eta}>1$, $p_{\eta}>p_0$ and $C_{\eta}>0$ such that
\begin{align}\label{eq-4-18^*}
    v_{k,p,i}^*(z)\leq -\left(8-\frac{\sigma_{k,i}}{2\pi}-\eta\right)\log|z|+C_{\eta}= -\left(4-\eta\right)\log|z|+C_{\eta}
\end{align}
for any $2R_{\eta}\leq |z|\leq \frac{r_0}{3\varepsilon_{p,i}^*}$ and $p>p_{\eta}$.
\end{Lemma}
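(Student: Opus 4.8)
The plan is to establish first the decay of the spherical average of $v_{k,p,i}^*$ by an ODE argument, and then to upgrade it to the pointwise bound by controlling the angular oscillation via the rescaled gradient estimate \eqref{eq-3-80}. Write
\[f_{k,p}(z):=\mu_k\Big(1+\tfrac{v_{k,p,i}^*(z)}{p}\Big)^p+\beta\Big(1+\tfrac{v_{k,p,i}^*(z)}{p}\Big)^{\frac{p-1}{2}}\Big(1+\tfrac{v_{3-k,p,i}^*(z)}{p}\Big)^{\frac{p+1}{2}}\geq 0,\]
so that $-\Delta v_{k,p,i}^*=f_{k,p}$ on $B_{r_0/(3\varepsilon_{p,i}^*)}$, which lies in the domain of $v_{k,p,i}^*$ for $p$ large since $B_{2r_0}(x_i)\subset\Omega$ and $y_{p,i}\to x_i$. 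In the type $\mathcal A$ case Lemma~\ref{lemma-3-10} gives $(v_{1,p,i}^*,v_{2,p,i}^*)\to(V_1,V_2)$ in $C_{loc}^2$, hence $f_{k,p}\to\mu_ke^{V_k}+\beta e^{(V_1+V_2)/2}$ in $C^0_{loc}(\mathbb R^2)$, and by Theorem~\ref{th-3.1} together with Lemma~\ref{lemma-4-3++++++} one has $\int_{\mathbb R^2}(\mu_ke^{V_k}+\beta e^{(V_1+V_2)/2})\,dz=8\pi=\sigma_{k,i}$. Since $r\mapsto\int_{B_r}f_{k,p}$ is nondecreasing, for every $\eta\in(0,1]$ there are $R_\eta>1$ and $p_\eta>p_0$ with
\[\int_{B_r}f_{k,p}\,dz\geq 8\pi-2\pi\eta\qquad\text{for all }R_\eta\leq r\leq \tfrac{r_0}{3\varepsilon_{p,i}^*}\text{ and }p\geq p_\eta.\]

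Next, let $\bar v(r):=\frac1{2\pi r}\int_{\partial B_r}v_{k,p,i}^*\,dS$. From $2\pi r\,\bar v'(r)=\int_{B_r}\Delta v_{k,p,i}^*\,dz=-\int_{B_r}f_{k,p}\,dz$ and the previous display, $\bar v'(r)\leq-\frac{4-\eta}{r}$ on $[R_\eta,\tfrac{r_0}{3\varepsilon_{p,i}^*}]$; integrating from $2R_\eta$ and using that $\bar v(2R_\eta)\to\frac1{2\pi}\int_{\partial B_{2R_\eta}}V_k$ stays bounded, we obtain, after enlarging $p_\eta$,
\[\bar v(r)\leq-(4-\eta)\log r+C_\eta,\qquad 2R_\eta\leq r\leq\tfrac{r_0}{3\varepsilon_{p,i}^*},\ \ p\geq p_\eta,\]
which is exactly the asserted bound for the average, since $8-\frac{\sigma_{k,i}}{2\pi}-\eta=4-\eta$ in view of $\sigma_{k,i}=8\pi$.

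Finally I would bound $\operatorname{osc}_{\partial B_\rho}v_{k,p,i}^*$. Rescaling \eqref{eq-3-80} gives $|\nabla v_{k,p,i}^*(z)|\leq\frac{C}{M_p(y_{p,i})}\sum_{j=1}^{n_0}\big|z-(x_{p,j}-y_{p,i})/\varepsilon_{p,i}^*\big|^{-1}$. The key point, specific to type $\mathcal A$, is that \emph{no family $x_{p,j}$ with $j\neq m_i$ converges to $x_i$}: such a $j$ would carry a bubble (of type $\mathcal A$, $\mathcal B$ or $\mathcal C$), which by $(\mathcal P_1^{n_0})$ and the Remark~\ref{remark3-1}‑type lower bounds used in Lemma~\ref{lemma-4-3++++++} would add a strictly positive amount to $\sigma_{1,i}$ or to $\sigma_{2,i}$ on top of the $8\pi$ produced by the $m_i$‑bubble, contradicting $\sigma_{1,i}=\sigma_{2,i}=8\pi$. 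Hence for $|z|\leq r_0/(3\varepsilon_{p,i}^*)$ the terms with $x_{p,j}\to x_l\neq x_i$ are $O(\varepsilon_{p,i}^*)$, and for $j=m_i$ one has $|(x_{p,m_i}-y_{p,i})/\varepsilon_{p,i}^*|\leq C_0$ by \eqref{fc-3-75}; choosing $R_\eta\geq C_0$ yields $|\nabla v_{k,p,i}^*(z)|\leq C/|z|$ on $\{2R_\eta\leq|z|\leq r_0/(3\varepsilon_{p,i}^*)\}$. Joining the minimum point of $v_{k,p,i}^*$ on $\partial B_\rho$ to any $z\in\partial B_\rho$ by a circular arc then gives $v_{k,p,i}^*(z)\leq\min_{\partial B_\rho}v_{k,p,i}^*+\pi C\leq\bar v(\rho)+\pi C$, and combining with the second step finishes the proof. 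The main obstacle is precisely this last step — namely extracting from the type $\mathcal A$ hypothesis, through the exact local‑mass quantization of Lemma~\ref{lemma-4-3++++++} and Corollary~\ref{cor-4-2}, that no secondary concentration occurs at $x_i$, so that $\nabla v_{k,p,i}^*$ genuinely decays like $C/|z|$ across the whole annulus; Steps 1–2 are then routine.
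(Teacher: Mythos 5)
Your argument is correct under the standing type $\mathcal{A}$ assumption, but it follows a genuinely different route from the paper. The paper proves \eqref{eq-4-18^*} directly from the Green's representation formula: writing $v_{1,p,i}^*(z)-v_{1,p,i}^*(0)=\int G(\varepsilon_{p,i}^*z+y_{p,i},\varepsilon_{p,i}^*w+y_{p,i})g_p(w)\,dw-\dots$, splitting the logarithmic potential into the regions $\{|w|\leq R_\eta\}$, $\{|w|\leq 2|z-w|\}$ and $\{|w|\geq 2|z-w|\}$, and using \emph{both} the lower bound $\int_{B_{R_\eta}}g_p\geq 8\pi-\eta$ and the upper bound $\int_{B_{2\delta/\varepsilon_{p,i}^*}}g_p\leq\sigma_{1,i}+\eta$ together with the boundedness of $H$; no gradient information and no statement about other blow-up families near $x_i$ are needed, and the argument only sees the numbers $8\pi$ and $\sigma_{k,i}$, which is why it is reused verbatim in Lemma \ref{Lemma-4-8} where only $8\pi\leq\sigma_{1,i}\leq 4(1+\sqrt2)\pi$ is known. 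You instead derive the decay of the spherical average by the ODE $2\pi r\,\bar v'(r)=-\int_{B_r}f_{k,p}$ (which, with the mass lower bound, is fine and gives exactly the exponent $4-\eta$ since $\sigma_{k,i}=8\pi$ here), and then upgrade to a pointwise bound by an oscillation estimate based on the rescaled version of \eqref{eq-3-80}; this forces you to rule out a second family $x_{p,j}\to x_i$, $j\neq m_i$. That exclusion is in fact provable as you sketch: by $(\mathcal{P}_1^{n_0})$ and \eqref{fc-3-75} the balls $B_{R\varepsilon_{p,j}}(x_{p,j})$ and $B_{R\varepsilon_{p,i}^*}(y_{p,i})$ are disjoint for large $p$, so the Remark \ref{remark3-1}-type lower bounds add a fixed positive amount to $\sigma_{1,i}$ or $\sigma_{2,i}$ beyond the $8\pi$ already produced at $y_{p,i}$, contradicting Lemma \ref{lemma-4-3++++++} (or Corollary \ref{cor-4-2}); but this step should be written out, since it is the load-bearing part of your proof and is exactly what the paper's potential-theoretic argument avoids. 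The trade-off is clear: your route is more geometric and gives a clean picture (single bubble, $C/|z|$ gradient decay, bounded angular oscillation), while the paper's route is more robust — it needs only crude two-sided mass bounds, hence transfers unchanged to the type $\mathcal{B}$/$\mathcal{C}$ case where the exact quantization you rely on is not yet available.
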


\begin{proof}
Take $k=1$ for example.
 There exists large $R_{\eta}>1$ such that
\begin{align}
    \int_{B_{R_{\eta}}(0)} \mu_1 e^{V_{1}} +\beta e^{\frac{{V}_{1}+{V}_{2}}{2}} dz\geq 8\pi-\frac{\eta}{2}.
\end{align} Denote
\begin{align}\label{fc-gp}
    g_p(z):=\mu_1\left(1+\frac{v_{1,p,i}^*(z)}{p}\right)^p+\beta\left(1+\frac{v_{1,p,i}^*(z)}{p}\right)^{\frac{p-1}{2}}\left(1+\frac{v_{2,p,i}^*(z)}{p}\right)^{\frac{p+1}{2}}.
\end{align}
Then by $(v_{1,p,i}^*, v_{2,p,i}^*)\to (V_1, V_2)$ in $C_{loc}^2(\mathbb R^2)\times C_{loc}^2(\mathbb R^2)$, there exists large $p_{\eta}>1$ such that for all $p>p_{\eta}$,
\begin{align}\label{eq-4-18}
    &\int_{B_{R_{\eta}}(0)} g_p(z) dz\geq \int_{B_{R_{\eta}}(0)} \mu_1 e^{V_{1}} +\beta e^{\frac{{V}_{1}+{V}_{2}}{2}} dz-\frac{\eta}{2}\geq 8\pi-\eta.
\end{align}
Furthermore, letting $\delta=r_0/3$, it follows from \eqref{31-1} and \eqref{eq-4-9-} that
\begin{align*}
\lim_{p\to\infty}\int_{B_{\frac{2\delta}{\varepsilon_{p,i}^*}}(0)}g_p(w)dw=\lim_{p\to\infty}\frac{p}{M_p(y_{p,i})}\int_{B_{2\delta}(y_{p,i})}
\mu_{1}u_{1,p}^{p}+\beta u_{1,p}^{\frac{p-1}{2}}u_{2,p}^{\frac{p+1}{2}} dx=\sigma_{1,i},\end{align*}
so by taking $p_{\eta}$ larger if necessary, we have that for all $p>p_{\eta}$,
\begin{align}\label{eq-4-18-0}\int_{B_{\frac{2\delta}{\varepsilon_{p,i}^*}}(0)}g_p(w)dw\leq \sigma_{1,i}+\eta.\end{align}

By the Green's representation formula, we have
\begin{align*}
    u_{1,p}(\varepsilon_{p,i}^* z+y_{p,i})=\int_{\Omega} G(\varepsilon_{p,i}^* z+y_{p,i},y) \left(\mu_1 u_{1,p}^p +\beta u_{1,p}^{\frac{p-1}{2}} u_{2,p}^{\frac{p+1}{2}}\right) dy.
\end{align*}
Denoting $\widetilde{\Omega}_{p,i}:=\frac{\Omega-y_{p,i}}{\varepsilon_{p,i}^*}$, it follows from \eqref{eq-3-107} that
\begin{align}
    v_{1,p,i}^*(z)=-p+\int_{\widetilde{\Omega}_{p,i}} G(\varepsilon_{p,i}^* z+y_{p,i},\varepsilon_{p,i}^* w+y_{p,i}) g_p(w)dw.
\end{align}
From here and \eqref{eq-1-6^*}, we have
\begin{align}\label{eq-4-22}
    &v_{1,p,i}^*(z)-v_{1,p,i}^*(0)\nonumber\\
    =&\int_{\widetilde{\Omega}_{p,i}} \left[G(\varepsilon_{p,i}^* z+y_{p,i},\varepsilon_{p,i}^* w+y_{p,i})-G(y_{p,i},\varepsilon_{p,i}^* w+y_{p,i})\right] g_p(w) dw\\
    =&\frac{1}{2\pi}\int_{B_{\frac{2\delta}{\varepsilon_{p,i}^*}}(0)} \log{\frac{|w|}{|w-z|}} g_p(w) dw+\frac{1}{2\pi}\int_{\widetilde{\Omega}_{p,i}\setminus B_{\frac{2\delta}{\varepsilon_{p,i}^*}}(0)} \log{\frac{|w|}{|w-z|}} g_p(w) dw\nonumber \\
    &-\int_{\widetilde{\Omega}_{p,i}} \left[H(\varepsilon_{p,i}^* z+y_{p,i},\varepsilon_{p,i}^* w+y_{p,i})-H(y_{p,i},\varepsilon_{p,i}^* w+y_{p,i})\right] g_p(w) dw\nonumber\\
    =&I(z)+II(z)+III(z).\nonumber
\end{align}

Now we let $2R_{\eta}\leq |z|\leq \frac{\delta}{\varepsilon_{p,i}^*}$ and $p>p_{\eta}$.
Since $H(\cdot,\cdot)$ is bounded in $B_{r_0}(x_i)\times\Omega$, it follows that
\begin{align*}
    III(z)&=O(1)\int_{\widetilde{\Omega}_{p,i}}g_p(w) dw\\
    &=O(1)\frac{p}{M_p(y_{p,i})}\int_{\Omega}\mu_{1}u_{1,p}^{p}+\beta u_{1,p}^{\frac{p-1}{2}}u_{2,p}^{\frac{p+1}{2}} dx=O(1).\nonumber
\end{align*}
For $|z|\leq {\delta}/{\varepsilon_{p,i}^*}$ and $|w|\geq {2\delta}/{\varepsilon_{p,i}^*}$, we have $\frac{2}{3}\leq \frac{|w|}{|w-z|}\leq 2$, which shows that
\begin{align*}
    II(z)=O(1)\int_{\widetilde{\Omega}_{p,i}\setminus B_{{2\delta}/{\varepsilon_{p,i}^*}}(0)}g_p(w) dw=O(1).
\end{align*}
Next, we divide $I(z)$ into the following parts:
\begin{align}
    I(z)=&\frac{1}{2\pi}\int_{B_{R_{\eta}}(0)} \log{\frac{|w|}{|w-z|}} g_p(w) dw\\
    &+\frac{1}{2\pi}\int_{B_{{2\delta}/{\varepsilon_{p,i}^*}}(0)\setminus B_{R_{\eta}}(0) \cap\{|w|\leq 2|z-w|\}} \log{\frac{|w|}{|w-z|}} g_p(w) dw\nonumber\\
    &+\frac{1}{2\pi}\int_{B_{{2\delta}/{\varepsilon_{p,i}^*}}(0)\setminus B_{R_{\eta}}(0)\cap\{|w|\geq 2|z-w|\}} \log{|w|} g_p(w) dw\nonumber\\
    &+\frac{1}{2\pi}\int_{B_{{2\delta}/{\varepsilon_{p,i}^*}}(0)\setminus B_{R_{\eta}}(0) \cap\{|w|\geq 2|z-w|\}} \log{\frac{1}{|w-z|}} g_p(w) dw\nonumber\\
    =&I_1(z)+I_2(z)+I_3(z)+I_4(z).\nonumber
\end{align}
For $|w|\leq R_{\eta}$ and $|z|\geq 2R_{\eta}$, we have $\frac{|w|}{|w-z|}\leq \frac{2R_{\eta}}{|z|}\leq 1$. This, together with \eqref{eq-4-18}, implies that
\begin{align}
    I_1(z)\leq& \frac{1}{2\pi} \log{\frac{2R_{\eta}}{|z|}}\int_{B_{R_{\eta}}(0)} g_p(w) dw
    \leq -\frac{1}{2\pi}\left(8\pi-\eta \right)\log|z|+C.
\end{align}
Also,
\begin{align*}
     I_2(z)\leq \frac{\log{2}}{2\pi} \int_{B_{{2\delta}/{\varepsilon_{p,i}^*}}(0)\setminus B_{R_{\eta}}(0)} g_p(z) dz\leq C.
\end{align*}
For $|w|\geq 2|z-w|$, we have $|w|\leq 2|z|$. So we see from \eqref{eq-4-18}-\eqref{eq-4-18-0} that
\begin{align*}
    I_3(z)\leq &\frac{1}{2\pi}\log(2|z|)\int_{B_{{2\delta}/{\varepsilon_{p,i}^*}}(0)\setminus B_{R_{\eta}}(0)}  g_p(w) dw\\
    \leq & \frac{1}{2\pi}(\sigma_{1,i}-8\pi+2\eta)\log|z|+C.\nonumber
\end{align*}
Since $1+\frac{v_{k,p,i}^*(w)}{p}=\frac{u_{k,p}(\varepsilon_{p,i}^*w+y_{p,i})}{M_p(y_{p,i})}\leq 1$ and so $0\leq g_p(w)\leq \mu_k+\beta$ for $|z|\leq \frac{2\delta}{\varepsilon_{p,i}^*}$,
we have
\begin{align*}
    I_4(z)= &\frac{1}{2\pi}\int_{B_{{2\delta}/{\varepsilon_{p,i}^*}}(0)\setminus B_{R_{\eta}}(0) \cap\{2\leq 2|z-w|\leq|w|\}} \log{\frac{1}{|w-z|}} g_p(w) dw\\
     &+\frac{1}{2\pi}\int_{B_{{2\delta}/{\varepsilon_{p,i}^*}}(0)\setminus B_{R_{\eta}}(0) \cap\{|z-w|\leq 1\}} \log{\frac{1}{|w-z|}} g_p(w) dw\nonumber\\
    \leq & C \int_{\{|z-w|\leq 1\}} \log{\frac{1}{|w-z|}} dw\leq C.\nonumber
\end{align*}
Inserting above estimates into \eqref{eq-4-22} and using $v_{1,p,i}^*(0)\to V_1(0)$, we obtain
$$v_{1,p,i}^*(z)\leq -\left(8-\frac{\sigma_{1,i}}{2\pi}-\frac{3\eta}{2\pi}\right)\log|z|+C_\eta$$
for any $2R_{\eta}\leq |z|\leq \frac{\delta}{\varepsilon_{p,i}^*}$ and $p>p_{\eta}$.
\end{proof}
\begin{remark}\label{remark-4-5}
Lemma \ref{lemma-4-4} shows that for $p$ large,
$$\left(1+\frac{v_{k,p,i}^*}{p}\right)^{p}= e^{p\log\left(1+\frac{v_{k,p,i}^*}{p}\right)}\leq e^{v_{k,p,i}^*}\leq \frac{C}{|z|^{4-\eta}},\;\text{for }2R_{\eta}\leq |z|\leq \frac{r_0}{3\varepsilon_{p,i}^*}.$$
Meanwhile, since $v_{k,p,i}\to V_k$ in $C_{loc}^2(\mathbb R^2)$, we have $(1+\frac{v_{k,p,i}^*}{p})^{p}\leq C$ for $|z|\leq 2R_{\eta}$ and $p$ large. Therefore, for $p$ large, 
    \begin{align}
    &\left(1+\frac{v_{k,p,i}^*}{p}\right)^{p}\leq \frac{C}{(1+|z|)^{4-\eta}},\quad \text{for }|z|\leq \frac{r_0}{3\varepsilon_{p,i}^*}.
    \end{align}
    Similarly, we have
    \begin{align}
    &\left(1+\frac{v_{k,p,i}^*}{p}\right)^{\frac{p-1}{2}}\left(1+\frac{v_{3-k,p,i}^*}{p}\right)^{\frac{p+1}{2}}\leq \frac{C}{(1+|z|)^{4-\eta}},\quad \text{for }|z|\leq \frac{r_0}{3\varepsilon_{p,i}^*}.
\end{align}
\end{remark}
\begin{Lemma}\label{Lemma-4-6} We have
    \begin{align}\label{fc-4-21}
\lim\limits_{p\rightarrow+\infty} u_{1,p}(y_{p,i})=\lim\limits_{p\rightarrow+\infty} u_{2,p}(y_{p,i})=\sqrt{e}.
    \end{align}
    As a consequence,
     \begin{align}\label{fc-4-21-1}
\lim\limits_{p\rightarrow+\infty}\Vert u_{1,p}\Vert_{L^\infty(B_{r_0}(x_i))}&=\lim\limits_{p\rightarrow+\infty}\Vert u_{2,p}\Vert_{L^\infty(B_{r_0}(x_i))}=\sqrt{e},
    \end{align}
and $\gamma_{1,i}=\gamma_{2,i}=8\pi \sqrt{e}$.
\end{Lemma}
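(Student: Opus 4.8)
The plan is to reduce everything to the single claim $\lim_{p\to\infty}M_p(y_{p,i})=\sqrt e$. Once this is known, \eqref{fc-4-21} follows because in the type $\mathcal A$ case $v_{k,p,i}^*(0)\to V_k(0)\in\mathbb R$, so $u_{k,p}(y_{p,i})=M_p(y_{p,i})\big(1+v_{k,p,i}^*(0)/p\big)\to\sqrt e$ for $k=1,2$; then \eqref{fc-4-21-1} follows since $y_{p,i}$ maximizes $M_p$ on $\overline{B_{r_0}(x_i)}$, whence $u_{k,p}(y_{p,i})\le\|u_{k,p}\|_{L^\infty(B_{r_0}(x_i))}\le M_p(y_{p,i})$; and $\gamma_{k,i}=\sigma_{k,i}\lim_pM_p(y_{p,i})=8\pi\sqrt e$ by \eqref{eq-4-9-1} and Lemma \ref{lemma-4-3++++++}. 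Since $1\le\liminf_pM_p(y_{p,i})$ and $M_p(y_{p,i})\le C$, it suffices to prove that every subsequential limit $L$ of $M_p(y_{p,i})$ equals $\sqrt e$.

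To compute $L$, I would start from the Green representation at $y_{p,i}$, say for $u_{1,p}$, that is $u_{1,p}(y_{p,i})=\int_\Omega G(y_{p,i},y)\big(\mu_1u_{1,p}^p+\beta u_{1,p}^{\frac{p-1}{2}}u_{2,p}^{\frac{p+1}{2}}\big)\,dy$, and split $\Omega$ into the bubble disc $B_{R\varepsilon_{p,i}^*}(y_{p,i})$, the annulus $B_{r_0/3}(y_{p,i})\setminus B_{R\varepsilon_{p,i}^*}(y_{p,i})$, and the exterior $\Omega\setminus B_{r_0/3}(y_{p,i})$, with $R\gg1$ fixed. On the bubble disc I change variables $y=y_{p,i}+\varepsilon_{p,i}^*z$, use $M_p(y_{p,i})^p=M_p(y_{p,i})/\big(p(\varepsilon_{p,i}^*)^2\big)$, the convergences $(1+v_{k,p,i}^*/p)^p\to e^{V_k}$ and $(1+v_{k,p,i}^*/p)^{\frac{p\mp1}{2}}\to e^{V_k/2}$ uniformly on $B_R$, and the splitting $G(y_{p,i},y_{p,i}+\varepsilon_{p,i}^*z)=\frac1{2\pi}\log\frac1{\varepsilon_{p,i}^*}+\frac1{2\pi}\log\frac1{|z|}-H(y_{p,i},y_{p,i}+\varepsilon_{p,i}^*z)$ from \eqref{eq-1-6^*}, to obtain that the bubble integral equals $\frac{M_p(y_{p,i})}{p}\big[\frac1{2\pi}\log\frac1{\varepsilon_{p,i}^*}\int_{B_R}g_p+O_R(1)\big]$, where $g_p\to\mu_1e^{V_1}+\beta e^{(V_1+V_2)/2}$ in $C^0(B_R)$ and $O_R(1)$ is bounded in $p$ for each fixed $R$ (since $0\le g_p\le\mu_1+\beta$ on $B_R$ and $\log\frac1{|z|}$, $H$ are integrable/bounded). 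Dividing the whole identity by $M_p(y_{p,i})$ and using $-\log\varepsilon_{p,i}^*=\frac12\log p+\frac{p-1}{2}\log M_p(y_{p,i})$, hence $\frac1p\log\frac1{\varepsilon_{p,i}^*}\to\frac12\log L$, the bubble contribution tends, as $p\to\infty$, to $\frac{\log L}{4\pi}\int_{B_R}\big(\mu_1e^{V_1}+\beta e^{(V_1+V_2)/2}\big)\,dz$, which tends to $2\log L$ as $R\to\infty$ by the mass identity $\int_{\mathbb R^2}\big(\mu_1e^{V_1}+\beta e^{(V_1+V_2)/2}\big)\,dz=8\pi$ from Theorem \ref{th-3.1}.

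It remains to show the annulus and exterior integrals are $o\big(M_p(y_{p,i})\big)$. On the annulus I would use the decay estimate of Remark \ref{remark-4-5}, which bounds $(1+v_{k,p,i}^*/p)^p$ and $(1+v_{k,p,i}^*/p)^{\frac{p-1}{2}}(1+v_{3-k,p,i}^*/p)^{\frac{p+1}{2}}$ by $C(1+|z|)^{-(4-\eta)}$; since $M_p(y_{p,i})\le C$ gives $G(y_{p,i},y_{p,i}+\varepsilon_{p,i}^*z)\le\frac1{2\pi}\log\frac1{\varepsilon_{p,i}^*}+C\le Cp$ for $|z|\ge R\ge1$, the annulus integral is at most $C\,M_p(y_{p,i})\int_{|z|\ge R}(1+|z|)^{-(4-\eta)}\,dz=C\,M_p(y_{p,i})\,\epsilon(R)$ with $\epsilon(R)\to0$. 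On the exterior, \eqref{31-1} gives $u_{k,p}=O(1/p)$ on compact subsets of $\overline\Omega\setminus\mathcal S$, while on each $B_{r_0}(x_j)$ with $j\ne i$ one has $G(y_{p,i},\cdot)=O(1)$ (as $|y_{p,i}-y|\ge 2r_0$ by \eqref{xixj}) and $p\int_{B_{r_0}(x_j)}\big(\mu_1u_{1,p}^p+\beta u_{1,p}^{\frac{p-1}{2}}u_{2,p}^{\frac{p+1}{2}}\big)\,dy\le C$ by \eqref{eq-4.9-0} and Young's inequality, so the exterior integral is $O(1/p)$. Letting $p\to\infty$ and then $R\to\infty$ in $u_{1,p}(y_{p,i})/M_p(y_{p,i})=\frac1{2\pi p}\log\frac1{\varepsilon_{p,i}^*}\int_{B_R}g_p+o(1)+\epsilon_p(R)$ (where $\limsup_p|\epsilon_p(R)|\le C\epsilon(R)$) and recalling $u_{1,p}(y_{p,i})/M_p(y_{p,i})\to1$, we arrive at $1=2\log L$, i.e. $L=\sqrt e$. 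The main obstacle is exactly this balancing step: the divergent factor $\log\frac1{\varepsilon_{p,i}^*}\sim\frac p4\log L$ must be absorbed against the $\frac1p$ in front of the bubble term and against the integrable tail $(1+|z|)^{-(4-\eta)}$ on the annulus, and the order of the $p\to\infty$ and $R\to\infty$ limits must be tracked carefully because $\varepsilon_{p,i}^*$ itself depends on the quantity $M_p(y_{p,i})$ being determined.
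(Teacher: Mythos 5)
Your argument is correct and is essentially the paper's own proof of Lemma \ref{Lemma-4-6}: both start from the Green representation of $u_{1,p}(y_{p,i})$, discard the exterior via \eqref{eq-4.9-0}, isolate the $\frac{1}{2\pi p}\log\frac{1}{\varepsilon_{p,i}^*}$ term against the local mass $8\pi$ while controlling the $\log|z|$ and $H$ contributions with the decay of Remark \ref{remark-4-5}, and then solve $1=2\log L$ (equivalently $\frac{\log\varepsilon_{p,i}^*}{p}\to-\frac14$) to get $M_p(y_{p,i})\to\sqrt e$. The only cosmetic difference is that you split $B_R$ plus an annulus and let $R\to\infty$ at the end, whereas the paper applies dominated convergence directly on the full rescaled ball; the conclusions \eqref{fc-4-21}, \eqref{fc-4-21-1} and $\gamma_{k,i}=8\pi\sqrt e$ are deduced exactly as in the paper.
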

\begin{proof}
Note that $v_{k,p,i}^*(0)=V_k(0)+o(1)=O(1)$,                                                                                                                                                                                                                                                                                                                                                                                                                                                                                                                                                                                                                                                                                                                                                                                                                                                                                                                                             
which, together with $u_{k,p}(y_{p,i})=M_p(y_p)(1+\frac{v_{k,p,i}*(0)}{p})$, implies that
\begin{align}\label{eq-4-35+-}
    u_{k,p}(y_{p,i})=M_p(y_{p,i})+O\left(\frac{1}{p}\right)\geq \frac{1}{2},\quad\text{for $p$ large.}
\end{align}
 By the Green's representation formula, we have
    \begin{align}
        u_{1,p}(y_{p,i})=&\int_{\Omega}G(y_{p,i},x)\left(\mu_1 u_{1,p}^p+\beta u_{1,p}^{\frac{p-1}{2}} u_{2,p}^{\frac{p+1}{2}} \right) dx\\
        =&\int_{B_{r_0/3}(y_{p,i})}G(y_{p,i},x)\left(\mu_1 u_{1,p}^p+\beta u_{1,p}^{\frac{p-1}{2}} u_{2,p}^{\frac{p+1}{2}} \right) dx+O\left(\frac{1}{p}\right)\nonumber,
    \end{align}
    where we have used that
    \begin{align*}
 &\int_{\Omega\setminus B_{r_0/3}(y_{p,i})}G(y_{p,i},x)\left(\mu_1 u_{1,p}^p+\beta u_{1,p}^{\frac{p-1}{2}} u_{2,p}^{\frac{p+1}{2}} \right) dx\\
= &O(1)\int_{\Omega} \mu_1 u_{1,p}^p+\beta u_{1,p}^{\frac{p-1}{2}} u_{2,p}^{\frac{p+1}{2}}  dx=O\left(\frac{1}{p}\right).
    \end{align*}
  Recalling $g_p$ defined in \eqref{fc-gp},  it follows from \eqref{eq-1-6^*} that
    \begin{align}\label{eq-4-37+-}
        u_{1,p}(y_{p,i})=&\frac{M_p(y_{p,i})}{p}\int_{B_{{r_0}/{3\varepsilon_{p,i}^*}}(0)}G(y_{p,i},\varepsilon_{p,i}^*z+y_{p,i})g_p(z) dz+O\left(\frac{1}{p}\right)\\
        =&-\frac{M_p(y_{p,i})}{p}\int_{B_{{r_0}/{3\varepsilon_{p,i}^*}}(0)}H(y_{p,i},\varepsilon_{p,i}^*z+y_{p,i})g_p(z) dz\nonumber\\
        &-\frac{M_p(y_{p,i})}{2\pi p}\int_{B_{{r_0}/{3\varepsilon_{p,i}^*}}(0)}(\log|z|) g_p(z) dz\nonumber\\
        &-\frac{M_p(y_{p,i})}{2\pi p}\log{\varepsilon_{p,i}^*}\int_{B_{{r_0}/{3\varepsilon_{p,i}^*}}(0)}g_p(z) dz+O\left(\frac{1}{p}\right).\nonumber
    \end{align}
From Remark \ref{remark-4-5} and the dominated convergence theorem, we obtain
\begin{align*}
&\lim\limits_{p\rightarrow+\infty}\int_{B_{{r_0}/{3\varepsilon_{p,i}^*}}(0)}g_p(z) dz=\int_{\mathbb{R}^2} \mu_1 e^{V_1} +\beta e^{\frac{V_1+V_2}{2}} dz=8\pi,
\end{align*}
\begin{align*}
\lim\limits_{p\rightarrow+\infty}\int_{B_{{r_0}/{3\varepsilon_{p,i}^*}}(0)}(\log|z|) g_p(z) dz=\int_{\mathbb{R}^2}(\log|z|)\left(  \mu_1 e^{V_1} +\beta e^{\frac{V_1+V_2}{2}} \right) dz=O(1).
\end{align*}
Furthermore,
\begin{align*}
&\int_{B_{{r_0}/{3\varepsilon_{p,i}^*}}(0)}H(y_{p,i},\varepsilon_{p,i}^*z+y_{p,i}) g_p(z) dz=O(1)\int_{B_{{r_0}/{3\varepsilon_{p,i}^*}}(0)}g_p(z) dz=O(1).
\end{align*}
These, together with \eqref{eq-4-35+-} and \eqref{eq-4-37+-}, imply
\begin{align}
    \frac{\log{\varepsilon_{p,i}^*}}{p}=-\frac{1}{4}+o(1).
\end{align}
Since $$\log{p}+2\log{\varepsilon_{p,i}^*}+(p-1)\log{M_p(y_{p,i})}=0,$$ we obtain
$M_p(y_{p,i})\to \sqrt{e}$ as $p\to\infty$. Consequently, \eqref{fc-4-21} follows from \eqref{eq-4-35+-}, and $\gamma_{k,i}=\sigma_{k,i}\lim M_p(y_{p,i})=8\pi\sqrt{e}$. Finally, \eqref{fc-4-21-1} follows from \eqref{fc-4-21} and
$$M_p(y_{p,i})=\max\limits_{x\in\overline{B_{r_0}(x_i)}}M_p(x).$$
The proof is complete.
\end{proof}

\subsection{ The type $\mathcal{B}$ or  type $\mathcal{C}$ case.}\label{section-4-2} This case is more complicated than the type $\mathcal{A}$ case, and different ideas are needed.
Without loss of generality, we may assume that $(v_{1,p,i}^*,v_{2,p,i}^*)$ satisfies type $\mathcal{B}$.
\begin{Lemma}\label{Lemma-4-8}
  $\sigma_{1,i}=8\pi$.
\end{Lemma}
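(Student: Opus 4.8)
The plan is to trap $\sigma_{1,i}$ between $\int_{\mathbb{R}^2}\mu_1 e^{\widetilde{V}_1}\,dz=8\pi$ from both sides. Write $\delta=r_0/3$ and recall $g_p(z)=\mu_1(1+\tfrac{v_{1,p,i}^*}{p})^p+\beta(1+\tfrac{v_{1,p,i}^*}{p})^{\frac{p-1}{2}}(1+\tfrac{v_{2,p,i}^*}{p})^{\frac{p+1}{2}}$, so that $\int_{B_{2\delta/\varepsilon_{p,i}^*}(0)}g_p\,dz=\frac{p}{M_p(y_{p,i})}\int_{B_{2\delta}(y_{p,i})}(\mu_1 u_{1,p}^p+\beta u_{1,p}^{\frac{p-1}{2}}u_{2,p}^{\frac{p+1}{2}})\,dx\to\sigma_{1,i}$. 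First I would record the lower bound: on a fixed ball $B_R(0)$, $v_{1,p,i}^*\to\widetilde{V}_1$ uniformly, so by Fatou $\sigma_{1,i}\geq\liminf_p\int_{B_R}\mu_1(1+\tfrac{v_{1,p,i}^*}{p})^p\,dz=\int_{B_R}\mu_1 e^{\widetilde{V}_1}\,dz$, and $R\to\infty$ gives $\sigma_{1,i}\geq 8\pi$. Feeding this into Corollary \ref{cor-4-2} gives $\sigma_{1,i}^2-8\pi\sigma_{1,i}\leq\max_{t\geq0}(8\pi t-t^2)=16\pi^2$, i.e. $\sigma_{1,i}\leq 4\pi(1+\sqrt2)<12\pi$; this a priori gap is essential below.

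Next I would reprove the decay estimate of Lemma \ref{lemma-4-4} in the present case. Its proof uses only that $\int_{B_{R_\eta}(0)}g_p\,dz\geq 8\pi-\eta$ for a suitable $R_\eta$ (which here follows from $g_p\to\mu_1 e^{\widetilde{V}_1}$ locally uniformly — the cross term tends to $0$ because $v_{2,p,i}^*\to-\infty$ — and $\int_{\mathbb{R}^2}\mu_1 e^{\widetilde{V}_1}=8\pi$), that $\int_{B_{2\delta/\varepsilon_{p,i}^*}}g_p\,dz\leq\sigma_{1,i}+\eta$ for $p$ large, plus Green's representation and boundedness of $H$. Hence for every $\eta\in(0,1]$ there are $R_\eta,C_\eta$ with $v_{1,p,i}^*(z)\leq-(8-\tfrac{\sigma_{1,i}}{2\pi}-\eta)\log|z|+C_\eta$ for $2R_\eta\leq|z|\leq\tfrac{r_0}{3\varepsilon_{p,i}^*}$ and $p$ large. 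Since $\sigma_{1,i}<12\pi$ we may fix $\eta$ and $\alpha'$ with $2<\alpha'<8-\tfrac{\sigma_{1,i}}{2\pi}-\eta=:\alpha$. Using $(1+t)^p\leq e^{pt}$ we get $\mu_1(1+\tfrac{v_{1,p,i}^*}{p})^p\leq\mu_1 e^{v_{1,p,i}^*}\leq C|z|^{-\alpha}$, and since $v_{1,p,i}^*\leq 0$ on this range, $(1+\tfrac{v_{1,p,i}^*}{p})^{p-1}\leq e^{\frac{p-1}{p}v_{1,p,i}^*}\leq C|z|^{-\alpha'}$ for $p$ large.

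Finally I would rule out escaping mass. On the annulus $A_R:=\{R\leq|z|\leq\tfrac{r_0}{3\varepsilon_{p,i}^*}\}$ the $\mu_1$-part of $g_p$ integrates to at most $C\int_R^\infty r^{1-\alpha}\,dr\to 0$ as $R\to\infty$. For the cross term, using $1+\tfrac{v_{2,p,i}^*}{p}\leq 1$ and Cauchy–Schwarz,
\begin{align*}
\int_{A_R}\beta\Big(1+\tfrac{v_{1,p,i}^*}{p}\Big)^{\frac{p-1}{2}}\Big(1+\tfrac{v_{2,p,i}^*}{p}\Big)^{\frac{p+1}{2}}dz
\leq\beta\Big(\int_{A_R}\big(1+\tfrac{v_{1,p,i}^*}{p}\big)^{p-1}dz\Big)^{1/2}\Big(\int_{A_R}\big(1+\tfrac{v_{2,p,i}^*}{p}\big)^{p+1}dz\Big)^{1/2},
\end{align*}
where the first factor is $\leq(C\int_R^\infty r^{1-\alpha'}dr)^{1/2}\to 0$ by the decay estimate, and the second factor is uniformly bounded because, after rescaling, $\int_{B_{2\delta/\varepsilon_{p,i}^*}}(1+\tfrac{v_{2,p,i}^*}{p})^{p+1}dz=\tfrac{p}{M_p(y_{p,i})^2}\int_{B_{2\delta}(y_{p,i})}u_{2,p}^{p+1}dx\leq\tfrac{C_2}{M_p(y_{p,i})^2}\leq C$ by \eqref{eq-4.9} and $\liminf_p M_p(y_{p,i})\geq 1$. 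Thus $\limsup_p\int_{A_R}g_p\,dz\to 0$ as $R\to\infty$, while for fixed $R$, $\int_{B_R}g_p\,dz\to\int_{B_R}\mu_1 e^{\widetilde{V}_1}dz$. Writing $\int_{B_{2\delta/\varepsilon_{p,i}^*}}g_p=\int_{B_R}g_p+\int_{A_R}g_p$ and letting $p\to\infty$ then $R\to\infty$ yields $\sigma_{1,i}=\int_{\mathbb{R}^2}\mu_1 e^{\widetilde{V}_1}dz=8\pi$. The main obstacle is precisely this last step: unlike type $\mathcal{A}$, $v_{2,p,i}^*$ has no limit equation, so uniform integrability of $g_p$ is not free; it must be forced out of the decay estimate for $v_{1,p,i}^*$ (which in turn needs the a priori bound $\sigma_{1,i}<12\pi$, so that the decay exponent beats $2$) combined with the uniform $L^{p+1}$-bound on $u_{2,p}$.
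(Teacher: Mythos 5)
Your proposal is correct and follows essentially the same route as the paper: lower bound $\sigma_{1,i}\geq 8\pi$ via the local limit $\widetilde V_1$, the a priori bound $\sigma_{1,i}\leq 4(1+\sqrt{2})\pi$ from Corollary \ref{cor-4-2}, the decay estimate obtained by rerunning the proof of Lemma \ref{lemma-4-4}, and then killing the cross term by Cauchy--Schwarz against the uniform bound $\tfrac{p}{M_p(y_{p,i})^2}\int_\Omega u_{2,p}^{p+1}\,dx\leq C$ before passing to the limit. The only (cosmetic) difference is that you work with an arbitrary decay exponent $\alpha'>2$ and bound the tail by $\int_R^\infty r^{1-\alpha'}dr$, whereas the paper fixes the exponent $3$ and expresses the tail via $\int_{\mathbb{R}^2\setminus B_R}e^{\widetilde V_1}dz$.
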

\begin{proof}
Similarly as Remark \ref{remark3-1}, we have 
    \begin{align}
       \sigma_{1,i}= &\lim\limits_{p\rightarrow \infty}  \frac{p}{M_p(y_{p,i})} \int_{B_{\rho}(x_{i})} \left(\mu_1 u_{1,p}^p +\beta u_{1,p}^{\frac{p-1}{2}}u_{2,p}^{\frac{p+1}{2}}\right) dx\nonumber\\
        \geq &  \int_{\mathbb{R}^2}\mu_{1} e^{\widetilde V_{1}} dz=8\pi.\nonumber
    \end{align}
Then Corollary \ref{cor-4-2} implies $8\pi\leq\sigma_{1,i}\leq 4(1+\sqrt{2})\pi$. 
Consequently, the same proof as Lemma \ref{lemma-4-4} implies that for any $\eta\in(0,0.1]$, there exist large $R_{\eta}>1$, $p_{\eta}>1$ and $C_{\eta}>0$ such that
\begin{align}\label{eq-4-32++-}
    v_{1,p,i}^*(z)&\leq -\left(8-\frac{\sigma_{1,i}}{2\pi}-\eta\right)\log|z|+C_{\eta}\\
    &\leq -(6-2\sqrt{2}-\eta)\log|z|+C_{\eta}\leq -3\log|z|+C_{\eta},\nonumber
\end{align}
for any $2R_{\eta}\leq |z|\leq \frac{\delta}{\varepsilon_{p,i}^*}$ and $p>p_{\eta}$, where $\delta=r_0/3$.
Then similarly as Remark \ref{remark-4-5}, we have that for $m\in \{-1,0,1\}$ and $p$ large,
\begin{align}\label{eq-4-32++}
    \left(1+\frac{v_{1,p,i}^*}{p}\right)^{p+m}\leq \frac{C}{(1+|z|)^3},\quad\forall |z|\leq \frac{\delta}{\varepsilon_{p,i}^*}.
\end{align}

Fix any large $R>1$.
By type $\mathcal{B}$ in $(\p_2^{n_0})$ and the dominated convergence theorem, we have for any $m=-1,0,1$ and $p$ large enough,
\begin{align*}
&\int_{B_{R}(0)} \left(1+\frac{v_{2,p,i}^*}{p}\right)^{p+m} dz=o(1),\\
    &\int_{B_{R}(0)} \left(1+\frac{v_{1,p,i}^*}{p}\right)^{p+m} dz=  \int_{B_{R}(0)}  e^{\widetilde{V}_{1}} dz+o(1),\\
    &\int_{B_{\frac{\delta}{\varepsilon_{p,i}^*}}(0)} \left(1+\frac{v_{1,p,i}^*}{p}\right)^{p+m} dz=\int_{\mathbb{R}^2}  e^{\widetilde{V}_{1}} dz+o(1),
    \end{align*}
    and so
\begin{align}\label{fc-4-29}
    0<&\int_{B_{\frac{\delta}{\varepsilon_{p,i}^*}}(0)} \left(1+\frac{v_{1,p,i}^*}{p}\right)^{\frac{p-1}{2}}\left(1+\frac{v_{2,p,i}^*}{p}\right)^{\frac{p+1}{2}} dz\\
    \leq & \left(\int_{B_{R}(0)} \left(1+\frac{v_{1,p,i}^*}{p}\right)^{p-1} dz\right)^{\frac{1}{2}} \left(\int_{B_{R}(0)} \left(1+\frac{v_{2,p,i}^*}{p}\right)^{p+1} dz\right)^{\frac{1}{2}} \nonumber\\
    &+\left(\int_{B_{\frac{\delta}{\varepsilon_{p,i}^*}}(0)\setminus B_{R}(0)} \left(1+\frac{v_{1,p,i}^*}{p}\right)^{p-1} dz\right)^{\frac{1}{2}} \left(\frac{p}{M_p(y_{p,i})^2}\int_{\Omega} u_{2,p}^{p+1} dx\right)^{\frac{1}{2}} \nonumber\\
    \leq &  o(1)+O\left(\int_{\mathbb R^2\setminus B_{R}(0)}  e^{\widetilde{V}_{1}} dz+o(1)\right).\nonumber
\end{align}
Letting $p\to\infty$ first and then $R\to \infty$, we obtain
\begin{align}\label{fc-4-29-0}
\lim_{p\to\infty}\int_{B_{\frac{\delta}{\varepsilon_{p,i}^*}}(0)} \left(1+\frac{v_{1,p,i}^*}{p}\right)^{\frac{p-1}{2}}\left(1+\frac{v_{2,p,i}^*}{p}\right)^{\frac{p+1}{2}} dz=0.
\end{align}
Consequently,
\begin{align}\label{fc-4-30}
    \sigma_{1,i}=&\lim_{p\to\infty}\frac{p}{M_p(y_{p,i})} \int_{B_{\delta}(y_{p,i})} \mu_1 u_{1,p}^p +\beta u_{1,p}^{\frac{p-1}{2}}u_{2,p}^{\frac{p+1}{2}} dx\nonumber\\
    =& \lim_{p\to\infty}\int_{B_{\frac{\delta}{\varepsilon_{p,i}^*}}(0)} \mu_1\left(1+\frac{v_{1,p,i}^*}{p}\right)^p+\beta\left(1+\frac{v_{1,p,i}^*}{p}\right)^{\frac{p-1}{2}}\left(1+\frac{v_{2,p,i}^*}{p}\right)^{\frac{p+1}{2}} dz\\
    =&\int_{\mathbb{R}^2}  \mu_1 e^{\widetilde{V}_{1}} dz
    =8\pi.\nonumber
\end{align}
This completes the proof.
\end{proof}

\begin{Lemma}\label{lemma-4-10=} We have
    \begin{align}
\lim\limits_{p\rightarrow+\infty} M_p(y_{p,i})=\lim\limits_{p\rightarrow+\infty} u_{1,p}(y_{p,i})=\lim\limits_{p\rightarrow+\infty} \|u_{1,p}\|_{L^\infty(B_{r_0}(x_i))}=\sqrt{e}.
    \end{align}
    Consequently, $\gamma_{1,i}=\sigma_{1,i}\lim\limits_{p\rightarrow+\infty} M_p(y_{p,i})=8\pi\sqrt{e}$.
\end{Lemma}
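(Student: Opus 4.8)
The plan is to repeat the argument of Lemma~\ref{Lemma-4-6}, exploiting the type~$\mathcal B$ structure to simplify the bookkeeping. I would first record two elementary facts. Since $v_{2,p,i}^*(0)\to-\infty$ while $\max\{v_{1,p,i}^*(0),v_{2,p,i}^*(0)\}=0$, for $p$ large one has $v_{1,p,i}^*(0)=0$ exactly, hence $u_{1,p}(y_{p,i})=M_p(y_{p,i})$. Moreover $M_p(y_{p,i})\le\|M_p\|_\infty\le C$ by \eqref{eq-4.8}, and $1\le\liminf_{p\to\infty}M_p(y_{p,i})$ (this bound is used already in the proof of Corollary~\ref{cor-4-2}; alternatively $\varepsilon_{p,i}^*\to0$ from Lemma~\ref{lemma-3-10} forces $pM_p(y_{p,i})^{p-1}\to\infty$, which excludes any subsequential limit of $M_p(y_{p,i})$ strictly below $1$). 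Thus $M_p(y_{p,i})$ is bounded above and below by positive constants.

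Next I would apply the Green representation formula to $u_{1,p}(y_{p,i})$, discard the integral over $\Omega\setminus B_{r_0/3}(y_{p,i})$ which is $O(1/p)$ by \eqref{31-1}, rescale via $x=\varepsilon_{p,i}^* z+y_{p,i}$, and use $(\varepsilon_{p,i}^*)^{-2}=pM_p(y_{p,i})^{p-1}$ together with \eqref{eq-1-6^*} to obtain, with $g_p$ as in \eqref{fc-gp},
\begin{align*}
u_{1,p}(y_{p,i})=&-\frac{M_p(y_{p,i})}{2\pi p}\log\varepsilon_{p,i}^*\int_{B_{r_0/(3\varepsilon_{p,i}^*)}(0)}g_p(z)\,dz\\
&-\frac{M_p(y_{p,i})}{2\pi p}\int_{B_{r_0/(3\varepsilon_{p,i}^*)}(0)}(\log|z|)\,g_p(z)\,dz\\
&-\frac{M_p(y_{p,i})}{p}\int_{B_{r_0/(3\varepsilon_{p,i}^*)}(0)}H(y_{p,i},\varepsilon_{p,i}^* z+y_{p,i})\,g_p(z)\,dz+O(1/p).
\end{align*}
The first integral tends to $\sigma_{1,i}=8\pi$ by Lemma~\ref{Lemma-4-8}, cf.\ \eqref{fc-4-30}. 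The third is $O(1)$ because $H$ is bounded on $\overline{B_{r_0}(x_i)}\times\Omega$ and $\int g_p\,dz=O(1)$. For the second, the diagonal part $\mu_1(1+v_{1,p,i}^*/p)^p$ converges to $\int_{\mathbb R^2}(\log|z|)\mu_1 e^{\widetilde V_1}\,dz=O(1)$ by dominated convergence via the decay bound \eqref{eq-4-32++}; for the off-diagonal part I would split into $|z|\le R$ and $R\le|z|\le r_0/(3\varepsilon_{p,i}^*)$, let $p\to\infty$ first (using that $(1+v_{2,p,i}^*/p)^{(p+1)/2}\to0$ uniformly on $B_R$ and \eqref{fc-4-29-0}) and then $R\to\infty$, applying Cauchy--Schwarz with the weight $(\log|z|)^2$ against \eqref{eq-4-32++} and the bound $\frac{p}{M_p(y_{p,i})^2}\int_\Omega u_{2,p}^{p+1}\,dx\le C$ coming from \eqref{eq-4.9}; this shows the second integral is also $O(1)$. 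Since $M_p(y_{p,i})$ is bounded, the last two terms on the right are $O(1/p)$.

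Combining, $u_{1,p}(y_{p,i})=-\frac{4M_p(y_{p,i})}{p}\log\varepsilon_{p,i}^*\,(1+o(1))+O(1/p)$; since $u_{1,p}(y_{p,i})=M_p(y_{p,i})$ and $M_p(y_{p,i})$ is bounded below, dividing yields $-\frac{4}{p}\log\varepsilon_{p,i}^*=1+o(1)$, i.e.\ $\frac1p\log\varepsilon_{p,i}^*\to-\frac14$. Inserting this into the identity $-2\log\varepsilon_{p,i}^*=\log p+(p-1)\log M_p(y_{p,i})$, divided by $p$, and passing to the limit along any convergent subsequence, gives $\log(\lim_p M_p(y_{p,i}))=\frac12$, so $M_p(y_{p,i})\to\sqrt e$; hence $u_{1,p}(y_{p,i})=M_p(y_{p,i})\to\sqrt e$, and from $u_{1,p}(y_{p,i})\le\|u_{1,p}\|_{L^\infty(B_{r_0}(x_i))}\le\|M_p\|_{L^\infty(B_{r_0}(x_i))}=M_p(y_{p,i})$ also $\|u_{1,p}\|_{L^\infty(B_{r_0}(x_i))}\to\sqrt e$. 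Finally $\gamma_{1,i}=\sigma_{1,i}\lim_p M_p(y_{p,i})=8\pi\sqrt e$ by \eqref{eq-4-9-1}. The only genuinely new difficulty relative to Lemma~\ref{Lemma-4-6} is keeping the $\log|z|$-weighted off-diagonal term under control; the remainder is a direct transcription of the single-bubble computation.
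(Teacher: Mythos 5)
Your proposal is correct and follows essentially the same route as the paper: identify $u_{1,p}(y_{p,i})=M_p(y_{p,i})=\|u_{1,p}\|_{L^\infty(B_{r_0}(x_i))}$ from $v_{2,p,i}^*(0)\to-\infty$, rerun the Green-representation computation of Lemma \ref{Lemma-4-6}, control the $\log|z|$-weighted cross term by the Cauchy--Schwarz splitting of \eqref{fc-4-29}--\eqref{fc-4-29-0} together with the decay bound \eqref{eq-4-32++}, and then extract $\frac{1}{p}\log\varepsilon_{p,i}^*\to-\frac14$ and hence $M_p(y_{p,i})\to\sqrt e$ and $\gamma_{1,i}=8\pi\sqrt e$. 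This is exactly the paper's argument, so no further comparison is needed.
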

\begin{proof}
    By type $\mathcal{B}$, we have $v_{2,p,i}^*(0)\rightarrow-\infty$ and so
    \begin{align*}
        p(u_{2,p}(y_{p,i})-M_p(y_{p,i}))\rightarrow-\infty,
    \end{align*}
    which implies that $$u_{1,p}(y_{p,i})=M_p(y_{p,i})=\|u_{1,p}\|_{L^\infty(B_{r_0}(x_i))}.$$ Then it follows from \eqref{eq-4-37+-} that
    \begin{align}\label{eq-4-55+-}
     1
        =&\frac{1}{p}\int_{B_{{r_0}/{3\varepsilon_{p,i}^*}}(0)}H(y_{p,i},\varepsilon_{p,i}^*z+y_{p,i})g_p(z) dz\\
        &-\frac{1}{2\pi p}\int_{B_{{r_0}/{3\varepsilon_{p,i}^*}}(0)}(\log|z|) g_p(z) dz\nonumber\\
        &-\frac{1}{2\pi p}\log{\varepsilon_{p,i}^*}\int_{B_{{r_0}/{3\varepsilon_{p,i}^*}}(0)}g_p(z) dz+O\left(\frac{1}{p}\right).\nonumber
    \end{align}
    Recall \eqref{fc-4-29}-\eqref{fc-4-29-0} that
    \begin{align*}
  \lim_{p\to\infty} \int_{B_{{r_0}/{3\varepsilon_{p,i}^*}}(0)} \left(1+\frac{v_{1,p,i}^*}{p}\right)^{\frac{p-1}{2}}\left(1+\frac{v_{2,p,i}^*}{p}\right)^{\frac{p+1}{2}} dz
   =0.
    \end{align*}
Similarly, we can obtain
    \begin{align*}
    &\int_{B_{{r_0}/{3\varepsilon_{p,i}^*}}(0)} \left|\log|z|\right|\left(1+\frac{v_{1,p,i}^*}{p}\right)^{\frac{p-1}{2}}\left(1+\frac{v_{2,p,i}^*}{p}\right)^{\frac{p+1}{2}} dz\\
        \leq & o(1)+ C \left(\int_{\mathbb{R}^2\setminus B_R(0)}|\log|z||^2 e^{\widetilde{V}_{1}} dz+o(1)\right),\quad\forall R>1,
\end{align*}
and so
$$\lim_{p\to\infty}\int_{B_{{r_0}/{3\varepsilon_{p,i}^*}}(0)} \left|\log|z|\right|\left(1+\frac{v_{1,p,i}^*}{p}\right)^{\frac{p-1}{2}}\left(1+\frac{v_{2,p,i}^*}{p}\right)^{\frac{p+1}{2}} dz=0.$$
Then a similar argument as \eqref{fc-4-30} leads to
\begin{align*}
&\lim\limits_{p\rightarrow+\infty}\int_{B_{{r_0}/{3\varepsilon_{p,i}^*}}(0)}g_p(z) dz=\int_{\mathbb{R}^2} \mu_1 e^{\widetilde{V}_{1}} dz=8\pi,\\
&\int_{B_{{r_0}/{3\varepsilon_{p,i}^*}}(0)}H(y_{p,i},\varepsilon_{p,i}^*z+y_{p,i}) g_p(z) dz=O(1)\int_{B_{{r_0}/{3\varepsilon_{p,i}^*}}(0)}g_p(z) dz=O(1),\\
&\lim\limits_{p\rightarrow+\infty}\int_{B_{{r_0}/{3\varepsilon_{p,i}^*}}(0)}(\log|z|) g_p(z) dz= \int_{\mathbb{R}^2}\mu_1(\log|z|)e^{\widetilde{V}_{1}} dz=O(1).
\end{align*}
Inserting these estimates into \eqref{eq-4-55+-}, we can prove as Lemma \ref{Lemma-4-6} that $M_p(y_{p,i})\to \sqrt{e}$. This completes the proof.
\end{proof}
\begin{corollary}\label{cor-4-11}
We have that for any $\rho\in (0, r_0/3]$,
    \begin{align*}
        &\gamma_{2,i}=\lim\limits_{p\rightarrow \infty} p\int_{B_{\rho}(x_i)}\mu_2 u_{2,p}^{p} dx,\\
        &\lim\limits_{p\rightarrow \infty} p\int_{B_{\rho}(x_i)}u_{1,p}^{\frac{p+1}{2}}u_{2,p}^{\frac{p+1}{2}} dx=0,\label{eq-4-61++}\\
        &\lim\limits_{p\rightarrow \infty} p\int_{B_{\rho}(x_i)}\mu_1 u_{1,p}^{p+1} dx=8\pi e.
    \end{align*}
\end{corollary}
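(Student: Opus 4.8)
The plan is to push all three integrals through the rescaling $x=\varepsilon_{p,i}^*z+y_{p,i}$ defining $v_{k,p,i}^*$. First, by the argument of \eqref{fc-4-01} (annular contributions are killed by \eqref{31-1} since $B_{2r_0}(x_i)\cap\mathcal S=\{x_i\}$) together with $y_{p,i}\to x_i$ (Lemma~\ref{lemma-3-10}), it suffices to prove each of the three claims with $B_\rho(x_i)$ replaced by $B_\rho(y_{p,i})$, for a single fixed $\rho\in(0,r_0/3]$. I will then use what is already known in the type $\mathcal B$ situation: $\sigma_{1,i}=8\pi$ (Lemma~\ref{Lemma-4-8}); $M_p(y_{p,i})\to\sqrt e$ with $u_{1,p}(y_{p,i})=M_p(y_{p,i})=\|u_{1,p}\|_{L^\infty(B_{r_0}(x_i))}$ (Lemma~\ref{lemma-4-10=}); and the two estimates from the proof of Lemma~\ref{Lemma-4-8}, namely \eqref{eq-4-32++}, i.e.\ $\bigl(1+\tfrac{v_{1,p,i}^*}{p}\bigr)^{p+m}\le C(1+|z|)^{-3}$ on $B_{r_0/(3\varepsilon_{p,i}^*)}(0)$ for $m\in\{-1,0,1\}$, and \eqref{fc-4-29-0}. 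Writing $a_p(z):=1+\tfrac{v_{1,p,i}^*(z)}{p}=\tfrac{u_{1,p}(\varepsilon_{p,i}^*z+y_{p,i})}{M_p(y_{p,i})}$ and $b_p(z):=1+\tfrac{v_{2,p,i}^*(z)}{p}$, one has $0\le a_p,b_p\le1$ on $B_{\rho/\varepsilon_{p,i}^*}(0)$ for $p$ large (since $y_{p,i}$ maximises $M_p$ over $B_{r_0}(x_i)$), while $(\varepsilon_{p,i}^*)^{-2}=pM_p(y_{p,i})^{p-1}$ gives $p(\varepsilon_{p,i}^*)^2M_p(y_{p,i})^{p+m}=M_p(y_{p,i})^{m+1}$.

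For the third identity, the substitution yields $p\int_{B_\rho(y_{p,i})}\mu_1u_{1,p}^{p+1}\,dx=\mu_1M_p(y_{p,i})^2\int_{B_{\rho/\varepsilon_{p,i}^*}(0)}a_p^{p+1}\,dz$. Since $a_p^{p+1}\le C(1+|z|)^{-3}$ is dominated by a fixed function in $L^1(\mathbb{R}^2)$ and $a_p^{p+1}\to e^{\widetilde V_1}$ pointwise (because $v_{1,p,i}^*\to\widetilde V_1$ locally uniformly), dominated convergence together with $\int_{\mathbb{R}^2}e^{\widetilde V_1}\,dz=8\pi/\mu_1$ (see \eqref{eq-3-49}) and $M_p(y_{p,i})^2\to e$ gives the limit $8\pi e$. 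For the second identity, the same substitution gives $p\int_{B_\rho(y_{p,i})}u_{1,p}^{\frac{p+1}{2}}u_{2,p}^{\frac{p+1}{2}}\,dx=M_p(y_{p,i})^2\int_{B_{\rho/\varepsilon_{p,i}^*}(0)}a_p^{\frac{p+1}{2}}b_p^{\frac{p+1}{2}}\,dz$, and since $a_p\le1$ I bound $a_p^{\frac{p+1}{2}}b_p^{\frac{p+1}{2}}\le a_p^{\frac{p-1}{2}}b_p^{\frac{p+1}{2}}$, so this integral is at most $\int_{B_{r_0/(3\varepsilon_{p,i}^*)}(0)}a_p^{\frac{p-1}{2}}b_p^{\frac{p+1}{2}}\,dz$, which tends to $0$ by \eqref{fc-4-29-0}.

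The first identity is the one that needs real work. Since $\gamma_{2,i}=\lim_{p\to\infty}p\int_{B_\rho(y_{p,i})}\bigl(\mu_2u_{2,p}^p+\beta u_{1,p}^{\frac{p+1}{2}}u_{2,p}^{\frac{p-1}{2}}\bigr)dx$, it suffices to show that the cross term, which after rescaling equals $M_p(y_{p,i})\int_{B_{\rho/\varepsilon_{p,i}^*}(0)}a_p^{\frac{p+1}{2}}b_p^{\frac{p-1}{2}}\,dz$, tends to $0$. The naive split $a_p^{\frac{p+1}{2}}b_p^{\frac{p-1}{2}}=(a_p^{p+1})^{1/2}(b_p^{p-1})^{1/2}$ followed by Cauchy--Schwarz is too lossy, since $\int a_p^{p+1}$ is not small and $\int b_p^{p-1}$ is not bounded over $B_{\rho/\varepsilon_{p,i}^*}(0)$. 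Instead I will apply H\"older's inequality with conjugate exponents $q=\tfrac{p+1}{p-1}$ and $q'=\tfrac{p+1}{2}$ to the factorisation $a_p^{\frac{p+1}{2}}b_p^{\frac{p-1}{2}}=\bigl(a_p^{\frac{(p-1)^2}{2(p+1)}}b_p^{\frac{p-1}{2}}\bigr)\cdot a_p^{\frac{2p}{p+1}}$, which yields
\[
\int_{B_{\rho/\varepsilon_{p,i}^*}(0)}a_p^{\frac{p+1}{2}}b_p^{\frac{p-1}{2}}\,dz\ \le\ \Bigl(\int_{B_{\rho/\varepsilon_{p,i}^*}(0)}a_p^{\frac{p-1}{2}}b_p^{\frac{p+1}{2}}\,dz\Bigr)^{\frac{p-1}{p+1}}\Bigl(\int_{B_{\rho/\varepsilon_{p,i}^*}(0)}a_p^{p}\,dz\Bigr)^{\frac{2}{p+1}}.
\]
Here the first bracket tends to $0$ by \eqref{fc-4-29-0} while its exponent $\tfrac{p-1}{p+1}\to1$, and the second bracket stays bounded because $a_p^{p}\le C(1+|z|)^{-3}\in L^1(\mathbb{R}^2)$ while its exponent $\tfrac{2}{p+1}\to0$; hence the product tends to $0$, the cross term tends to $0$, and $\gamma_{2,i}=\lim_{p\to\infty}p\int_{B_\rho(x_i)}\mu_2u_{2,p}^p\,dx$. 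The one delicate point is the choice of H\"older exponents above: it is made exactly so that the ``small'' bracket carries the weight $a_p^{\frac{p-1}{2}}$ matching \eqref{fc-4-29-0}, while the ``bounded'' bracket involves only the nonnegative power $b_p^{0}$ of $b_p$; everything else is the change of variables plus dominated convergence.
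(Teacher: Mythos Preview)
Your argument is correct. For the third identity and the second identity your route is essentially the paper's: rescaling, the decay \eqref{eq-4-32++}, and dominated convergence for $a_p^{p+1}$, while the mixed term with equal exponents is dominated by the cross term of \eqref{fc-4-29-0} via $a_p\le 1$ (the paper instead uses $\|u_{1,p}\|_\infty\le C$, which is the same trick in the original variable).

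The genuine difference is in the first identity. The paper simply re-runs the Cauchy--Schwarz splitting argument of \eqref{fc-4-29}--\eqref{fc-4-29-0} with the roles of the exponents swapped: on $B_R$ one uses that $\int_{B_R}b_p^{p-1}\to0$ (since $v_{2,p,i}^*\to-\infty$ uniformly, type~$\mathcal{B}$), and on the annulus one uses the smallness of $\int_{B\setminus B_R}a_p^{p+1}$ together with the global bound $p\int_\Omega u_{2,p}^{p-1}\,dx=O(1)$ (which follows from H\"older and \eqref{eq-4.9-0}). You instead reduce the ``wrong'' cross term $\int a_p^{(p+1)/2}b_p^{(p-1)/2}$ directly to the already established one $\int a_p^{(p-1)/2}b_p^{(p+1)/2}$ via a single H\"older inequality with $p$-dependent exponents $q=\tfrac{p+1}{p-1}$, $q'=\tfrac{p+1}{2}$, so that the ``small'' bracket is exactly \eqref{fc-4-29-0} and the ``bounded'' bracket is $\int a_p^{p}$ carrying a vanishing exponent. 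This is a neat shortcut: it avoids revisiting the $B_R$/annulus decomposition and the extra observation that $p\int u_{2,p}^{p-1}$ is bounded. The paper's route is perhaps more transparent and symmetric (one argument handles both cross terms), while yours is more economical once \eqref{fc-4-29-0} and \eqref{eq-4-32++} are in hand.
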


\begin{proof}
    Similarly as \eqref{fc-4-29-0}-\eqref{fc-4-30}, we have
    \begin{align*}
        &\lim_{p\to\infty}p\int_{B_{\rho}(x_i)}u_{k,p}^{\frac{p-1}{2}}u_{3-k,p}^{\frac{p+1}{2}} dx\\
        =& \lim_{p\to\infty} M_p(y_{p,i})\int_{B_{{\rho}/{\varepsilon_{p,i}^*}}(0)} \left(1+\frac{v_{k,p,i}^*}{p}\right)^{\frac{p-1}{2}}\left(1+\frac{v_{3-k,p,i}^*}{p}\right)^{\frac{p+1}{2}} dz
        =0,\;k=1,2,\\
        &\lim_{p\to\infty}p\int_{B_{\rho}(x_i)}\mu_1 u_{1,p}^{p+m} dx\\
        =&\lim_{p\to\infty} M_p(y_{p,i})^{1+m}\int_{B_{{\rho}/{\varepsilon_{p,i}^*}}(0)} \left(1+\frac{v_{1,p,i}^*}{p}\right)^{p+m}dz=8\pi e^{\frac{1+m}{2}},\quad m=0,1,
    \end{align*}
    which implies that
    \begin{align*}
&\gamma_{2,i}=\lim_{p\rightarrow\infty}p\int_{B_{\rho}(x_i)}\mu_2 u_{2,p}^{p} +\beta u_{1,p}^{\frac{p+1}{2}}u_{2,p}^{\frac{p-1}{2}} dx=\lim\limits_{p\rightarrow \infty} p\int_{B_{\rho}(x_i)}\mu_2 u_{2,p}^{p} dx,
    \end{align*}
and (using \eqref{eq-4.8})
    \begin{align*}
        p\int_{B_{\rho}(x_i)}u_{1,p}^{\frac{p+1}{2}}u_{2,p}^{\frac{p+1}{2}} dx=O(1)p\int_{B_{\rho}(x_i)}u_{1,p}^{\frac{p-1}{2}}u_{2,p}^{\frac{p+1}{2}} dx\to 0.
    \end{align*}
This completes the proof.    
\end{proof}

\begin{remark}
Now, comparing to the type $\mathcal{A}$ case, the new question is how to compute $\sigma_{2, i}$ or equivalently $\gamma_{2, i}$, because we have no similar estimates as \eqref{eq-4-32++-}-\eqref{eq-4-32++} for $v_{2,p, i}^*$ and so we can not use the dominated convergence theorem. We will prove below that $\sigma_{2,i}\in\{0, 8\pi\}$ and so $\gamma_{2,i}\in \{0,8\pi\sqrt{e}\}$.
\end{remark}

\begin{Lemma}
\label{cor-4-13----}
    Suppose $\sigma_{2,i}=0$, i.e. $\gamma_{2,i}=0$. Then there exists $C>0$ such that $\|p u_{2,p}\|_{L^\infty(B_{r_0}(x_i))}\leq C$ for $p$ large. Consequently, $$\lim_{p\to\infty}\|u_{2,p}\|_{L^\infty(B_{r_0}(x_i))}=0.$$
\end{Lemma}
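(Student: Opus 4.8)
The idea is to control $pu_{2,p}$ near $x_i$ through the Green representation, the point being that the source term has \emph{vanishing $L^1$-mass} on $B_{2r_0}(x_i)$ once $\gamma_{2,i}=0$. Write
\[
f_{2,p}:=p\Bigl(\mu_2 u_{2,p}^p+\beta u_{2,p}^{\frac{p-1}{2}}u_{1,p}^{\frac{p+1}{2}}\Bigr)\ge 0,
\qquad
pu_{2,p}(x)=\int_\Omega G(x,y)\,f_{2,p}(y)\,dy .
\]
First I would record the two mass bounds we need: by \eqref{eq-4.9-0} and H\"older's inequality $\|f_{2,p}\|_{L^1(\Omega)}\le C$; and $\|f_{2,p}\|_{L^1(B_{2r_0}(x_i))}\to 0$, since $p\int_{B_\rho(x_i)}(\mu_2u_{2,p}^p+\beta u_{2,p}^{\frac{p-1}{2}}u_{1,p}^{\frac{p+1}{2}})\,dx\to\gamma_{2,i}=0$ by the $\rho$-independence noted in \eqref{eq-4-9-1}, while on $B_{2r_0}(x_i)\setminus B_\rho(x_i)$ — a compact subset of $\overline\Omega\setminus\s$ by \eqref{xixj} — the bound \eqref{31-1} gives $u_{k,p}\le C_\rho/p$, so this part is $O(C_\rho^{p}/p^{p-1})$. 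Then, for $x\in B_{r_0}(x_i)$, split $pu_{2,p}(x)=h_p(x)+H_p(x)$ with $h_p(x):=\int_{B_{2r_0}(x_i)}G(x,y)f_{2,p}(y)\,dy\ (\ge 0)$ and $H_p(x):=\int_{\Omega\setminus B_{2r_0}(x_i)}G(x,y)f_{2,p}(y)\,dy$; for such $x$ one has $|x-y|\ge r_0$ in the second integral, so $|G(x,y)|\le C$ by \eqref{eq-2.1} and $\|H_p\|_{L^\infty(B_{r_0}(x_i))}\le C\|f_{2,p}\|_{L^1(\Omega)}\le C$.

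\textbf{The quantitative core} is an $L^q$-estimate for $h_p$ in which $q$ is allowed to grow proportionally to $p$. From \eqref{eq-2.1} one gets $\int_{B_{r_0}(x_i)}|G(x,y)|^q\,dx\le C^q\,q!$ uniformly in $y\in B_{2r_0}(x_i)$ (after the substitution $r=e^{-t}$ in $\int_0^1(1+\log\tfrac1r)^q r\,dr$), hence $\|G(\cdot,y)\|_{L^q(B_{r_0}(x_i))}\le Cq$ by Stirling; Minkowski's inequality together with $f_{2,p}\ge 0$ then gives, for every $q\ge 1$,
\[
\|h_p\|_{L^q(B_{r_0}(x_i))}\le C\,q\,\gamma_p',
\qquad
\gamma_p':=\|f_{2,p}\|_{L^1(B_{2r_0}(x_i))}\to 0,
\]
so $\|pu_{2,p}\|_{L^q(B_{r_0}(x_i))}\le Cq\gamma_p'+C$. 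Using $u_{2,p}=(pu_{2,p})/p$ and $u_{1,p}\le M_p(y_{p,i})$ on $B_{r_0}(x_i)$ (the latter by the choice of $y_{p,i}$), I would apply this with $q=p(1+\varepsilon)$ and with $q=\tfrac{(p-1)(1+\varepsilon)}{2}$ to obtain
\[
\bigl\|p\mu_2 u_{2,p}^p\bigr\|_{L^{1+\varepsilon}(B_{r_0}(x_i))}\le p\mu_2\Bigl(C\gamma_p'+\tfrac{C}{p}\Bigr)^{p},
\qquad
\bigl\|p\beta u_{2,p}^{\frac{p-1}{2}}u_{1,p}^{\frac{p+1}{2}}\bigr\|_{L^{1+\varepsilon}(B_{r_0}(x_i))}\le p\beta\,M_p(y_{p,i})^{\frac{p+1}{2}}\Bigl(C\gamma_p'+\tfrac{C}{p}\Bigr)^{\frac{p-1}{2}}.
\]
Both right-hand sides tend to $0$: the first because its base tends to $0$ and is raised to the power $p$; the second because $M_p(y_{p,i})^{(p+1)/2}\approx e^{(p+1)/4}$ by Lemma \ref{lemma-4-10=}, whereas $(C\gamma_p'+C/p)^{(p-1)/2}$ decays like $\varepsilon_p^{(p-1)/2}$ with $\varepsilon_p\to 0$, and $\sqrt e\,\varepsilon_p<1$ for all large $p$, so the product is bounded by $\theta^{\,p}$ for a fixed $\theta<1$ and absorbs the polynomial factor $p$. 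On $B_{2r_0}(x_i)\setminus B_{r_0}(x_i)$ one has $u_{k,p}\le C/p$ by \eqref{31-1}, hence $f_{2,p}\to 0$ uniformly there; altogether $\|f_{2,p}\|_{L^{1+\varepsilon}(B_{2r_0}(x_i))}\to 0$. Finally, for $x\in B_{r_0}(x_i)$, H\"older's inequality gives $|h_p(x)|\le\|G(x,\cdot)\|_{L^{(1+\varepsilon)'}(B_{2r_0}(x_i))}\,\|f_{2,p}\|_{L^{1+\varepsilon}(B_{2r_0}(x_i))}\le C\|f_{2,p}\|_{L^{1+\varepsilon}(B_{2r_0}(x_i))}\to 0$ (the Green kernel being in $L^{(1+\varepsilon)'}$ with norm bounded uniformly in $x$), whence $\|pu_{2,p}\|_{L^\infty(B_{r_0}(x_i))}\le\|h_p\|_{L^\infty(B_{r_0}(x_i))}+\|H_p\|_{L^\infty(B_{r_0}(x_i))}\le C$, and therefore $\|u_{2,p}\|_{L^\infty(B_{r_0}(x_i))}=\tfrac1p\|pu_{2,p}\|_{L^\infty(B_{r_0}(x_i))}\to 0$.

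\textbf{Main obstacle.} The delicate step is the second displayed inequality: one must track precisely how the genuinely divergent factor $M_p(y_{p,i})^{(p+1)/2}\approx e^{(p+1)/4}$ competes with the decaying factor $(C\gamma_p'+C/p)^{(p-1)/2}$, whose rate of decay is \emph{not} available — we know only $\gamma_p'\to 0$ and $M_p(y_{p,i})\to\sqrt e$ (so the fluctuation $e^{(\log M_p(y_{p,i})-1/2)(p+1)/2}$ has base tending to $1$ but with no control on the exponent). The resolution is that no rate is needed: as soon as $\sqrt e\,(C\gamma_p'+C/p)<1$, which holds for all large $p$, the product decays super-exponentially and kills the polynomial prefactor. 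This same mechanism — the $p$-th powers in the nonlinearity converting ``base $\to 0$'' into ``super-exponentially small'' — is precisely what also makes the first displayed estimate (and hence the whole $L^q$-bootstrap, in which $q$ is forced to grow like $p$) close.
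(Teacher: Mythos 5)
Your argument is correct, and it reaches the conclusion by a genuinely different route than the paper. The paper's proof converts the power nonlinearity into an exponential one: setting $\bar u_p=c_0pu_{2,p}$ it uses $\log t\le t/e$ to get $f_p\le Ce^{\bar u_p/e}$, notes the global bound $\|f_p\|_{L^1(\Omega)}\le C$ and the vanishing local mass $\int_{B_{r_0/3}(x_i)}f_p\to 0$ (from $\gamma_{2,i}=0$ and Corollary \ref{cor-4-11}), and then invokes the Brezis--Merle type result \cite[Lemma 2.2]{CL-JFA} to conclude $\|pu_{2,p}\|_{L^\infty}\le C$ near $x_i$, finishing with \eqref{31-1}. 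You instead run the Brezis--Merle mechanism by hand, directly for the power nonlinearity: the splitting of the Green representation at $B_{2r_0}(x_i)$, the kernel estimate $\|G(\cdot,y)\|_{L^q(B_{r_0}(x_i))}\le Cq$ uniformly in $y$, Minkowski's inequality giving $\|h_p\|_{L^q}\le Cq\gamma_p'$ with $\gamma_p'\to 0$, and then the choice $q\sim p$ to convert $L^1$-smallness of the source into $L^{1+\varepsilon}$-smallness, closing with H\"older. This is self-contained (no external lemma and no reduction to an exponential equation), at the cost of having to treat the coupling term separately — which you do correctly via $u_{1,p}\le M_p(y_{p,i})$ on $B_{r_0}(x_i)$ and the boundedness (indeed convergence to $\sqrt e$) of $M_p(y_{p,i})$, observing rightly that no decay rate for $\gamma_p'$ is needed because the exponent $\tfrac{p-1}{2}$ makes the product super-exponentially small. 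Two small points worth making explicit if you write this up: the annulus $\overline{B_{2r_0}(x_i)}\setminus B_{\rho}(x_i)$ contains no other concentration point because the disjointness in \eqref{xixj} forces $|x_i-x_j|\ge 4r_0$, so \eqref{31-1} indeed applies there; and the uniform-in-$y$ bound $\int_{B_{r_0}(x_i)}|G(x,y)|^q\,dx\le C^q\,\Gamma(q+1)$ should be recorded (it follows from \eqref{eq-2.1} exactly as you indicate), since the whole bootstrap rests on the linear-in-$q$ growth of this norm.
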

\begin{proof} Recalling \eqref{eq-4.8} that we can take $c_0>1$ such that $\|u_{k,p}\|_{L^\infty(\Omega)}\leq c_0$ for $k=1,2$ and all $p\geq p_0$. Denote
$$
	\bar u_p:=c_0pu_{2,p}  \quad\text{and}\quad f_p:=c_0p\left(\mu_2 u_{2,p}^p +\beta u_{2,p}^{\frac{p-1}{2}}u_{1,p}^{\frac{p+1}{2}}\right)\geq 0,
$$
then
$$\begin{cases}
	-\Delta \bar u_p=f_p,\quad \bar u_p>0,\quad\text{in}~\Omega,\\
	\bar u_p=0,\quad\text{on}~\pa\Omega.
	\end{cases}$$
Note from \eqref{eq-4.9-0} that $\|f_p\|_{L^1(\Omega)}\leq C$ for all $p\geq p_0$. Since $\log t\le\frac{1}{e}t$ for any $t\in(0,+\iy)$, we obtain
	$$\log (p^{\frac{2}{p-1}}c_0u_{2,p})\le \f{1}{e}p^{\frac{2}{p-1}}c_0u_{2,p},\quad \forall x\in\Omega. $$
Then we have that for any $x\in\Omega$,
\begin{equation}\label{fc-fp1}
\begin{aligned}
	f_p&\leq Cp(c_0u_{2,p})^{\frac{p-1}{2}}=Ce^{\frac{p-1}{2}\log(p^{\frac{2}{p-1}}c_0u_{2,p})}\leq Ce^{\frac{p-1}{2e} p^{\frac{2}{p-1}}c_0u_{2,p}}\\
	&\leq Ce^{\frac1e \bar u_p},\quad\text{for $p$ large.}
	\end{aligned}
\end{equation}
Besides, we see from $\gamma_{2,i}=0$ and Corollary \ref{cor-4-11} that
\begin{equation}\label{fc-fp2}\lim_{p\to\infty}\int_{B_{r_0/3}(x_i)}f_pdx=0.\end{equation}
Thanks to \eqref{fc-fp1} and \eqref{fc-fp2}, the same proof as \cite[Lemma 2.2]{CL-JFA} implies that 
$$\|c_0pu_{2,p}\|_{L^\infty(B_{r_0/12}(x_i))}=\|\bar u_p\|_{L^\infty(B_{r_0/12}(x_i))}\leq C,\quad\text{ for $p$ large}.$$
Together with \eqref{31-1}, we finally obtain $\|pu_{2,p}\|_{L^\infty(B_{r_0}(x_i))}\leq C$ for $p$ large.
\end{proof}

\begin{Lemma}\label{lemma-4-12}
    Suppose $\sigma_{2,i}\neq 0$. Then
    $\sigma_{2,i}=8\pi$, $\gamma_{2,i}=8\pi \sqrt{e}$ and 
    \begin{equation}\label{fc-4-00}\lim\limits_{p\rightarrow+\infty}\Vert u_{2,p}\Vert_{L^\infty(B_{r_0}(x_i))}=\sqrt{e},\end{equation}
    \begin{align}\label{eq-4-99}
        \lim\limits_{p\rightarrow \infty} p\int_{B_{r_0}(x_i)}\mu_2 u_{2,p}^{p+1} dx=8\pi e.
    \end{align}
\end{Lemma}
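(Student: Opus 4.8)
The plan is to resolve the ``missing'' local mass $\sigma_{2,i}$ by performing a second blow-up of $u_{2,p}$ at its own, finer scale near $x_i$, showing that there the coupling term is negligible so the rescaled limit is a pure Liouville bubble of mass $8\pi$, and then extracting the sharp constants from the Pohozaev identity of Lemma~\ref{Lemma-4-1++} together with a Green's representation as in the single-equation case.

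\emph{Step 1 (a finer bubble of $u_{2,p}$).} Since $\sigma_{2,i}\neq0$ and $\lim_p M_p(y_{p,i})=\sqrt e$ by Lemma~\ref{lemma-4-10=}, we have $\gamma_{2,i}=\sqrt e\,\sigma_{2,i}>0$. I would let $z_{p,i}\in\overline{B_{r_0/3}(x_i)}$ realize $m_{2,p}:=\|u_{2,p}\|_{L^\infty(B_{r_0/3}(x_i))}$ and set $\tilde\varepsilon_p:=(p\,m_{2,p}^{p-1})^{-1/2}$. From \eqref{31-1}, $u_{2,p}=O(1/p)$ on $\overline{B_{r_0/3}(x_i)}\setminus B_\rho(x_i)$ for each fixed $\rho$, so $z_{p,i}\to x_i$; and from $\int_{B_\rho(x_i)}u_{2,p}^p\le|B_\rho|\,m_{2,p}^p$ together with $p\int_{B_\rho(x_i)}\mu_2u_{2,p}^p\,dx\to\gamma_{2,i}>0$ (Corollary~\ref{cor-4-11}) one gets $\liminf_p m_{2,p}\ge1$; also $m_{2,p}\le M_p(y_{p,i})$, so $\ell:=\lim_p m_{2,p}\in[1,\sqrt e]$ along a subsequence. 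Next I would show $\tilde\varepsilon_p\to0$: if $p\,m_{2,p}^{p-1}\le C_0$ along a subsequence then $p u_{2,p}^p\le c_0C_0$ everywhere while $pu_{2,p}^p\to0$ a.e.\ on $B_\rho(x_i)$ by Lemma~\ref{lemma-3-7}, so dominated convergence forces $p\int_{B_\rho(x_i)}\mu_2u_{2,p}^p\,dx\to0$, contradicting $\gamma_{2,i}>0$.

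\emph{Step 2 (the limiting profile is $\widetilde V_2$, of mass $8\pi$).} With $w_p(z):=p\big(u_{2,p}(\tilde\varepsilon_pz+z_{p,i})/m_{2,p}-1\big)$ one computes $-\Delta w_p=\mu_2(1+w_p/p)^p+h_p$ with $h_p:=\beta(1+w_p/p)^{(p-1)/2}\big(u_{1,p}(\tilde\varepsilon_pz+z_{p,i})/m_{2,p}\big)^{(p+1)/2}\ge0$, $w_p\le0=w_p(0)$, and $0\le-\Delta w_p\le\mu_2+\beta$. Rescaling the statement $\lim_p p\int_{B_\rho(x_i)}u_{2,p}^{(p-1)/2}u_{1,p}^{(p+1)/2}\,dx=0$ of Corollary~\ref{cor-4-11} gives $\int_{(B_\rho(x_i)-z_{p,i})/\tilde\varepsilon_p}h_p\,d\zeta\to0$, hence $h_p\to0$ in $L^1_{loc}(\mathbb R^2)$. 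Then, exactly as in Lemma~\ref{lemma-3-5^*}, $w_p$ is bounded in $C^{1,\alpha}_{loc}$ and, up to a subsequence, $w_p\to w$ in $C^1_{loc}(\mathbb R^2)$ with $-\Delta w=\mu_2e^w$, $w\le0=w(0)$; Fatou together with the bound on $p\int_{B_\rho(x_i)}\mu_2u_{2,p}^p\,dx$ gives $\int_{\mathbb R^2}e^w<\infty$, so \cite[Theorem 1.1]{Chen-Li-91} yields $w=\widetilde V_2$ and $\int_{\mathbb R^2}\mu_2e^{w}=8\pi$.

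\emph{Step 3 (pinning the fine-scale mass, the crux).} The difficulty is that $u_{2,p}$ lives at the scale $\tilde\varepsilon_p\ge\varepsilon_{p,i}^*$, so the mass actually seen at that scale is $\gamma_{2,i}/\ell$ rather than $\sigma_{2,i}=\gamma_{2,i}/\sqrt e$, and the crude bound $\sigma_{2,i}\le8\pi$ from Corollary~\ref{cor-4-2} only gives a fine-scale mass $\le8\pi\sqrt e$ --- too large for the decay exponent $8-(\text{mass})/2\pi-O(\eta)$ to exceed $2$. This is resolved using Lemma~\ref{Lemma-4-1++}: with $\gamma_{1,i}=8\pi\sqrt e$ (Lemma~\ref{lemma-4-10=}) and the two limits $\lim_pp\int_{B_\rho(x_i)}\mu_1u_{1,p}^{p+1}\,dx=8\pi e$, $\lim_pp\int_{B_\rho(x_i)}u_{1,p}^{(p+1)/2}u_{2,p}^{(p+1)/2}\,dx=0$ of Corollary~\ref{cor-4-11}, one gets $E_2:=\lim_pp\int_{B_\rho(x_i)}\mu_2u_{2,p}^{p+1}\,dx=\gamma_{2,i}^2/(8\pi)$. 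The pointwise bound $u_{2,p}\le m_{2,p}$ on $B_\rho(x_i)$ gives $E_2\le\ell\,\gamma_{2,i}$, i.e.\ $\gamma_{2,i}\le8\pi\ell$, while Fatou applied to $p\int_{B_\rho(x_i)}\mu_2u_{2,p}^p\,dx=m_{2,p}\int\mu_2(1+w_p/p)^p\,d\zeta$ gives $\gamma_{2,i}\ge8\pi\ell$. Hence $\gamma_{2,i}=8\pi\ell$, and the total mass at the $\tilde\varepsilon_p$-scale, namely $\lim_p\frac{p}{m_{2,p}}\int_{B_\delta(z_{p,i})}\big(\mu_2u_{2,p}^p+\beta u_{2,p}^{(p-1)/2}u_{1,p}^{(p+1)/2}\big)\,dx=\gamma_{2,i}/\ell$, equals $8\pi$.

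\emph{Step 4 (decay estimate, Green's representation, and conclusion).} Knowing the fine-scale mass is exactly $8\pi$, the argument of Lemma~\ref{lemma-4-4} applies to $w_p$, giving for each small $\eta>0$ constants $R_\eta,C_\eta$ with $w_p(z)\le-(4-\eta)\log|z|+C_\eta$ for $2R_\eta\le|z|\le r_0/(3\tilde\varepsilon_p)$, hence (as in Remark~\ref{remark-4-5}) $(1+w_p/p)^{p+m}\le C(1+|z|)^{-(4-\eta)}$ for $m\in\{-1,0,1\}$, with $4-\eta>2$. Feeding these bounds together with $h_p\to0$ into the Green's representation of $u_{2,p}(z_{p,i})$ exactly as in Lemmas~\ref{Lemma-4-6} and \ref{lemma-4-10=}, dominated convergence yields $\log\tilde\varepsilon_p/p\to-\tfrac14$, and then $\log p+2\log\tilde\varepsilon_p+(p-1)\log m_{2,p}=0$ forces $m_{2,p}\to\sqrt e$, i.e.\ $\ell=\sqrt e$. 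Therefore $\gamma_{2,i}=8\pi\sqrt e$, $\sigma_{2,i}=\gamma_{2,i}/\sqrt e=8\pi$, $\|u_{2,p}\|_{L^\infty(B_{r_0}(x_i))}=m_{2,p}\to\sqrt e$ (using \eqref{31-1} to discard $B_{r_0}(x_i)\setminus B_{r_0/3}(x_i)$), and $\lim_pp\int_{B_{r_0}(x_i)}\mu_2u_{2,p}^{p+1}\,dx=E_2=(8\pi\sqrt e)^2/(8\pi)=8\pi e$. The main obstacle throughout is that every bubbling estimate for $u_{2,p}$ must be run at a scale different from the one where the hypothesis ``type $\mathcal B$'' was formulated, so the coupling terms have to be controlled by Corollary~\ref{cor-4-11}, and the exact fine-scale mass must be produced \emph{a priori} in Step~3 before the decay estimate of Step~4 can be started.
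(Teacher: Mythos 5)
Your route is genuinely different from the paper's: you perform a second blow-up of $u_{2,p}$ at its own scale $\tilde\varepsilon_p=(p\,m_{2,p}^{p-1})^{-1/2}$ and pin the fine-scale mass at $8\pi$ before running a decay estimate and a Green's representation, whereas the paper never blows up again. Instead, after deriving the identity $\gamma_{2,i}^2=8\pi\lim_p p\int_{B_\rho(x_i)}\mu_2u_{2,p}^{p+1}dx$ (your $E_2=\gamma_{2,i}^2/8\pi$, obtained the same way from Lemma \ref{Lemma-4-1++} and Corollary \ref{cor-4-11}), it gets the crucial lower bound $\lim_p p\int_{B_\rho(x_i)}\mu_2u_{2,p}^{p+1}dx\geq 8\pi e$ directly by applying the sharp constant $S_p^{-2}\to 8\pi e$ of Lemma \ref{lm2.4} to a cutoff $u_{2,p}\chi_i$, which immediately forces $\gamma_{2,i}\geq 8\pi\sqrt e$ and hence all the conclusions. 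That argument is much shorter and bypasses every fine-scale estimate you need.

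The genuine gap in your proposal is the assertion in Step 2 that $0\le-\Delta w_p\le\mu_2+\beta$, equivalently $h_p\le\beta$, and the ensuing claim that compactness of $w_p$ follows ``exactly as in Lemma \ref{lemma-3-5^*}''. That bound requires $u_{1,p}\le m_{2,p}$ near $z_{p,i}$, which is not known: at this stage you only know $m_{2,p}\to\ell\in[1,\sqrt e]$, while $\|u_{1,p}\|_{L^\infty(B_{r_0}(x_i))}\to\sqrt e$, and nothing prevents the $u_{1,p}$-bubble center $y_{p,i}$ (whose scale $\varepsilon_{p,i}^*\le\tilde\varepsilon_p$) from lying within $o(\tilde\varepsilon_p)$ of $z_{p,i}$; indeed type $\mathcal B$ only says $u_{2,p}\le M_p(y_{p,i})(1-c_p/p)$ near $y_{p,i}$, which is compatible with $z_{p,i}$ sitting inside the $u_{1,p}$-bubble. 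In that scenario $h_p$ has a spike of height comparable to $(\sqrt e/\ell)^{(p+1)/2}$, so it is not uniformly bounded if $\ell<\sqrt e$, and the $L^1_{loc}$-smallness you correctly derive from Corollary \ref{cor-4-11} is not a drop-in replacement: the proof of Lemma \ref{lemma-3-5^*} uses an $L^\infty$ bound on $\Delta$ of the rescaled function to control the subharmonic part and run Harnack, and the same pointwise bound is used again in the $I_4$-type term of the decay estimate (Lemma \ref{lemma-4-4}) and implicitly in the Green's representation of Step 4. Without it you cannot exclude that the maximum $w_p(0)=0$ is produced by the potential of this thin spike rather than by a genuine Liouville bubble, in which case $w_p$ could degenerate locally and the Fatou lower bound $\gamma_{2,i}\ge 8\pi\ell$ of Step 3 would fail. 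The gap is plausibly repairable --- e.g.\ by using the coarse-scale decay \eqref{eq-4-32++} of $u_{1,p}$ to show the spike's contribution to the Newtonian potential of $h_p$ is $o(1)$ locally uniformly, or by a Brezis--Merle-type argument --- but that extra step is exactly what is missing, and it is the step the paper's cutoff-plus-sharp-constant argument is designed to avoid.
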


\begin{proof}
    By Corollary \ref{cor-4-2} and Lemma \ref{Lemma-4-8}, we have $0<\sigma_{2,i}\leq 8\pi$. 
        Also recall from Lemma \ref{lemma-4-10=} that  $$0<\gamma_{2,i}=\sigma_{2,i}\lim_{p\rightarrow+\infty}M_p(y_{p,i})=\sigma_{2,i}\sqrt{e}\leq 8\pi\sqrt{e}.$$ From Lemma \ref{Lemma-4-1++}, Corollary \ref{cor-4-11} and $\gamma_{1,i}=8\pi\sqrt{e}$, we get that for any $\rho\in (0, r_0/3]$,
    \begin{align}\label{eq-4-63++}
        \left(\lim\limits_{p\rightarrow \infty} p\int_{B_{\rho}(x_i)}\mu_2 u_{2,p}^{p} dx\right)^2=\gamma_{2,i}^2=8\pi \lim\limits_{p\rightarrow \infty} p\int_{B_{\rho}(x_i)}\mu_2 u_{2,p}^{p+1} dx. 
    \end{align}
Now we claim that 
\begin{align}\label{fc-4-37}
    \lim_{p\rightarrow \infty} p\int_{B_{\rho}(x_i)}\mu_2 u_{2,p}^{p+1} dx\geq 8\pi e.
\end{align}

   Take $\chi_i \in C_0^{\infty}(B_{2\rho}(x_i))$ such that $0\leq\chi_i\leq1$ and
\be
\chi_i(x)=
\begin{cases}
  1, & \mbox{if } |x-x_i|\leq \rho \\
  0, & \mbox{if } |x-x_i|\in[\frac{4\rho}{3},2\rho).
\end{cases}\notag
\ee
Define $$\widetilde{u}_{2,p}:=u_{2,p} \chi_i \in H_0^1(B_{2\rho}(x_i)).$$ 
Then by \eqref{31-1}-\eqref{31} and Corollary \ref{cor-4-11}, we obtain
\begin{align}
p\int_{B_{2\rho}(x_i)} \widetilde{u}_{2,p}^{p+1} dx&=p\int_{B_{\rho}(x_i)} {u}_{2,p}^{p+1} dx+o(1),\\
    p\int_{B_{2\rho}(x_i)}\left|\nabla \widetilde{u}_{2,p}\right|^2 dx &=  p\int_{B_{\rho}(x_i)}\left|\nabla u_{2,p}\right|^2 dx+o(1),\label{4.8}\\
    p\int_{B_{\rho}(x_i)}\left|\nabla u_{2,p}\right|^2 dx= p\int_{B_{\rho}(x_i)} &\mu_2 u_{2,p}^{p+1}+\beta u_{2,p}^{\frac{p+1}{2}} u_{1,p}^{\frac{p+1}{2}}dx
    -p \int_{\partial B_{\rho}(x_i)}u_{2,p}\frac{\partial u_{2,p}}{\partial \Vec{n}}dS_x\notag\\
    =
    p\int_{B_{\rho}(x_i)}&\mu_2 u_{2,p}^{p+1} dx +o(1)\label{4.9}.
\end{align}
Note from \eqref{eq-4-63++} that for $p$ large,
\begin{align}
    p\int_{B_{\rho}(x_i)}  u_{2,p}^{p+1} dx &\geq  \frac{\gamma_{2,i}^2}{10\pi\mu_2}>0.\nonumber
\end{align}
Then, by applying Lemma \ref{lm2.4} with $D=B_{2\rho}(x_i)$ to $\widetilde{u}_{2,p}$, we have that for $p$ large,
\begin{align}
     p\int_{B_{2\rho}(x_i)}\left|\nabla \widetilde{u}_{2,p}\right|^2 dx
     &\geq \frac{p^{\frac{p-1}{p+1}}}{(p+1) S_p^2}
    \left[  p\int_{B_{2\rho}(x_i)} \widetilde{u}_{2,p}^{p+1} dx \right]^{\frac{2}{p+1}}\notag\\
    &= (8\pi e+o(1))\left[ p\int_{B_{\rho}(x_i)}u_{2,p}^{p+1} dx +o(1)\right]^{\frac{2}{p+1}}\geq 8\pi e+o(1) .\label{4.10}
\end{align}
This, together with \eqref{4.8} and \eqref{4.9}, proves \eqref{fc-4-37}.

Clearly \eqref{eq-4-63++}-\eqref{fc-4-37} imply $\gamma_{2,i}\geq 8\pi \sqrt{e}$ and so $\gamma_{2,i}=8\pi \sqrt{e}$, which gives $\sigma_{2,i}=8\pi$. Furthermore, \eqref{fc-4-37} becomes
\begin{align}\label{fc-4-001}8\pi e&=
    \lim_{p\rightarrow \infty} p\int_{B_{\rho}(x_i)}\mu_2 u_{2,p}^{p+1} dx\leq \lim_{p\rightarrow \infty}\Vert u_{2,p}\Vert_{L^\infty(B_\rho(x_i))}  p\int_{B_{\rho}(x_i)}\mu_2 u_{2,p}^{p} dx\\&=\gamma_{2,i}\lim_{p\rightarrow \infty}\Vert u_{2,p}\Vert_{L^\infty(B_\rho(x_i))}=8\pi\sqrt{e}\lim_{p\rightarrow \infty}\Vert u_{2,p}\Vert_{L^\infty(B_\rho(x_i))},\nonumber
\end{align}
so
    \begin{align*}
        \lim_{p\rightarrow \infty}\Vert u_{2,p}\Vert_{L^\infty(B_\rho(x_i))}\geq\sqrt{e}.
    \end{align*}
    From here and $\Vert u_{2,p}\Vert_{L^\infty(B_{r_0}(x_i))}\leq M_p(y_{p,i})\to \sqrt{e}$, we obtain \eqref{fc-4-00}. 
    Finally, \eqref{eq-4-99} follows from the first equality of \eqref{fc-4-001} and \eqref{31-1}.
    This completes the proof.
\end{proof}

Finally, analogous results as Lemma \ref{Lemma-4-8}-Lemma \ref{lemma-4-12} hold for the type $\mathcal{C}$ case and are omitted here.

\begin{proof}[Proof of Theorem \ref{thm-localmass}]
Theorem \ref{thm-localmass} follows from Lemmas \ref{lemma-4-3++++++}, \ref{Lemma-4-8} and \ref{lemma-4-12}.
\end{proof}

\section{Proof of Theorem \ref{Th-1LL}}
\label{section-5}
 In this section, we give the energy estimates of system \eqref{eq-1.1} and complete the proof of Theorem \ref{Th-1LL}.
 Define
 $$\mathcal{S}_k:=\{x_i\in\mathcal{S}\; : \; \sigma_{k,i}\neq 0\}=\{x_i\in\mathcal{S}\; : \; \gamma_{k,i}\neq 0\}, \quad k=1,2.$$
 
 \begin{Proposition}\label{cor4-16} We have that for $k=1,2$,
    \begin{align}
        p u_{k,p}\rightarrow 8\pi \sqrt{e} \sum_{x_i\in\mathcal{S}_k} G(x_i,\cdot)\quad\text{ in }\,\,C^{2}_{loc}(\overline{\Omega}\setminus\mathcal{S}_k).\label{eq-4-100++}
    \end{align}
            In particular, $\mathcal{S}_k\neq\emptyset$ for $k=1,2$.
\end{Proposition}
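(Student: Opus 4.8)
The plan is to combine Lemma \ref{lemma-3-7} with the local mass classification of Theorem \ref{thm-localmass}. First I would note that the limit function produced by Lemma \ref{lemma-3-7} is already the desired one: by \eqref{eq-4-9-1} one has $\gamma_{k,i}=\sigma_{k,i}\lim_{p\to\infty}M_p(y_{p,i})$; by Theorem \ref{thm-localmass}, $\sigma_{k,i}\in\{0,8\pi\}$; and by Lemmas \ref{Lemma-4-6} and \ref{lemma-4-10=} one has $\lim_{p\to\infty}M_p(y_{p,i})=\sqrt{e}$ in every case. Hence $\gamma_{k,i}=8\pi\sqrt{e}$ if $x_i\in\mathcal{S}_k$ and $\gamma_{k,i}=0$ otherwise, so $\sum_{i=1}^N\gamma_{k,i}G(\cdot,x_i)=8\pi\sqrt{e}\sum_{x_i\in\mathcal{S}_k}G(\cdot,x_i)$. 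This already gives \eqref{eq-4-100++} with $C^2_{loc}(\overline{\Omega}\setminus\mathcal{S})$ in place of $C^2_{loc}(\overline{\Omega}\setminus\mathcal{S}_k)$.

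Next I would enlarge the set of convergence to $\overline{\Omega}\setminus\mathcal{S}_k$, i.e. treat a neighbourhood of each $x_j\in\mathcal{S}\setminus\mathcal{S}_k$. For such $x_j$ we have $\sigma_{k,j}=0$, hence $\sigma_{3-k,j}=8\pi$ by Theorem \ref{thm-localmass}, so $x_j$ is of type $\mathcal{B}$ (when $k=2$) or type $\mathcal{C}$ (when $k=1$); in either case Lemma \ref{cor-4-13----} (or its type $\mathcal{C}$ analogue) gives $\|pu_{k,p}\|_{L^\infty(B_{r_0}(x_j))}\leq C$, hence $\|u_{k,p}\|_{L^\infty(B_{r_0}(x_j))}\leq C/p$ for $p$ large. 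I would then rerun the Green representation argument of Lemma \ref{lemma-3-7} on a fixed compact neighbourhood $\mathscr{K}$ of $x_j$: writing, for $m\in\{0,1\}$,
\[ p\nabla_x^m u_{k,p}(x)=p\int_\Omega\nabla_x^m G(x,y)\Big(\mu_k u_{k,p}^p+\beta u_{k,p}^{\frac{p-1}{2}}u_{3-k,p}^{\frac{p+1}{2}}\Big)\,dy, \]
and splitting $\Omega$ into the balls $B_\rho(x_i)$, $x_i\in\mathcal{S}$, and their complement, the complement contributes $O(C_\rho^{p}/p^{p-1})$ by \eqref{31-1}, each $x_i\in\mathcal{S}_k$ contributes $8\pi\sqrt{e}\,\nabla_x^m G(x,x_i)+O(\rho)$ exactly as in Lemma \ref{lemma-3-7}, and each $x_i\in\mathcal{S}\setminus\mathcal{S}_k$ contributes $o(1)$, since there $\|u_{k,p}\|_{L^\infty(B_{r_0}(x_i))}\leq C/p$, $\|u_{3-k,p}\|_{L^\infty(\Omega)}\leq C$ by \eqref{eq-4.8}, and $\int_{B_\rho(x_i)}|\nabla_x^m G(x,y)|\,dy\leq C$ by \eqref{eq-2.1}. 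Letting $p\to\infty$ and then $\rho\to0$ yields uniform convergence of $p\nabla_x^m u_{k,p}$ on $\mathscr{K}$ for $m=0,1$, and standard elliptic estimates upgrade it to $C^2_{loc}$; this proves \eqref{eq-4-100++}.

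Finally, for the nonemptiness of $\mathcal{S}_k$: if $\mathcal{S}_k=\emptyset$ then $\sigma_{k,i}=0$ for every $i$, so by Lemma \ref{cor-4-13----} and its type $\mathcal{C}$ analogue $\|u_{k,p}\|_{L^\infty(B_{r_0}(x_i))}\leq C/p$ for all $i$; combined with \eqref{31-1} this forces $\|u_{k,p}\|_{L^\infty(\Omega)}\leq C/p$, whence $p\int_\Omega u_{k,p}^p\,dx\leq p(C/p)^p|\Omega|\to0$, contradicting the lower bound in \eqref{eq-4.9-0}. The only ingredient beyond Sections \ref{section-3}--\ref{section-4} used here is the uniform bound $\|pu_{k,p}\|_{L^\infty(B_{r_0}(x_i))}\leq C$ near those concentration points at which the $k$-th component vanishes, which is exactly Lemma \ref{cor-4-13----} (and its type $\mathcal{C}$ counterpart); granting that, there is no real obstacle, and the proof is a routine rerun of the argument of Lemma \ref{lemma-3-7}.
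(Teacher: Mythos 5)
Your proposal is correct and follows essentially the same route as the paper: determine $\gamma_{k,i}\in\{0,8\pi\sqrt{e}\}$ from the Section \ref{section-4} results, apply Lemma \ref{lemma-3-7} to get convergence in $C^2_{loc}(\overline{\Omega}\setminus\mathcal{S})$, use Lemma \ref{cor-4-13----} (and its type $\mathcal{C}$ analogue) to rerun the Green representation argument and upgrade to $C^2_{loc}(\overline{\Omega}\setminus\mathcal{S}_k)$, and obtain $\mathcal{S}_k\neq\emptyset$ by contradiction. Your contradiction via the lower bound in \eqref{eq-4.9-0} is only a cosmetic variant of the paper's appeal to Proposition \ref{prop-4.1}, and your extra detail on why points of $\mathcal{S}\setminus\mathcal{S}_k$ contribute $o(1)$ is exactly what the paper leaves implicit.
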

\begin{proof}
In Section \ref{section-4}, we have proved that $\gamma_{k,i}=8\pi \sqrt{e}$ for $x_i\in \mathcal{S}_k$ and $\gamma_{k,i}=0$ for $x_i\in\mathcal{S}\setminus\mathcal{S}_k$. Then Lemma \ref{lemma-3-7} imlies
$$ p u_{k,p}\rightarrow 8\pi \sqrt{e} \sum_{x_i\in\mathcal{S}_k} G(x_i,\cdot)\quad\text{ in }\,\,C^{2}_{loc}(\overline{\Omega}\setminus\mathcal{S}).$$
On the other hand, Lemma \ref{cor-4-13----} shows that 
$$p\Vert u_{k,p}\Vert_{L^{\infty}(\cup_{x_i\in \mathcal{S}\setminus\mathcal{S}_k}B_{r_0}(x_i))}=O(1),$$
then the same proof as Lemma \ref{lemma-3-7} actually implies that the convergence holds in $C^{2}_{loc}(\overline{\Omega}\setminus\mathcal{S}_k)$, namely \eqref{eq-4-100++} holds.

If $\mathcal S_k=\emptyset$ for some $k$, then Lemma \ref{cor-4-13----} and \eqref{31-1} together imply $p\|u_{k,p}\|_{L^\infty(\Omega)}\leq C$, a contradiction with  Proposition \ref{prop-4.1}. This proves $\mathcal{S}_k\neq\emptyset$ for $k=1,2$.
\end{proof}

Next, we give the location of the concentration points $x_i\in\s$.
\begin{Proposition} \label{prp-4-177}
For any $i=1,\cdots,N$, we have 
    \begin{align}
        \sum_{k=1}^2 m_{k,i}\left( m_{k,i} \nabla_x  R(x_i) -2\sum_{l\neq i } m_{k,l} \nabla_x G(x_i,x_l)\right)=0,
    \end{align}
    where
    \begin{align}\label{fc-mki}
        m_{k,i}:=
        \begin{cases}
            0 \quad x_i\not\in\s_k,\\
            1 \quad x_i\in\s_k.
        \end{cases}
    \end{align}
\end{Proposition}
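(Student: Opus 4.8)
The plan is to derive the location identity from a localized Pohozaev identity of type \eqref{eq-2--8} on a small ball $B_r(x_i)$, matched against the limiting profile $pu_{k,p}\to 8\pi\sqrt e\,\sum_{x_l\in\s_k}G(x_l,\cdot)$ established in Proposition \ref{cor4-16}. First I would fix $i$ and a small $r\in(0,r_0)$ so that $\overline{B_r(x_i)}\cap\s=\{x_i\}$ and $B_r(x_i)\subset\Omega$ by \eqref{xixj}, apply \eqref{eq-2--8} with $\Omega'=B_r(x_i)$ and multiply through by $p^2$. The boundary integral over $\partial B_r(x_i)$ of the nonlinear term is $O(p^2\Vert u_{k,p}\Vert_{L^\infty(\partial B_r(x_i))}^{p+1})=o(1)$ by \eqref{31-1}, exactly as in the proof of Lemma \ref{Lemma-4-1++}; so the identity reduces to
\begin{align*}
    \sum_{k=1}^2\left(\frac12\int_{\partial B_r(x_i)}|p\nabla u_{k,p}|^2 n_j\,dS_x-\int_{\partial B_r(x_i)}p\partial_j u_{k,p}\,\langle p\nabla u_{k,p},\Vec n(x)\rangle\,dS_x\right)=o(1).
\end{align*}

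Next I would pass to the limit $p\to\infty$ in the two surface integrals using the $C^2_{loc}(\overline\Omega\setminus\s)$-convergence of $pu_{k,p}$ on $\partial B_r(x_i)$. Writing $\Phi_k:=8\pi\sqrt e\sum_{x_l\in\s_k}G(\cdot,x_l)$, so that $p\nabla u_{k,p}\to\nabla\Phi_k$ uniformly on $\partial B_r(x_i)$, the limit becomes a purely $r$-dependent boundary quadratic form in $\nabla\Phi_k$. Now split $\Phi_k=8\pi\sqrt e\,m_{k,i}\big(\frac1{2\pi}\log\frac1{|x-x_i|}-H(x,x_i)\big)+8\pi\sqrt e\sum_{l\neq i}m_{k,l}G(x,x_l)$; the singular part contributes, after the standard computation (the same one underlying Theorem \ref{th-AA}(3)), a term whose $r\to0$ limit isolates the regular part's gradient, and the cross terms between the $\frac1{|x-x_i|}\log$-singularity and the smooth function $h_{k,i}(x):=-m_{k,i}H(x,x_i)+\sum_{l\neq i}m_{k,l}G(x,x_l)$ produce precisely $c\,m_{k,i}\nabla_x h_{k,i}(x_i)$ for an explicit constant $c$, while the self-interaction of the smooth part is $O(r^2)$ and the self-interaction of the pure singularity is rotationally symmetric and integrates to $0$. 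Since $m_{k,i}\nabla_x h_{k,i}(x_i)=m_{k,i}\big(-m_{k,i}\nabla_xH(x_i,x_i)+\sum_{l\neq i}m_{k,l}\nabla_xG(x_i,x_l)\big)$ and $\nabla_xR(x_i)=\nabla_x H(x_i,x_i)$ (with the usual factor $2$ from differentiating $H(x,x)$ — here one uses $R(x)=H(x,x)$ and the symmetry $H(x,y)=H(y,x)$), summing over $k=1,2$ and letting $r\to0$ yields
\begin{align*}
    \sum_{k=1}^2 m_{k,i}\left(m_{k,i}\nabla_xR(x_i)-2\sum_{l\neq i}m_{k,l}\nabla_xG(x_i,x_l)\right)=0.
\end{align*}

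The main obstacle, as usual in this Pohozaev-balancing argument, is the bookkeeping of the boundary integral $\int_{\partial B_r(x_i)}\big(\frac12|\nabla\Phi_k|^2 n_j-\partial_j\Phi_k\langle\nabla\Phi_k,\Vec n\rangle\big)dS_x$: one must verify that, in the limit $r\to0$, only the singular$\times$regular cross term survives with the correct numerical coefficient, that the $\log$-self-term vanishes by symmetry (it is odd under $x-x_i\mapsto-(x-x_i)$ after the angular integration), and that the factor of $2$ in front of the $\sum_{l\neq i}$ term is produced correctly — the cleanest way is to recall that for a single function $\Phi=a\log\frac1{|x-x_i|}+\psi(x)$ with $\psi$ smooth near $x_i$ one has $\lim_{r\to0}\int_{\partial B_r(x_i)}\big(\frac12|\nabla\Phi|^2 n_j-\partial_j\Phi\langle\nabla\Phi,\Vec n\rangle\big)dS_x=-2\pi a\,\partial_j\psi(x_i)$, apply this with $\Phi=\Phi_k$, $a=4\sqrt e\,m_{k,i}$, $\psi=8\pi\sqrt e\,h_{k,i}$, and then use $\partial_j h_{k,i}(x_i)=-m_{k,i}\partial_{x_j}H(x_i,x_i)+\sum_{l\neq i}m_{k,l}\partial_{x_j}G(x_i,x_l)$ together with $\partial_{x_j}R(x_i)=2\,\partial_{x_j}H(x,y)\big|_{x=y=x_i}$. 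I would also note the degenerate-but-harmless point that if $m_{k,i}=0$ the $k$-th summand vanishes identically, so only the indices $k$ with $x_i\in\s_k$ contribute, which is consistent with the convergence in Proposition \ref{cor4-16} being taken over $\overline\Omega\setminus\s_k$.
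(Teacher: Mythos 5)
Your proposal is correct and follows essentially the same route as the paper's proof: the Pohozaev identity \eqref{eq-2--8} on $B_r(x_i)$, the $o(1)$ bound on the nonlinear boundary term via \eqref{31-1}, passage to the limit on $\partial B_r(x_i)$ using Proposition \ref{cor4-16}, the singular/regular splitting of the Green function, and the identity $\nabla_x R(x_i)=2\nabla_x H(x,y)\big|_{x=y=x_i}$ to produce the factor $2$. The only slip is the sign in your auxiliary formula (the limit equals $+2\pi a\,\partial_j\psi(x_i)$ rather than $-2\pi a\,\partial_j\psi(x_i)$), which is harmless here because the same constant multiplies both $k$-terms and the whole expression is equated to zero.
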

\begin{proof}
 The proof is standard and let us take $i=1$ for example. Fix any $r\in (0, r_0/3)$. By applying the Pohozaev identity \eqref{eq-2--8} with $\Omega'=B_{r}(x_1)$, we obtain
    \begin{align}\label{fc-ppp}
        &\sum_{k=1}^2\int_{\partial B_{r}(x_1)}\frac{1}{2}|p\nabla u_{k,p}|^2  n_i -p \partial_i u_{k,p} \langle p\nabla u_{k,p}, \Vec{n}(x)\rangle dS_x\\
        =&\frac{p^2}{p+1}\int_{\partial B_{r}(x_1)}\left(\mu_1 u_{1,p}^{p+1}+\mu_2 u_{2,p}^{p+1}+2\beta u_{1,p}^{\frac{p+1}{2}}u_{2,p}^{\frac{p+1}{2}}\right) n_i dS_x,\;i=1,2.\nonumber
    \end{align}
    Again by \eqref{31-1}, we have
    \begin{align}\label{eq-4-114}
        \frac{p^2}{p+1}\int_{\partial B_{r}(x_1)}\left(\mu_1 u_{1,p}^{p+1}+\mu_2 u_{2,p}^{p+1}+2\beta u_{1,p}^{\frac{p+1}{2}}u_{2,p}^{\frac{p+1}{2}}\right) n_i dS_x=o(1).
    \end{align}
    By Proposition \ref{cor4-16}, we get that for $x\in \partial B_{r}(x_1)$,
    \begin{align}
        \frac{1}{8\pi \sqrt{e}}p \nabla u_{k,p} (x)=- \frac{m_{k,1}}{2\pi r^2}(x-x_1) -m_{k,1} \nabla_x H(x,x_1) +\sum_{l=2 }^N m_{k,l} \nabla_x G(x,x_l)+o(1),\nonumber
    \end{align}
    which implies that
    \begin{align*}
        &\frac{1}{2}|p\nabla u_{k,p}|^2 n_i -p \partial_i u_{k,p} \langle p\nabla u_{k,p}, \Vec{n}(x)\rangle\\
        =&-\frac{8 em_{k,1}^2}{r^3}(x-x_1)_i-\frac{32\pi em_{k,1}}{r}\left(m_{k,1} \nabla_x H(x,x_1) -\sum_{l=2 }^N m_{k,l} \nabla_x G(x,x_l)\right)_i+O(1).\nonumber
    \end{align*}
   Thus
    \begin{align}
        &\sum_{k=1}^2\int_{\partial B_{r}(x_1)}\frac{1}{2}|p\nabla u_{k,p}|^2  n_i -p \partial_i u_{k,p} \langle p\nabla u_{k,p}, \Vec{n}(x)\rangle dS_x
        \nonumber\\
        =& -64\pi^2 e \sum_{k=1}^2m_{k,1}\left(m_{k,1} \nabla_x H(x_1,x_1) -\sum_{l=2 }^N m_{k,l} \nabla_x G(x_1,x_l)\right)_i+O(r).\nonumber
    \end{align}
    From here and \eqref{eq-4-114}, and letting first $p\to\infty$ and then $r\to0$ in \eqref{fc-ppp}, we obtain
    \begin{align*}
        \sum_{k=1}^2 m_{k,1}\left( m_{k,1} \nabla_x  H(x_1,x_1) -\sum_{l=2 }^N m_{k,l} \nabla_x G(x_1,x_l)\right)=0.
    \end{align*} 
    The proof is complete by using $R(x)=H(x,x)$.
\end{proof}

\begin{Proposition}\label{prop-5-1}
It holds that
    \begin{align}\label{eq-5-311111}
        \lim_{p\rightarrow\infty}p\int_{\Omega}|\nabla u_{1,p}|^2+|\nabla u_{2,p}|^2 dx =8\pi e\sum_{i=1}^N (m_{1,i}+m_{2,i}),
     \end{align}
     where $m_{k,i}$ is defined in \eqref{fc-mki}.
\end{Proposition}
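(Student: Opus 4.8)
The plan is to reduce the global energy identity to the local mass contributions already computed in Section~\ref{section-4}. First I would use the equations \eqref{eq-1.1} together with $u_{k,p}\in H_0^1(\Omega)$ and integration by parts to rewrite the left-hand side as
\[
p\int_{\Omega}|\nabla u_{1,p}|^2+|\nabla u_{2,p}|^2\,dx
=p\int_{\Omega}\Big(\mu_1 u_{1,p}^{p+1}+\mu_2 u_{2,p}^{p+1}+2\beta u_{1,p}^{\frac{p+1}{2}}u_{2,p}^{\frac{p+1}{2}}\Big)\,dx .
\]
Then I would fix any $\rho\in(0,r_0)$ and split $\Omega$ into the pairwise disjoint balls $B_\rho(x_1),\dots,B_\rho(x_N)$ (which lie in $\Omega$ by \eqref{xixj}) and the complement $\Omega\setminus\bigcup_i B_\rho(x_i)$, handling the two pieces separately.

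On the complement, the uniform decay \eqref{31-1} gives $p\|u_{k,p}\|_{L^\infty(\Omega\setminus\bigcup_i B_\rho(x_i))}\le C_\rho$, whence
\[
p\int_{\Omega\setminus\bigcup_i B_\rho(x_i)}\Big(\mu_1 u_{1,p}^{p+1}+\mu_2 u_{2,p}^{p+1}+2\beta u_{1,p}^{\frac{p+1}{2}}u_{2,p}^{\frac{p+1}{2}}\Big)\,dx
\le C\,\frac{C_\rho^{\,p+1}}{p^{\,p}}\longrightarrow 0 \qquad (p\to\infty).
\]
On each $B_\rho(x_i)$, Lemma~\ref{Lemma-4-1++} gives
\[
\lim_{p\to\infty}p\int_{B_\rho(x_i)}\Big(\mu_1 u_{1,p}^{p+1}+\mu_2 u_{2,p}^{p+1}+2\beta u_{1,p}^{\frac{p+1}{2}}u_{2,p}^{\frac{p+1}{2}}\Big)\,dx=\frac{1}{8\pi}\big(\gamma_{1,i}^2+\gamma_{2,i}^2\big).
\]
Summing over the $N$ balls and using the decay on the complement yields $\lim_{p\to\infty}p\int_{\Omega}(|\nabla u_{1,p}|^2+|\nabla u_{2,p}|^2)\,dx=\frac{1}{8\pi}\sum_{i=1}^N(\gamma_{1,i}^2+\gamma_{2,i}^2)$.

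Finally I would identify the constants $\gamma_{k,i}$. By Section~\ref{section-4} (Lemmas~\ref{Lemma-4-6}, \ref{lemma-4-10=}, \ref{lemma-4-12} and \ref{cor-4-13----}, together with the type~$\mathcal C$ analogues), for every $i$ one has $\gamma_{k,i}=8\pi\sqrt e$ if $x_i\in\mathcal S_k$ and $\gamma_{k,i}=0$ if $x_i\notin\mathcal S_k$; that is, $\gamma_{k,i}=8\pi\sqrt e\,m_{k,i}$ with $m_{k,i}$ as in \eqref{fc-mki}. Since $m_{k,i}\in\{0,1\}$ we have $\gamma_{k,i}^2=64\pi^2 e\,m_{k,i}^2=64\pi^2 e\,m_{k,i}$, so that $\frac{1}{8\pi}\sum_{i=1}^N(\gamma_{1,i}^2+\gamma_{2,i}^2)=8\pi e\sum_{i=1}^N(m_{1,i}+m_{2,i})$, which is \eqref{eq-5-311111}.

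I do not expect a serious obstacle: the two delicate points are that the limit in Lemma~\ref{Lemma-4-1++} is already independent of $\rho$, so one may fix $\rho$ once and let $p\to\infty$ without juggling the order of the limits $p\to\infty$ and $\rho\to0$; and that the value $\gamma_{k,i}=8\pi\sqrt e\,m_{k,i}$ must be quoted from the correct case of Section~\ref{section-4} (type~$\mathcal A$ versus type~$\mathcal B$/$\mathcal C$, and, within the latter, the dichotomy $\sigma_{2,i}=0$ or $\sigma_{2,i}=8\pi$), which is precisely what was established there.
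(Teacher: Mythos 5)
Your proposal is correct and follows essentially the same route as the paper: rewrite the Dirichlet energy via the equations, discard the contribution away from $\mathcal S$ using \eqref{31-1}, evaluate each local contribution by Lemma \ref{Lemma-4-1++} as $\frac{1}{8\pi}(\gamma_{1,i}^2+\gamma_{2,i}^2)$, and insert $\gamma_{k,i}=8\pi\sqrt{e}\,m_{k,i}$ from Section \ref{section-4}. No issues.
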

\begin{proof}
    By system \eqref{eq-1.1} and \eqref{31-1}, we have that for any fixed $\rho\in (0, r_0)$,
    \begin{align*}
        &p\int_{\Omega}|\nabla u_{1,p}|^2+|\nabla u_{2,p}|^2 dx\\
        =&p\int_{\om} \mu_1 u_{1,p}^{p+1}+\mu_2 u_{2,p}^{p+1}+2\beta u_{1,p}^{\frac{p+1}{2}}u_{2,p}^{\frac{p+1}{2}} dx\\
        =&\sum_{i=1}^N p\int_{B_{\rho}(x_i)}\mu_1 u_{1,p}^{p+1}+\mu_2 u_{2,p}^{p+1}+2\beta u_{1,p}^{\frac{p+1}{2}}u_{2,p}^{\frac{p+1}{2}} dx+o(1).
    \end{align*}
   Since 
  we have proved in Section 4 that  $$\mathcal{S}_k=\{x_i\in\mathcal{S}\; : \; \gamma_{k,i}\neq 0\}=\{x_i\in\mathcal{S}\; : \; \gamma_{k,i}=8\pi\sqrt{e}\}, \quad k=1,2$$  
  and $$ m_{k,i}=
        \begin{cases}
            0 \quad x_i\not\in\s_k,\\
            1 \quad x_i\in\s_k,
        \end{cases}$$    
        we have $(\gamma_{1,i},\gamma_{2,i})=(8\pi\sqrt{e}m_{1,i}, 8\pi\sqrt{e}m_{2,i})$ for any $i$. From here and \eqref{eq-4-1^*}, we obtain the desired assertion \eqref{eq-5-311111}.
\end{proof}

Now we are ready to prove Theorem \ref{Th-1LL}.

\begin{proof}[Proof of Theorem \ref{Th-1LL}]
Clearly Theorem \ref{Th-1LL} follows from Propositions \ref{cor4-16}-\ref{prop-5-1}.
\end{proof}

\subsection*{Acknowledgements}  Z.Chen is supported by National Key R\&D Program of China (Grant 2023YFA1010002) and NSFC (No. 12222109).

\end{document}